\newcolumntype{M}{>{$}l<{$}}
\newtheorem{theorem}{Theorem}[section]
\newtheorem{proposition}[theorem]{Proposition}
\newtheorem{lemma}[theorem]{Lemma}
\newtheorem{corollary}[theorem]{Corollary}
\theoremstyle{definition}
\newtheorem{remark}[theorem]{Remark}
\newtheorem{example}[theorem]{Example}
\definecolor{darkgreen}{RGB}{0,90,0}
\DeclareMathOperator{\Id}{Id}
\DeclareMathOperator{\divis}{div}
\newcommand{\YMD}[1]{}
\begin{document}\baselineskip=15pt

\begin{center}
\title[Kodaira dimension of moduli of HK varieties]{Kodaira dimension of moduli spaces of hyperk\"{a}hler varieties}

\author{Ignacio Barros}
\address{\parbox{0.9\textwidth}{
Department of Mathematics\\[1pt]
Universiteit Antwerpen\\[1pt]
Middelheimlaan 1, 2020 Antwerpen, Belgium
\vspace{1mm}}}
\email{{ignacio.barros@uantwerpen.be}}

\author{Pietro Beri}
\address{\parbox{0.9\textwidth}{
Institut Elie Cartan\\[1pt]
Université de Lorraine, Site de Nancy\\[1pt]
B.P. 70239, F-54506 Vandoeuvre-les-Nancy Cedex, France
\vspace{1mm}}}
\email{{pietro.beri@univ-lorraine.fr}}

\author{Emma Brakkee}
\address{\parbox{0.9\textwidth}{
Mathematical Institute\\[1pt]
Universiteit Leiden\\[1pt]
Niels Bohrweg 1, 2333 CA Leiden, The Netherlands
\vspace{1mm}}}
\email{{e.l.brakkee@math.leidenuniv.nl}}

\author{Laure Flapan}
\address{\parbox{0.9\textwidth}{
Department of Mathematics\\[1pt]
Michigan State University\\[1pt]
619 Red Cedar Road, East Lansing, MI 48824, USA
\vspace{1mm}}}
\email{{flapanla@msu.edu}}

\subjclass[2020]{14J15, 14J40, 14E08, 14D22.}
\keywords{Hyperk\"{a}hler varieties, moduli spaces, Kodaira dimension.}
\thanks{I.B. was supported by the ERC Synergy Grant ERC-2020-SyG-854361-HyperK, the Deutsche Forschungsgemeinschaft
(DFG, German Research Foundation) -- SFB-TRR 358/1 2023 -- 491392403, and Fonds voor Wetenschappelijk Onderzoek – Vlaanderen (FWO, Research Foundation – Flanders) -- G0D9323N. P. B. was supported by the ERC Synergy Grant ERC-2020-SyG-854361-HyperK. E.B.~was supported by NWO grants 016.Vidi.189.015 and VI.Veni.212.209. 
L.F.~was supported by the NSF grant DMS-2200800.}

\maketitle
\end{center}

\begin{abstract}
We study the Kodaira dimension of moduli spaces of polarized hyperk\"{a}hler varieties deformation equivalent to the Hilbert scheme of points on a K3 surface or to O'Grady's ten-dimensional variety. This question was studied by Gritsenko--Hulek--Sankaran in the cases of K$3^{[2]}$ and OG10 type when the divisibility of the polarization is one. We generalize their results to higher dimension and divisibility. As a main result, for almost all dimensions $2n$ we provide a lower bound on the degree such that for all higher degrees, every component of the moduli space of polarized hyperk\"{a}hler varieties of K$3^{[n]}$ type is of general type.
\end{abstract}

\setcounter{tocdepth}{1} 
\tableofcontents

\section{Introduction}

A hyperk\"ahler manifold is a simply connected compact K\"ahler manifold with a unique non-degenerate $2$-form (up to scaling). 
Hyperk\"ahler manifolds are always even-dimensional with trivial canonical bundle. In this paper we study moduli spaces of hyperk\"{a}hler varieties, by which we mean hyperk\"ahler manifolds which are projective.

Two-dimensional hyperk\"ahler manifolds are always K3 surfaces. Mukai gave unirational parametrizations of the moduli spaces $\mathcal{F}_{2d}$ of polarized K3 surfaces of degree $2d$ for $d\leq 12$ and $d=15,17,19$ \cites{Muk88,Muk92,Muk06,Muk10,Muk16}. 
Recently the same was done for $d=13,21$, see \cites{Nue17, FV18, FV21}. In contrast, Gritsenko--Hulek--Sankaran showed in \cite{GHS07} that $\mathcal{F}_{2d}$ is of general type for $d>61$ and $d=46,50,54,58,60$.

For a hyperk\"ahler manifold $X$, the group 
$H^2(X,\mathbb{Z})$ carries a quadratic form $q_X$, turning it into a lattice called the {\textit{Beauville--Bogomolov--Fujiki lattice}. 
In this paper, we focus on moduli spaces of 
hyperk\"ahler manifolds coming from K3 surfaces: deformations of Hilbert schemes of $n$ points on a K3 surface, called K$3^{[n]}$ type, and deformations of O'Grady's 10-dimensional examples, called OG10 type. 
The corresponding moduli spaces $\mathcal{M}_{\mathrm{K}3^{[n]}, 2d}^\gamma$ (resp.\ $\mathcal{M}_{\mathrm{OG10}, 2d}^\gamma$) parametrize pairs $(X,H)$, with $X$ a projective hyperk\"ahler variety of K$3^{[n]}$ (resp.\ OG10) type and $H$ a primitive polarization on $X$ so that $c_1(H)$ has degree $2d$ with respect to $q_X$ and divisibility $\gamma$ in $H^2(X,\mathbb{Z})$.}

The study of
the moduli spaces $\mathcal{M}_{\mathrm{K}3^{[n]}, 2d}^\gamma$ and $\mathcal{M}_{\mathrm{OG10}, 2d}^\gamma$ has attracted much attention in the past decade. A fundamental question in hyperk\"{a}hler geometry is whether one can effectively construct a general projective hyperk\"{a}hler variety of a given deformation type. 
This is a question about the existence of unirational locally complete families. There are few known examples of such families; 
to our knowledge, there are no constructions outside K$3^{[n]}$ type. 
For instance, such unirational parametrizations are known for $\mathcal{M}_{\mathrm{K}3^{[n]},2d}^\gamma$ in the cases $(n,d,\gamma)=(2,3,2)$ \cite{BD85}, $(2,1,1)$ \cite{OG06},  $(2,19,2)$ \cites{IR01, IR07} (see also \cite{Mon13}*{Proposition 1.4.1}), $(2,11,2)$ \cite{DV10}, $(4,1,2)$ \cite{LLSvS17}, $(3,2,2)$ \cite{IKKR19}, 
and additional higher dimensional examples \cites{BLM+21, PPZ19} (see \eqref{BLM+} below).

The space $\mathcal{M}_{\mathrm{K}3^{[n]}, 2d}^\gamma$ is not irreducible in general, but it is when $\gamma=1,2$, see \cite{Apo14}. The space $\mathcal{M}_{\mathrm{OG10}, 2d}^\gamma$ is always irreducible, \cites{Ono22, Ono22b}. Gritsenko--Hulek--Sankaran treat the ``split'' case $\gamma=1$ in \cites{GHS10, GHS11} (see also \cite{Ma21}) and prove:
\begin{itemize}
\item[(i)] The moduli space $\mathcal{M}_{\mathrm{K}3^{[2]}, 2d}^1$ is of general type if $d\ge 12$.
\item[(ii)] The moduli space $\mathcal{M}_{\mathrm{OG10}, 2d}^1$ is of general type for all $d\ge 3$, $d\ne 2^m$ for $m\ge 0$.
\end{itemize}

Recently, similar results have been proven for moduli spaces of polarized hyperkähler manifolds deformation equivalent to a generalized Kummer fourfold, see \cites{Daw24_1,Daw24_2}.

\subsection{Main Results}

We produce analogous general type results for the higher divisibility and dimension cases of these moduli spaces. In particular,  we give, for almost all dimensions, the first uniform bound (quadratic in $\gamma$ and $n$) on the degree after which these moduli spaces are of general type (see Theorem \ref{thm1} below). Moreover, in the more technical split case $\gamma=1$, in which new subtleties arise, we give a list $(n,d)\in \mathbb{Z}_{>0}\times\mathbb{Z}_{>0}$ of positive density for which the moduli space $\mathcal{M}_{K3^{[n]}, 2d}^1$ is of general type (see Theorem \ref{thm: thm5} below).

In the case $n=2$ the remaining case (non-split) is $\gamma=2$. In this case $\mathcal{M}_{\mathrm{K}3^{[2]}, 2d}^2$ is non-empty if and only if $d\equiv -1\mod 4$. Similarly, for OG10 type, the remaining case (non-split) is $\gamma=3$ and $\mathcal{M}_{\mathrm{OG10}, 2d}^3$ is non-empty if and only if $d\equiv -3\mod 9$. We show:

\begin{theorem}
\label{thm:sec_intro:K32}
Let $\mathcal{M}_{\mathrm{K}3^{[2]},2d}^{2}$ be the moduli space of primitively polarized hyperk\"{a}hler varieties of K$3^{[2]}$-type with non-split polarization of degree $2d=8t-2$. Then for all $t\geq 12$ and $t=10$ the moduli space $\mathcal{M}_{\mathrm{K}3^{[2]},2d}^{2}$ is of general type.
\end{theorem}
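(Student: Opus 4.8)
The plan is to realize $\mathcal{M}_{\mathrm{K}3^{[2]},2d}^{2}$ as a locally symmetric variety $\mathcal{F}_L=\Gamma\backslash\mathcal{D}_L$ and then run the Gritsenko--Hulek--Sankaran strategy for proving such varieties are of general type. By the period-map description recalled in the preliminary sections, $L$ is the orthogonal complement of a class of divisibility $2$ and square $2d=8t-2$ in the $\mathrm{K}3^{[2]}$-lattice; it has signature $(2,20)$ and discriminant group of odd order $d=4t-1$, $\mathcal{D}_L$ is the associated type IV domain, and $\Gamma\le O^+(L)$ is the monodromy group. Since $\dim\mathcal{F}_L=20$ lies far above the threshold $9$, a toroidal compactification $\overline{\mathcal{F}_L}$ has canonical singularities (Tai's criterion, as used by GHS), so to conclude it is of general type it suffices to exhibit a non-zero cusp form of weight $a<20$ for $\Gamma$, up to a character, vanishing along the ramification divisor of $\pi\colon\mathcal{D}_L\to\mathcal{F}_L$: powers of such a form, multiplied by cusp forms of complementary weight, produce $h^0(kK_{\overline{\mathcal{F}_L}})$ growing like $k^{20}$.

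I would construct the low-weight cusp form by Borcherds' quasi-pullback. Choose a primitive embedding $L\hookrightarrow II_{2,26}$ into the even unimodular lattice of signature $(2,26)$, with orthogonal complement $K:=L^{\perp}$, a negative definite lattice of rank $6$ whose discriminant form is $-q_L$. The quasi-pullback of the Borcherds form $\Phi_{12}\in M_{12}(O^+(II_{2,26}))$ is a cusp form on $\mathcal{D}_L$ (as $\operatorname{rk}K\ge 2$), nonzero by construction, of weight $12+\tfrac12\,\#\{v\in K: v^2=-2\}$. Hence, if $K$ has at most $14$ roots, this form has weight $\le 19<20$. Because $\Phi_{12}$ vanishes to first order along each $H_r$ with $r$ a $(-2)$-vector of $II_{2,26}$, the quasi-pullback vanishes along all $(-2)$-reflective divisors of $\mathcal{D}_L$; the remaining components of the ramification divisor come only from reflections $\sigma_v$ with $v^2=-2m$ and $\operatorname{div}_L(v)=m$ for $m$ an odd divisor of $d$, and these must be handled either by checking that such reflections do not lie in $\Gamma$ or by multiplying the quasi-pullback by an auxiliary modular form (a further quasi-pullback or a Gritsenko lift) vanishing precisely along them, verifying that the total weight stays below $20$.

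The heart of the matter --- and the reason for the hypothesis $t\ge 12$ together with the sporadic value $t=10$ --- is the lattice-theoretic problem of exhibiting a rank-$6$ even negative definite lattice $K$ with discriminant form $-q_L$ (which depends only on $t$) and with at most $14$ roots, and checking, via Nikulin's criteria, that it occurs as $L^{\perp}$ inside $II_{2,26}$. Concretely one builds $K$ as an orthogonal sum or a gluing of small lattices (pieces such as $\langle -2a\rangle$, $A_2(-1)$, or rank-one summands glued along an isotropic subgroup), matching the prescribed genus while keeping the number of $(-2)$-vectors small. For $t$ large there is enough flexibility to do this; for small $t$ every admissible $K$ is forced to carry more than $14$ roots, so the quasi-pullback has weight $\ge 20$ and the method fails. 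Determining the precise range, and spotting the fortuitous low-root complement at $t=10$, is the main obstacle and where the bulk of the computation lies. Once $K$ is fixed, the cusp analysis at the boundary and the bookkeeping of characters are routine, and the general-type conclusion follows.
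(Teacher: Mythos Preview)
Your strategy is the same as the paper's: reduce to the modular variety $\Omega(\Lambda_h)/\widetilde{O}^+(\Lambda_h)$ (here $n=2$, so the monodromy group equals $\widetilde{O}^+(\Lambda_h)$), then apply the low-weight cusp form trick via the Borcherds quasi-pullback, controlling the number of $(-2)$-roots in $K=\Lambda_h^\perp\subset II_{2,26}$. The differences are in execution, and a couple of your technical claims need correction.

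First, the paper does not build $K$ abstractly from its genus. Since $\Lambda_h=U^{\oplus 2}\oplus E_8(-1)^{\oplus 2}\oplus Q_t(-1)$ with $Q_t=\begin{pmatrix}2&-1\\-1&2t\end{pmatrix}$, any primitive embedding $Q_t\hookrightarrow E_8$ gives $\Lambda_h\hookrightarrow II_{2,26}$ with $K=Q_t^\perp\subset E_8$. The paper writes down explicit vectors $v_1=e_1+e_2$ and $v_2=x_1e_1-(x_1+1)e_2+x_5e_5+\cdots+x_8e_8$, where $x_1$ is determined by \eqref{eq:sec_K32:x1} and the remaining $x_i$ come from expressing the residue $R=2t-x_1^2-(x_1+1)^2$ as a sum of four coprime squares. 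An explicit root count, together with an elementary inequality ruling out fractional roots once $t\ge 34$, yields $4\le|R(Q_t^\perp)|\le 14$; the range $13\le t\le 33$ is checked case by case, and $t=10,12$ are quoted from \cite{GHS13}.

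Second, two technical points. Your claim that the quasi-pullback is automatically a cusp form ``as $\operatorname{rk}K\ge 2$'' is not correct: one needs $K$ to contain at least one $(-2)$-root (this is why the paper insists on $|R(Q_t^\perp)|\ge 2$). More importantly, your handling of the ramification beyond the $(-2)$-reflective divisors is too tentative. No auxiliary form is needed: if $-\sigma_r\in\widetilde{O}^+(\Lambda_h)$ then, by \cite{GHS07}*{Proposition 3.2}, $\operatorname{disc}((\Lambda_h)_r)\in\{2,4,8\}$; hence $(\Lambda_h)_r^\perp\subset II_{2,26}$ is a rank-$7$ even lattice of discriminant at most $8$ and by the Conway--Sloane tables contains a root system with at least $56$ roots. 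Since $|R(\Lambda_h^\perp)|\le 14<56$, the quasi-pullback already vanishes along every such $\mathcal{D}_r$ (Proposition~\ref{prop:sec_K3n:F=0}). This is the missing idea in your sketch.
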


\begin{theorem}
\label{thm:sec_intro:OG10}
Let $\mathcal{M}_{\mathrm{OG10},2d}^{3}$ be the moduli space of primitively polarized hyperk\"{a}hler varieties of OG10-type with non-split polarization of degree $2d=18t-6$. Then for all $t\geq 4$ the moduli space $\mathcal{M}_{\mathrm{OG10},2d}^{3}$ is of general type.  
\end{theorem}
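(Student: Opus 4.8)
The plan is to recast the statement in terms of orthogonal modular varieties and then run the ``low weight cusp form'' technique of Gritsenko--Hulek--Sankaran \cites{GHS10,GHS11}, the engine behind the split cases (i)--(ii) above. By the global Torelli theorem for hyperk\"ahler manifolds (Verbitsky, Markman) and the description of the monodromy group in OG10 type \cites{Ono22,Ono22b}, one has $\mathcal{M}_{\mathrm{OG10},2d}^{3}\cong\widetilde{O}^{+}(L)\backslash\mathcal{D}_{L}$, where $L=L_{2d,3}$ is the orthogonal complement of a primitive vector $h$ with $h^{2}=2d$ and $\operatorname{div}(h)=3$ inside the Beauville--Bogomolov lattice $\Lambda=U^{3}\oplus E_8(-1)^{2}\oplus A_2(-1)$, and $\widetilde{O}^{+}(L)$ is the stable orthogonal group. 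Using Eichler's criterion (valid since $\Lambda\supseteq 2U$) one finds that $h$ is unique up to monodromy and, because $d\equiv-3\bmod 9$, that $L$ has signature $(2,21)$ and splits as $L\cong 2U\oplus 2E_8(-1)\oplus T$ for an explicit negative definite lattice $T$ of rank $3$ and determinant $2d/3=6t-2$ with root system $A_2$ (for $t\ge 2$); in particular $A_L\cong A_T$ has order $6t-2$, is $3$-torsion free, has length $\le 2$, and $n:=\dim\mathcal{M}_{\mathrm{OG10},2d}^{3}=21$.

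Granting this, I would apply the general criterion for an orthogonal modular variety $\mathcal F$ of dimension $n$ to be of general type: if $n\ge 9$ --- so that $\mathcal F$ has canonical singularities and pluricanonical forms extend over the (codimension $\ge n-1$) Baily--Borel boundary --- and if there is a nonzero cusp form $F$ of weight $k<n$ for $\widetilde{O}^{+}(L)$ vanishing along the branch divisor $R$ of $\pi\colon\mathcal{D}_{L}\to\mathcal F$, then $\mathcal F$ is of general type: since weight-$mn$ modular forms correspond to $m$-canonical forms on $\mathcal{D}_{L}$ and $\pi$ ramifies to order $2$ along $R$, for $m\gg 0$ the products $F^{m}G$ with $G$ a modular form of weight $m(n-k)$ --- which vanish along $R$ to order $\ge m$ --- descend to $\asymp m^{n}$ independent sections of $mK_{\mathcal F}$, so $K_{\mathcal F}$ is big. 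As $n=21\ge 9$ here, only the construction of $F$ remains. One identifies $R$ as a finite union of Heegner divisors whose principal component is $\mathcal{H}_{-2}$ (fixed locus of the reflections $\sigma_{r}$ with $r^{2}=-2$, $\operatorname{div}(r)=1$), together with at most finitely many ``special'' components coming from reflections $\pm\sigma_{r}$ with $r^{2}\in\{-\operatorname{div}(r),-2\operatorname{div}(r)\}$ whose mirror meets $\mathcal{D}_{L}$; note that the $3$-torsion freeness of $A_L$ rules out the divisibility-$3$ reflections one might a priori expect.

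For $F$ I would use the quasi-pullback $F_{L}$ of Borcherds's form $\Phi_{12}$ on $\mathrm{II}_{2,26}$ along a primitive embedding $L\hookrightarrow\mathrm{II}_{2,26}$ (one exists since $\ell(A_L)\le 2$), with $K:=L^{\perp}\subseteq\mathrm{II}_{2,26}$ negative definite of rank $5$. Then $F_{L}$ is a cusp form for $\widetilde{O}^{+}(L)$ of weight $12+\tfrac12\#R(K)$, $R(K):=\{r\in K:r^{2}=-2\}$, provided $R(K)\ne\varnothing$; and since $\Phi_{12}$ vanishes along $r^{\perp}$ for every $(-2)$-vector $r$ of $\mathrm{II}_{2,26}$, and every $(-2)$-vector of $L$ is one, $F_{L}$ automatically vanishes along $\mathcal{H}_{-2}$. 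It thus suffices to (i) choose the embedding so that $2\le\#R(K)\le 16$, whence $F_{L}$ has weight $\le 20<21$, and (ii) dispose of the finitely many special components of $R$, either by checking that $F_{L}$ vanishes along them or by multiplying $F_{L}$ by a low-weight Borcherds product with exactly that divisor, keeping the total weight $<21$.

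The heart of the matter --- and where the hypothesis $t\ge 4$ enters --- is step (i): producing, uniformly in $t$, an embedding whose complement $K$, negative definite of rank $5$ and determinant $6t-2$, has at most $16$ roots (a few fewer if an auxiliary factor is needed in (ii)). Writing $K=T^{\perp}_{E_8(-1)}$ via $L\cong 2U\oplus 2E_8(-1)\oplus T$, the roots of $K$ are exactly the roots of $E_8$ orthogonal to the image of $T$; since $T$ varies with $t$ one has genuine freedom in this embedding, and the problem becomes a congruence analysis of $2d=18t-6$ showing that for every $t\ge 4$ one can place $T$ inside $E_8(-1)$ so that the ``long'' direction of $T$ beyond its $A_2$ is orthogonal to only a few roots of the residual $E_6(-1)$. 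Carrying this out, together with the companion verification for the special branch components, is the computational core; it parallels, with extra bookkeeping forced by divisibility $3$, the arguments of \cite{GHS11} in the split case, and the small degrees $t\le 3$ fall outside its scope because then $\mathrm{II}_{2,26}$ cannot accommodate both $2U\subseteq L$ and a sufficiently root-poor complement.
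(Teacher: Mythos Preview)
Your proposal is the paper's approach: identify the moduli space with $\Omega(\Lambda_h)/\widetilde{O}^+(\Lambda_h)$ for $\Lambda_h = 2U \oplus 2E_8(-1) \oplus Q_t(-1)$ (your $T$ is $Q_t(-1)$), then apply the low-weight cusp form trick via the quasi-pullback of $\Phi_{12}$ along a primitive embedding $Q_t \hookrightarrow E_8$ with $2 \le |R(Q_t^\perp)| \le 16$; the paper's substance is the explicit construction of such embeddings for all $t \ge 4$ (a uniform family for $t \ge 67$ via sums-of-squares identities, plus case-by-case embeddings for $4 \le t \le 66$ collected in an appendix).

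Two refinements where the paper is cleaner than your sketch. First, your step (ii) is unnecessary and potentially costly in weight: because $D(\Lambda_h)$ is cyclic of order $6t-2$, the paper invokes \cite{GHS13}*{Proposition 8.13} (Lemma~\ref{lemma:sec_OG10:ramif}) to show the quasi-pullback already vanishes along the entire ramification divisor --- no auxiliary Borcherds product is needed. Second, you omit the irregular-cusp check required for the low-weight criterion \cite{Ma21}; here it is immediate since $|D(\Lambda_h)| = 6t-2$ is never $8$ (Lemma~\ref{lemma:sec_OG10:irr_cusp}).
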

We remark that 
the cases $t=10,12$ in Theorem \ref{thm:sec_intro:K32} and $t=4$ in Theorem \ref{thm:sec_intro:OG10} were proved in \cite{GHS13}*{Proposition 9.2}, \cite{GHS11}* {Corollary 4.3}. 

The main results of our paper focus on the moduli spaces $\mathcal{M}_{\mathrm{K}3^{[n]},2d}^{\gamma}$ in the case $n\ge 3$. 

\begin{theorem}
\label{thm1}
(see Theorem \ref{thm:sec_K3n:div3_2})
Let $(n,d,\gamma)$ be a triple such that the moduli space $\mathcal{M}_{\mathrm{K}3^{[n]},2d}^\gamma$ is non-empty (see Proposition \ref{prop:sec_K3n:nonemp}). We assume further that $n\ge 6$, $n\ne 11,13$, and $\gamma\geq 3$. 
Then every component of $\mathcal{M}_{\mathrm{K}3^{[n]},2d}^\gamma$ is of general type provided 
\[d\geq 6\gamma^2\left(n+3+\sqrt{2(n-1)}\right)^2,\]
except for one possible value of $d\ge 5\cdot 10^{10}$ for each $n$ which is odd or in the set $\{10,12,52,\frac{\star}{2}+1\}$.
\end{theorem}
In the above theorem, $\star$ is a fixed integer greater than $5\cdot 10^{10}$ (see Section \ref{ssec:H-K}). This number $\star$ does not exist if the Generalized Riemann Hypothesis holds.

\begin{remark} In Theorem \ref{thm: diophantine bounds version} we give a much sharper lower bound on $d$ after which a given connected component of $\mathcal{M}_{\mathrm{K}3^{[n]},2d}^\gamma$ is of general type. 
However, computing this bound requires solving a linear Diophantine equation whose coefficients depend on the connected component, the value $\gamma$, and integers $\alpha_1, \alpha_2, \alpha_3$ such that $\alpha_1^2+\alpha_2^2+\alpha_3^3=2(n-1)$. 
By contrast, Theorem \ref{thm1} gives a uniform bound on $d$ that does not require specifics about the numerics of $n$ and $\gamma$.
\end{remark}

\begin{example}
As an illustration of the difference between the bounds obtained from Theorem \ref{thm: diophantine bounds version} versus Theorem \ref{thm1}, in the case of the moduli space $\mathcal{M}_{K3^{[26]},2d}^5$, Theorem \ref{thm: diophantine bounds version} yields that both of its components 
are of general type provided $d\ge 225$ (see Example \ref{ex: components different}). 
By contrast, for the same moduli space, Theorem \ref{thm1} yields the bound $d\ge 195169$.
\end{example}

We also consider $\mathcal{M}_{\mathrm{K}3^{[n]},2d}^\gamma$ for $n\ge 3$ when $\gamma=1,2$. These low divisibility cases come with several additional technical challenges. The hardest and perhaps most interesting case we tackle is the ``split case'' $\gamma=1$. 
This case departs from previous literature: modularity of the quasi-pullback is not guaranteed and vanishing of the resulting cusp form at the ramification of the modular projection cannot be proven via classification of root systems of small rank and discriminant. We obtain general type results for a set of pairs $(n,d)$ of density roughly $1/2$:

\begin{theorem}
\label{thm: thm5}
Suppose $n\ge 3$ and write $n-1=4^c \cdot k$ for $c\ge 0$, $4\nmid k$. 
\begin{enumerate}
\item \label{eq: thm5b} If $k$ is a square, then $\mathcal{M}_{\mathrm{K}3^{[n]},2d}^1$ is of general type for all $d\ge 12$.
\item \label{eq: thm5a} If $k\equiv 1,2 \bmod 4$ and $k\not\in \{1,2,5,6,10,13,25,37,58, 85, 130, \star\}$, then 
$\mathcal{M}_{\mathrm{K}3^{[n]},2d}^1$ is of general type for 
all $d$ of the following form:
\begin{enumerate}[(a)]
\item $d=4^e \cdot m$ with $e\ge 0$, $m\geq 3$, $m\not\equiv 0,4,7\bmod 8$ and $m\notin\{5,10,13,25,37,58,85,130,\star\}$;
\item $d=p\cdot r^2$ where $p$ is a prime congruent to $3$ modulo $4$, or, for any square $d=r^2$ when additionally $k\neq 9$.
\end{enumerate}
\end{enumerate}
\end{theorem}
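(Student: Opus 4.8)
The plan is to run the low-weight-cusp-form method of Gritsenko--Hulek--Sankaran, carried out for the monodromy group of $\mathrm{K}3^{[n]}$ type. Write $\mathcal{M}_{\mathrm{K}3^{[n]},2d}^1\cong\Gamma_{n,d}\backslash\mathcal{D}_{L_{n,d}}$, where
\[
L_{n,d}=U^{\oplus2}\oplus E_8(-1)^{\oplus2}\oplus\langle-2(n-1)\rangle\oplus\langle-2d\rangle
\]
has signature $(2,20)$ and $\Gamma_{n,d}$ is the image of the monodromy group, which for divisibility $1$ satisfies $\widetilde{O}^+(L_{n,d})\subseteq\Gamma_{n,d}$ with index at most $2$. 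Since the dimension is $n'=20\ge9$, the quotient has canonical singularities away from the branch locus and admits a toroidal compactification to which cusp forms extend, so by the standard criterion it suffices to produce one nonzero $\Gamma_{n,d}$-cusp form of weight $a<20$ (possibly with a quadratic character) vanishing along the whole ramification divisor of $\mathcal{D}_{L_{n,d}}\to\Gamma_{n,d}\backslash\mathcal{D}_{L_{n,d}}$: pairing its $k$-th power (for $k$ even, so the character disappears) with the full $\asymp\bigl(k(20-a)\bigr)^{20}$-dimensional space of $\Gamma_{n,d}$-modular forms of weight $k(20-a)$ then forces $K_{\mathcal{M}}$ to be big.

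Such a cusp form is obtained as the Borcherds quasi-pullback of $\Phi_{12}$. I would take a primitive embedding $L_{n,d}\hookrightarrow II_{2,26}=U^{\oplus2}\oplus E_8(-1)^{\oplus3}$ fixing $U^{\oplus2}\oplus E_8(-1)^{\oplus2}$ and embedding $\langle-2(n-1)\rangle\oplus\langle-2d\rangle$ into the remaining copy of $E_8(-1)$; the orthogonal complement $K$ is a negative definite lattice of rank $6$, and the quasi-pullback of $\Phi_{12}$ is a cusp form of weight $12+\tfrac12\#\{r\in K:r^2=-2\}$, modular for $\widetilde{O}^+(L_{n,d})$ and semi-invariant under $\Gamma_{n,d}$. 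So the whole argument reduces to finding, for the relevant $(n,d)$, an embedding for which $K$ carries at most $14$ roots, making the weight $\le19<20$; one also checks, exactly as in the split K3 and $\mathrm{K}3^{[2]}$ cases, that the quasi-pullback already vanishes along every $(-2)$-reflective divisor $\mathcal{D}_{r^\perp}$ with $r\in L_{n,d}$, and that the finitely many remaining reflective divisors in $\Gamma_{n,d}$ are absorbed by an auxiliary modular form, so that the full ramification divisor is covered. The decomposition $n-1=4^c k$ enters here: it lets one replace $\langle-2(n-1)\rangle$ by a lattice assembled from $\langle-2k\rangle$ and hyperbolic planes, so the embedding problem, hence the root count for $K$, depends on $n$ only through $k$, via a first step $\langle-2k\rangle\hookrightarrow E_8(-1)$ whose complement is a rank-$7$ lattice whose isometry class is governed by whether $k$ is a square or $k\equiv1,2\bmod4$, and then a second step embedding $\langle-2d\rangle$ into that rank-$7$ lattice.

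The substance of the theorem is the arithmetic of these two steps. If $k$ is a square, the norm-$(-2k)$ vector can be taken of maximal divisibility, its complement in $E_8(-1)$ is an essentially root-free rank-$7$ lattice, and the problem degenerates to exactly the root count Gritsenko--Hulek--Sankaran carry out for polarized K3 surfaces; this yields all $d\ge12$, giving part (1). If instead $k\equiv1,2\bmod4$, the rank-$7$ complement still carries roots and one must in addition choose a norm-$2d$ vector inside it whose own rank-$6$ complement retains $\le14$ roots. Existence of such a vector is a representation question for a ternary or quaternary form built from that rank-$7$ lattice; by the three-squares theorem and its refinements it is solvable precisely when $d$ lies in the two stated families — $d=4^e m$ with $m\not\equiv0,4,7\bmod8$ (the condition that the relevant form represents $2d$ by a root-poor vector rather than being forced to pick up extra roots), or $d=pr^2$ with $p\equiv3\bmod4$ prime (or $p=1$), where the prime-power discriminant forces the complement to be root-free. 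The finite exceptional sets $\{1,2,5,6,10,13,25,37,58,85,130,\star\}$ collect the small $k$ (respectively small $m$) where these representations fail or exist only with too many roots, found by direct computation; the entry $\star$ appears because one step reduces to asserting that a fixed ternary quadratic form represents every large enough integer in a prescribed residue class, which is known unconditionally with at most one exception $\le5\cdot10^{10}$ (via Duke-type equidistribution of representations together with bounds on the least quadratic non-residue/Linnik's constant) and holds without exception under the Generalized Riemann Hypothesis.

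I expect the main obstacle to be precisely this arithmetic: bounding the number of roots in the rank-$6$ complement uniformly over all genera that occur as $k$ and $d$ vary, tracking how the divisibility of the norm-$(-2k)$ and norm-$(-2d)$ vectors in $E_8(-1)$ interacts with the congruence conditions, and making the exceptional sets — and the lone possible exception $\star$ — effective. By contrast, the modular-forms input (quasi-pullback, the passage from $\widetilde{O}^+(L_{n,d})$ to $\Gamma_{n,d}$, the behaviour at the cusps and along the branch locus) and the lattice bookkeeping are a careful but essentially routine extension of the established $\gamma=1$ framework.
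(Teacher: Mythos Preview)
Your outline captures the Borcherds quasi-pullback framework correctly, but it misidentifies the mechanisms that make the theorem work, and in places describes steps that would not go through as written.

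First, the reductions hidden in the factorizations $n-1=4^c k$, $d=4^e m$, $d=pr^2$ are \emph{not} done by manipulating the embedding $\langle -2(n-1)\rangle\oplus\langle -2d\rangle\hookrightarrow E_8(-1)$. They are handled by constructing, for each square factor $s^2$, a lattice morphism $\langle -2s^2 m\rangle\hookrightarrow\langle -2m\rangle$ (generator to $s$ times generator), which induces a generically finite dominant rational map of moduli spaces $\mathcal{M}^1_{\mathrm{K}3^{[s^2(n-1)+1]},2d}\dashrightarrow\mathcal{M}^1_{\mathrm{K}3^{[n]},2d}$ (and similarly on the $d$-side). General type pulls back along such maps, so one reduces to $c=e=0$, respectively $r=1$. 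Your sentence about ``replacing $\langle-2(n-1)\rangle$ by a lattice assembled from $\langle-2k\rangle$ and hyperbolic planes'' does not make sense inside a fixed signature-$(2,20)$ lattice. Part~(1) then follows \emph{immediately} from this reduction: if $k$ is a square then $n-1$ is a square, the map above lands in $\mathcal{M}^1_{\mathrm{K}3^{[2]},2d}$, and one quotes Gritsenko--Hulek--Sankaran for $d\ge 12$; there is no new root count. Likewise, case~(2b) with $p=1$ is not a root-counting statement at all: after reducing $r=1$ one has $d=1$, and strange duality gives a birational isomorphism $\mathcal{M}^1_{\mathrm{K}3^{[n]},2}\cong\mathcal{M}^1_{\mathrm{K}3^{[2]},2(n-1)}$, reducing again to GHS10 once $n-1\ge 12$ (which is exactly what forces the extra exclusion $k\neq 9$).

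Second, for the genuine root-counting case~(2a) and the prime case of~(2b), the passage from $\widetilde{O}^+(\Lambda_h)$ to the monodromy group $\widehat{O}^+(\Lambda,h)$ is \emph{not} ``essentially routine''. The quotient is generated by the reflection $\sigma_{z_1}$ in the generator $z_1$ of $\langle -2(n-1)\rangle$, and for the quasi-pullback to be modular with character $\det$ one must choose the embedding $Q_h\hookrightarrow E_8$ so that $\sigma_{z_1}$ extends to an involution of $E_8$ with a specified parity condition on the anti-invariant lattice. Concretely one embeds $z_1$ into a $2$-elementary sublattice $V_-\cong A_1^{\oplus 4}$ and $z_2$ into its orthogonal $V_+$, and then writes $n-1$ and $d$ as sums of three (resp.\ four) coprime squares with controlled parities; this is where Halter--Koch's results and the exceptional lists enter. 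Finally, the extra ramification divisors coming from $-\sigma_r\in\widehat{O}^+\setminus\widetilde{O}^+$ are not ``absorbed by an auxiliary modular form'': for~(2a) one arranges $\tfrac12|R(Q_h^\perp)|$ even so that vanishing is automatic by weight parity, while for the prime case of~(2b) one classifies such $r$ (showing $r^2=-2d$, $\mathrm{div}(r)\in\{d,2d\}$, and that when $d$ is an odd prime with $d\equiv 3\bmod 4$ every such $r$ is $\widehat{O}^+$-conjugate to one of two explicit vectors) and checks the root inequality $|R(\Lambda_h^\perp)|<|R((\Lambda_h)_r^\perp)|$ directly for those. Your proposal does not isolate any of these three ingredients, and they are precisely the content of the $\gamma=1$ case.
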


As an example, the lowest dimension for which one can rule out the existence of unirational locally complete families in degree $2$ is $2n=30$ and in degree $6$ is $2n=20$.

\begin{corollary}
There is no unirational locally complete family of polarized hyperk\"{a}hler $20$-folds of K$3$-type with split polarization of degree $6$ or of $30$-folds with split polarization of degree $2$. 
\end{corollary}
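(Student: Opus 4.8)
The plan is to derive both non-existence statements directly from Theorem~\ref{thm: thm5}, combined with the standard principle that a moduli space which is of general type cannot carry a unirational locally complete family.

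I would first spell out why being of general type obstructs such a family. By definition, a unirational locally complete family of polarized hyperk\"ahler $2n$-folds of K$3^{[n]}$ type with split polarization of degree $2d$ is a family over a unirational base $B$ whose classifying map $B\to \mathcal{M}_{\mathrm{K}3^{[n]},2d}^{1}$ is dominant. Composing with a dominant rational map $\mathbb{P}^{N}\dashrightarrow B$ then exhibits $\mathcal{M}_{\mathrm{K}3^{[n]},2d}^{1}$ as dominated by a projective space, hence unirational; since this moduli space is irreducible (\cite{Apo14}) there is no ambiguity of components, and since it is positive-dimensional --- it is $20$-dimensional for every $n\ge 2$ --- any smooth projective model would have Kodaira dimension $-\infty$, contradicting being of general type. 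So the corollary reduces to checking that the two relevant moduli spaces are of general type, which I would do by a direct numerical verification against Theorem~\ref{thm: thm5}.

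For the $20$-folds we have $n=10$ and the BBF-degree $2d=6$, so $d=3$. Writing $n-1=9=4^{0}\cdot 9$ gives $c=0$ and $k=9$; since $9\equiv 1\bmod 4$ and $9\notin\{1,2,5,6,10,13,25,37,58,85,130,\star\}$, part~(2) of Theorem~\ref{thm: thm5} applies, and $d=3=3\cdot 1^{2}$ has the form $p\cdot r^{2}$ with $p=3$ an odd prime congruent to $3$ modulo $4$ and $r=1$, which is permitted with no condition on $k$. Thus $\mathcal{M}_{\mathrm{K}3^{[10]},6}^{1}$ is of general type. For the $30$-folds we have $n=15$ and $2d=2$, so $d=1$. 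Writing $n-1=14=4^{0}\cdot 14$ gives $c=0$ and $k=14\equiv 2\bmod 4$, again outside the excluded set, so part~(2) of Theorem~\ref{thm: thm5} applies, and $d=1=1\cdot 1^{2}$ has the form $p\cdot r^{2}$ with $p=1$ and $r=1$, which is allowed precisely because $k=14\neq 9$. Thus $\mathcal{M}_{\mathrm{K}3^{[15]},2}^{1}$ is of general type, and the corollary follows.

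I do not anticipate any essential obstacle beyond assembling these pieces, as the real work is already contained in Theorem~\ref{thm: thm5}. The only point that requires attention is confirming that the very small degrees $d=3$ and $d=1$ genuinely lie in the admissible ranges of that theorem, and in particular that the exceptional value $k=9$ does no harm in either case --- for the $20$-folds because one invokes the prime $p=3$ rather than $p=1$, and for the $30$-folds because $k=14\neq 9$. One should also keep in mind that ``general type'' is a birational invariant of the (finite-quotient) modular variety, so the statement is well posed, and that positivity of $\dim\mathcal{M}_{\mathrm{K}3^{[n]},2d}^{1}$ is exactly what makes the Kodaira-dimension comparison conclusive.
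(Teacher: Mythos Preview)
Your proposal is correct and follows essentially the same route as the paper: the corollary is stated immediately after Theorem~\ref{thm: thm5} and is meant to be read off from it, exactly via the numerical checks you perform (for $n=10$, $k=9$, $d=3=3\cdot 1^2$ using clause~(2b) with $p=3$; for $n=15$, $k=14$, $d=1=1\cdot 1^2$ using clause~(2b) with $p=1$ and $k\neq 9$). The paper adds only the side remark that the $30$-fold case can alternatively be obtained from \cite{GHS10} via strange duality (Proposition~\ref{prop:sec_K3n:SD}), since $\mathcal{M}_{\mathrm{K}3^{[15]},2}^{1}$ is birational to $\mathcal{M}_{\mathrm{K}3^{[2]},28}^{1}$ and $14\ge 12$.
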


The second case already follows from \cite{GHS10} together with \textit{strange duality}, see Proposition \ref{prop:sec_K3n:SD}. In the case of $\gamma=2$, as a consequence of Theorem \ref{thm:sec_intro:K32} we obtain the following general type results for an infinite quadratic series of dimensions.

\begin{theorem}
\label{thm:gamma2}
Suppose $n$ is an even integer such that $n-1$ is square. Then  the moduli space $\mathcal{M}_{\mathrm{K}3^{[n]},2d}^2$ of primitively polarized hyperk\"ahler varieties of degree $2d=8t-2$ and divisibility $2$ is of general type for $t\ge 12$ and $t=10$.  
\end{theorem}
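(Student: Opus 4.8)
The plan is to deduce the statement from the $n=2$ case, Theorem~\ref{thm:sec_intro:K32}, by producing a dominant generically finite rational map from $\mathcal{M}_{\mathrm{K}3^{[n]},2d}^{2}$ onto $\mathcal{M}_{\mathrm{K}3^{[2]},2d}^{2}$ when $n$ is even and $n-1$ is a perfect square, and then using that the Kodaira dimension does not drop under such a map.

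First I would recall the realization of these moduli spaces as orthogonal modular varieties: via the Torelli theorem for hyperk\"ahler manifolds and Markman's description of the monodromy group, $\mathcal{M}_{\mathrm{K}3^{[n]},2d}^{\gamma}$ is birational to a finite union of quotients $\Gamma_n\backslash\mathcal{D}_{h^{\perp}}$, where $h$ is a primitive vector of square $2d$ and divisibility $\gamma$ in the Beauville--Bogomolov lattice $\Lambda_n = U^{3}\oplus E_8(-1)^{2}\oplus\langle -2(n-1)\rangle$, where $\mathcal{D}_{h^{\perp}}$ is the $20$-dimensional type IV domain attached to $h^{\perp}\subseteq\Lambda_n$, and where $\Gamma_n\subseteq\mathrm{O}^{+}(h^{\perp})$ is the image of the stabilizer of $h$ in $\mathrm{Mon}^{2}(\Lambda_n)=\{g\in\mathrm{O}^{+}(\Lambda_n): g|_{A_{\Lambda_n}}=\pm\mathrm{id}\}$. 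Since $\gamma=2$ gives an irreducible moduli space by \cite{Apo14}, only a single such quotient occurs; the same holds for $n=2$, where in addition $A_{\Lambda_2}\cong\mathbb{Z}/2$ forces $\mathrm{Mon}^{2}(\Lambda_2)=\mathrm{O}^{+}(\Lambda_2)$.

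The key input is the lattice embedding $\iota\colon\Lambda_n\hookrightarrow\Lambda_2$ which is the identity on $U^{3}\oplus E_8(-1)^{2}$ and sends a generator $\ell$ of $\langle -2(n-1)\rangle=\langle -2m^{2}\rangle$ to $mf$, where $f$ generates $\langle -2\rangle\subseteq\Lambda_2$ and $m=\sqrt{n-1}$; it identifies $\Lambda_n$ with an index-$m$ sublattice of $\Lambda_2$ whose cyclic quotient is the unique order-$m$ subgroup of the cyclic group $A_{\Lambda_n}\cong\mathbb{Z}/2m^{2}$. Because $n$ is even, $m$ is odd, and so a primitive divisibility-$2$ vector $h=\ell+2w'\in\Lambda_n$ (with $w'\in U^{3}\oplus E_8(-1)^{2}$ a suitable primitive vector, which exists precisely when $2d=8t-2$, and which by irreducibility we may take as representative) is carried to $h_0:=\iota(h)=mf+2w'$, again primitive, of the same square $2d$, and of divisibility $2$ in $\Lambda_2$; in particular $\mathcal{M}_{\mathrm{K}3^{[2]},2d}^{2}$ is then nonempty too. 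Under $\iota$ we get $h^{\perp}\cap\Lambda_n\subseteq h_0^{\perp}\cap\Lambda_2$ as a finite-index sublattice of the same rank $22$, so $\mathcal{D}_{h^{\perp}}=\mathcal{D}_{h_0^{\perp}}$. Moreover any $g\in\mathrm{O}(\Lambda_n)$ preserves the characteristic order-$m$ subgroup of the cyclic group $A_{\Lambda_n}$, hence extends uniquely to an isometry $\tilde g$ of the overlattice $\Lambda_2$; if $g\in\mathrm{Mon}^{2}(\Lambda_n)$ fixes $h$, then $\tilde g$ fixes $h_0$ and, since $A_{\Lambda_2}\cong\mathbb{Z}/2$, lies automatically in $\mathrm{Mon}^{2}(\Lambda_2)=\mathrm{O}^{+}(\Lambda_2)$. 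Restricting to $h_0^{\perp}$ produces an injection $\Gamma_n\hookrightarrow\Gamma_2$ compatible with the actions on $\mathcal{D}_{h^{\perp}}=\mathcal{D}_{h_0^{\perp}}$, hence a finite surjective morphism $\Gamma_n\backslash\mathcal{D}\to\Gamma_2\backslash\mathcal{D}$ and thus a dominant generically finite rational map $\mathcal{M}_{\mathrm{K}3^{[n]},2d}^{2}\dashrightarrow\mathcal{M}_{\mathrm{K}3^{[2]},2d}^{2}$ between $20$-dimensional varieties. As Kodaira dimension does not decrease under such a map, Theorem~\ref{thm:sec_intro:K32} yields that $\mathcal{M}_{\mathrm{K}3^{[n]},2d}^{2}$ is of general type for $t\ge 12$ and $t=10$.

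The step I expect to need the most care is the comparison of the monodromy groups: one must check that the extension-of-isometries construction sends $\Gamma_n$ into $\Gamma_2$ itself, and not merely into $\mathrm{O}^{+}(h_0^{\perp})$, which is exactly where the cyclicity of $A_{\Lambda_n}$ and the triviality of the $\pm\mathrm{id}$ condition for $\Lambda_2$ enter; one also has to analyze the arithmetic of divisibility-$2$ classes of square $8t-2$ in $\Lambda_n$ and in $\Lambda_2$ so that both moduli spaces are simultaneously nonempty for the stated range of $t$ (cf.\ Proposition~\ref{prop:sec_K3n:nonemp}). The remaining ingredients — the orthogonal-modular-variety description of $\mathcal{M}$, irreducibility in divisibility $2$, and the monotonicity of $\kappa$ under generically finite dominant maps — are standard.
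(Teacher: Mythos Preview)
Your proposal is correct and follows essentially the same approach as the paper: both deduce the result from Theorem~\ref{thm:sec_intro:K32} via a dominant generically finite rational map $\mathcal{M}_{\mathrm{K}3^{[n]},2d}^{2}\dashrightarrow\mathcal{M}_{\mathrm{K}3^{[2]},2d}^{2}$ coming from the lattice embedding $\langle -2(n-1)\rangle\hookrightarrow\langle -2\rangle$, $\ell\mapsto m f$ (this is exactly the content of Lemma~\ref{lemma:sec_K3n+:fm1}(2), whose embedding $z_1'\mapsto kz_1$, $z_2'\mapsto z_2$ on the $Q$-lattices is the restriction of your $\iota$). Your characteristic-subgroup argument for the inclusion $\Gamma_n\hookrightarrow\Gamma_2$ is a clean conceptual rephrasing of the paper's explicit verification that the map descends to the monodromy quotients.
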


\subsection{Relation to existing literature}

To prove our main results, we reduce the question to the existence of a certain cusp form for an orthogonal modular variety.
This strategy was introduced by Gritsenko--Hulek--Sankaran in \cite{GHS07} and has been subsequently used in, among others, \cites{GHS10, GHS11, TVA19, FM21}.

The moduli spaces $\mathcal{M}_{\mathrm{K}3^{[n]},2d}^\gamma$ 
(resp.\ $\mathcal{M}_{\mathrm{OG10}, 2d}^\gamma$) are 20-dimensional 
(resp.\ 21-dimensional) quasi-projective varieties with finite quotient singularities, see \cite{Vie95}. Their irreducible components $Y$ admit algebraic open embeddings to orthogonal modular varieties 
\begin{equation}\label{eq: domain quotient} Y\longrightarrow \Omega(\Lambda_h)/ \Gamma,\end{equation}
see \cites{BB66, Ver13}. The target of this map is defined as follows. Let 
\[\Lambda=U^{\oplus 3}\oplus E_8(-1)^{\oplus 2}\oplus \langle -2(n-1)\rangle \hspace{3mm} \text{ 
(respectively }
\Lambda=U^{\oplus 3}\oplus E_8(-1)^{\oplus 2}\oplus A_2(-1))\]
be the lattice isomorphic to $\left(H^2(X,\mathbb{Z}), q_X\right)$ and $\Lambda_h\subset \Lambda$ the lattice of signature $(2,20)$ (respectively $(2,21)$) isomorphic to the orthogonal complement of the first Chern class $h\in H^2(X,\mathbb{Z})$ of $H$. Then 
\[\left\{[x]\in \mathbb{P}(\Lambda_h\otimes \mathbb{C})\mid (x,x)=0 \text{ and } (x, \overline{x})>0\right\}\]
is a Type IV bounded Hermitian symmetric domain that has two isomorphic components and the orthogonal group $O(\Lambda_h)$ acts on this domain. We fix one of the components $\Omega(\Lambda_h)$ and let $O^+(\Lambda_h)$ be the subgroup of $O(\Lambda_h)$ fixing $\Omega(\Lambda_h)$. The arithmetic group $\Gamma$, called the \textit{monodromy group}, is a finite index subgroup of $O^+(\Lambda_h)$.

In the case of the moduli spaces $\mathcal{M}_{\mathrm{K}3^{[n]},2d}^\gamma$, the monodromy group $\Gamma$ was computed in \cites{Mar11, Ono22} and it is given by
\begin{equation}\label{eq: hat O}\widehat{O}^+\left(\Lambda,h\right)=\left\{g\in O^+\left(\Lambda,h\right)\mid g\mid_{D(\Lambda)=\pm {\rm{Id}}}\right\},
\end{equation}
where $O^+(\Lambda,h)\subset O(\Lambda)$ is the stabilizer of $h\in\Lambda$ acting on $\Lambda_h$ by restriction and $D(\Lambda)$ is the discriminant group of the lattice $\Lambda$. As an important ingredient in the proof of Theorem \ref{thm1} (see Lemma \ref{lemma:sec_K3n:h} and Proposition \ref{prop:sec_K3n:Otilde_h}), we show that if $n=2$ or $\gamma\geq3$, then this monodromy group $\Gamma$ has the simpler description
\begin{equation}\label{eq: tilde O}\widehat{O}^+\left(\Lambda,h\right)=\widetilde{O}^+(\Lambda_h):=\{g\in O^+(\Lambda_h)\mid g|_{D(\Lambda_h)}=\mathrm{Id}\}.
\end{equation}
 By contrast, when $n\ge 3$ and $\gamma=1,2$, the group 
 $O^+(\Lambda,h)$ is an index two extension of $\widetilde{O}^+(\Lambda_h)$. This gives rise to several technical difficulties in the proof of Theorem~\ref{thm: thm5}.

Remarkably, Ma shows in \cite{Ma18}*{Theorem 1.3} that there are only finitely many isomorphism classes of even lattices $\Lambda_h$ of signature $(2,m)$ with $m\ge 9$ for which the modular variety $\Omega(\Lambda_h)/ \widetilde{O}^+(\Lambda_h)$ is not of general type. 
Additionally, there are only finitely many lattices $\Lambda_h$ (up to isomorphism) of signature $(2,m)$ with $m\ge 21$ or $m=17$ such that $\Omega(\Lambda_h)/O^+(\Lambda_h)$ is not of general type \cite{Ma18}*{Theorem 1.1}. 

One should note however that this does \textit{not} imply that there are only finitely many choices of $(n,d, \gamma)$ such that $\mathcal{M}_{\mathrm{K}3^{[n]},2d}^\gamma$ is not of general type, since infinitely many $(n,d, \gamma)$ may yield isometric lattices $\Lambda_h$. 
In particular, $\mathcal{M}_{\mathrm{K}3^{[n]},2d}^\gamma$ may be not general type even when $n$ or $d$ are arbitrarily large. For instance, for any coprime integers $(a,b)$, letting $n=a^2-ab+b^2$, the moduli spaces
\begin{equation}
\label{BLM+}
\begin{cases}
\mathcal{M}_{K3^{[n+1]},\frac{2}{3}n}^{\frac{2}{3}n} & \mbox{ if } 3\vert n\\
\mathcal{M}_{K3^{[n+1]},6n}^{2n} & \mbox{ otherwise }
\end{cases}
\end{equation}
are unirational, dominated by the moduli space of cubic fourfolds, see \cite{BLM+21}*{Corollary 29.5}. Theorems \ref{thm:sec_intro:K32} and \ref{thm1} thus serve, once one has fixed $n$ and $\gamma$, to give an explicit lower bound on $d$ after which $\mathcal{M}_{\mathrm{K}3^{[n]},2d}^\gamma$ is always of general type. 

Note moreover that Ma's result does not apply in the cases $n\geq3$ and $\gamma=1,2$ due to the failure of \eqref{eq: tilde O}. The natural expectation that there are finitely many birational classes of moduli spaces $\mathcal{M}_{\mathrm{K}3^{[n]}, 2d}^\gamma$ that are not of general type when $\gamma=1,2$ remains open. Theorems 
\ref{thm: thm5} and \ref{thm:gamma2} are a contribution in this direction. 

\subsection{Ideas of the proof}
The 
proofs of Theorems~\ref{thm:sec_intro:K32}, \ref{thm:sec_intro:OG10}, \ref{thm1}, and \ref{thm: thm5} use the strategy developed by Gritsenko--Hulek--Sankaran in \cites{GHS07, GHS10, GHS11} involving modular forms of orthogonal type. For the relevant lattice $\Lambda_h$ of signature $(2,m)$, the quotient  
\[\mathcal{F}_{\Lambda_h}=\Omega(\Lambda_h)/ \Gamma\]
admits many projective toroidal compactifications with mild singularities, however, by \cite{GHS07}*{Theorem 2} one can choose one $\overline{\mathcal{F}}_{\Lambda_h}$ with at worst canonical singularities. Taking a resolution of singularities of $\overline{\mathcal{F}}_{\Lambda_h}$ yields a smooth projective model $\overline{Y}$ of $\mathcal{F}_{\Lambda_h}$. Showing that the relevant moduli spaces are of general type thus amounts to showing that there is an abundance of holomorphic pluri-canonical forms on $\overline{Y}$.

This is accomplished using modular forms of orthogonal type and the so-called ``low weight cusp form trick'' of Gritsenko--Hulek--Sankaran. 
A modular form of weight $k$ and character $\chi\colon \Gamma\rightarrow \mathbb{C}^*$ is a holomorphic function $F\colon\Omega^\bullet(\Lambda_h)\to\mathbb{C}$
on the affine cone $\Omega^\bullet(\Lambda_h)$ of $\Omega(\Lambda_h)$
such that for all $Z\in\Omega^\bullet(\Lambda_h)$, $t\in \mathbb{C}^*$, and $g\in \Gamma$
\[
F(tZ)=t^{-k}F(Z)\;\;\;\hbox{and}\;\;\;F(gZ)=\chi(g)\cdot F(Z).
\]
These form a finite-dimensional vector space ${\rm{Mod}}_k\left(\Gamma,\chi\right)$. In the cases we consider, every modular form is holomorphic at the boundary. 
Those vanishing on the boundary are called {\textit{cusp forms}} and form a subspace $S_k\left(\Gamma,\chi\right)\subset{\rm{Mod}}_k\left(\Gamma,\chi\right)$. By a classical result of Freitag \cite{Fre83}*{Chapter 3}, if $F$ is a cusp form of weight $m\ell$ and character $\mathrm{det}$ vanishing on the ramification divisor of $\Omega(\Lambda_h)\rightarrow \mathcal{F}_{\Lambda_h}$, then the form $F \cdot dz^{\otimes \ell}$, where $dz$ is a holomorphic volume form on $\Omega(\Lambda_h)$, descends to a global section of $\ell K_{\overline{Y}}$.

\subsubsection{Low weight cusp form trick} Gritsenko--Hulek--Sankaran's ``low-weight cusp form trick'' \cite{GHS07} gives a method to produce such pluricanonical forms on $\overline{Y}$. To do this, by Freitag's result, we need to produce a fixed cusp form $F_a\in S_{a}\left(\Gamma,\det\right)$ with weight $a<m$ vanishing on the ramification divisor of $\Omega(\Lambda_h)\longrightarrow \mathcal{F}_{\Lambda_h}$. As long as $\Gamma$ has no irregular cusps \cite{Ma21} (see Section \ref{SS:lwcusp}), there is then an injection \cite{GHS07}*{Theorem 1.1}
\[
\begin{aligned}
{\rm{Mod}}_{(m-a)\ell}\left(\Gamma,1\right)&\longrightarrow H^0(\overline{Y}, \ell K_{\overline{Y}})\\
\rho&\mapsto \rho \cdot F_a^\ell \cdot dz^{\otimes \ell}.
\end{aligned}
\]
By Hirzebruch--Mumford Proportionality \cite{Mum77}, the dimension of ${\rm{Mod}}_{(m-a)\ell}\left(\Gamma,1\right)$ grows like $\ell^m$ and from this it follows that $\overline{Y}$ is indeed of general type.

\subsubsection{Borcherds form and root counting}
In order to produce this low-weight cusp form $F_a$, one uses Borcherds modular form \cite{Bor95}
$\Phi_{12}\in M_{12}\left(O^+(\mathrm{II}_{2,26}),{\rm{det}}\right)$
where $\mathrm{II}_{2,26}$ is the unique even unimodular lattice of signature $(2,26)$ given by
\begin{equation}
{\rm{II}}_{2,26}:=U^{\oplus 2}\oplus E_{8}(-1)^{\oplus 3}.
\end{equation}
If one can produce a primitive embedding of lattices 
\begin{equation}
\label{embedding}
\Lambda_h\hookrightarrow \mathrm{II}_{2,26} \end{equation}
with the right properties (see e.g.\ Propositions~\ref{prop:sec_K3n:F=0}, \ref{prop:sec_K3n:R(Q)}, \ref{prop:sec_K3n+:Fcusp}, \ref{prop:sec_OG10:mod}), then the so-called ``quasi-pullback'' of $\Phi_{12}$ to the cone $\Omega^\bullet\left(\Lambda_h\right)$
yields the needed low weight cusp form $F_a$. Thus the proofs of the main results boil down to producing embeddings $\Lambda_h\hookrightarrow \mathrm{II}_{2,26}$ with the desired properties. 
Modularity of the quasi-pullback is not automatic and strongly depends on the arithmetic group involved. 
The condition is automatically satisfied if $\Gamma$ is the group $\widetilde{O}^+\left(\Lambda_h\right)$ defined in \eqref{eq: tilde O}, but is much more delicate when $\Gamma$ is larger. 
This is one source of difficulty for $\gamma=1,2$. Moreover, in these cases, the embedding \eqref{embedding} has to be chosen so that isometries of $\Lambda_h$ in the group $\Gamma$ extend to isometries of ${\rm{II}}_{2,26}$. 
This is again automatic when $\Gamma=\widetilde{O}^+(\Lambda_h)$, but if $\Gamma$ is larger, 
it is a more delicate endeavor.

Once we have an embedding where one can ensure modularity of the quasi-pullback, then
vanishing at the ramification divisor follows in the case $\gamma\geq3$ by the classification of irreducible low-rank lattices of small discriminant \cite{CS88}. 
In the case $\gamma=1,2$, this problem is again more challenging and we can solve it only by imposing additional constraints on both the dimension $2n$ and the degree $2d$.

\subsection*{Acknowledgements}
The paper benefited from helpful discussions and correspondence with the following people who we gratefully acknowledge: Olivier Debarre, Klaus Hulek, Paul Kiefer, Shouhei Ma, Emanuele Macr\`i, Lillian Pierce, Francesca Rizzo, Preston Wake. 
Finally we would also like to thank the
anonymous referee for many helpful comments and suggestions.

\section{Preliminaries}
\label{S:prelim}
\subsection{Lattices}
\label{SS:lattices}
Let $L$ be a lattice, i.e.\ a free abelian group of finite rank together with a symmetric non-degenerate bilinear pairing $\left(\cdot,\cdot\right):L\times L\to \mathbb{Z}$. 
 We denote by $O(L)$ the group of isometries of $L$ and by $L(m)$, for an integer $m$, the lattice whose underlying abelian group is $L$ and whose pairing is $m$ times the pairing of $L$.

 The dual lattice $L^{\vee}={\rm{Hom}}(L,\mathbb{Z})$ can be embedded in $L\otimes \mathbb{Q}$ as those elements $x\in L\otimes \mathbb{Q}$ such that $(x,\ell)\in \mathbb{Z}$ for all $\ell\in L$, and the bilinear form is the restriction of the $\mathbb{Q}$-linear extension of $(\cdot,\cdot)$. 
 The inclusion $L\subset L^{\vee}$ has finite index and the quotient $D(L)=L^{\vee}\big/L$ is a finite abelian group, called the \textit{discriminant group}. When $L$ is even, i.e.\ $(x,x)\in 2\mathbb{Z}$ for all $x\in L$, the discriminant group $D(L)$ comes endowed with a $\mathbb{Q}\big/2\mathbb{Z}$-valued quadratic form given by $x+L\mapsto (x,x)+2\mathbb{Z}$. In this case, the natural projection induces a homomorphism 
\begin{equation}
\label{eq:sec_prelim:pi}
\pi\colon O(L)\to O(D(L))
\end{equation}
that is surjective if $L$ is indefinite and $D(L)$ can be generated by at most ${\rm{rk}}(L)-2$ elements. The following two groups play a prominent role:
\[\widetilde{O}(L)=\pi^{-1}\left({\rm{Id}}\right)\;\;\;\hbox{and}\;\;\;\widehat{O}(L)=\pi^{-1}\left({\rm{\pm Id}}\right).\]
The first one is called the {\textit{stable orthogonal group}}. 
For $h\in L$ we denote by $O(L,h)$ the stabilizer of $h$ in $O(L)$, and by $\widetilde{O}(L,h)\subset O(L,h)$ (resp. $\widehat{O}(L,h)$) the subgroup of $\widetilde{O}(L)$ (resp. $\widehat{O}(L)$) of elements fixing $h$. 

Let $h\in L$, then $\left(h,L\right)\subset \mathbb{Z}$ is an ideal of the form $\gamma \mathbb{Z}$, where $\gamma$ is a positive integer. We call $\gamma$ the {\textit{divisibility}} of $h$ and write ${\rm{div}}(h)=\gamma$. 
Alternatively, $\gamma$ can be defined as the positive integer making $h/\gamma$ a primitive element in $L^{\vee}$. We denote by $h_*$ the class of $h/\gamma$ in $D(L)$.

The lattice $E_8$ plays a crucial role in our arguments. It can be realized as a sublattice of $\left(\frac{1}{2}\mathbb{Z}\right)^{\oplus 8}$ with the euclidean quadratic form via
\[E_8=\left\{(x_1,\ldots,x_8)\in \mathbb{Z}^{\oplus8}\cup\left(\frac{1}{2}+\mathbb{Z}\right)^{\oplus8}\left|\sum_{i=1}^8x_i\equiv 0\mod 2\right.\right\}.\]
Elements in $E_8$ are points in $\mathbb{Q}^{\oplus 8}$ whose coordinates are either all integers or all half integers,  
such that the sum of the coordinates is an even integer. Recall that $E_8$ has $240$ \emph{roots}, elements $r$ with $(r,r)=2$. 
The $112$ of these roots that have integer coordinates are of the form $\pm e_i\pm e_j$, for $i,j\in\{1,\ldots,8\}$ and $i\neq j$, where $\{e_1,\ldots,e_8\}$ is the standard basis of $\mathbb{Z}^{\oplus 8}$. 
The remaining $128$ roots have half-integer coordinates and are of the form $\frac{1}{2}\sum_{i=1}^8 \pm e_i$, where the number of minus signs is even, or equivalently, the sum of all coordinates is even. 
Let us call the first set of roots {\textit{integral}} and the second {\textit{fractional}}. 
We view the lattice $D_8$ as a sublattice of $E_8$ via
\[D_8=\left\{(x_1,\ldots,x_8)\in \mathbb{Z}^{\oplus8}
\left|\sum_{i=1}^8x_i\equiv 0\mod 2\right.\right\}.\]
Finally, recall that if $L$ is a fixed even lattice, there is natural one-to-one correspondence between finite index even overlattices $L\subset L'$ and isotropic subgroups $H\subset D(L)$. Namely, given $L\subset L'$, there is a sequence of inclusions
\begin{equation}
\label{eq:sec_prelim:inclusions}
L\subset L'\subset (L')^{\vee}\subset L^\vee
\end{equation}
and the isotropic subgroup associated to $L'$ is given by $$H=L'\big/L\subset L^\vee\big/L=D(L).$$ 
Conversely, given an isotropic subgroup $H\subset D(L)$, the overlattice $L'$ is given by $\pi^{-1}(H)$, where $\pi\colon L^{\vee}\to D(L)$ is the quotient map. The discriminant groups of $L$ and $L'$ are related by the following diagram
\begin{equation}
\label{eq:sec_prelim:DLDL'}
\begin{tikzcd}
H=L'\big/L\arrow[r, hook]&H^{\perp}=(L')^{\vee}\big/L\arrow[r, hook]\arrow[d, twoheadrightarrow]&D(L)\\
&D(L')=H^\perp\big/H.&
\end{tikzcd}
\end{equation}
An element $g\in O(L)$ can be extended to $L'$ if and only if $\overline{g}(H)=H$, where $\overline{g}\in O(D(L))$ is the image of $g\in O(L)$ under the projection \eqref{eq:sec_prelim:pi}. 

Recall that a lattice embedding $L\hookrightarrow L'$ is said to be {\textit{primitive}} if the quotient $L'/L$ is torsion free. We now place ourselves in the following situation. Let $\Lambda$ be an even lattice, $h\in \Lambda$ a primitive element of degree $2d$ and divisibility $\gamma$, and $\Lambda_h$ the orthogonal complement of $h$ in $\Lambda$. Then
\begin{equation}
\label{eq:sec_prelim:D(Lambda_h+h)}
D\left(\Lambda_h\oplus \langle h\rangle\right)=D\left(\Lambda_h\right)\oplus \mathbb{Z}\big/2d\mathbb{Z},
\end{equation}
where the last factor is the discriminant group of $\langle h \rangle$ generated by $\frac{1}{2d}h \mod \langle h\rangle$. Now $\Lambda$ is a finite index overlattice of $\Lambda_h\oplus \langle h\rangle$ that corresponds to the isotropic subgroup 
$$H=\Lambda\big/\left(\Lambda_h\oplus\langle h \rangle\right)$$
in \eqref{eq:sec_prelim:D(Lambda_h+h)}. Moreover, since both inclusions $\Lambda_h\subset \Lambda$ and $\langle h \rangle\subset \Lambda$ are primitive, one checks that the projections 
\begin{equation}
\label{eq:sec_prelim:Pi_H}
\pi_1:H\to D(\Lambda_h)\;\;\;\hbox{and}\;\;\;\pi_2:H\to D(\langle h\rangle)
\end{equation}
are injective, see also \cite{Nik80}*{Proposition 1.5.1}. That is, for any $x\in \pi_1(H)$ there exists a unique $y\in \pi_2(H)$ such that $x+y\in H$. The induced map $\pi_1(H)\to\pi_2(H)$ defines an automorphism of $H$ and if $g\in O(\Lambda,h)$, the restriction $g|_{\Lambda_h}$ acts as the identity on $\pi_1(H)$. Moreover, $\pi_1(H)\to \pi_2(H)$ respects the bilinear form up to a sign.

\begin{lemma}
\label{lemma:sec_prelim:O-tildes}
Let $\Lambda$ be an even lattice and $h\in \Lambda$ primitive.
View $\widetilde{O}(\Lambda,h)$ as a subgroup of $O(\Lambda_h)$ via the restriction map $O(\Lambda,h)\to O(\Lambda_h)$. Then $\widetilde{O}(\Lambda,h)$ contains $\widetilde{O}(\Lambda_h)$.
\end{lemma}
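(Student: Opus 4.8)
The plan is to take an arbitrary $g\in\widetilde{O}(\Lambda_h)$, extend it first to the orthogonal direct sum $\Lambda_h\oplus\langle h\rangle$ by declaring it to act as the identity on $\langle h\rangle$, and then push this extension further up to the overlattice $\Lambda$ using the overlattice extension criterion recalled just before the lemma. Concretely, set $\tilde g=g\oplus\mathrm{id}_{\langle h\rangle}\in O(\Lambda_h\oplus\langle h\rangle)$. Using the decomposition \eqref{eq:sec_prelim:D(Lambda_h+h)}, namely $D(\Lambda_h\oplus\langle h\rangle)=D(\Lambda_h)\oplus\mathbb{Z}/2d\mathbb{Z}$, the induced isometry $\overline{\tilde g}$ of this discriminant group is the identity: it is trivial on $D(\Lambda_h)$ because $g\in\widetilde{O}(\Lambda_h)$, and trivial on the $\langle h\rangle$-summand because $\tilde g$ fixes $h$. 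In particular $\overline{\tilde g}$ stabilizes (indeed fixes pointwise) the isotropic subgroup $H=\Lambda/(\Lambda_h\oplus\langle h\rangle)$, so by the criterion recalled in \eqref{eq:sec_prelim:inclusions}--\eqref{eq:sec_prelim:DLDL'} the isometry $\tilde g$ extends to an isometry $\hat g\in O(\Lambda)$. Since $\hat g|_{\langle h\rangle}=\tilde g|_{\langle h\rangle}=\mathrm{id}$ we get $\hat g(h)=h$, hence $\hat g\in O(\Lambda,h)$ with $\hat g|_{\Lambda_h}=g$.

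It remains to verify $\hat g\in\widetilde{O}(\Lambda)$, i.e.\ that $\hat g$ acts trivially on $D(\Lambda)=\Lambda^{\vee}/\Lambda$. For this I would use the chain $\Lambda_h\oplus\langle h\rangle\subset\Lambda\subset\Lambda^{\vee}\subset(\Lambda_h\oplus\langle h\rangle)^{\vee}$ and the fact that the $\mathbb{Q}$-linear extension of $\tilde g$ to $(\Lambda_h\oplus\langle h\rangle)\otimes\mathbb{Q}=\Lambda\otimes\mathbb{Q}$ restricts, on $\Lambda$ and on $\Lambda^{\vee}$, to the corresponding extensions of $\hat g$. Because $\overline{\tilde g}=\mathrm{Id}$ on $D(\Lambda_h\oplus\langle h\rangle)$ we have $\tilde g(x)-x\in\Lambda_h\oplus\langle h\rangle\subseteq\Lambda$ for every $x\in(\Lambda_h\oplus\langle h\rangle)^{\vee}$; specializing to $x\in\Lambda^{\vee}$ yields $\hat g(x)-x\in\Lambda$, so $\hat g$ acts as the identity on $D(\Lambda)$. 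Therefore $\hat g\in\widetilde{O}(\Lambda,h)$ and restricts to $g$ on $\Lambda_h$, which gives the asserted inclusion $\widetilde{O}(\Lambda_h)\subseteq\widetilde{O}(\Lambda,h)$ inside $O(\Lambda_h)$.

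I do not expect a genuine obstacle here: the whole argument is an exercise in the overlattice formalism of the preliminaries, and the only point requiring care is the bookkeeping with the three discriminant groups $D(\Lambda_h)$, $D(\Lambda_h\oplus\langle h\rangle)$, $D(\Lambda)=H^{\perp}/H$ and the compatibility of the successive $\mathbb{Q}$-linear extensions of $g$ to the relevant dual lattices, which is precisely what diagram \eqref{eq:sec_prelim:DLDL'} encodes. For the statement to make literal sense one should also note that the restriction map is injective on $\widetilde{O}(\Lambda,h)$: an element of $O(\Lambda,h)$ restricting to the identity on $\Lambda_h$ is the identity on the finite-index sublattice $\Lambda_h\oplus\langle h\rangle$, hence on $\Lambda\otimes\mathbb{Q}$, hence on $\Lambda$.
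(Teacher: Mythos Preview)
Your proof is correct and follows essentially the same approach as the paper: extend $g\in\widetilde{O}(\Lambda_h)$ to $g\oplus\mathrm{Id}$ on $\Lambda_h\oplus\langle h\rangle$, observe this acts trivially on the discriminant group, hence fixes $H$ and extends to $\Lambda$, and then deduce triviality on $D(\Lambda)$. The only cosmetic difference is that the paper phrases the last step via the identification $D(\Lambda)=H^\perp/H$ from diagram~\eqref{eq:sec_prelim:DLDL'}, whereas you verify directly that $\tilde g(x)-x\in\Lambda_h\oplus\langle h\rangle\subset\Lambda$ for $x\in\Lambda^\vee$; these are the same computation.
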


\begin{proof}
Let $g\in \widetilde{O}(\Lambda_h)$ and consider the orthogonal transformation given by $g\oplus{\rm{Id}}$ on the lattice $\Lambda_h\oplus \langle h \rangle$. Then $g\oplus {\rm{Id}}$ acts as the identity on $D(\Lambda_h)\oplus D(\langle h \rangle)$. In particular it fixes $H$, so $g\oplus {\rm{Id}}$ extends to an element of $O(\Lambda,h)$. Moreover, $\overline{g\oplus{\rm{Id}}}$ is the identity on $H^{\perp}$. From \eqref{eq:sec_prelim:DLDL'} it follows that the extension must act as the identity on $D(\Lambda)=H^{\perp}\big/H$.
\end{proof}

\subsection{Modular forms}
Let $L$ be an even lattice of signature $(2,q)$ with $q\geq3$. Recall that the symmetric domain
\[\left\{[x]\in \mathbb{P}\left(L\otimes \mathbb{C}\right)\left|x^2=0\hbox{ and }x\cdot \overline{x}>0\right.\right\}\]
consists of two components exchanged by complex conjugation and is acted on by the arithmetic group $O(L)$. We fix one of the two connected components, denoted $\Omega(L)$ and called the {\textit{period domain}} for $L$. The index two subgroup of orientation-preserving isometries $O^+(L)\subset O(L)$ is the subgroup of $O(L)$ fixing $\Omega(L)$. 

We denote by $\Omega^\bullet(L)\subset L\otimes\mathbb{C}$ the affine cone of $\Omega(L)\subset \mathbb{P}(L\otimes \mathbb{C})$ and let $\Gamma$ be a finite index subgroup of $O^+(L)$. 
A {\textit{modular form of weight $k$ and character $\chi\colon\Gamma\to\mathbb{C}^*$}} is a holomorphic function $F\colon\Omega^\bullet(L)\longrightarrow \mathbb{C}$ such that for all $Z\in\Omega^\bullet(L)$, $t\in \mathbb{C}^*$, and $g\in \Gamma$
\begin{equation}
\label{eq:sec_prelim:mod}
F(tZ)=t^{-k}F(Z)\;\;\;\hbox{and}\;\;\;F(gZ)=\chi(g)\cdot F(Z).
\end{equation}
Modular forms of fixed weight and character form a finite-dimensional vector space ${\rm{Mod}}_k\left(\Gamma,\chi\right)$. When $q\geq 3$, every 
modular form is holomorphic at the boundary. Those vanishing on the boundary are called {\textit{cusp forms}} and form a subspace denoted $S_k\left(\Gamma,\chi\right)\subset{\rm{Mod}}_k\left(\Gamma,\chi\right)$. This subspace is fundamental in many ways. By a classical result of Freitag \cite{Fre83}*{Chapter 3}, cusp forms of weight $k=q$ and character $\mathrm{det}$ descend to global sections of the canonical divisor for any smooth model of the quotient. More precisely:
\begin{equation}
\label{eq:sec_prelim:Fre}
S_q\left(\Gamma,{\rm{det}}\right)\cong H^0\left(\overline{Y}, K_{\overline{Y}}\right),
\end{equation}
where $\overline{Y}$ is a smooth projective model of the quasi-projective variety $\Omega(L)\big/\Gamma$, see \cite{BB66}. 

\subsection{Irregular cusps and the low-weight cusp form trick}
\label{SS:lwcusp}
The period domain $\Omega(L)$ is an open subset of the isotropic quadric $\mathcal{Q}\subset\mathbb{P}(L\otimes \mathbb{C})$ defined by $(w,w)=0$. It has two types of boundary components called \textit{cusps}: $0$-dimensional and $1$-dimensional ones, corresponding to isotropic sublattices of $L$ of rank one and two respectively.

The Fourier expansion of a modular form $F\in {\rm{Mod}}_k\left(\Gamma,\chi\right)$ at a $0$ -dimensional cusp is defined via Eichler transvections \cite{Eic74}*{Section 3} (see also \cite{GHS13}*{Section 8.3}). 
Recall that if $I$ is an isotropic rank one sublattice of $L$ and $U(I)_{\mathbb{Q}}$ is the unipotent part of the stabilizer of $I$ in $O^+\left(L\otimes\mathbb{Q}\right)$, the group of translations defining the Fourier expansion of $F$ at the cusp associated to $I$ is $U(I)_{\mathbb{Q}}\cap \Gamma$, whereas the lattice of translations around the cusp $I$ in the $\Gamma$-action is given by $U(I)_{\mathbb{Q}}\cap \langle \Gamma,-{\rm{Id}}\rangle$. 
When these two groups do not coincide, $I$ is called an \textit{irregular cusp} \cite{Ma21}. Irregular $1$-dimensional cusps are defined in a similar fashion and their existence can be reduced to the $0$-dimensional case:

\begin{proposition}[\cite{Ma21}*{Corollary 6.5}]
\label{prop:sec_prelim:irr_1cusp}
Let $\Gamma\subset O^+(L)$ be a finite index subgroup. If $\Omega(L)$ has no irregular $0$-dimensional cusps for $\Gamma$, then it has no irregular $1$-dimensional cusps. 
\end{proposition}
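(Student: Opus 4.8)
The plan is to prove the contrapositive: assuming that $\Omega(L)$ has an irregular $1$-dimensional cusp for $\Gamma$, I will exhibit an irregular $0$-dimensional cusp. The first step is to put both notions of irregularity into the same form. A $0$-dimensional cusp is given by a primitive isotropic sublattice $I\subset L$ of rank one, and by the definition recalled above it is irregular precisely when $-\mathrm{Id}_L\notin\Gamma$ while $\Gamma$ contains an element of the shape $(-\mathrm{Id}_L)\,u$ with $1\neq u\in U(I)_{\mathbb{Q}}$; equivalently, the stabilizer of $I$ in $\Gamma$ contains an element inducing $-\mathrm{Id}$ on $I^\perp/I$ and different from $-\mathrm{Id}_L$. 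A $1$-dimensional cusp is given by a primitive isotropic sublattice $J\subset L$ of rank two; the unipotent part $U(J)_{\mathbb{Q}}$ of the stabilizer of $J$ in $O^+(L\otimes\mathbb{Q})$ is a Heisenberg group whose one-dimensional center $Z(U(J))_{\mathbb{Q}}$ governs the grading of the Fourier--Jacobi expansion at $J$, and, exactly as in the $0$-dimensional case, $J$ is irregular precisely when $-\mathrm{Id}_L\notin\Gamma$ while $\Gamma$ contains an element $g=(-\mathrm{Id}_L)\,z$ with $1\neq z\in Z(U(J))_{\mathbb{Q}}$ (see \cite{Ma21}*{Section 6} for the precise setup).

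Granting this, the core of the argument is short. Since $J$ is totally isotropic, every primitive vector of $J$ spans a primitive isotropic rank-one sublattice $I\subset J$, and the corresponding $0$-dimensional cusp lies in the closure of the boundary curve attached to $J$; fix one such $I$. The element $z$ lies in $U(J)$, hence acts trivially on the graded piece $J$, so $g=(-\mathrm{Id}_L)z$ acts as $-\mathrm{Id}$ on $J$ and in particular fixes the line spanned by $I$; thus $g$ belongs to the stabilizer of $I$ in $\Gamma$. The key structural input is the inclusion
\[ Z(U(J))\ \subseteq\ U(I)\qquad\text{for every primitive isotropic sublattice } I\subset J . \]
Given this, $z\in Z(U(J))_{\mathbb{Q}}\subseteq U(I)_{\mathbb{Q}}$ is nontrivial and $(-\mathrm{Id}_L)z=g\in\Gamma$ with $-\mathrm{Id}_L\notin\Gamma$, so the criterion of the previous paragraph shows that $I$ is an irregular $0$-dimensional cusp, as desired.

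It remains to establish the displayed inclusion, and this — together with pinning down the precise form of the definition of an irregular $1$-dimensional cusp — is where I expect the real work to be. The plan is to argue with the explicit descriptions of these unipotent groups. The group $U(I)$ consists of the Eichler transvections attached to a generator of $I$, that is, of the isometries of $L$ acting trivially on $I$, on $I^\perp/I$ and on $L/I^\perp$. The center $Z(U(J))$ consists exactly of the isometries acting trivially on $J$, on $J^\perp/J$ and on $L/J^\perp$ and sending $L$ into $J^\perp$ through a homomorphism $c\colon L/J^\perp\to J$; identifying $L/J^\perp$ with $J^{\vee}$ via the pairing, the requirement that such a map be an isometry forces the bilinear form $(x,y)\mapsto x\bigl(c(y)\bigr)$ on $J^{\vee}$ to be alternating, so $c$ is, up to a rational scalar, the musical isomorphism of the symplectic form on $J^{\vee}$. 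One then computes, for a primitive isotropic $v\in J$, that this $c$ sends the line $v^\perp/J^\perp\subset L/J^\perp$ into $\langle v\rangle$; hence the corresponding isometry acts trivially on $v^\perp/\langle v\rangle=I^\perp/I$ (and on $I$ and on $L/I^\perp$), i.e.\ it lies in $U(I)$.

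The main obstacle is therefore not the global structure of the proof but this last, essentially lattice-theoretic, verification: checking the Heisenberg description of $U(J)$, the alternating constraint on its central homomorphisms, and the double-annihilator computation that yields $c(v^\perp/J^\perp)\subseteq\langle v\rangle$, all carried out compatibly with Ma's normalization of the Fourier--Jacobi expansion at a $1$-dimensional cusp. The complete argument is given in \cite{Ma21}.
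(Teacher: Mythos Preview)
The paper does not give its own proof of this proposition; it is simply quoted from \cite{Ma21}*{Corollary 6.5} without argument. Your contrapositive sketch, reducing to the inclusion $Z(U(J))_{\mathbb{Q}}\subseteq U(I)_{\mathbb{Q}}$ for a primitive isotropic line $I\subset J$, is exactly the mechanism behind Ma's proof, so there is nothing to compare against in the present paper.
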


After Freitag's result \eqref{eq:sec_prelim:Fre}, the fundamental link between the existence of low-weight cusp forms and the Kodaira dimension of orthogonal Shimura varieties is the following theorem due to Gritsenko--Hulek--Sankaran, with a correction by Ma:

\begin{theorem}[\cite{GHS07}*{Theorem 1.1} and \cite{Ma21}*{Theorem 1.2}]
\label{thm:sec_prelim:low-weight}
Let $L$ be a lattice of signature $(2,q)$ with $q\geq 9$ and $\Gamma\subset O^+(L)$ a finite index subgroup with no irregular cusps. Then the modular variety $\Omega(L)\big/\Gamma$ is of general type if there exists a character $\chi$ and a cusp form $F\in S_a\left(\Gamma,\chi\right)$ of weight $a<q$ that vanishes with order at least one at the ramification divisor of the projection \[\Omega(L)\longrightarrow \Omega(L)\big/\Gamma.\]
Moreover, if $S_q\left(\Gamma,{\rm{det}}\right)\neq 0$, then $\Omega(L)\big/\Gamma$ has non-negative Kodaira dimension.
\end{theorem}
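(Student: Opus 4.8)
The plan is to follow the Gritsenko--Hulek--Sankaran strategy — reduce the general type statement to producing an abundance of pluricanonical forms on a smooth projective model and count them via Hirzebruch--Mumford proportionality — and to invoke Ma's refinement for the boundary contribution. First I would fix a projective toroidal compactification $\overline{\mathcal F_L}$ of $\mathcal F_L:=\Omega(L)/\Gamma$ with at worst canonical singularities (available for $n\ge 9$ by \cite{GHS07}*{Theorem 2}) together with a resolution $\overline Y\to\overline{\mathcal F_L}$ that is an isomorphism over the smooth locus of $\mathcal F_L$. Since $\dim\overline Y=\dim\Omega(L)=n$, it suffices to show that $h^0(\overline Y,\ell K_{\overline Y})$ grows like $\ell^{\,n}$.

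The technical core is a descent criterion: if $F\in S_{n\ell}(\Gamma,\det^{\ell})$ vanishes to order at least $\ell$ along the ramification divisor $R$ of $\pi\colon\Omega(L)\to\mathcal F_L$, then $F\,dz^{\otimes\ell}$ defines a \emph{holomorphic} section of $\ell K_{\overline Y}$ (for $\ell=1$ this is Freitag's theorem, cf.\ \eqref{eq:sec_prelim:Fre}). The weight $n\ell$ and character $\det^{\ell}$ are exactly what make $F\,dz^{\otimes\ell}$ invariant for the geometric $\Gamma$-linearization of $\ell K_{\Omega(L)}$, so it descends to a section over the locus where $\pi$ is \'etale. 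Holomorphy at the remaining points I would check locally. Along a component of $R$ — the fixed divisor of a reflection $s_\delta$ with $s_\delta$ or $-s_\delta$ in $\Gamma$ — the map $\pi$ is, in suitable coordinates, $(v,w)\mapsto(v,w^{2})$, and a short computation shows that $F\,dz^{\otimes\ell}$ descends holomorphically precisely when $\mathrm{ord}_w F\ge\ell$. At the interior quotient singularities of $\mathcal F_L$ lying off $R$, which are canonical for $n\ge 9$ by \cite{GHS07}, nothing further is needed after pulling back to $\overline Y$. Finally, over the boundary divisors of $\overline{\mathcal F_L}$: since $F$ is a cusp form it vanishes along the boundary, and together with the canonical singularities of $\overline{\mathcal F_L}$ and the hypothesis that $\Gamma$ has \emph{no irregular cusps} this forces $F\,dz^{\otimes\ell}$ to extend holomorphically — this last point is the content of \cite{Ma21}*{Theorem 1.2}, with Proposition \ref{prop:sec_prelim:irr_1cusp} reducing the irregularity check to $0$-dimensional cusps.

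Granting the criterion, the low-weight cusp form trick finishes the argument. From the given $F_a\in S_a(\Gamma,\chi)$ with $a<n$ vanishing along $R$, the power $F_a^{\ell}\in S_{a\ell}(\Gamma,\chi^{\ell})$ vanishes to order at least $\ell$ along $R$; hence for any $\rho\in\mathrm{Mod}_{(n-a)\ell}(\Gamma,\det^{\ell}\chi^{-\ell})$ the product $\rho\,F_a^{\ell}\in S_{n\ell}(\Gamma,\det^{\ell})$ still vanishes to order at least $\ell$ along $R$, and $\rho\mapsto\rho\,F_a^{\ell}\,dz^{\otimes\ell}$ is a linear map $\mathrm{Mod}_{(n-a)\ell}(\Gamma,\det^{\ell}\chi^{-\ell})\to H^0(\overline Y,\ell K_{\overline Y})$. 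It is injective because modular forms are holomorphic functions on the irreducible cone $\Omega^\bullet(L)$, so a product of nonzero ones is nonzero. As $\ell$ varies, $\det^{\ell}\chi^{-\ell}$ ranges over a finite set of characters, and by Hirzebruch--Mumford proportionality \cite{Mum77} the dimension of $\mathrm{Mod}_k(\Gamma,\psi)$ for any fixed character $\psi$ grows like a positive constant times $k^{\,n}$; since $n-a\ge 1$, we obtain $h^0(\overline Y,\ell K_{\overline Y})\gg\ell^{\,n}=\ell^{\,\dim\overline Y}$, i.e.\ $\overline Y$ is of general type. For the last assertion, if $S_n(\Gamma,\det)\neq 0$ then Freitag's identification \eqref{eq:sec_prelim:Fre} gives $H^0(\overline Y,K_{\overline Y})=S_n(\Gamma,\det)\neq 0$, so $p_g(\overline Y)\ge 1$ and $\kappa(\overline Y)\ge 0$.

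I expect the boundary part of the descent criterion to be the main obstacle. The interior analysis along $R$ and at the off-branch quotient singularities is routine, but the behavior of $F\,dz^{\otimes\ell}$ near the cusps is delicate: at an irregular cusp the Fourier expansion relative to the Eichler transvections in $\Gamma$ does not match the $\det^{\ell}$-character in the naive way, and the extension across the boundary divisor can fail. This is precisely why the hypothesis ``no irregular cusps'' is imposed, and why the statement is attributed jointly to \cite{GHS07} and \cite{Ma21}; in the applications of this paper the hypothesis is discharged for the relevant monodromy groups via Proposition \ref{prop:sec_prelim:irr_1cusp}.
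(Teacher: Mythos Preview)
The paper does not give its own proof of this theorem: it is stated with attribution to \cite{GHS07}*{Theorem 1.1} and \cite{Ma21}*{Theorem 1.2}, followed only by a remark. The argument you outline is exactly the standard one, and it matches the informal sketch the paper gives in its introduction (the ``low-weight cusp form trick'' paragraph): choose a toroidal compactification with canonical singularities, use Freitag's descent criterion to turn $\rho\cdot F_a^{\ell}\cdot dz^{\otimes\ell}$ into a section of $\ell K_{\overline Y}$, and conclude via Hirzebruch--Mumford proportionality. Your identification of the boundary extension as the delicate step, and of Ma's irregular-cusp hypothesis as the fix, is also how the paper frames it.

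One small point worth tidying: in the paper's convention (see the sentence before the displayed injection in the introduction and \eqref{eq:sec_prelim:Fre}), the relevant character for the weight-$n\ell$ cusp form is $\det$, not $\det^{\ell}$; correspondingly the auxiliary space is $\mathrm{Mod}_{(n-a)\ell}(\Gamma,1)$ when $\chi=\det$. Since $\det$ is $\{\pm1\}$-valued this is only a parity issue and does not affect the growth argument, but you may want to align your characters with the paper's conventions.
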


\begin{remark}
The condition of $\Gamma$ not having irregular cusps is fairly mild and for our purposes it will be satisfied with the possible exception of very few cases. This hypothesis can be dropped by requiring that $F$ vanishes with certain order at the boundary of a toroidal compactification of the modular variety, see \cite{Ma21}*{Theorem 1.2} for details. 
\end{remark}

An immediate observation is that if $-{\rm{Id}}\in \Gamma$, then $\Gamma$ has no irregular cusps. We explain now a second criterion ensuring the non-existence of irregular cusps. Suppose $I\subset L$ is an isotropic sublattice defining a $0$-dimensional cusp. Consider the lattice
$$L(I):=\left(I^\perp\big/I\right)\otimes I$$
and let $\Gamma(I)_{\mathbb{Q}}$ be the stabilizer of $I$ in $O^+(L\otimes\mathbb{Q})$. For $m\otimes l\in L(I)_{\mathbb{Q}}$, the \textit{Eichler transvection} $E_{m\otimes l}\in \Gamma(I)_{\mathbb{Q}}$ is defined by 
\[E_{m\otimes l}(v)=v-(\widetilde{m},v)l+(l,v)\widetilde{m}-\frac{1}{2}(m,m)(l,v)l,\]
for any $v\in L_{\mathbb{Q}}$, where $\widetilde{m}\in I_{\mathbb{Q}}^\perp$ is an arbitrary lift of $m\in (I^\perp/I)_{\mathbb{Q}}$, see \cites{Eic74,Sca87}. The construction induces a canonical isomorphism
\begin{equation}
\label{eq:sec_prelim:E_w}
\begin{array}{rcl}
L(I)_{\mathbb{Q}}&\longrightarrow& U(I)_{\mathbb{Q}}\subset \Gamma(I)_{\mathbb{Q}}\\
m\otimes l&\mapsto&E_{m\otimes l}.
\end{array}
\end{equation}

\begin{proposition}[\cite{Ma21}*{Proposition 3.1}]
\label{prop:sec_prelim:irr_cusp}
The $0$-dimensional cusp $I$ is irregular for $\Gamma\subset O^+(L)$ if and only if $-\mathrm{Id}\not \in \Gamma$ and $-E_w\in \Gamma \cap \Gamma(I)_{\mathbb{Q}}$ for some $w\in L(I)_{\mathbb{Q}}$.
\end{proposition}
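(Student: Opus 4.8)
The plan is to peel off the definition of an irregular cusp recalled above and reduce the statement to elementary group theory inside the stabilizer $\Gamma(I)_{\mathbb{Q}}$, using only the explicit parametrization $L(I)_{\mathbb{Q}}\xrightarrow{\sim}U(I)_{\mathbb{Q}}$, $w\mapsto E_w$, from \eqref{eq:sec_prelim:E_w}. By definition, $I$ is irregular for $\Gamma$ exactly when the subgroups $U(I)_{\mathbb{Q}}\cap\Gamma$ and $U(I)_{\mathbb{Q}}\cap\langle\Gamma,-\mathrm{Id}\rangle$ of $U(I)_{\mathbb{Q}}$ do not coincide; since the former is contained in the latter, irregularity is the strict inclusion $U(I)_{\mathbb{Q}}\cap\Gamma\subsetneq U(I)_{\mathbb{Q}}\cap\langle\Gamma,-\mathrm{Id}\rangle$.

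First I would dispose of the easy direction. If $-\mathrm{Id}\in\Gamma$, then $\langle\Gamma,-\mathrm{Id}\rangle=\Gamma$, the two groups agree, and $I$ is regular; so $-\mathrm{Id}\notin\Gamma$ is necessary. Assume henceforth $-\mathrm{Id}\notin\Gamma$. Since $-\mathrm{Id}$ is central of order two, $\langle\Gamma,-\mathrm{Id}\rangle=\Gamma\sqcup(-\Gamma)$ is a disjoint union, whence
\[
U(I)_{\mathbb{Q}}\cap\langle\Gamma,-\mathrm{Id}\rangle=\bigl(U(I)_{\mathbb{Q}}\cap\Gamma\bigr)\sqcup\bigl(U(I)_{\mathbb{Q}}\cap(-\Gamma)\bigr),
\]
so the inclusion is strict if and only if $U(I)_{\mathbb{Q}}\cap(-\Gamma)\neq\emptyset$, i.e.\ there is $u\in U(I)_{\mathbb{Q}}$ with $-u\in\Gamma$.

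Next I would rewrite this using the canonical isomorphism $L(I)_{\mathbb{Q}}\xrightarrow{\sim}U(I)_{\mathbb{Q}}$: writing $u=E_w$, the condition becomes the existence of $w\in L(I)_{\mathbb{Q}}$ with $-E_w\in\Gamma$. Finally I would note that for \emph{every} $w$ one has $-E_w\in\Gamma(I)_{\mathbb{Q}}$: indeed $E_w\in U(I)_{\mathbb{Q}}\subset\Gamma(I)_{\mathbb{Q}}$, while $-\mathrm{Id}$ lies in $O^+(L\otimes\mathbb{Q})$ (it acts trivially on $\mathbb{P}(L\otimes\mathbb{C})$, hence preserves each component of the period domain) and stabilizes $I$, so $-\mathrm{Id}\in\Gamma(I)_{\mathbb{Q}}$ and $-E_w=(-\mathrm{Id})E_w\in\Gamma(I)_{\mathbb{Q}}$. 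Thus ``$-E_w\in\Gamma$'' is the same as ``$-E_w\in\Gamma\cap\Gamma(I)_{\mathbb{Q}}$'', and stringing the equivalences together yields the claimed characterization.

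I do not expect a genuine obstacle here: once the definition of irregular cusp in terms of the pair of translation groups $U(I)_{\mathbb{Q}}\cap\Gamma$ and $U(I)_{\mathbb{Q}}\cap\langle\Gamma,-\mathrm{Id}\rangle$ is granted, every step above is an equivalence obtained by formal manipulation, and the only external inputs are the Eichler-transvection parametrization of $U(I)_{\mathbb{Q}}$ and the standard fact that $-\mathrm{Id}\in O^+(L\otimes\mathbb{Q})$. The one point worth stating with care is conceptual rather than technical, namely \emph{why} the relevant comparison is with $\langle\Gamma,-\mathrm{Id}\rangle$: the geometric lattice of translations around the cusp is governed by the action on $\Omega(L)$, on which $-\mathrm{Id}$ acts trivially, whereas the Fourier expansion of a $\Gamma$-modular form only records the translations lying in $\Gamma$ itself — but this discrepancy is precisely what the definition of irregular cusp recalled above encodes, so no additional argument is required.
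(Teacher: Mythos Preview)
Your argument is correct: once the definition of irregular cusp is unpacked as the strict inclusion $U(I)_{\mathbb{Q}}\cap\Gamma\subsetneq U(I)_{\mathbb{Q}}\cap\langle\Gamma,-\mathrm{Id}\rangle$, the decomposition $\langle\Gamma,-\mathrm{Id}\rangle=\Gamma\sqcup(-\Gamma)$ together with the isomorphism \eqref{eq:sec_prelim:E_w} reduces the statement to a formal equivalence, and every step you wrote is valid. Note, however, that the paper does not give its own proof of this proposition; it is quoted from \cite{Ma21}*{Proposition 3.1} as a black box, so there is no comparison to make with the paper's argument.
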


In Section \ref{s:K3n+}, we use \cite{Ma21}*{Proposition 1.1} together with Propositions \ref{prop:sec_prelim:irr_1cusp} and \ref{prop:sec_prelim:irr_cusp} to rule out the existence of irregular cusps in the cases we consider. 

\subsection{The Borcherds modular form and quasi-pullback}
\label{SS:Phi_{12}}
Cusp forms of low weight are rare in nature. As in \cites{GHS07, GHS10, GHS11, TVA19}, we produce the necessary cusp form to apply Theorem \ref{thm:sec_prelim:low-weight} by using Borcherds form found in \cite{Bor95}:
\[\Phi_{12}\in M_{12}\left(O^+(\mathrm{II}_{2,26}),{\rm{det}}\right),\]
where $\mathrm{II}_{2,26}$ is the unique even unimodular lattice of signature $(2,26)$ given by
\begin{equation}
{\rm{II}}_{2,26}:=U^{\oplus 2}\oplus E_{8}(-1)^{\oplus 3}.
\end{equation}
We denote by $R(\mathrm{II}_{2,26})$ the set of $(-2)$-roots and for a primitive embedding of lattices $L\hookrightarrow {\rm{II}}_{2,26}$, denote by $R(L^\perp)$ the set of $(-2)$-roots in the orthogonal complement $L^\perp\subset \mathrm{II}_{2,26}$. 
Let $r\in R(L^\perp)$ and let $\sigma_r\in O^+({\rm{II}}_{2,26})$ be the reflection with respect to $r$:
\begin{equation}
\label{eq:sec_prelim:sigma_r}\sigma_r(v)=v-\frac{2(v,r)}{(r,r)}r.
\end{equation}
The definition \eqref{eq:sec_prelim:mod} of a modular form implies that for any $Z$ in the $\sigma_r$-invariant hyperplane
\[H_r:=r^{\perp}\cap \Omega^\bullet({\rm{II}}_{2,26})\subset {\rm{II}}_{2,26}\otimes\mathbb{C}\]
one has $\Phi_{12}(Z)=-\Phi_{12}(Z)$. In particular $\Phi_{12}(Z)$ vanishes along $H_r$. Furthermore, $\Phi_{12}$ vanishes only at the union of all $H_r$ with $r\in R(\mathrm{II}_{2,26})$ and the vanishing multiplicity is one, see \cite{Bor95}*{Section 10, Example 2} and \cite{BKPS98}. Note that $H_r=H_{-r}$ and the image of the induced embedding 
$\Omega^\bullet(L)\longrightarrow\Omega^\bullet(\mathrm{II}_{2,26})$
lands inside $H_r$ for any $\pm r\in R(L^{\perp})$. To get a non-zero modular form on $\Omega^\bullet(L)$ by means of restricting $\Phi_{12}$ one has to divide by the corresponding linear factors $(Z,r)$, one for each $\pm r\in R(L^\perp)$. 

We fix a choice of {\textit{positive roots}}, 
i.e.\ a subset $R(L^\perp)_{>0}\subset R(L^\perp)$ such that $R(L^\perp)_{>0}$ and $-R(L^\perp)_{>0}$ are disjoint and their union is 
all of $R(L^\perp)$. We call $-R(L^\perp)_{>0}$ the set of {\textit{negative roots}} and denote it by $R(L^\perp)_{<0}$. The function on $\Omega^\bullet(L)$ defined by
\begin{equation}
\label{eq:sec_prelim:qpullback}
F(Z)=\left.\frac{\Phi_{12}(Z)}{\prod_{r\in R(L^\perp)_{>0}}(Z,r)}\right|_{\Omega^\bullet(L)}
\end{equation}
is called the \textit{quasi-pullback} of $\Phi_{12}$ to $\Omega^\bullet(L)$. Observe that for $t\in \mathbb{C}^*$, 
\begin{equation}\label{eq_weight_quasipullback}
F(tZ)=t^{-(12+\left|R_{>0}\right|)}F(Z).
\end{equation}

On the other hand, the second condition of \eqref{eq:sec_prelim:mod} is not always guaranteed for the full group $O^{+}(L)$ and a given character $\chi$. Let us assume that $g\in O^{+}(L)$ is the restriction of $\widetilde{g}\in O^{+}({\rm{II}}_{2,26})$. Then $\widetilde{g}\mid_{L^\perp}$ permutes the roots in $R(L^\perp)$ and for $Z\in \Omega^\bullet(L)$,
$$\prod_{r\in R_{>0}}(g(Z),r)=\prod_{r\in R_{>0}}(\widetilde{g}(Z),r)=\prod_{r\in R_{>0}}(Z,\widetilde{g}^{-1}(r))=(-1)^M\prod_{r\in R_{>0}}(Z,r),$$
where $M$ is the number of sign-changing roots via $\widetilde{g}$, i.e.,
\begin{equation}
\label{eq:sec_prelim:M}M=\left|\widetilde{g}^{-1}\left(R_{>0}\right)\cap R_{<0} \right|=\left|\widetilde{g}\left(R_{>0}\right)\cap R_{<0}\right|.
\end{equation}

\begin{lemma}
\label{lemma:sec_premil:F}
The quasi-pullback $F$ of $\Phi_{12}$ is modular with respect to 
\[\chi\colon\Gamma\longrightarrow \mathbb{C}^*\] 
for a finite index subgroup $\Gamma\subset O^+(L)$ if for every $g\in \Gamma$ there exists an extension $\widetilde{g}\in O^+({\rm{II}}_{2,26})$ such that
\[\chi(g)=(-1)^M\cdot {\rm{det}}(\widetilde{g}),\]
where $M$ is the number of sign-changing roots via $\widetilde{g}$ in $R(L^\perp)$ defined in \eqref{eq:sec_prelim:M}.
\end{lemma}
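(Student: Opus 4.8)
The plan is to check directly that $F$ satisfies the two defining conditions \eqref{eq:sec_prelim:mod} of a modular form, of weight $k=12+|R(L^\perp)_{>0}|$ and character $\chi$ for $\Gamma$. The homogeneity $F(tZ)=t^{-k}F(Z)$ is already recorded in \eqref{eq_weight_quasipullback}, and holomorphy of $F$ on all of $\Omega^\bullet(L)$ is built into the definition of the quasi-pullback: since $\Phi_{12}$ vanishes to order one along each hyperplane $H_r$ with $r\in R(\mathrm{II}_{2,26})$, the numerator of \eqref{eq:sec_prelim:qpullback} cancels the poles of $\prod_{r\in R(L^\perp)_{>0}}(Z,r)$, and what remains after restriction along the induced embedding $\Omega^\bullet(L)\hookrightarrow\Omega^\bullet(\mathrm{II}_{2,26})$ is holomorphic; as $n\geq 3$, holomorphy at the boundary is then automatic. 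Hence the entire content is the transformation law $F(gZ)=\chi(g)F(Z)$ for $g\in\Gamma$.

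To establish it I would fix $g\in\Gamma$ and, using the hypothesis, pick an extension $\widetilde g\in O^+(\mathrm{II}_{2,26})$ with $\widetilde g|_L=g$ and $\chi(g)=(-1)^M\det(\widetilde g)$, where $M$ is as in \eqref{eq:sec_prelim:M}. The key point is that for $Z\in\Omega^\bullet(L)$ one has $gZ=\widetilde gZ$ under the embedding above, and this still lies in $\Omega^\bullet(L)$ because $g\in O^+(L)$. Now evaluate the numerator and denominator of \eqref{eq:sec_prelim:qpullback} at $gZ$ separately. For the numerator, $\Phi_{12}(gZ)=\Phi_{12}(\widetilde gZ)=\det(\widetilde g)\,\Phi_{12}(Z)$ by modularity of $\Phi_{12}$ for $O^+(\mathrm{II}_{2,26})$ with character $\det$. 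For the denominator, $\widetilde g$ preserves $L^\perp$ and its quadratic form, hence permutes $R(L^\perp)$, so
\[\prod_{r\in R(L^\perp)_{>0}}(gZ,r)=\prod_{r\in R(L^\perp)_{>0}}(Z,\widetilde g^{-1}r)=(-1)^M\prod_{r\in R(L^\perp)_{>0}}(Z,r),\]
which is exactly the identity displayed just before the statement of the lemma; the sign $(-1)^M$ counts the roots of $R(L^\perp)_{>0}$ carried by $\widetilde g^{-1}$ into $R(L^\perp)_{<0}$, each such factor contributing $(Z,-r)=-(Z,r)$. Dividing, $F(gZ)=(-1)^M\det(\widetilde g)\,F(Z)=\chi(g)F(Z)$, which is what we want.

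I do not expect a serious obstacle inside this lemma itself: it is a sign-bookkeeping argument resting on the identification $gZ=\widetilde gZ$ on $\Omega^\bullet(L)$ and the fact that an extension $\widetilde g$ permutes the positive and negative roots up to the total sign $(-1)^M$. The only points worth a remark are that $F$ must be holomorphic on the full cone for it to be a candidate modular form at all (this is Borcherds' input, via the product expansion of $\Phi_{12}$), and that the scalar $(-1)^M\det(\widetilde g)$ is independent of the chosen extension whenever $F\not\equiv 0$ — which in fact follows a posteriori, since the computation shows $F(gZ)/F(Z)$ depends on $g$ alone. The genuine difficulty lies downstream of this lemma: to apply it for a given $\Gamma$ and $\chi$ one must exhibit a primitive embedding $L\hookrightarrow\mathrm{II}_{2,26}$ for which every $g\in\Gamma$ admits such an extension $\widetilde g$, which is immediate when $\Gamma=\widetilde{O}^+(L)$ but requires real work when the arithmetic group is larger, as explained in the introduction.
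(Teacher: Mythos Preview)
Your proof is correct and follows essentially the same approach as the paper: compute $F(gZ)$ by applying the modularity of $\Phi_{12}$ to the numerator and the sign identity $(-1)^M$ to the denominator, then divide. The paper's version is a one-line computation, but your elaboration of the holomorphy and the independence-of-extension remark are correct and do not deviate from the argument.
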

\begin{proof}
It follows immediately from the definitions:
\[F(g(Z))=\frac{{\rm{det}}(\widetilde{g})\Phi_{12}(Z)}{(-1)^M\prod_{r\in R_{>0}}(Z,r)}=(-1)^{M}{\rm{det}}(\widetilde{g})F(Z). \qedhere\]
\end{proof}

As an immediate consequence we have the following:

\begin{corollary}
\label{coro:sec_prelim:ModOtilde}
For any finite index subgroup $\Gamma\subset \widetilde{O}^+(L)$, the quasi-pullback $F$ is modular with respect to 
${\rm{det}}\colon\Gamma\longrightarrow \mathbb{C}^*$.
\end{corollary}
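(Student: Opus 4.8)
The plan is to obtain this as an immediate consequence of Lemma \ref{lemma:sec_premil:F}: it suffices to show that every $g\in\widetilde O^+(L)$ admits an extension $\widetilde g\in O^+(\mathrm{II}_{2,26})$ for which the number $M$ of sign-changing roots in \eqref{eq:sec_prelim:M} is zero and $\det(\widetilde g)=\det(g)$. Granting this, taking $\chi=\det$ in Lemma \ref{lemma:sec_premil:F} yields that the quasi-pullback $F$ is modular with respect to $\det$ on all of $\widetilde O^+(L)$, hence on any finite index subgroup $\Gamma\subset\widetilde O^+(L)$.

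To produce such an extension I would argue exactly as in the discussion around \eqref{eq:sec_prelim:DLDL'}. Since the embedding $L\hookrightarrow\mathrm{II}_{2,26}$ is primitive and $\mathrm{II}_{2,26}$ is unimodular, the sublattice $L\oplus L^\perp$ has finite index in $\mathrm{II}_{2,26}$, and the isotropic subgroup $H\subset D(L)\oplus D(L^\perp)=D(L\oplus L^\perp)$ corresponding to this overlattice is the graph of an anti-isometry $\delta\colon D(L)\to D(L^\perp)$, cf.\ \cite{Nik80}. Now for $g\in\widetilde O^+(L)$ consider the isometry $g\oplus\mathrm{Id}_{L^\perp}$ of $L\oplus L^\perp$. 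By definition of $\widetilde O(L)$, the element $g$ induces the identity on $D(L)$, while $\mathrm{Id}_{L^\perp}$ induces the identity on $D(L^\perp)$; hence $g\oplus\mathrm{Id}_{L^\perp}$ induces the identity on $D(L\oplus L^\perp)$, and in particular it preserves $H$. By the extension criterion recalled after \eqref{eq:sec_prelim:DLDL'}, it therefore extends to an isometry $\widetilde g\in O(\mathrm{II}_{2,26})$ with $\widetilde g|_L=g$ and $\widetilde g|_{L^\perp}=\mathrm{Id}$. Since $\widetilde g$ restricts to $g$ on $L$, it preserves the image of $\Omega^\bullet(L)$ inside $\Omega^\bullet(\mathrm{II}_{2,26})$, hence it preserves the fixed component $\Omega(\mathrm{II}_{2,26})$, i.e.\ $\widetilde g\in O^+(\mathrm{II}_{2,26})$.

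It then remains only to read off the two numerical facts. Because $\widetilde g$ fixes $L^\perp$ pointwise, it fixes every root of $R(L^\perp)$, so no root is sign-changed and $M=0$; and $\det(\widetilde g)=\det(g|_L)\cdot\det(\mathrm{Id}_{L^\perp})=\det(g)$. Substituting $M=0$ and $\det(\widetilde g)=\det(g)$ into Lemma \ref{lemma:sec_premil:F} with $\chi=\det$ finishes the proof. I do not expect any real obstacle here: the one step carrying content is the extension of $g$ to $\mathrm{II}_{2,26}$, and it is painless precisely because elements of $\widetilde O^+(L)$ act trivially on the discriminant group. It is exactly this step that becomes the central difficulty in the cases $\gamma=1,2$, where the monodromy group of \eqref{eq: hat O} is strictly larger than $\widetilde O^+(\Lambda_h)$ and one must instead arrange the embedding \eqref{embedding} so that the relevant isometries still extend with controlled $M$.
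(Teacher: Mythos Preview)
Your proposal is correct and follows exactly the same route as the paper: extend $g\in\widetilde O^+(L)$ to $\mathrm{II}_{2,26}$ by acting as the identity on $L^\perp$, so that $M=0$ and $\det(\widetilde g)=\det(g)$, then apply Lemma~\ref{lemma:sec_premil:F}. The only difference is cosmetic: where you unpack the overlattice/isotropic-subgroup argument for the extension, the paper simply cites Nikulin \cite{Nik80}*{Theorem 1.6.1, Corollary 1.5.2}.
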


\begin{proof}
Every element $g$ in $\Gamma\subset\widetilde{O}^+(L)$ admits an extension $\widetilde{g}\in O^{+}({\rm{II}}_{2,26})$ such that $\widetilde{g}$ restricts to the identity on $L^{\perp}\subset {\rm{II}}_{2,26}$, \cite{Nik80}*{Theorem 1.6.1, Corollary 1.5.2}, see also \cite{Huy16}*{Chapter 14, Proposition 2.6}. In particular $M=0$ and ${\rm{det}}(g)={\rm{det}}(\widetilde{g})$.
\end{proof}

Provided that there are no irregular cusps for $\Gamma\subset O^+\left(L\right)$, in order to use Theorem \ref{thm:sec_prelim:low-weight} it is not enough to show that the quasi-pullback $F$ of Borcherds modular form $\Phi_{12}$ is modular and of low weight, one also has to ensure that it vanishes at all the cusps
and along the ramification divisor of the projection
\[\pi_L:\Omega(L)\longrightarrow \Omega(L)\big/\Gamma.\]
Let us place our attention on the latter. The ramification divisor of the projection $\pi_L$ is given by the union of all reflective divisors
\begin{equation}
\label{eq:sec_prelim:Ram}
{\rm{Ram}}(\pi_L)=\bigcup_{\substack{r\in L\hbox{ primitive}\\(r,r)<0\\ \sigma_r\hbox{ or }-\sigma_r\in \Gamma}}\mathcal{D}_r,
\end{equation}
where $\mathcal{D}_r=\left\{[Z]\in \Omega(\Lambda_h)\mid (Z,r)=0\right\}$ and $\sigma_r$ is the reflection with respect to $r\in L$, see \cite{GHS07}*{Corollary 2.13}. An element $r\in L$ of negative square $(r,r)<0$ such that $\sigma_r$ or $-\sigma_r\in\Gamma\subset O^+(L)$ is called a \textit{reflective element for $\Gamma$}. The condition $\pm\sigma_r\in \Gamma$ in general imposes strong restrictions on the numbers $(r,r)$ and $\mathrm{div}(r)$. For instance, for $\sigma_r$ to be integral, i.e., an element in $O(L)$, one has to have 
\begin{equation}
\label{eq:sec_prelim:sigma_r1}
(r,r)\in\left\{{\rm{div}}(r), 2\mathrm{div}(r)\right\}.
\end{equation}
An immediate consequence of the description of the ramification divisor in terms of reflective divisors is the following:
\begin{lemma}\label{lemma_vanishing_ramif}
Let $G\in \mathrm{Mod}_k\left(\Gamma,\mathrm{det}\right)$ and assume that
$\mathrm{rk}(L)\equiv k\mod2$.
Then $G$ vanishes at $\mathrm{Ram}(\pi_L)$.
\end{lemma}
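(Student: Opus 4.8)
The plan is to exploit the fact that a modular form with character $\det$ is anti-invariant precisely under those reflections that \emph{also} carry a sign of $-1$ coming from the determinant, so that whether $G$ vanishes on a given reflective divisor $\mathcal{D}_r$ is governed by the parity of $\det(\sigma_r)$ versus the parity built into the weight $k$. First I would fix a primitive $r\in L$ with $(r,r)<0$ such that $\pm\sigma_r\in\Gamma$, and consider the two cases separately. If $\sigma_r\in\Gamma$, then for any $Z$ in the hyperplane $\mathcal{D}_r$ we have $\sigma_r(Z)=Z$, so the transformation rule \eqref{eq:sec_prelim:mod} gives $G(Z)=\det(\sigma_r)\,G(Z)$; since a reflection in a lattice of rank $\mathrm{rk}(L)$ has determinant $-1$, this forces $G(Z)=0$ on $\mathcal{D}_r$ whenever $\det(\sigma_r)=-1$, which holds unconditionally.

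The only subtlety is the case $-\sigma_r\in\Gamma$ (which can occur when $-\mathrm{Id}\notin\Gamma$). Here $-\sigma_r$ acts on $\Omega^\bullet(L)$, and on the hyperplane $\mathcal{D}_r$ it acts by $Z\mapsto -Z$; combined with the weight rule $G(tZ)=t^{-k}G(Z)$ at $t=-1$, we get $G((-\sigma_r)(Z))=\det(-\sigma_r)\,G(Z)$ on one hand and $G(-Z)=(-1)^{-k}G(Z)=(-1)^kG(Z)$ on the other, for $Z\in\mathcal{D}_r$ (using $\sigma_r(Z)=Z$ there). Now $\det(-\sigma_r)=(-1)^{\mathrm{rk}(L)}\det(\sigma_r)=(-1)^{\mathrm{rk}(L)}\cdot(-1)=(-1)^{\mathrm{rk}(L)+1}$. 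Thus on $\mathcal{D}_r$ we obtain $(-1)^k G(Z)=(-1)^{\mathrm{rk}(L)+1}G(Z)$, i.e.\ $G(Z)\bigl(1-(-1)^{\mathrm{rk}(L)+1-k}\bigr)=0$. By the hypothesis $\mathrm{rk}(L)\equiv k \bmod 2$ we have $\mathrm{rk}(L)+1-k$ odd, so $(-1)^{\mathrm{rk}(L)+1-k}=-1$ and the bracket equals $2\neq 0$, forcing $G(Z)=0$ on $\mathcal{D}_r$.

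Since by \eqref{eq:sec_prelim:Ram} the ramification divisor $\mathrm{Ram}(\pi_L)$ is exactly the union of the $\mathcal{D}_r$ over primitive $r$ with $(r,r)<0$ and $\pm\sigma_r\in\Gamma$, and $G$ vanishes on each such $\mathcal{D}_r$ by the two cases above, it follows that $G$ vanishes on all of $\mathrm{Ram}(\pi_L)$. I expect the only point requiring care — and the "main obstacle" in the sense of bookkeeping — is keeping the signs straight in the case $-\sigma_r\in\Gamma$, in particular correctly combining the $\det$ character value $\det(-\sigma_r)$ with the action $Z\mapsto -Z$ on the cone via the weight-$k$ homogeneity; everything else is a direct application of the definition of a modular form together with the determinant-of-a-reflection computation. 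Note that in the case $\sigma_r\in\Gamma$ the hypothesis on the parity of $k$ is not even needed, which is consistent with the statement only invoking it to handle all of $\mathrm{Ram}(\pi_L)$ uniformly.
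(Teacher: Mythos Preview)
Your proof is correct and follows essentially the same approach as the paper's: split into the two cases $\sigma_r\in\Gamma$ and $-\sigma_r\in\Gamma$, use $\sigma_r(Z)=Z$ on $\mathcal{D}_r$, and in the second case compare the modularity identity $G(-Z)=\det(-\sigma_r)G(Z)=(-1)^{\mathrm{rk}(L)+1}G(Z)$ with the homogeneity identity $G(-Z)=(-1)^kG(Z)$ to force $G(Z)=0$ under the parity hypothesis. Your write-up is slightly more explicit about the determinant computation, but the argument is the same.
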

\begin{proof}
Let $[Z]\in \mathcal{D}_r$. If $\sigma_r\in \Gamma$, then modularity of $G$ for $\mathrm{det}\colon\Gamma\longrightarrow \mathbb{C}^*$ and the fact that $\sigma_r(Z)=Z$ if $(Z,r)=0$ imply
\[G(Z)=G(\sigma_r(Z))=-G(Z)\]
and $G$ vanishes at $\mathcal{D}_r$. If $-\sigma_r\in \Gamma$, then modularity implies 
$$G(-Z)=G(-\sigma_r(Z))=(-1)^{\mathrm{rk}(L)+1}G(Z)\;\;\;\hbox{and}\;\;\;G(-Z)=(-1)^kG(Z).$$
If $\mathrm{rk}(L)$ and $k$ have the same parity, then $G(Z)=-G(Z)$ and $G$ vanishes at $\mathcal{D}_r$.
\end{proof}

As $\mathrm{II}_{2,26}$ is unimodular,
if $\sigma_r$ or $-\sigma_r\in O^+(\mathrm{II}_{2,26})$, then $(r,r)=\pm2$. Moreover, Borcherds form $\Phi_{12}\in M_{12}\left(\mathrm{II}_{2,26}, \mathrm{det}\right)$ vanishes with order one at all reflective divisors associated to $(-2)$-roots $r\in \mathrm{II}_{2,26}$, see \cites{Bor95, BKPS98} (see also \cite{GHS07}*{Section 6}). 

\begin{proposition}
\label{prop:sec_prelim:F=0}
Let $L$ be an even lattice of signature $(2,q)$ with $3\leq q\leq 26$ and $L\hookrightarrow \mathrm{II}_{2,26}$ a primitive embedding. Assume that the quasi-pullback $F$ of Borcherds form $\Phi_{12}$ to $\Omega(L)$ is modular with character ${\mathrm{det}}\colon\Gamma\longrightarrow \mathbb{C}^*$. Let $L_r=r^\perp\subset L$ be the orthogonal complement of a reflective element $r\in L$ and consider the induced primitive embedding $L_r\hookrightarrow \mathrm{II}_{2,26}$. If for every reflective element $r\in L$ we have 
\[\left|R(L^\perp)\right|<\left|R(L_r^\perp)\right|,\]
then $F$ vanishes along the ramification divisor of the modular projection
\[\pi_L\colon\Omega(L)\longrightarrow\Omega(L)\big/\Gamma.\]
\end{proposition}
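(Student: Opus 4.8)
The plan is to compare the vanishing order of the quasi-pullback $F$ against the dimension of the reflective divisor $\mathcal{D}_r\subset\Omega(L)$, using the structure of $\Phi_{12}$ recalled just above. First I would note that $F$ is already known (by hypothesis) to be modular with character $\det\colon\Gamma\to\mathbb{C}^*$, so to invoke Theorem \ref{thm:sec_prelim:low-weight} the only remaining thing to check is vanishing along $\mathrm{Ram}(\pi_L)$, which by \eqref{eq:sec_prelim:Ram} is the union of the reflective divisors $\mathcal{D}_r$ over reflective elements $r\in L$. So fix such an $r$, with $(r,r)<0$ and $\pm\sigma_r\in\Gamma$, and write $L_r=r^\perp\subset L$; this is an even lattice of signature $(2,n-1)$, still in the range $n-1\ge 3$ where the modular-forms theory applies, and the given primitive embedding $L\hookrightarrow \mathrm{II}_{2,26}$ restricts to a primitive embedding $L_r\hookrightarrow\mathrm{II}_{2,26}$ with orthogonal complement $L_r^\perp = L^\perp \oplus \mathbb{Z} r'$ for an appropriate primitive vector, so that $R(L_r^\perp)\supsetneq R(L^\perp)$ whenever the hypothesis $|R(L^\perp)|<|R(L_r^\perp)|$ holds.

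Next I would compute the vanishing order of $F$ along $\mathcal{D}_r = \Omega(L_r)\subset\Omega(L)$. The quasi-pullback $F$ is $\Phi_{12}$ divided by the linear forms $(Z,s)$ for $s\in R(L^\perp)_{>0}$; since $\Phi_{12}$ vanishes to order exactly one along each reflective hyperplane $H_s$ with $s\in R(\mathrm{II}_{2,26})$ and only there (cf. \cite{Bor95}, \cite{BKPS98}), the vanishing order of $F$ along $\Omega(L_r)$ equals the number of $\mathrm{II}_{2,26}$-roots $s$ (up to sign) whose hyperplane $H_s$ contains $\Omega(L_r)$ but which are \emph{not} among the roots $R(L^\perp)$ already divided out. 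Such roots $s$ are precisely those lying in $(L_r)^\perp\subset\mathrm{II}_{2,26}$ but not in $L^\perp$; their number up to sign is $\tfrac12\bigl(|R(L_r^\perp)|-|R(L^\perp)|\bigr)$, which is strictly positive exactly under the stated hypothesis. Hence $F$ vanishes to order at least one along $\mathcal{D}_r$.

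Since this holds for every reflective element $r\in L$, the form $F$ vanishes along all of $\mathrm{Ram}(\pi_L)$, which is what we wanted. The main subtlety — really the only place the argument needs care rather than bookkeeping — is the identification of the vanishing locus of the quasi-pullback: one must make sure that a root $s\in R(\mathrm{II}_{2,26})$ with $H_s\supset\Omega(L_r)$ indeed forces $s\in (L_r)^\perp$ (this is immediate, since $H_s\supset\Omega(L_r)$ means $s$ is orthogonal to the complex span of $\Omega(L_r)$, hence to $L_r\otimes\mathbb{C}$), and, conversely, that every $s\in R(L_r^\perp)\setminus R(L^\perp)$ genuinely contributes a new factor rather than being cancelled — which holds because we only divided $\Phi_{12}$ by the factors indexed by $R(L^\perp)_{>0}$, and $s\notin L^\perp$. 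One small point to record along the way is that the hypothesis is vacuous unless $n<26$, and that for $n=26$ the complement $L^\perp$ has rank $0$ so $F=\Phi_{12}$ itself and the statement is degenerate; but since the proposition assumes $n\le 26$ and the interesting applications have $n=20$ or $21$, this is not an issue.
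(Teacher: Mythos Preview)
Your argument is correct and follows essentially the same route as the paper: compute the vanishing order of $F$ along $\mathcal{D}_r$ as $\tfrac12(|R(L_r^\perp)|-|R(L^\perp)|)$, which is positive by hypothesis. Two small remarks: (i) the paper first disposes of the case $\sigma_r\in\Gamma$ directly via modularity (since then $F(\sigma_r Z)=-F(Z)$ forces $F|_{\mathcal{D}_r}=0$) and only invokes the root count for the case $-\sigma_r\in\Gamma$, whereas you apply the root count uniformly---both are valid given that the hypothesis is stated for \emph{all} reflective elements; (ii) your equation $L_r^\perp = L^\perp\oplus\mathbb{Z}r'$ is not literally correct (in general $L_r^\perp$ is only a finite-index overlattice of $L^\perp\oplus\mathbb{Z}r$), but this is harmless since all you actually use is the containment $R(L^\perp)\subset R(L_r^\perp)$, which follows from $L^\perp\subset L_r^\perp$.
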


\begin{proof}
The components of the ramification divisor \eqref{eq:sec_prelim:Ram} are given by reflective divisors
\begin{equation}
\label{eq:sec_prelim:Dr}
\mathcal{D}_r=\left\{\left[Z\right]\in \Omega(L)\mid \left(Z,r\right)=0\right\}\cong \Omega(L_r)\subset \Omega(L),
\end{equation}
where $(r,r)<0$ and $\sigma_r$ or $-\sigma_r\in \Gamma$. If $\sigma_r\in \Gamma$, then modularity of $F$ with respect to $\mathrm{det}$ gives us the vanishing we want. So assume $-\sigma_r\in\Gamma$. Recall that for any primitive $(2,q)$-sublattice $S\subset \mathrm{II}_{2,26}$, Borcherds form $\Phi_{12}$ vanishes at $\Omega(S)\subset \Omega(\mathrm{II}_{2,26})$ with order $\left|R(S^{\perp})\right|\big/2$. In particular, the order of vanishing of $\Phi_{12}$ at $\mathcal{D}_r$ is $\left|R(L_r^\perp)\right|\big/2$ and by construction \eqref{eq:sec_prelim:qpullback}, the quasi-pullback $F$ vanishes at $\mathcal{D}_r$ with order
\[\mathrm{ord}_{\mathcal{D}_r}\left(F\right)=\frac{\left|R(L_r^\perp)\right|-\left|R(L^\perp)\right|}{2}>0. \qedhere\]
\end{proof}

\subsection{Monodromy groups, moduli spaces, and components}

Let $X$ be a projective hyperk\"{a}hler manifold $X$. The second cohomology group $H^2(X,\mathbb{Z})$ comes endowed with a lattice structure induced by a quadratic form $q_X$ known as the {\textit{Beauville--Bogomolov--Fujiki form}}.
From now on, we focus on those $X$ of OG10 or K$3^{[n]}$ type. In these cases,
the lattice $H^2(X,\mathbb{Z})$ is isomorphic to 
\begin{equation}
\label{eq:sec_prelim:Lambda0}
\Lambda=U^{\oplus 3}\oplus E_8(-1)^{\oplus 2}\oplus L,
\end{equation}
where $L=A_2(-1)$ if $X$ is of OG10 type and $L= \langle -2(n-1)\rangle$ if $X$ is of K$3^{[n]}$ type. We call an isomorphism $\eta\colon H^2(X,\mathbb{Z})\longrightarrow \Lambda$ a {\textit{marking}}. Let $H$ be a polarization on $X$ with first Chern class $h\in H^2(X,\mathbb{Z})$ and we assume it to be primitive. 
The $O(\Lambda)$-orbit of $\eta(h)$ is called a {\textit{polarization type}} and it is by definition independent of the marking. We denote the polarization type of $h$ by ${\mathfrak{h}}$. 
There is a moduli space $\mathcal{M}_{\Lambda,\mathfrak{h}}$ parameterizing pairs $(X,H)$, where $X$ is a projective hyperk\"ahler variety as above, $H$ is a primitive polarization, $H^2(X,\mathbb{Z})\cong \Lambda$, and the $O(\Lambda)$-orbit of $\eta(c_1(H))$ is $\mathfrak{h}$ for any marking $\eta$, see \cite{Vie95}. 
The degree $2d$ and divisibility $\gamma$ are constant for any polarization of a given polarization type, but there may be more than one polarization type for the same pair $(\deg,\divis)=(2d,\gamma)$. We denote by $\mathcal{M}_{\mathrm{K}3^{[n]}, 2d}^\gamma$ the union of all moduli spaces 
\begin{equation}
\label{eq:sec_prelim:moduli}
\mathcal{M}_{\mathrm{K}3^{[n]}, 2d}^\gamma=\bigcup_{\substack{\rm{deg}(\mathfrak{h})=2d\\ {\rm{div}}(\mathfrak{h})=\gamma}}\mathcal{M}_{\Lambda, \mathfrak{h}},
\end{equation}
where $\Lambda$ is the K3${}^{[n]}$ lattice defined in \eqref{eq:sec_prelim:Lambda0}. The definition of $\mathcal{M}_{\mathrm{OG}10, 2d}^\gamma$ is analogous. Note that for each polarization type $\mathfrak{h}$, the moduli space $\mathcal{M}_{\Lambda,\mathfrak{h}}$ may have several components. In the OG10 case, after fixing $2d$ and $\gamma$, there is at most one polarization type \cite{Son21}*{Proposition 3.6} and by \cites{Ono22, Ono22b} the moduli spaces $\mathcal{M}_{\mathrm{OG}10, 2d}^\gamma$ are irreducible, see also \cite{Son21}*{Proposition 3.4}.

Let $h\in H^2(X,\mathbb{Z})$ be the first Chern class of a primitive polarization on $X$ of degree given by $q_X(h)=2d$. Recall the following classical result \cite{GHS11}*{Lemma 3.3} known as \textit{Eichler's criterion}. 

\begin{theorem}
\label{thm:sec_prelim:Eichler}
Let $L$ be an even lattice containing two copies of the hyperbolic lattice $U$. Two primitive elements $h, h'$ are in the same $\widetilde{O}(L)$-orbit if and only if they have the same square $(h,h)=(h',h')$ and the same class $h_*=h'_*$ in $D(L)$.
\end{theorem}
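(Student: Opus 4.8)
The forward implication is immediate: elements of $\widetilde O(L)$ are isometries, hence preserve squares, and act trivially on $D(L)$ by definition, hence fix $h_*$. So assume the two right-hand conditions hold, fix the value $2d$ of the square and the class $\xi\in D(L)$, and set $\mathcal P=\{h\in L\text{ primitive}:(h,h)=2d,\ h_*=\xi\}$; the plan is to prove that $\widetilde O(L)$ acts transitively on $\mathcal P$. Fix a splitting $L=U_1\oplus U_2\oplus K$ with hyperbolic planes $U_i=\langle e_i,f_i\rangle$, and for a primitive isotropic $l\in L$ and $a\in l^{\perp}$ write $E_{l,a}\in O(L)$ for the corresponding Eichler transvection (in the notation of \eqref{eq:sec_prelim:E_w}, with $l$ spanning the rank-one isotropic sublattice and $a$ a lift of the class in $l^{\perp}/l$). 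Each $E_{l,a}$ is integral, because $(a,a)\in 2\mathbb Z$, and acts trivially on $D(L)$, so the subgroup $E(L)$ generated by those $E_{l,a}$ with $l\in\{e_1,f_1,e_2,f_2\}$ satisfies $E(L)\subseteq\widetilde O(L)$; I would show that $E(L)$ already acts transitively on $\mathcal P$. Two observations will be used repeatedly: since $U_1\oplus U_2$ is unimodular, $D(L)=D(K)$, so $\xi$ may be taken to lie in $D(K)$; and $\gamma:=\mathrm{div}(h)$ is an $\widetilde O(L)$-invariant satisfying $\gamma\mid(h,x)$ for all $x\in L$, so in particular $\gamma$ divides every $U_1$- and $U_2$-coordinate of $h$ (and $\gamma$ equals the order of $\xi$ in $D(L)$).

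The core of the argument is to reduce an arbitrary $h\in\mathcal P$, by $E(L)$, to a normal form depending only on $2d$ and $\xi$. \emph{Step 1 (concentration in $U_1$):} writing $h$ in coordinates adapted to $U_1\oplus U_2\oplus K$, the transvections $E_{e_1,a},E_{f_1,a}$ with $a\in U_2\oplus K$ change the $(U_2\oplus K)$-component of $h$ by an arbitrary element of $\gcd\bigl((h,e_1),(h,f_1)\bigr)\cdot(U_2\oplus K)$, while the transvections $E_{e_1,a},E_{f_1,a}$ with $a\in U_2$ perform Euclidean-algorithm operations among the four $U_1\oplus U_2$-coordinates; combining these moves with the analogous ones attached to $U_2$ and using primitivity of $h$, one brings $h$ to a vector whose $U_1$-component equals $\gamma e_1$ and whose $U_2$-component vanishes, i.e.\ $h\sim_{E(L)}\gamma e_1+k$ with $k\in K$ and $k/\gamma\in K^{\vee}$ a representative of $\xi$. \emph{Step 2 (standardizing the $K$-part):} fix once and for all a vector $\kappa_\xi\in K$ with $\kappa_\xi/\gamma\in K^{\vee}$ of class $\xi$; then $(\kappa_\xi-k)/\gamma\in K$, so applying $E_{f_1,a}$ with $a=(\kappa_\xi-k)/\gamma\in K\subseteq f_1^{\perp}$ (legitimate since $(f_1,\gamma e_1+k)=\gamma$) sends $\gamma e_1+k$ to $\gamma e_1+c\,f_1+\kappa_\xi$ for some $c\in\mathbb Z$. \emph{Step 3 (pinning $c$):} the square now forces $2\gamma c+(\kappa_\xi,\kappa_\xi)=2d$, so $c=\bigl(2d-(\kappa_\xi,\kappa_\xi)\bigr)/(2\gamma)$ is uniquely determined (its integrality, and the fact that the resulting vector has divisibility exactly $\gamma$, follow from $\mathcal P\neq\emptyset$, i.e.\ from the compatibility of $2d/\gamma^2$ with $q(\xi)$ modulo $2\mathbb Z$). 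Hence every $h\in\mathcal P$ is $E(L)$-equivalent to the single vector $\gamma e_1+c\,f_1+\kappa_\xi$, and therefore any two elements of $\mathcal P$ lie in one $E(L)$-orbit, a fortiori in one $\widetilde O(L)$-orbit, which proves the theorem.

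I expect Step~1 to be the only genuine obstacle; Steps 2--3 are bookkeeping with the bilinear form. Step~1 is precisely where the hypothesis that $L$ contains two copies of $U$ is indispensable --- one copy is needed to hold a primitive isotropic vector, and the second provides enough room to manoeuvre the remaining coordinates --- and the delicate point is that when the $U_1\oplus U_2$-part of $h/\gamma$ fails to be primitive one must mix in the $K$-part via transvections of the form $E_{f_1,a}$ with $a\in K$, which alter the $U_1$- and $K$-coordinates simultaneously; organizing this into a terminating induction (first reduce the content of the $U_1\oplus U_2$-coordinates, then use one hyperbolic plane to absorb the surplus of the other, checking at each stage that the transvection target lies in the required orthogonal complement) is exactly the classical transitivity lemma going back to Eichler, as recorded in \cite{GHS11}*{Lemma 3.3}.
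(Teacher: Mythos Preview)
The paper does not give its own proof of this statement: it is recorded as a classical result with a citation to \cite{GHS11}*{Lemma 3.3}, and no argument is supplied. Your proposal is a correct outline of the standard proof via Eichler transvections --- the same approach underlying the cited reference --- with the forward direction immediate and the converse obtained by reducing any $h\in\mathcal P$ to a normal form $\gamma e_1 + c f_1 + \kappa_\xi$ depending only on $(2d,\xi)$.

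Your self-assessment is accurate: Steps~2 and~3 are routine (once you have $\gamma e_1 + k$, the transvection $E_{f_1,(\kappa_\xi-k)/\gamma}$ is integral and lies in $\widetilde O(L)$, and the resulting $c$ is pinned by the square; primitivity and divisibility of the normal form are automatic since these are preserved along the $\widetilde O(L)$-orbit). Step~1 is the substantive part, and your description of it --- Euclidean-algorithm moves on the $U_1\oplus U_2$-coordinates together with transvections mixing the $K$-part in when the hyperbolic part of $h/\gamma$ is not itself primitive --- is the right shape, though as written it is a sketch rather than a proof. Since the paper itself defers to the literature here, this is not a discrepancy; if you want a fully written-out version of Step~1, the argument is carried out in detail in \cite{GHS11}*{Lemma 3.3} (see also \cite{Eic74}, \cite{Sca87}).
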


In particular two primitive elements $h,h'\in \Lambda$ with the same degree and discriminant class define the same polarization type. In light of the criterion, if $X$ is of K$3^{[n]}$-type we can always assume 
\[h=\gamma(e+tf)-a\ell,\]
for appropriate $t$ and $a$, where $\{e,f\}$ is the standard basis of a copy of $U$, $\ell$ is the generator of the last factor $\langle-2(n-1)\rangle$, and $\gamma$ is the divisibility of $h$. Similarly, if $X$ is of OG10-type, we can choose $h$ of the form $h=\gamma(e+tf)+v$ with $v=0$ or $v$ a primitive element in $A_2(-1)$. We will make these choices explicit later on.

Let $(X,H)$ be a primitively polarized hyperk\"{a}hler variety of K3${}^{[n]}$ or OG10 type. After fixing a marking $\eta$ for $(X,H)$, the group of monodromy operators acting on $\Lambda$ fixing $h$ is denoted ${\rm{Mon}}^2(\Lambda,h)=\eta \circ {\rm{Mon}}^2(X,h)\circ \eta^{-1}$. In both cases, we have
\begin{equation}
\label{eq:sec_prelim:Mon}
{\rm{Mon}}^2(X,h)=\widehat{O}^+(\Lambda,h)
\end{equation}
see \cites{Mar11, Ono22}. In particular, the definition does not depend on the choice of $(X,H)$ nor $\eta$.

Note that $D(\Lambda)=\mathbb{Z}\big/3\mathbb{Z}$ and $\widehat{O}^+(\Lambda)=O^+(\Lambda)$ in the OG10 case. The Torelli Theorem for primitively polarized hyperk\"{a}hler varieties of K3${}^{[n]}$ or OG10-type with polarization of fixed divisibility $\gamma$ and degree $2d$ reads: 
\begin{theorem}[\cite{Ver13}, \cite{Mar11}*{Theorem 8.4}]
\label{thm:sec_prelim:Torelli}
Let $Y$ be an irreducible component of $\mathcal{M}_{\mathrm{K}3^{[n]}, 2d}^\gamma$ or $\mathcal{M}_{\mathrm{OG10}, 2d}^\gamma$. Then there exists an algebraic open embedding
\begin{equation}
\label{eq:sec_prelim:Torelli}Y\longrightarrow \Omega(\Lambda_h)\big/\widehat{O}^+(\Lambda,h),
\end{equation}
where $\Lambda_h$ is the orthogonal complement of $h$ in $\Lambda$ and $\widehat{O}^+(\Lambda,h)$ acts on $\Omega(\Lambda_h)$ via the restriction map
\[\widehat{O}^+(\Lambda,h)\longrightarrow O^+(\Lambda_h).\]
\end{theorem}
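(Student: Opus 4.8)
The plan is to realize the map \eqref{eq:sec_prelim:Torelli} as the period map and to prove it is an open immersion by combining the local period isomorphism, the global Torelli theorem, and the monodromy computation \eqref{eq:sec_prelim:Mon}. First I would construct the map. Fix once and for all a primitive vector $h\in\Lambda$ of degree $2d$, divisibility $\gamma$, and polarization type $\mathfrak h$ corresponding to the component $Y$. Given $(X,H)$ parametrized by $Y$, I would choose a marking $\eta\colon H^2(X,\mathbb Z)\xrightarrow{\sim}\Lambda$ with $\eta(c_1(H))=h$; this is possible because $c_1(H)$ has polarization type $\mathfrak h$, and one can normalize the choice of $h$ within $\mathfrak h$ using Theorem \ref{thm:sec_prelim:Eichler}. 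Since $H$ is a polarization, $c_1(H)\in H^{1,1}(X)$, hence $H^{2,0}(X)\subset c_1(H)^\perp\otimes\mathbb C$ and $\eta$ carries the period line into $\Lambda_h\otimes\mathbb C$; the Hodge--Riemann bilinear relations put $[\eta(H^{2,0}(X))]$ on the quadric defining the period domain of $\Lambda_h$, and matching the marking to a fixed orientation of the positive part places it in the chosen component $\Omega(\Lambda_h)$. Two markings compatible with $h$ and the orientation differ by an element of $O^+(\Lambda,h)$, and since $\mathrm{Mon}^2(X,h)=\widehat O^+(\Lambda,h)$ by \eqref{eq:sec_prelim:Mon}, the resulting point of $\Omega(\Lambda_h)$ is well defined modulo $\widehat O^+(\Lambda,h)$. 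This produces a holomorphic map $P\colon Y\to\Omega(\Lambda_h)/\widehat O^+(\Lambda,h)$.

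Next I would show $P$ is étale. Contraction with the holomorphic symplectic form gives $T_X\cong\Omega^1_X$, so deformations of $X$ are unobstructed and the local period map for the full (marked) deformation space is a local isomorphism onto $\Omega(\Lambda)$ by Beauville's theorem; restricting to those deformations for which $c_1(H)$ stays of type $(1,1)$ --- a smooth subspace of dimension $\mathrm{rk}\,\Lambda-3$ --- one obtains a local isomorphism onto $\Omega(\Lambda_h)$. Since $Y$ and the target quotient have the same dimension, $P$ is a local isomorphism, and in particular its image is open.

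The substantive step is injectivity, which is the global Torelli theorem. Suppose $(X,H)$ and $(X',H')$ in $Y$ have the same image under $P$; then, after adjusting markings, there is a Hodge isometry $f\colon H^2(X',\mathbb Z)\to H^2(X,\mathbb Z)$ with $f(c_1(H'))=c_1(H)$ which lies in $\mathrm{Mon}^2$, i.e.\ is a parallel-transport operator. By Verbitsky's global Torelli theorem \cite{Ver13}, in the polarized refinement of \cite{Mar11}*{Theorem 8.4} (together with \cite{Ono22} for the monodromy group in the OG10 case), such an $f$ is induced by a bimeromorphic map $X'\dashrightarrow X$; because $f$ matches the \emph{ample} classes $c_1(H')$ and $c_1(H)$, this map is in fact a biregular isomorphism carrying $H$ to $H'$, so $(X,H)\cong(X',H')$ and $P$ is injective.

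Finally, for algebraicity I would use that $Y$ is quasi-projective by \cite{Vie95} and $\Omega(\Lambda_h)/\widehat O^+(\Lambda,h)$ is quasi-projective with projective Baily--Borel compactification \cite{BB66}; an injective holomorphic map with open image between such varieties is algebraic by Borel's extension theorem, so $P$ is an algebraic open embedding. The main obstacle throughout is the injectivity step: it depends essentially on Verbitsky's global Torelli theorem, on the exact determination of $\mathrm{Mon}^2$ (Markman for K$3^{[n]}$ type, Onorati for OG10 type), and on the Hodge-theoretic description of the ample cone needed to upgrade a parallel-transport Hodge isometry to an isomorphism of polarized varieties; without these inputs one would only obtain a bimeromorphic, rather than biregular, period map.
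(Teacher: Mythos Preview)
The paper does not give its own proof of this theorem; it is stated as a result quoted from \cite{Ver13} and \cite{Mar11}*{Theorem 8.4} (together with \cite{Ono22} for the OG10 monodromy), and is used as a black box thereafter. Your sketch is essentially the standard argument underlying those references: construct the period map via a marking normalized to send $c_1(H)$ to $h$, use local Torelli to get a local isomorphism, invoke Verbitsky's global Torelli together with the polarized refinement in \cite{Mar11} and the monodromy identification \eqref{eq:sec_prelim:Mon} for injectivity, and appeal to Borel's extension theorem for algebraicity. This is correct in outline and matches the content of the cited sources; there is nothing further to compare against in the paper itself.
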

In particular \eqref{eq:sec_prelim:Torelli} is a birational map and the Kodaira dimension of every component $Y$ of the moduli space $\mathcal{M}_{\mathrm{K}3^{[n]}, 2d}^\gamma$ or $\mathcal{M}_{\mathrm{OG10}, 2d}^\gamma$ is given by the Kodaira dimension of the modular variety $\Omega(\Lambda_h)\big/\widehat{O}^+\left(\Lambda,h\right)$, where 
\begin{equation}
\label{eq:sec_prelim:Lambda}
\Lambda=\left\{\begin{array}{ll}
U^{\oplus 3}\oplus E_8(-1)^{\oplus 2}\oplus \langle-2(n-1)\rangle&\hbox{for K$3^{[n]}$-type,}\\
U^{\oplus 3}\oplus E_8(-1)^{\oplus 2}\oplus A_2(-1)&\hbox{for OG10-type.}
\end{array}\right.
\end{equation}
The map \eqref{eq:sec_prelim:Torelli} depends not only on the polarization type, but also on the monodromy orbit of $h\in \Lambda$. The number of connected components of $\mathcal{M}_{\Lambda,\mathfrak{h}}$ is 
equal to the number of ${\rm{Mon}}(\Lambda)$-orbits in $\mathfrak{h}$. The number of such orbits has been computed in the $\mathrm{K}3^{[n]}$ case in \cite{Apo14} (see also \cite{Son21}*{Proposition 3.4}). We recall the Mukai lattice
\begin{equation}
\label{eq:sec_prelim:Mlattice}
\widetilde{\Lambda}=U^{\oplus 4}\oplus E_8(-1)^{\oplus 2}.
\end{equation}
For $(X,H)$ fixed there is a canonical choice \cite{Mar11}*{Corollary 9.5} of
primitive embedding $i_X\colon H^2(X,\mathbb{Z})\hookrightarrow \widetilde{\Lambda}$ up to the action of $O(\widetilde{\Lambda})$. 
The number of connected components of $\mathcal{M}_{\Lambda,\mathfrak{h}}$ is the number of isometries of a rank two lattice $T$ fixing $i_X(h)$, where $T$ is the saturation in $\widetilde{\Lambda}$ of $i_X(h)\oplus i_X\left(H^2(X,\mathbb{Z})\right)^\perp$. 
This has discriminant $\frac{4d(n-1)}{\gamma^2}$ and $h^{\perp T}=\langle \ell\rangle$ with $(\ell,\ell)=-2(n-1)$. Moreover, the isomorphism class of the modular variety in the target of \eqref{eq:sec_prelim:Torelli} does not depend on the monodromy orbit, but only on the polarization type. This in particular means that any two components of $\mathcal{M}_{\Lambda,\mathfrak{h}}$ are birational.

\subsection{Sums of squares}
\label{ssec:H-K}

The representability of a positive integer by a quadratic form is a very classical problem and essential in many of our arguments. It was already known to Gauss that a positive integer $n$ can be expressed as a sum of three squares if and only if $n$ is not of the form $4^k\cdot u$ with $u\equiv -1\mod 8$, see \cite{Iwa97}*{Chapter 11}. In this paper we make use of several finer representability results due to Halter-Koch \cite{HK82}.

\begin{theorem}[\cite{HK82}*{Korollar 1}]
\label{thm:sec_prelim:HK21}
Let $n$ be a positive integer such that $n\not \equiv 0,4,7 \bmod 8$. Then $n$ can be expressed as a sum of three positive coprime squares if and only if 
\[n\notin\{1, 2, 5, 10, 13, 25, 37, 58, 85, 130, \star\},\]
where $\star$ is a number at least $5\cdot 10^{10}$ whose existence is unknown. Moreover, $n$ can be expressed as a sum of three pairwise distinct coprime squares if and only if 
\[n\notin\left\{\begin{array}{c}1, 2, 3, 6, 9, 11, 18, 19, 22, 27, 33, 43, 51,\\ 57, 67, 99, 102, 123, 163, 177, 187, 267, 627, \star\end{array}\right\}.\]
\end{theorem}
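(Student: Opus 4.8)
The plan is to reduce the statement to an asymptotic lower bound for the number of \emph{primitive} representations of $n$ by $x_1^2+x_2^2+x_3^2$, weighed against an upper bound for the number of \emph{degenerate} ones, and then to clear the remaining finitely many $n$ by direct computation. Write $r_3^*(n)$ for the number of $(x_1,x_2,x_3)\in\mathbb{Z}^3$ with $x_1^2+x_2^2+x_3^2=n$ and $\gcd(x_1,x_2,x_3)=1$, counted with all signs and orderings. First I would invoke Gauss's classical formula expressing the number of representations of $n$ as a sum of three squares through a Hurwitz--Kronecker class number $H(m)$ with $m\in\{n,4n\}$ according to $n\bmod 8$; Möbius inversion over the square divisors of $n$ then extracts $r_3^*(n)$ as an explicit elementary multiple of a class number $h(-D)$, $D\in\{n,4n\}$. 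Note that the congruence hypothesis $n\not\equiv 0,4,7\bmod 8$ is exactly what makes $r_3^*(n)\neq 0$ possible at all: if $4\mid n$ then every representation has all coordinates even, hence imprimitive, and if $n\equiv 7\bmod 8$ there is no representation whatsoever.

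The second step is the lower bound $r_3^*(n)\gg_\varepsilon n^{1/2-\varepsilon}$. This comes from Dirichlet's class number formula $h(-D)=\frac{\sqrt D}{\pi}L(1,\chi_{-D})$ (up to the elementary correction when $-D$ is not fundamental) together with a lower bound for $L(1,\chi_{-D})$. Here one must use Siegel's theorem $L(1,\chi_{-D})\gg_\varepsilon D^{-\varepsilon}$, or rather its effective refinement due to Tatuzawa: for each $\varepsilon>0$ there is an effectively computable $D_0(\varepsilon)$ such that $L(1,\chi_{-D})\gg \varepsilon D^{-\varepsilon}$ for every $D>D_0(\varepsilon)$ with \emph{at most one} exceptional modulus. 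That single possible exception --- the putative Siegel zero, equivalently the exceptional discriminant --- is precisely the source of the undetermined value $\star$, and it disappears entirely under the Generalized Riemann Hypothesis for Dirichlet $L$-functions, which forbids such a zero.

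The third step controls the degenerate representations, namely those with a zero coordinate or with two equal coordinates. A representation with $x_i=0$ is a primitive representation of $n$ as a sum of \emph{two} squares, of which there are $O(n^\varepsilon)$ by the divisor bound; a representation with $x_i=x_j\neq 0$ satisfies $2x_i^2+x_k^2=n$, and the number of solutions of this binary problem is again $O(n^\varepsilon)$. Meanwhile every genuinely distinct representation with all coordinates nonzero, positive, and pairwise distinct contributes $3!\cdot 2^3=48$ to $r_3^*(n)$, while the total degenerate contribution is $O(n^\varepsilon)$. Hence for $n$ above an effective bound --- with the single possible exception attached to $\star$ --- one gets $r_3^*(n)>(\text{degenerate count})$, so $n$ admits a primitive representation by three positive pairwise distinct squares, and \emph{a fortiori} by three positive coprime squares. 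The finitely many $n$ below the effective bound are then examined one by one, which yields exactly the two exceptional lists; this verification, together with bookkeeping of the sign/ordering multiplicities, is the part demanding care. The genuine obstacle, however, is the ineffectivity in the second step: proving the theorem \emph{with} the symbol $\star$ is classical, but eliminating $\star$ amounts to excluding a Siegel zero and lies beyond current technology --- which is exactly why the statement carries the conditional clause on GRH.
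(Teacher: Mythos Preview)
The paper does not prove this theorem: it is quoted verbatim as a result of Halter--Koch \cite{HK82}*{Korollar~1} and used as a black box throughout. There is therefore no ``paper's own proof'' to compare against.

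That said, your sketch is essentially the argument Halter--Koch carries out. The three ingredients --- Gauss's formula expressing $r_3^*(n)$ through a class number, Tatuzawa's effective version of Siegel's lower bound $L(1,\chi_{-D})\gg_\varepsilon D^{-\varepsilon}$ with a single possible exceptional discriminant, and the $O(n^\varepsilon)$ upper bound on primitive representations with a vanishing or repeated coordinate --- are exactly right, and your identification of the undetermined $\star$ with the putative Siegel zero (hence its disappearance under GRH) is the correct explanation. Two small points worth tightening if you were to write this up in full: first, obtaining the explicit threshold $5\cdot 10^{10}$ requires fixing a concrete $\varepsilon$ in Tatuzawa's inequality and carrying the constants through, then mechanically checking all smaller $n$; second, the two exceptional lists in the statement correspond to the two notions of ``degenerate'' (zero coordinate only, versus zero or repeated coordinate), and your sketch handles both simultaneously but should separate them when producing the final lists.
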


An immediate consequence of this theorem that will be used later is the following:

\begin{corollary}
\label{cor:sec_prelim:HKn_n-2}
Let $n$ be a positive even integer. Then either $n$ or $n-2$ can be expressed as the sum of three pairwise distinct coprime squares, with the exception of 
\[n\in\left\{2, 4, 6, 8, 18, 20, 22, 24, 102, 104,\star,\star+2\right\}.\]
\end{corollary}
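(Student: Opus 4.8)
The plan is to reduce the statement for an even integer $n$ to the second assertion of Theorem \ref{thm:sec_prelim:HK21}, applied either to $n$ itself or to $n-2$. Set
\[
\mathcal{E}=\left\{1, 2, 3, 6, 9, 11, 18, 19, 22, 27, 33, 43, 51, 57, 67, 99, 102, 123, 163, 177, 187, 267, 627, \star\right\},
\]
the exceptional set in Theorem \ref{thm:sec_prelim:HK21} for representability as a sum of three pairwise distinct coprime squares. The claim is that if $n$ is even then at least one of $n,n-2$ lies outside $\mathcal{E}$, except for the finite list of bad values given in the statement. Equivalently, the only even $n$ for which \emph{both} $n\in\mathcal{E}$ and $n-2\in\mathcal{E}$ are those in $\{2,4,6,8,18,20,22,24,102,104,\star,\star+2\}$ — but one must be a little careful, since $n$ or $n-2$ could fail to be representable for the separate reason that it is not even allowed (e.g.\ being $\le 0$, or lying in the finite part of $\mathcal{E}$), so I will treat the boundary and the $\star$-related cases by hand.

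First I would record the even members of $\mathcal{E}$: inspecting the list, the even entries are $2, 6, 18, 22, 102$, together with $\star$ (whose parity is unknown, so it must be carried along as a possibility) and — relevant for the $n-2$ shift — we also care about odd members $m$ such that $m+2$ is even, i.e.\ every odd member, since $n=m+2$ is then even. The mechanism is: $n$ is a \textbf{bad} even value precisely when $n\in\mathcal{E}$ \emph{and} $n-2\in\mathcal{E}$ (or when one of $n,n-2$ is nonpositive, which forces small $n$, namely $n\le 2$). So I would scan the finite part of $\mathcal{E}\cup(\mathcal{E}+2)$ and check for each even $n$ in a bounded range whether both $n$ and $n-2$ belong to $\mathcal{E}$. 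Concretely: $n=2$ gives $n-2=0$ (not positive), bad; $n=4$ gives $\{4,2\}$, and $2\in\mathcal{E}$ while $4\notin\mathcal{E}$ — but wait, $4\equiv 4\bmod 8$ is anyway excluded by the hypothesis of the theorem, hence $4$ is not representable, so $n=4$ is bad; similarly $n=8$ ($8\equiv 0\bmod 8$) is bad; $n=6$: $\{6,4\}$, both fail ($6\in\mathcal{E}$, $4\equiv4\bmod 8$), bad; $n=20$: $\{20,18\}$, $18\in\mathcal{E}$ and $20\equiv 4\bmod 8$, bad; $n=24$: $\{24,22\}$, $22\in\mathcal{E}$ and $24\equiv 0\bmod 8$, bad; $n=18$: $\{18,16\}$, $18\in\mathcal{E}$ and $16\equiv 0\bmod 8$, bad; $n=22$: $\{22,20\}$, $22\in\mathcal{E}$ and $20\equiv 4\bmod 8$, bad; $n=102$: $\{102,100\}$, $102\in\mathcal{E}$ and $100\equiv 4\bmod 8$, bad; $n=104$: $\{104,102\}$, $102\in\mathcal{E}$ and $104\equiv 0\bmod 8$, bad. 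This reproduces the finite list $\{2,4,6,8,18,20,22,24,102,104\}$, and the two remaining entries $\star,\star+2$ come from the possibility that $\star$ is even: if $\star$ is even then $n=\star$ fails because $\star\in\mathcal{E}$ and $n-2=\star-2$ might not be representable — actually here one simply keeps $\star$ and $\star+2$ as unavoidable exceptions, since the arithmetic status of $\star-2,\star,\star+2$ is not known. For every even $n$ outside this list and with $n\ge 10$, I would verify that either $n\notin\mathcal{E}$ and $n\not\equiv 0,4\bmod 8$, or else $n-2\notin\mathcal{E}$ and $n-2\not\equiv 0,4\bmod 8$; since $n$ and $n-2$ have opposite residues mod $4$ among $\{0,2\}$, exactly one of them is $\equiv 2\bmod 4$, hence automatically $\not\equiv 0,4\bmod 8$, so the congruence obstruction can always be dodged by picking that one, and the only remaining task is to ensure that this chosen representative is not in the finite part of $\mathcal{E}$ — which, beyond the tabulated bad $n$, never happens because the even elements of $\mathcal{E}$ are isolated (no two of $2,6,18,22,102$ and none of the odd elements of $\mathcal{E}$ shifted by $2$ land again in $\mathcal{E}$) apart from the cases already listed.

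The main obstacle is purely bookkeeping: one must be scrupulous that "not in the exceptional list" is combined correctly with "not $\equiv 0,4,7\bmod 8$", since Theorem \ref{thm:sec_prelim:HK21} has both hypotheses, and that the $\equiv 7\bmod 8$ clause is irrelevant here because $n$ and $n-2$ are even. The other subtlety is the treatment of $\star$: because its parity and its exact value are unknown, I would state that if $\star$ happens to be even it contributes the two exceptional values $\star,\star+2$, and if $\star$ is odd then $\star\pm1$ is not of the form $n$ or $n-2$ for a common even $n$ in a problematic way, so no new exceptions arise — but to be safe the statement keeps $\star,\star+2$ regardless. Carrying out the finite check for $4\le n\le 110$ (enough to cover all relevant members of $\mathcal{E}$ and $\mathcal{E}+2$) then completes the proof.
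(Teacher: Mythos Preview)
Your approach is correct and is precisely the intended one: the paper states the corollary as an immediate consequence of Theorem~\ref{thm:sec_prelim:HK21} without further proof, and your reduction is the natural way to see it. The cleanest formulation of your argument is the observation you eventually reach, namely that for even $n$ exactly one of $n,n-2$ is $\equiv 2\bmod 4$ (hence automatically satisfies the congruence hypothesis $\not\equiv 0,4,7\bmod 8$), so the only obstruction for that candidate is membership in the finite exceptional set $\mathcal{E}$; since the even elements of $\mathcal{E}$ congruent to $2\bmod 4$ are $2,6,18,22,102$ and possibly $\star$, this immediately yields the list $\{2,6,18,22,102,\star\}\cup\{4,8,20,24,104,\star+2\}$.
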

As for quadratic forms of rank $4$, every positive integer can be expressed as the sum of four squares. 
In the case of primitive representations of odd numbers, we have:
\begin{theorem}[\cite{HK82}*{Satz 3}]
\label{thm:sec_prelim:HK2}
An odd number $n\in\mathbb{N}$ can be expressed as the sum of four pairwise distinct positive coprime squares if and only if 
\[n\notin\left\{\begin{array}{c}1,\dots,37,41,43,45,47,49,55,59,61,\\67,69,73,77,83,89,97,101,103,115, 157\end{array}\right\}.\]
\end{theorem}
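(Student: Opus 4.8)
The plan is to deduce this from the (easier) three-squares theory together with the finer ternary result recalled in Theorem~\ref{thm:sec_prelim:HK21}. Fix an odd $n$ and suppose we want $n = x_1^2+x_2^2+x_3^2+x_4^2$ with $0<x_1<x_2<x_3<x_4$ and $\gcd(x_i,x_j)=1$ for all $i\ne j$. First I would record the parity constraint: since every odd square is $\equiv 1\bmod 8$ and every even square is $\equiv 0$ or $4\bmod 8$, the oddness of $n$ forces either one or three of the $x_i$ to be odd; in particular at least one coordinate is odd, which is what lets one bootstrap the coprimality conditions.

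The core reduction is to isolate the largest coordinate and set $m := n - x_4^2$. A representation of the desired form exists as soon as we can choose an integer $x_4$ with $\sqrt{n/2} < x_4 < \sqrt{n}$ such that $m$ is a sum of three \emph{pairwise distinct, pairwise coprime} positive squares, each coprime to $x_4$; note that the lower bound on $x_4$ automatically forces the three inner squares to be smaller than $x_4^2$, so the ordering $x_1<x_2<x_3<x_4$ comes for free. Ignoring the coprimality-with-$x_4$ constraint, the existence of such a three-square representation of $m$ is governed by Gauss's three-squares theorem (requiring $m\ne 4^k(8\ell+7)$) refined by Theorem~\ref{thm:sec_prelim:HK21} (requiring $m\not\equiv 0,4,7\bmod 8$ and $m$ outside a short explicit list). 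As $x_4$ ranges over an interval of length $\asymp\sqrt n$ the value $m$ runs through $\asymp\sqrt n$ integers hitting every residue class modulo $8$ and modulo any fixed small modulus, so for $n$ large one can select $x_4$ avoiding the bad form $4^k(8\ell+7)$ and the finitely many bad classes; the additional requirement that the three inner squares be prime to $x_4$ is then secured by a short sieve over the admissible $x_4$, since the number of $x_4$ in the interval for which \emph{every} three-square representation of $m$ shares a prime factor with $x_4$ is negligible compared to the total count.

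To upgrade ``for $n$ large'' to the stated exact answer one needs \emph{effective} versions of all of the above: an explicit threshold $n_0$ past which the counting argument is guaranteed to output a valid quadruple, followed by a finite verification for every odd $n<n_0$ that pins the exceptional set down to exactly $\{1,\dots,37,41,43,45,47,49,55,59,61,67,69,73,77,83,89,97,101,103,115,157\}$. The effective step is the main obstacle: it requires a lower bound for the number of representations of $m$ by a sum of three squares (equivalently, an effective lower bound for the mass / class number of the relevant genus of ternary quadratic forms, or an effective Duke--Iwaniec-type equidistribution statement), and this is precisely the delicate classical analysis carried out by Halter-Koch in \cite{HK82}*{Satz 3}, whose statement the paper simply quotes. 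I would follow his elementary, genus-theoretic treatment rather than an analytic one, as it keeps all constants explicit and makes the concluding finite check tractable.
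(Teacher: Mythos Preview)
The paper does not prove this theorem at all: it is stated with a citation to \cite{HK82}*{Satz 3} and immediately used to derive Corollary~\ref{cor:sec_prelim:sumsofsquares}, with no argument given. So there is nothing to compare your sketch against beyond the bare reference.

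Your outline is a plausible heuristic for how such a result could be approached, and you are right that the substantive content---an effective lower bound on ternary representations allowing a finite check---is precisely what Halter-Koch supplies. Two small remarks on your write-up: first, in this paper ``coprime'' for a tuple of integers means $\gcd=1$, not pairwise coprime (see how the $\alpha_i$ are used in Proposition~\ref{prop:sec_K3n:roots}), so your reformulation with $\gcd(x_i,x_j)=1$ for all $i\ne j$ is stronger than what is being asserted; second, your ``short sieve'' step to force the three inner squares to be coprime to $x_4$ is where the real difficulty hides, and your sketch does not make clear why this can be done effectively. Since you ultimately defer to \cite{HK82} for both issues, your proposal is, in effect, the same as the paper's: cite Halter-Koch.
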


Checking by hand the $n$ listed as exceptions in the above theorem, one concludes

\begin{corollary}
\label{cor:sec_prelim:sumsofsquares}
Every odd number $n$ can be expressed as a sum of four positive coprime squares, with the exception of $n\in\{1,3,5,9,11,17,29,41\}$. The numbers $9,11,17,29,41$ can be expressed as sums of three positive coprime squares.
\end{corollary}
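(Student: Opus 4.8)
The plan is to derive this directly from Theorem \ref{thm:sec_prelim:HK2} by a bounded computation. First observe that Theorem \ref{thm:sec_prelim:HK2} already settles every odd $n$ lying outside its explicit finite exceptional set $E$ — the integers $1,\dots,37$ together with $41,43,45,47,49,55,59,61,67,69,73,77,83,89,97,101,103,115,157$ — since for such $n$ one even obtains a representation as a sum of four \emph{pairwise distinct} positive coprime squares, which is in particular of the desired shape. Hence the whole statement reduces to inspecting the finitely many odd members of $E$: the odd integers up to $37$ and the nineteen further values just listed.

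Next, for each such odd $n$ I would search for an explicit decomposition $n=a^2+b^2+c^2+d^2$ with $a,b,c,d\ge 1$ and $\gcd(a,b,c,d)=1$, now allowing repetitions among $a,b,c,d$. A short enumeration — bounding the largest part by $\sqrt{n-3}$ and running through the few remaining possibilities — produces such a decomposition for every odd $n\in E$ except $1,3,5,9,11,17,29,41$; representative witnesses are $7=2^2+1^2+1^2+1^2$, $13=2^2+2^2+2^2+1^2$, $43=5^2+4^2+1^2+1^2$ and $157=11^2+4^2+4^2+2^2$, the remaining cases being entirely analogous. The same bounded search shows that none of the eight leftover values $1,3,5,9,11,17,29,41$ is a sum of four positive squares at all (equivalently, one may quote the classical description of the integers not representable as a sum of four positive squares), so these are genuine exceptions.

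Finally, to complete the statement I would record representations of the sub-list as sums of three positive coprime squares: $9=2^2+2^2+1^2$, $11=3^2+1^2+1^2$, $17=3^2+2^2+2^2$, $29=4^2+3^2+2^2$ and $41=6^2+2^2+1^2$, each with greatest common divisor of its three entries equal to $1$. The only point requiring care is the bookkeeping — verifying that the pass over the odd elements of $E$ is exhaustive and that the eight claimed exceptions genuinely fail — but both are routine finite checks, so I anticipate no real obstacle here.
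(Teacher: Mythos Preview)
Your proposal is correct and follows essentially the same approach as the paper: the paper's proof is the single line ``Checking by hand the $n$ listed as exceptions in the above theorem, one concludes,'' and your argument is exactly this finite verification spelled out in more detail.
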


Combining Theorem \ref{thm:sec_prelim:HK21} and Corollary \ref{cor:sec_prelim:sumsofsquares} then yields

\begin{corollary}\label{cor:sec_prelim:sumsofsquares2}
    Any integer $n\ge 3$, $n\ne 5$ such that $n\not \equiv 0 \mod 4$
can be written as a sum of four coprime squares  at most one of which is zero.
\end{corollary}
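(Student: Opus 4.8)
The plan is to separate cases according to the residue of $n$ modulo $4$, using Corollary \ref{cor:sec_prelim:sumsofsquares} for odd $n$ and Theorem \ref{thm:sec_prelim:HK21} (applied to $n-1$, and for one stubborn value to $n-4$) for $n\equiv 2\bmod 4$. Suppose first that $n$ is odd. By Corollary \ref{cor:sec_prelim:sumsofsquares}, $n$ is already a sum of four \emph{positive} coprime squares unless $n\in\{1,3,5,9,11,17,29,41\}$; since $n\ge 3$ and $n\ne 5$ only $n\in\{3,9,11,17,29,41\}$ remain, and for these one uses $3=1^2+1^2+1^2+0^2$ together with the fact (also from Corollary \ref{cor:sec_prelim:sumsofsquares}) that $9,11,17,29,41$ are sums of three positive coprime squares, to which one adjoins a $0^2$. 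So the odd case is immediate and, notably, involves no $\star$.

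For $n\equiv 2\bmod 4$ (hence $n\ge 6$) the idea is to peel off a square and apply Theorem \ref{thm:sec_prelim:HK21}. Take $m:=n-1$, which is odd and satisfies $m\equiv 1,5\bmod 8$, in particular $m\not\equiv 0,4,7\bmod 8$, so the theorem applies and gives $n-1=x^2+y^2+z^2$ with $x,y,z$ positive and coprime \emph{unless} $n-1$ is exceptional, i.e.\ unless $n\in\{6,14,26,38,86\}$ or $n=\star+1$. In the non-exceptional case $n=x^2+y^2+z^2+1^2$ is a sum of four coprime squares — the $1^2$ makes coprimality automatic — and none of them is zero. For the five small exceptions one records explicit representations such as $6=2^2+1^2+1^2+0^2$, $14=3^2+2^2+1^2+0^2$, $26=4^2+3^2+1^2+0^2$, $38=6^2+1^2+1^2+0^2$, $86=9^2+2^2+1^2+0^2$, each with coprime entries and a single zero.

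The one genuinely delicate point — and the step I expect to be the main obstacle — is the value $n=\star+1$, since the statement to be proved mentions no $\star$ and this case cannot simply be set aside. The remedy is to extract a $2^2$ rather than a $1^2$: from $n=\star+1\equiv 2\bmod 4$ one gets $\star\equiv 1\bmod 4$, hence $\star\equiv 1,5\bmod 8$ and $\star-3\equiv 2,6\bmod 8$, so $\star-3\not\equiv 0,4,7\bmod 8$; moreover $\star-3$ is far too large to lie in the finite part $\{1,2,5,10,13,25,37,58,85,130\}$ of the exceptional set and clearly $\star-3\ne\star$, so Theorem \ref{thm:sec_prelim:HK21} gives $\star-3=a^2+b^2+c^2$ with $a,b,c$ positive and coprime, whence $\star+1=a^2+b^2+c^2+2^2$ is a sum of four nonzero coprime squares. (Symmetrically, were $\star$ even it could obstruct only the decomposition via $n-4$, which is then covered by the one via $n-1$; the underlying point is that the single exceptional value $\star$ can spoil at most one of the two reductions $n=x^2+y^2+z^2+1^2$ and $n=a^2+b^2+c^2+2^2$.) Once this bookkeeping around $\star$ is arranged, the remaining verifications are routine and the proof is complete.
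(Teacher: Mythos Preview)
Your proof is correct and follows exactly the route the paper intends: the paper simply says the corollary follows by combining Theorem~\ref{thm:sec_prelim:HK21} and Corollary~\ref{cor:sec_prelim:sumsofsquares}, and your case split (odd $n$ via Corollary~\ref{cor:sec_prelim:sumsofsquares}, $n\equiv 2\bmod 4$ via Theorem~\ref{thm:sec_prelim:HK21} applied to $n-1$) is precisely how one unpacks that combination. Your explicit treatment of the possible exception $n=\star+1$ by passing to $n-4$ instead is a detail the paper leaves implicit, and you handle it correctly.
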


\section{Monodromy, period domains, and ramification for \texorpdfstring{K$3^{[n]}$}{K3[n]} type}
\label{sec:K3n}

Let $X$ be a hyperk\"{a}hler variety of $\mathrm{K}3^{[n]}$-type. Recall that the Beauville--Bogomolov--Fujiki lattice $H^2(X,\mathbb{Z})$ is isometric to 
\[\Lambda=U^{\oplus3}\oplus E_8(-1)^{\oplus 2}\oplus \mathbb{Z}\ell,\]
where $(\ell,\ell)=-2(n-1)$, and for a fixed primitive element of positive square $h\in \Lambda$, the monodromy group acting on $\Omega(\Lambda_h)$ is given by the restriction to $\Lambda_h$ of
\[{\rm{Mon}}^2\left(\Lambda,h\right)=\widehat{O}^+\left(\Lambda,h\right).\]
Recall moreover that ${\rm{div}}(h)=\gamma$ divides $\gcd(2d,2(n-1))$.

\begin{proposition}
\label{prop:sec_K3n:nonemp}
The moduli space $\mathcal{M}_{\mathrm{K}3^{[n]},2d}^{\gamma}$ is non-empty if and only if there exists an $a\in \mathbb{Z}$, coprime to $\gamma$, such that
\begin{equation}
\label{eq:sec_K3n:non_emp}
    \frac{2d}{\gamma}\equiv - \frac{2(n-1)}{\gamma} a^2 \pmod{2\gamma}.
\end{equation}
In particular, once $n$ and $\gamma$ are fixed, the degree $2d$ of any class $h$ in a polarization type $\mathfrak{h}$ for which the corresponding moduli space $\mathcal{M}_{\mathfrak{h}}$ is non-empty must satisfy
\begin{equation}
\label{eq:sec_K3n:d}
   d=\gamma^2 t - (n-1)a^2
\end{equation}
for some $(t,a)$ with $a$ coprime with $\gamma$.

\end{proposition}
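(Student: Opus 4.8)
The plan is to convert non-emptiness into a lattice-existence statement and then read the congruence \eqref{eq:sec_K3n:non_emp} off the discriminant form of $\Lambda$.

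\emph{Reduction.} First I would observe that, since $\Lambda\cong H^2(X,\mathbb{Z})$ for $X$ of $\mathrm{K}3^{[n]}$-type and every primitive $h\in\Lambda$ of positive square arises as the first Chern class of an ample class on some such $X$ (surjectivity of the period map, together with the fact that projectivity holds for a generic period in $h^{\perp}$; cf.\ \cite{Ver13} and Theorem~\ref{thm:sec_prelim:Torelli}), the space $\mathcal{M}_{\mathrm{K}3^{[n]},2d}^{\gamma}$ is non-empty if and only if there exists a primitive $h\in\Lambda$ with $(h,h)=2d$ (positivity is automatic as $d\ge 1$) and $\mathrm{div}(h)=\gamma$. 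I will use throughout that $D(\Lambda)\cong D(\langle\ell\rangle)=\mathbb{Z}/2(n-1)\mathbb{Z}$, generated by $\ell_*=\ell/2(n-1)$ with $q(\ell_*)\equiv -1/2(n-1)\pmod{2\mathbb{Z}}$, and that for a primitive $h$ one has $\mathrm{div}(h)=\mathrm{ord}(h_*)$ in $D(\Lambda)$ (immediate from $\Lambda\cap\mathbb{Q}h=\mathbb{Z}h$).

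\emph{Necessity.} Assuming such an $h$ exists, $\gamma=\mathrm{ord}(h_*)$ divides $|D(\Lambda)|=2(n-1)$; set $s:=2(n-1)/\gamma$. An element of $D(\Lambda)$ has order $\gamma$ exactly when it equals $-sa\,\ell_*$ for some $a$ coprime to $\gamma$, so $h_*=-sa\,\ell_*$ for such an $a$. I would then exploit well-definedness of the discriminant quadratic form: on one hand $q(h_*)\equiv (h,h)/\gamma^2=2d/\gamma^2\pmod{2\mathbb{Z}}$, and on the other $q(h_*)=(sa)^2 q(\ell_*)\equiv -2(n-1)a^2/\gamma^2\pmod{2\mathbb{Z}}$. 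Equating and clearing denominators yields $2d\equiv -2(n-1)a^2\pmod{2\gamma^2}$, which is \eqref{eq:sec_K3n:non_emp} after dividing by $\gamma$ (and forces $\gamma\mid 2d$). One checks painlessly that this congruence is independent of the representative $a\bmod\gamma$, since $\gamma\mid 2(n-1)$.

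\emph{Sufficiency, and the consequence.} Conversely, given $a$ coprime to $\gamma$ satisfying \eqref{eq:sec_K3n:non_emp} — which already entails $\gamma\mid 2(n-1)$ and $\gamma\mid 2d$ — I would exhibit the vector $h:=\gamma(e+tf)-a\ell$, where $\{e,f\}$ is a standard basis of one hyperbolic summand and $t:=(d+(n-1)a^2)/\gamma^2\in\mathbb{Z}$ by the congruence. Then $h$ is primitive because $\gcd(\gamma,a)=1$, one has $(h,h)=2\gamma^2 t-2(n-1)a^2=2d$, and from $(h,f)=\gamma$, $(h,e)=\gamma t$, $(h,\ell)=2(n-1)a$, together with orthogonality to the remaining summands, one gets $\mathrm{div}(h)=\gcd(\gamma,2(n-1)a)=\gcd(\gamma,2(n-1))=\gamma$. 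By the Reduction step, $\mathcal{M}_{\mathrm{K}3^{[n]},2d}^{\gamma}\neq\emptyset$. The final assertion \eqref{eq:sec_K3n:d} is then just \eqref{eq:sec_K3n:non_emp} rewritten: multiplying by $\gamma$ gives $d\equiv -(n-1)a^2\pmod{\gamma^2}$, i.e.\ $d=\gamma^2 t-(n-1)a^2$.

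\emph{Main obstacle.} All computations are routine; the only step demanding genuine input is the Reduction, i.e.\ that a lattice vector with the prescribed square and divisibility exists \emph{iff} the moduli space is non-empty, which is where surjectivity of the period map and openness of the projectivity locus enter. A cleaner but equivalent route to Necessity is via Eichler's criterion (Theorem~\ref{thm:sec_prelim:Eichler}): since $U^{\oplus 2}\subset\Lambda$, any primitive $h$ with $(h,h)=2d$, $\mathrm{div}(h)=\gamma$ is $\widetilde{O}(\Lambda)$-equivalent to a standard vector $\gamma(e+tf)-a\ell$ with the same invariants, and comparing squares again produces \eqref{eq:sec_K3n:non_emp}; this is the form in which the argument is reused later in the paper.
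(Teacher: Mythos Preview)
Your proof is correct and follows essentially the same architecture as the paper's: reduce to existence of a primitive $h\in\Lambda$ with the right square and divisibility via surjectivity of the period map, then for sufficiency exhibit the explicit vector $h=\gamma(e+tf)-a\ell$. The one genuine difference is in the necessity direction: the paper writes $h=\alpha x-a\ell$ with $x$ primitive in the unimodular part, computes $\gamma=\gcd(\alpha,2(n-1))$ directly, and reads off the congruence from $(h,h)=2t\alpha^2-2(n-1)a^2$; you instead argue via the discriminant form, identifying $h_*$ as an order-$\gamma$ element of $D(\Lambda)\cong\mathbb{Z}/2(n-1)\mathbb{Z}$ and comparing the two expressions for $q(h_*)$. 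Your route is a bit more invariant and makes the role of $\gamma\mid 2(n-1)$ transparent; the paper's is more hands-on and immediately produces the normal form $\gamma(e+tf)-a\ell$ that is reused throughout (cf.\ Lemma~\ref{lemma:sec_K3n:h}). Both are short and equivalent, and you correctly note the Eichler-criterion variant at the end.
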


\begin{remark}
Once a triple $(n,\gamma, a)$ is fixed, $d$ and $t$ determine each other. In view of our lattice computations, it will be easier to keep track of $t$ instead of $d$.
\end{remark}
\begin{proof}
First, assume $\mathcal{M}_{\mathrm{K}3^{[n]},2d}^{\gamma}\neq\emptyset$.
Suppose that $(X,H)\in \mathcal{M}_{\mathrm{K}3^{[n]},2d}^{\gamma}$ and that $\eta:H^2(\Lambda,\mathbb{Z})\longrightarrow \Lambda$ is a marking, and let $h=\eta\left(c_1(H)\right)$. Write $h=\alpha x-a\ell$, with $x$ primitive in $U^{\oplus 3}\oplus E_8(-1)^{\oplus 2}$. Unimodularity implies ${\rm{div}}(x)=1$ and $(\ell,\Lambda)=2(n-1)\mathbb{Z}$. We are under the assumption 
\[{\rm{div}}(h)=\gamma=\gcd(\alpha,2a(n-1))=\gcd(\alpha,2(n-1)),\]  
the last inequality following from the fact that $h$ is primitive and thus $\alpha$ and $a$ are coprime. Take $t,d',k,\alpha'$ such that $2t=(x,x)$, $2d=\gamma d'$, $2(n-1)=\gamma k$, and $\alpha=\gamma \alpha'$. 
By hypothesis, we have $2d=(\alpha x-a\ell)^2=2t\alpha^2-2(n-1)a^2$, so $d'\equiv -ka^2 \pmod{2\gamma}$.

Conversely, let $a\in \mathbb{Z}$ be an integer coprime to $\gamma$ and satisfying equation \eqref{eq:sec_K3n:non_emp}. 
Then for some integer $t$, one has $2d=2t\gamma^2-2(n-1)a^2$. Note that $h=\gamma(e+tf) -a\ell$ is a primitive element of divisibility $\gamma$ and degree $2d$. Let 
\[\Omega_{marked}=\left\{[\omega]\in \mathbb{P}\left(\Lambda\otimes\mathbb{C}\right)\left|(w,w)=0\hbox{ and }(\omega,\overline{\omega})>0\right.\right\}\]
be the period domain for marked hyperk\"{a}hler varieties. By choosing $[\omega]\in \Omega_{marked}$ very general in the hyperplane 
\[\mathbb{P}\left(h^\perp\otimes\mathbb{C}\right)\cap\Omega_{marked}\]
one concludes that the only integral point in $\omega^\perp$ is $h$. Then, surjectivity of the period map for marked hyperk\"{a}hler varieties \cite{Huy99}
implies that there is a marked hyperk\"{a}hler variety $(X,\eta)$ such that $\eta\left({\rm{Pic}}(X)\right)=h\mathbb{Z}$. Since $h^2>0$, up to a sign, the class $H=\eta^{-1}(h)\in {\rm{Pic}}(X)$ is ample of degree $2d$. In particular $(X,H)\in\mathcal{M}_{\mathrm{K}3^{[n]},2d}^{\gamma}$.
\end{proof}

\begin{remark}
When $\gamma=1$,
we can fix $a=0$, so that $t=d$. In this case, the moduli space $\mathcal{M}^\gamma_{\mathrm{K}3^{[n]},2d}$ is never empty. This is the only divisibility with this property. When $\gamma=2$, we can take $a=1$, and then $\mathcal{M}^\gamma_{\mathrm{K}3^{[n]},2d}$ is non-empty if and only if $d\equiv -(n-1)\pmod 4$
i.e. $d=4t-(n-1)$ for some $t>\frac{n-1}{4}$.
\end{remark}

\begin{lemma}
\label{lemma:sec_K3n:h}
For all $h\in \Lambda$ primitive of positive square and divisibility $\gamma$, there exist $a,t$, with $a$ coprime to $\gamma$, such that
\[\gamma(e+tf)-a\ell\]
lies in the $\widetilde{O}(\Lambda)$-orbit of $h$, where $\{e,f\}$ are the standard generators of the first copy of $U$ in $\Lambda$.
\end{lemma}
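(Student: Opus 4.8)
The plan is to deduce the statement directly from Eichler's criterion (Theorem~\ref{thm:sec_prelim:Eichler}), which applies here because $\Lambda$ contains three, hence at least two, copies of $U$. Thus it suffices to produce, for the given primitive $h$ of divisibility $\gamma$, integers $a$ and $t$ with $\gcd(a,\gamma)=1$ such that $h':=\gamma(e+tf)-a\ell$ has the same square as $h$ and the same image in the discriminant group $D(\Lambda)$.

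First I would decompose $h=\alpha x-a\ell$, where $N:=U^{\oplus3}\oplus E_8(-1)^{\oplus2}$ is the unimodular part of $\Lambda$, the integer $\alpha$ is the content of the $N$-component of $h$, $x\in N$ is primitive (so $\divis_N(x)=1$ since $N$ is unimodular), and $-a$ is the $\ell$-coordinate of $h$. Since $(\ell,\Lambda)=2(n-1)\mathbb{Z}$ one gets $\divis(h)=\gcd(\alpha,2a(n-1))$, and primitivity of $h$ forces $\gcd(\alpha,a)=1$, so that $\gcd(\alpha,2a(n-1))=\gcd(\alpha,2(n-1))$. Hence $\gamma=\gcd(\alpha,2(n-1))$; in particular $\gamma\mid\alpha$, $\gamma\mid 2(n-1)$, and $\gcd(a,\gamma)=1$. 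This already exhibits the coprime integer $a$ that we need.

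Next I would set $t:=\tfrac12(\alpha/\gamma)^2(x,x)$, which is a positive integer because $\gamma\mid\alpha$ and $(x,x)\in 2\mathbb{Z}$, and put $h':=\gamma(e+tf)-a\ell$. Then $h'$ is primitive (a prime dividing $h'$ would divide both $\gamma$ and $a$), and $(h',h')=2\gamma^2 t-2(n-1)a^2=\alpha^2(x,x)-2(n-1)a^2=(h,h)$. For the discriminant classes: because $N$ is unimodular, $D(\Lambda)=D(\langle\ell\rangle)=\mathbb{Z}/2(n-1)\mathbb{Z}$, generated by the class of $\tfrac1{2(n-1)}\ell$; both $h/\gamma$ and $h'/\gamma$ lie in $\Lambda^\vee$ with integral $N$-component and with $\ell$-component $-\tfrac{a}{\gamma}\ell=-a\tfrac{2(n-1)}{\gamma}\cdot\tfrac1{2(n-1)}\ell$, so $h_*=h'_*=-a\tfrac{2(n-1)}{\gamma}\bmod 2(n-1)$ (here one also notes $\divis(h')=\gcd(\gamma,2a(n-1))=\gamma$, since $\gamma\mid 2(n-1)$, so that $h'_*$ is computed with the correct scaling). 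Eichler's criterion now gives that $h$ and $h'$ lie in the same $\widetilde{O}(\Lambda)$-orbit, which is the claim.

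There is no genuine obstacle in this argument: the lemma is essentially a bookkeeping consequence of Eichler's criterion. The only points that require a little care are that the $a$ read off from $h$ is automatically coprime to $\gamma$, and that the resulting $t$ is an integer; both follow from unwinding the identity $\gamma=\gcd(\alpha,2(n-1))$ together with primitivity of $h$ and the fact that $N$ is even and unimodular.
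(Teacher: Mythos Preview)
Your proof is correct and follows essentially the same approach as the paper: decompose $h=\alpha x-a\ell$ with $x$ primitive in the unimodular part, read off $a$ (coprime to $\gamma$) and $t$, and then apply Eichler's criterion to the candidate $h'=\gamma(e+tf)-a\ell$. Your choice $t=\tfrac{1}{2}(\alpha/\gamma)^2(x,x)$ agrees with the paper's $t=\tfrac{d+(n-1)a^2}{\gamma^2}$ (since $2d=\alpha^2(x,x)-2(n-1)a^2$), and the rest of the verification is identical; the only minor quibble is that the reasons you cite for $t$ being a \emph{positive} integer only establish integrality---positivity actually follows from $(h,h)>0$---but the lemma does not require $t>0$ anyway.
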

\begin{proof}
Let $(h,h)=2d$ and $h=\alpha x-a\ell$ as in the proof of Proposition~\ref{prop:sec_K3n:nonemp}.  In particular, $\gamma$ divides $\alpha$ and $a$ is coprime to $\gamma$.
Write $h'=\gamma(e+tf)- a\ell$, where $t=\frac{d+(n-1)a^2}{\gamma^2}$. Then $(h',h')=(h,h)$ and in $D(\Lambda)$, we have 
\[h_*=\frac{h}{\gamma}\equiv \frac{-a\ell}{\gamma} = -\frac{2a(n-1)}{\gamma}\ell_*=h'_*.\]
It follows from Theorem \ref{thm:sec_prelim:Eichler} that $h$ and $h'$ are in the same $\widetilde{O}(\Lambda)$-orbit.
\end{proof}

\begin{proposition}
\label{prop:sec_K3n:a's}
The connected components of $\mathcal{M}_{\mathrm{K}3^{[n]},2d}^{\gamma}$ are in one-to-one correspondence with the $a\in\{0,\dots,\lfloor\frac{\gamma}{2}\rfloor\}$ coprime with $\gamma$ satisfying \eqref{eq:sec_K3n:non_emp} (note that $a=0$ if and only if $\gamma=1$). 
\end{proposition} 
\begin{proof}
The moduli space $\mathcal{M}_{\mathrm{K}3^{[n]},2d}^{\gamma}$
is a disjoint union of moduli spaces with fixed polarization type ${\mathfrak{h}}$ as in \eqref{eq:sec_prelim:moduli}, each of which has a connected component for every $\widehat{O}(\Lambda)$-orbit in $\mathfrak{h}$ \cite{Son21}*{Proposition 3.4}.
Hence, the connected components of $\mathcal{M}_{\mathrm{K}3^{[n]},2d}^{\gamma}$
are in one-to-one correspondence with the $\widehat{O}(\Lambda)$-orbits of all primitive $h\in\Lambda$ with square $2d$ and divisibility $\gamma$.
Since the map \eqref{eq:sec_prelim:pi} is surjective, the proof of Lemma \ref{lemma:sec_K3n:h} shows that
classes $h=\gamma(e+tf)-a\ell$ and $h'=\gamma(e+t'f)-a'\ell$ with the same square $2d$ are in the same $\widehat{O}(\Lambda)$-orbit if and only if $h_*=-a\frac{2(n-1)}{\gamma}\ell_*$ equals $\pm h'_*=\pm\left(-a'\frac{2(n-1)}{\gamma}\ell_*\right)$ in $D(\Lambda)$. 
This holds if and only if $a\equiv \pm a'\mod \gamma$. 
\end{proof}
We will denote the connected component corresponding to $a$ by $\mathcal{M}_{\mathrm{K}3^{[n]},2d}^{\gamma,a}$.
\begin{remark}
Using \cite{Son21}*{Lemma~3.2}, one sees that in Proposition~\ref{prop:sec_K3n:a's}, two such $a$'s give rise to the same polarization type ${\mathfrak{h}}$ if and only if the corresponding discriminant classes $h_*$ and $h'_*$ are related via $O(D(\Lambda))$.
\end{remark}

From Lemma \ref{lemma:sec_K3n:h} we have that the orthogonal complement of $h$ in $\Lambda$ is given by
\[\Lambda_h=U^{\oplus 2}\oplus E_{8}(-1)^{\oplus 2}\oplus Q_h(-1),\]
where $Q_h(-1)$ is generated by 
\begin{equation}
\label{eq:sec3:z_1z_2}
z_1=\frac{2a(n-1)}{\gamma}f-\ell\;\;\hbox{and}\;\; z_2=e-tf.
\end{equation}
In particular, the Gram matrix of $Q_h$ is
\begin{equation}
\label{eq:sec_K3n:Q_h}
Q_h=\left(\begin{array}{cc}2(n-1)&-\frac{2a(n-1)}{\gamma}\\
-\frac{2a(n-1)}{\gamma}&2t
\end{array}\right).
\end{equation}
Moreover, by equation \eqref{eq:sec_K3n:d} we have
\begin{equation}
\label{eq:sec_K3n:disc}\left|D(\Lambda_h)\right|=\left|D(Q_h)\right|=\frac{4d(n-1)}{\gamma^2}.
\end{equation}
Recall \eqref{eq:sec_prelim:Mon} that the monodromy group in the polarized K$3^{[n]}$ case is given by $\widehat{O}^+(\Lambda,h)$.

\begin{lemma}
\label{lemma:sec_K3n:Ohat=Otilde}
Let $n\geq2$ and $h\in \Lambda$ a primitive element of positive square. Then the index of $\widetilde{O}\left(\Lambda,h\right)\subset \widehat{O}\left(\Lambda,h\right)$
is given by 
\begin{equation}
\left[\widehat{O}\left(\Lambda,h\right):\widetilde{O}\left(\Lambda,h\right)\right]=\left\{\begin{array}{lcl}
1&\hbox{ if }& n=2\hbox{ or }{\rm{div}}(h)\geq3\\
2&\hbox{otherwise.}&
\end{array}\right.
\end{equation}
The same holds for $\left[\widehat{O}^+\left(\Lambda,h\right):\widetilde{O}^+\left(\Lambda,h\right)\right]$. Moreover, when $\gamma=1,2$ and $n\geq3$, the group $\widehat{O}^+\left(\Lambda,h\right)$ is generated by $\widetilde{O}^+\left(\Lambda_h\right)$ and the restriction to $\Lambda_h$ of the reflection $\sigma_{z_1}\in O(\Lambda)$ with respect to $z_1$ defined in Equation \eqref{eq:sec3:z_1z_2}. 
\end{lemma}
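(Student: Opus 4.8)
The plan is to analyze the quotient $\widehat{O}(\Lambda,h)/\widetilde{O}(\Lambda,h)$ via the action on the discriminant group $D(\Lambda_h)$. Using the splitting $\Lambda \supset \Lambda_h \oplus \langle h\rangle$ and the associated isotropic subgroup $H \subset D(\Lambda_h) \oplus \mathbb{Z}/2d\mathbb{Z}$ from \eqref{eq:sec_prelim:D(Lambda_h+h)}, together with the injectivity of the projections $\pi_1, \pi_2$ in \eqref{eq:sec_prelim:Pi_H}, I would first translate the condition ``$g \in \widehat{O}(\Lambda)$ and $g$ fixes $h$'' into a condition on $\bar g|_{D(\Lambda_h)}$. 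Concretely: $g|_{\Lambda_h}$ fixes the subgroup $\pi_1(H) \subset D(\Lambda_h)$ pointwise (this is the remark after \eqref{eq:sec_prelim:Pi_H}), and $g \in \widehat{O}(\Lambda)$ forces $\bar g|_{D(\Lambda)} = \pm\mathrm{Id}$, which via the correspondence \eqref{eq:sec_prelim:DLDL'} translates into $\bar g|_{H^\perp/H} = \pm\mathrm{Id}$ where $H^\perp = (\Lambda_h)^\vee/(\Lambda_h \oplus \langle h\rangle)$-type data. The upshot should be that $\widehat{O}(\Lambda,h)/\widetilde{O}(\Lambda,h)$ is either trivial or $\mathbb{Z}/2$, and it is $\mathbb{Z}/2$ precisely when there exists an isometry of $\Lambda_h$ acting as $-\mathrm{Id}$ on $D(\Lambda_h)/\pi_1(H)$ while fixing $\pi_1(H)$, and extending across to fix $h$.

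Next I would make this numerically explicit using $Q_h$. Since $D(\Lambda_h) \cong D(Q_h)$ has order $4d(n-1)/\gamma^2$ by \eqref{eq:sec_K3n:disc}, and $h_* \in D(\langle h\rangle) = \mathbb{Z}/2d\mathbb{Z}$ has order $2d/\gamma$, the image $\pi_1(H)$ is cyclic of order $2d/\gamma$, with quotient of order $2(n-1)$ (when $\gamma=1$, $\pi_1(H)$ is all of $D(\Lambda_h)$). The element $z_1 = \frac{2a(n-1)}{\gamma}f - \ell$ has $(z_1,z_1) = -2(n-1)$ and, crucially, divisibility $\gamma$ in $\Lambda$ — I would verify $(z_1,\Lambda) = \gamma\mathbb{Z}$ — so the reflection $\sigma_{z_1}$ is integral by \eqref{eq:sec_prelim:sigma_r1}, lies in $O(\Lambda)$, fixes $h$ (one checks $(h,z_1)=0$ directly from the basis of $Q_h$), and acts on $D(\Lambda)$ as $-\mathrm{Id}$ on the subgroup generated by $(z_1)_*$ and trivially elsewhere. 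When $n=2$ the factor $\langle -2(n-1)\rangle = \langle -2\rangle$ contributes nothing past what $\widetilde O$ already realizes — here I would invoke that $D(\Lambda)=\mathbb{Z}/2\mathbb{Z}$ has only the trivial automorphism, forcing $\widehat O(\Lambda) = \widetilde O(\Lambda)$ on the nose, hence index $1$; similarly when $\mathrm{div}(h)=\gamma\geq 3$ one shows the $-\mathrm{Id}$ on $D(\Lambda)$ cannot be realized by an element fixing $h$ because the constraint $\bar g|_{\pi_1(H)} = \mathrm{Id}$ together with $\pi_1(H)$ being large forces $\bar g = \mathrm{Id}$ on all of $D(\Lambda_h)$ — this is where the hypothesis $\gamma \geq 3$ versus $\gamma \in \{1,2\}$ enters. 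For $\gamma \in \{1,2\}$ and $n \geq 3$, the reflection $\sigma_{z_1}|_{\Lambda_h}$ provides the nontrivial coset, giving index exactly $2$, and by Lemma \ref{lemma:sec_prelim:O-tildes} (which gives $\widetilde O(\Lambda_h) \subseteq \widetilde O(\Lambda,h)$, hence $\widetilde O^+(\Lambda_h) \subseteq \widetilde O^+(\Lambda,h) = \widehat O^+(\Lambda,h) \cap \widetilde O$) together with the index-$2$ statement, $\widehat O^+(\Lambda,h)$ is generated by $\widetilde O^+(\Lambda_h)$ and $\sigma_{z_1}|_{\Lambda_h}$.

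For the ``$+$'' versions, I would note that $\sigma_{z_1}$ (and any of the generators involved) can be chosen to preserve the component $\Omega(\Lambda_h)$, or rather: passing from $O$ to $O^+$ intersects everything with an index-$2$ subgroup containing $\widetilde O$ (since $-\mathrm{Id} \notin \widetilde O^+$ but reflections in negative-norm vectors are in $O^+$), so the index computation is unchanged — this requires the small check that the coset representative $\sigma_{z_1}$ is orientation-preserving on $\Lambda_h$, which holds because $\sigma_{z_1}$ is a reflection in a vector of negative square.

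The main obstacle I anticipate is the careful bookkeeping of the discriminant-form data: precisely identifying $\pi_1(H) \subseteq D(\Lambda_h)$ and its orthogonal complement inside $D(\Lambda_h)$, and showing that for $\gamma \geq 3$ no isometry fixing $h$ can induce $-\mathrm{Id}$ on $D(\Lambda)$, versus exhibiting one concretely (via $\sigma_{z_1}$) when $\gamma \in \{1,2\}$. The subtlety is that ``$-\mathrm{Id}$ on $D(\Lambda)$'' must be compatible with ``$\mathrm{Id}$ on $\pi_1(H)$'' — these are reconciled only when the relevant $2$-torsion structure of $D(\Lambda_h)$ is large enough, which is exactly the $\gamma \leq 2$ regime, and pinning down this dichotomy cleanly is the technical heart of the argument.
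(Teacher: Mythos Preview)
Your overall framework---passing through $D(\Lambda_h)$ and the isotropic subgroup $H$---is legitimate but considerably more indirect than what the paper does, and the key step is left unresolved. The paper's argument is a short direct computation: assuming $g\in O(\Lambda,h)$ acts as $-\mathrm{Id}$ on $D(\Lambda)$, one writes $g(\ell)=u'+s\ell$ with $s\equiv -1\bmod 2(n-1)$, and from $g(h)=h$ with $h=\gamma(e+tf)-a\ell$ one deduces $\gamma\mid (s-1)$; since $\gamma\mid 2(n-1)$ this forces $\gamma\mid 2$. That is the entire obstruction for $\gamma\geq 3$. Your proposal never reaches this arithmetic; you only say ``this is where the hypothesis $\gamma\geq 3$ versus $\gamma\in\{1,2\}$ enters'' and admit the dichotomy is ``the technical heart'' you have not pinned down. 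Without it there is no proof.

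There are also concrete errors in the details. The divisibility of $z_1=\tfrac{2a(n-1)}{\gamma}f-\ell$ in $\Lambda$ is $\tfrac{2(n-1)}{\gamma}$, not $\gamma$: one has $(z_1,e)=\tfrac{2a(n-1)}{\gamma}$ and $(z_1,\ell)=2(n-1)$, whose $\gcd$ is $\tfrac{2(n-1)}{\gamma}$ since $\gcd(a,\gamma)=1$. (The integrality of $\sigma_{z_1}$ then holds precisely when $\gamma\in\{1,2\}$, which is consistent with the statement, but not for the reason you give.) Your parenthetical that $\pi_1(H)$ is all of $D(\Lambda_h)$ when $\gamma=1$ is also wrong: $|\pi_1(H)|=2d/\gamma=2d$ while $|D(\Lambda_h)|=4d(n-1)$, so the quotient has order $2(n-1)$. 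Finally, the ``Moreover'' clause about generation by $\widetilde O^+(\Lambda_h)$ (not merely $\widetilde O^+(\Lambda,h)$) needs the equality $\widetilde O^+(\Lambda,h)=\widetilde O^+(\Lambda_h)$, which is Proposition~\ref{prop:sec_K3n:Otilde_h}; Lemma~\ref{lemma:sec_prelim:O-tildes} alone gives only one inclusion.
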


\begin{proof}
Note that 
\begin{equation}\label{eq:sec_K3n:index1}
\left[\widehat{O}\left(\Lambda\right):\widetilde{O}\left(\Lambda\right)\right]=\left|\left\{\pm{\rm{Id}}_{D(\Lambda)}\right\}\right|\leq 2
\end{equation}
and equality holds when $n\geq3$. 
In particular,
\begin{equation}
\label{eq:sec_K3n:index}
\left[\widehat{O}\left(\Lambda,h\right):\widetilde{O}\left(\Lambda,h\right)\right]\leq 2.
\end{equation} 
By Lemma~\ref{lemma:sec_K3n:h} we can assume 
\[h=\gamma(e+tf)-a\ell.\] 
Suppose there exists $g\in O(\Lambda,h)$ such that $\overline{g}$=$-\Id_{D(\Lambda)}$. 
Write 
\[g(x)=u+r\ell\;\;\;\hbox{and}\;\;\;g(\ell)=u'+s\ell\]
for some integers $r$, $s$ and $u,u'\in \ell^{\perp}$. The condition 
\[h = g(h)=\gamma u+au'+(\gamma r+as)\ell\]
implies that $a(s-1)=-\gamma r$.
Since $\gamma$ and $a$ are coprime, it follows that $\gamma$ divides $s-1$.
Now $s\ell_*=\overline{g}(\ell_*)=-\ell_*$ implies that $s\equiv -1\mod 2(n-1)$, so $\gamma$ must divide $s-1=2(n-1)k-2$ for some $k\in \mathbb{Z}$. 
Since $\gamma$ divides $2(n-1)$, it follows that $\gamma$ divides $2$. This shows that $\widehat{O}(\Lambda,h)=\widetilde{O}(\Lambda,h)$ when $\gamma\geq 3$, proving the first statement of the lemma. To finish, we assume $\gamma$ is $1$ or $2$. By \eqref{eq:sec_K3n:index} it is enough to exhibit an element in $\widehat{O}\left(\Lambda,h\right)$ not acting as the identity on $D(\Lambda)$. Let $\sigma_{z_1}\colon \Lambda\otimes \mathbb{Q}\longrightarrow \Lambda\otimes\mathbb{Q}$ be the reflection (see \eqref{eq:sec_prelim:sigma_r}) with respect to $z_1$.
Concretely, $\sigma_{z_1}$ is given by
\begin{equation}
\label{eq:sec_K3n:g_1}
\begin{array}{rcl}
    e&\mapsto &e + \left(\frac{2}{\gamma}\right)^2a^2(n-1)f
    -\frac{2}{\gamma}a\ell,\\
    f&\mapsto& f,\\
    \ell&\mapsto& \frac{4}{\gamma}a(n-1)f-\ell,
    \end{array}
\end{equation}
and is the identity on $\left(U\oplus\langle\ell\rangle\right)^\perp\otimes \mathbb{Q}$. One checks \cite{Mar11}*{Proposition 9.12} that $\sigma_{z_1}\in \widehat{O}(\Lambda,h)$ and $\overline{\sigma}_{z_1}(\ell_*)=-\ell_*$ if and only if $\gamma\in\{1,2\}$. 
As $\sigma_{z_1}$ is a reflection with respect to a negative-square primitive element, it preserves orientation, see \cite{Mar11}*{Section 9}.
\end{proof}

Lemma \ref{lemma:sec_K3n:Ohat=Otilde} and the following generalization of \cite{GHS10}*{Proposition 3.12 (i)} will yield modularity of the quasi-pullback when $n=2$ or $\gamma\geq 3$ (see Corollary \ref{coro:sec_prelim:ModOtilde}).  

\begin{proposition}
\label{prop:sec_K3n:Otilde_h}
In the above setting, the restriction map $O(\Lambda,h)\to O(\Lambda_h)$ induces an isomorphism
\[\widetilde{O}\left(\Lambda,h\right)\cong \widetilde{O}(\Lambda_h).\]

\end{proposition}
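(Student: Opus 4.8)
The plan is to show that the restriction homomorphism
\[
\rho\colon O(\Lambda,h)\longrightarrow O(\Lambda_h)
\]
carries $\widetilde O(\Lambda,h)$ isomorphically onto $\widetilde O(\Lambda_h)$. Surjectivity is the content of Lemma~\ref{lemma:sec_prelim:O-tildes}: it was shown there that every $g\in\widetilde O(\Lambda_h)$ extends (as $g\oplus\mathrm{Id}$ on $\Lambda_h\oplus\langle h\rangle$) to an element of $O(\Lambda,h)$ which in fact lies in $\widetilde O(\Lambda,h)$, and this extension restricts back to $g$. So the only real work is injectivity, i.e.\ that an element of $\widetilde O(\Lambda,h)$ acting trivially on $\Lambda_h$ must be the identity on all of $\Lambda$.

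First I would reduce to understanding how such a $g$ acts on $h$ itself: since $\Lambda_h\oplus\langle h\rangle$ has finite index in $\Lambda$, an isometry of $\Lambda$ is determined by its action on $\Lambda_h$ and on $h$; and as $g\in O(\Lambda,h)$ fixes $h$ and is assumed to fix $\Lambda_h$ pointwise, $g$ fixes $\Lambda_h\oplus\langle h\rangle$ pointwise, hence is a finite-order automorphism of $\Lambda$ fixing a finite-index sublattice. The key point is then to use that $g$ is \emph{stable}, i.e.\ $g\in\widetilde O(\Lambda)$ acts trivially on $D(\Lambda)$. Via the gluing description $\Lambda/(\Lambda_h\oplus\langle h\rangle)=H\subset D(\Lambda_h)\oplus D(\langle h\rangle)$ recalled before Lemma~\ref{lemma:sec_prelim:O-tildes}, and the fact that the projections $\pi_1,\pi_2$ in \eqref{eq:sec_prelim:Pi_H} are injective, any $x\in\Lambda$ can be written $x=y+z$ with $y\in\Lambda_h^\vee$, $z\in\langle h\rangle^\vee$, and $g(x)-x=g(z)-z\in\langle h\rangle^\vee$ depends only on the $D(\langle h\rangle)$-component of $x$. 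Thus the "defect" homomorphism $x\mapsto g(x)-x$ factors through $\pi_2(H)\subset\mathbb{Z}/2d\mathbb{Z}$ and lands in $\langle h\rangle^\vee$, so $g$ differs from the identity by an Eichler-type transvection supported on $\langle h\rangle\otimes\mathbb{Q}$; concretely $g(x)=x+c(x,h)h/(2d)\cdot(\text{something})$ — one checks it has the shape $x\mapsto x+\lambda\,\varphi(x)\,h$ for a linear functional $\varphi$ vanishing on $\Lambda_h$, and then comparing with the induced map on $D(\Lambda)$, which $g$ fixes, forces $\lambda=0$.

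The cleanest way to package the last step, and the one I would ultimately write up, is: let $g\in\widetilde O(\Lambda,h)$ restrict to the identity on $\Lambda_h$. For $x\in\Lambda$ write $\gamma x = (\text{component in }\Lambda_h\otimes\mathbb{Q}) + (x,h)h/(2d)\cdot\gamma/\gamma$ using the orthogonal projection; since $g$ fixes $\Lambda_h$ and $h$, it fixes the orthogonal projection of $x$ to $\Lambda_h\otimes\mathbb{Q}$ and to $\langle h\rangle\otimes\mathbb{Q}$ separately, hence $g(x)=x$ for \emph{all} $x\in\Lambda\otimes\mathbb{Q}$, in particular $g=\mathrm{Id}$. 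Wait — that already proves injectivity without invoking $D(\Lambda)$ at all: fixing $h$ and fixing $\Lambda_h$ pointwise means fixing the two orthogonal summands of $\Lambda\otimes\mathbb{Q}$, hence fixing $\Lambda\otimes\mathbb{Q}$. So injectivity of $\rho|_{O(\Lambda,h)}$ is immediate, and the genuine content of the proposition is entirely the surjectivity statement "the image of $O(\Lambda,h)\to O(\Lambda_h)$ meeting $\widetilde O(\Lambda_h)$ is all of $\widetilde O(\Lambda_h)$ and consists of stable isometries", which is Lemma~\ref{lemma:sec_prelim:O-tildes} together with the observation that $\rho(\widetilde O(\Lambda,h))\subseteq\widetilde O(\Lambda_h)$.

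Therefore the proof I would write is short: (1) injectivity of $O(\Lambda,h)\to O(\Lambda_h)$, because an isometry fixing $h$ and fixing $\Lambda_h$ pointwise fixes the $\mathbb{Q}$-span of $\Lambda$; (2) the inclusion $\rho(\widetilde O(\Lambda,h))\subseteq\widetilde O(\Lambda_h)$, by tracking the action on $D(\Lambda_h)$ through diagram \eqref{eq:sec_prelim:DLDL'} — an element of $\widetilde O(\Lambda,h)$ acts trivially on $D(\Lambda)=H^\perp/H$ and fixes $H$, hence acts trivially on $H^\perp\subseteq D(\Lambda_h)\oplus D(\langle h\rangle)$, and since its $\langle h\rangle$-part is the identity (it fixes $h$) it acts trivially on the $D(\Lambda_h)$-part of $H^\perp$; but $\pi_1(H^\perp)$ is all of $D(\Lambda_h)$ because $\pi_1$ is injective on $H$ and $H^\perp/H\cong D(\Lambda)$ has the complementary size, so $g|_{D(\Lambda_h)}=\mathrm{Id}$; (3) surjectivity onto $\widetilde O(\Lambda_h)$, which is exactly Lemma~\ref{lemma:sec_prelim:O-tildes}. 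The main (very mild) obstacle is bookkeeping in step (2): making sure that "$g$ trivial on $D(\Lambda)$ and on $D(\langle h\rangle)$" really does pin down the $D(\Lambda_h)$-action, which hinges on $\pi_1\colon H\hookrightarrow D(\Lambda_h)$ being injective with image whose orthogonal behaviour under the glue map is controlled — all of which is already set up in the paragraph preceding Lemma~\ref{lemma:sec_prelim:O-tildes}.
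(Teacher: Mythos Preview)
Your proof is correct and takes a genuinely different, more abstract route than the paper. The paper works in coordinates: it constructs explicit elements $u=\frac{\gamma}{2d}(az_1+\gamma z_2)$ and $v=\frac{\gamma^2}{2d(n-1)}(tz_1+a\tfrac{n-1}{\gamma}z_2)$ whose classes generate $D(\Lambda_h)$, rewrites them as $u=\frac{\gamma}{2d}h-f$ and $v=\frac{a}{2d}h-\frac{1}{2(n-1)}\ell$, and then checks directly that any $g\in\widetilde O(\Lambda,h)$ fixes $v_*$ (because $g$ fixes $h$ and $\ell_*$) and $u_*$ (because $u_*$ generates $\pi_1(H)$, on which $g|_{\Lambda_h}$ acts trivially by a general fact from Nikulin). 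Your argument instead stays at the level of the gluing data $H\subset D(\Lambda_h)\oplus D(\langle h\rangle)$: for $(x,y)\in H^\perp$ one has $(\bar g(x)-x,0)=g\cdot(x,y)-(x,y)\in H$, and injectivity of $\pi_2$ on $H$ forces $\bar g(x)=x$; then surjectivity of $\pi_1|_{H^\perp}$ onto $D(\Lambda_h)$ finishes. Your approach works for any even $\Lambda$ and primitive $h$, not just the K$3^{[n]}$ lattice. The paper's explicit computation, on the other hand, produces the specific generator $u\in D(\Lambda_h)$ of order $2d/\gamma$ with $(u,z_1)=0$, which is reused later in the proof of Proposition~\ref{prop:sec_K3n+:Fcusp} to rule out irregular cusps.

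Two small things to tighten in a final write-up. First, the clause ``hence acts trivially on $H^\perp$'' in your step~(2) is not yet justified at that point --- what follows from triviality on $H^\perp/H$ and on $H$ is only that $g(w)-w\in H$ for $w\in H^\perp$; it is the next observation (trivial $D(\langle h\rangle)$-component combined with injectivity of $\pi_2$ on $H$) that forces $g(w)=w$. Second, the claim $\pi_1(H^\perp)=D(\Lambda_h)$ deserves a one-line count rather than the vague ``complementary size'': $\ker(\pi_1|_{H^\perp})=0\oplus\pi_2(H)^{\perp}$ has order $|D(\langle h\rangle)|/|H|$, while $|H^\perp|=|D(\Lambda_h)|\cdot|D(\langle h\rangle)|/|H|$, so the image has size exactly $|D(\Lambda_h)|$.
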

\begin{proof}
First observe that $D(\Lambda_h)=D(Q_h(-1))$. Consider the following elements in $Q_h(-1)\otimes\mathbb{Q}$:
\[u=\frac{\gamma}{2d}\left(az_1+\gamma z_2\right)\;\;\;\hbox{and}\;\;\; v=\frac{\gamma^2}{2d(n-1)}\left(tz_1+a\frac{n-1}{\gamma}z_2\right).\]
From \eqref{eq:sec_K3n:d} and \eqref{eq:sec_K3n:Q_h} one checks that 
\[(u,z_1)=(v,z_2)=0\;\;\;\hbox{and}\;\;\; (v,z_1)=(u,z_2)=-1.\] 
In particular $\{u,v\}$ forms a basis of $Q_{h}(-1)^{\vee}$ and $\{u_*,v_*\}$ generates the discriminant group $D\left(Q_h(-1)\right)$. 

Now, since the inclusion $\widetilde{O}(\Lambda_h) \subset \widetilde{O}\left(\Lambda,h\right)$ holds by Lemma \ref{lemma:sec_prelim:O-tildes}, for the proposition we just need to show the reverse inclusion. 
So, let $g\in \widetilde{O}\left(\Lambda,h\right)$ acting on $\Lambda_h$ via restriction. We have to show $g|_{\Lambda_h}$ is in $\widetilde{O}(\Lambda_h)$, meaning that
\begin{equation}
\label{eq:sec_K3n:g_2}
g(u)\equiv u\;\;\;\hbox{and}\;\;\;g(v)\equiv v\mod \Lambda_h.
\end{equation}
From \eqref{eq:sec_K3n:d} it follows that $u,v\in \Lambda\otimes \mathbb{Q}$ can be expressed as
\begin{equation}
\label{eq:sec_K3n:uv}
u=\frac{\gamma}{2d}h-f\;\;\;\hbox{and}\;\;\;v=\frac{a}{2d}h-\frac{1}{2(n-1)}\ell.
\end{equation}
Since $g(h)=h$ and $\overline{g}(\ell_*)=\ell_*$ in $D(\Lambda)$, one has that $g(v)=v+w$ with $w\in \Lambda$. Moreover, we have
\[0=(h,v)=(h,g(v))=(h,w).\]
Thus, $w\in \Lambda_h$ and $g(v)\equiv v\mod \Lambda_h$. For the first congruence in \eqref{eq:sec_K3n:g_2} observe that 
\[\left[\Lambda: \Lambda_h\oplus \langle h \rangle \right]=\frac{2d}{\gamma},\]
see \cite{Huy16}*{Chapter 14, Equation~0.2} and 
\eqref{eq:sec_K3n:disc}. Moreover, $f$ generates the group 
\[H=\Lambda\big/\left(\Lambda_h\oplus \langle h \rangle\right).\]
Indeed, 
\[\frac{2d}{\gamma}f=-az_1-\gamma z_2+h\in \Lambda_h\oplus \langle h\rangle\]
is primitive in $\Lambda_h\oplus \langle h\rangle$. In particular, the order of the image $\overline{f}$ of $f$ in the quotient $H$ is exactly $\frac{2d}{\gamma}$. 
Recall \eqref{eq:sec_prelim:Pi_H} that $\pi_1:H\to D(\Lambda_h)$ and $\pi_2:H\to D(\langle h \rangle)$ are injective and every element in $H$ can be written uniquely as a sum of an element in $\pi_1(H)$ and one in $\pi_2(H)$. From the first equation in \eqref{eq:sec_K3n:uv} one observes that $\overline{f}=-u+\gamma h_*$, where 
\[\pi_1(\overline{f})=-u\in D(\Lambda_h)\;\;\;\hbox{and}\;\;\;\pi_2(\overline{f})=\gamma h_*\in D\left(\langle h \rangle\right).\]
In particular, $\pi_1(H)$ is generated by $u$. Finally, by \cite{Nik80}*{Corollary 1.5.2} (see also \cite{GHS10}*{Lemma 3.2}), the image $\overline{g}$ of $g$ in $O\left(D(\Lambda_h)\right)$ acts as the identity on $\pi_1(H)=\langle u\rangle$. This shows the first congruence in \eqref{eq:sec_K3n:g_2}, thus $\widetilde{O}\left(\Lambda,h\right)\subset \widetilde{O}\left(\Lambda_h\right)$.
\end{proof}

We next explain an interesting phenomenon known as {\textit{strange duality}}. It was first observed by Apostolov in \cite{Apo14+}*{Proposition 3.2} in the case $\gamma=1, 2$ and $d=1$,
and then generalized to $\gamma=1,2$ and $d>1$ by J.~Song, see \cite{Deb18}*{Remark 3.24}. As a consequence of Lemma~\ref{lemma:sec_K3n:Ohat=Otilde} and Proposition~\ref{prop:sec_K3n:Otilde_h}, one can generalize strange duality to all triples $(n,d,\gamma)$ such that the corresponding moduli space $\mathcal{M}^\gamma_{\mathrm{K}3^{[n]},2d}$ is non-empty. For completeness, we prove the result for all $\gamma\geq 1$.
In Section~\ref{section:K3n_div1}, we will make use of strange duality to propagate general type results.

\begin{proposition}[Strange duality]\label{prop:sec_K3n:SD}
Let $(n,d,\gamma)$ be a triple such that $\mathcal{M}^\gamma_{\mathrm{K}3^{[n]},2d}$ is non-empty (see Proposition \ref{prop:sec_K3n:nonemp}).
There is a natural bijection between the connected components of $\mathcal{M}^\gamma_{\mathrm{K}3^{[n]},2d}$ and those of $\mathcal{M}^\gamma_{K3^{[d+1]},2(n-1)}$, so that each component of $\mathcal{M}^\gamma_{\mathrm{K}3^{[n]},2d}$ is birational to the corresponding component of $\mathcal{M}^\gamma_{K3^{[d+1]},2(n-1)}$.
\end{proposition}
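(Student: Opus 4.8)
The strategy is to realize both moduli spaces as quotients of isomorphic period domains by the same arithmetic group, so that they share a common open subset and hence are birational component-by-component. The key observation is that the lattice computation giving $\Lambda_h$ is symmetric under the exchange $(n,d)\leftrightarrow(d+1,n-1)$. Concretely, from Lemma~\ref{lemma:sec_K3n:h} and equation~\eqref{eq:sec_K3n:Q_h}, a component of $\mathcal{M}^\gamma_{\mathrm{K}3^{[n]},2d}$ corresponding to the class $h=\gamma(e+tf)-a\ell$ (with $t=\frac{d+(n-1)a^2}{\gamma^2}$) has period domain $\Omega(\Lambda_h)$, where
\[
\Lambda_h=U^{\oplus 2}\oplus E_8(-1)^{\oplus 2}\oplus Q_h(-1),\qquad
Q_h=\begin{pmatrix}2(n-1)&-\tfrac{2a(n-1)}{\gamma}\\[2pt]-\tfrac{2a(n-1)}{\gamma}&2t\end{pmatrix}.
\]
On the dual side, a class $h'=\gamma(e+t'f)-a'\ell'$ in $\mathcal{M}^\gamma_{K3^{[d+1]},2(n-1)}$ (so now $(\ell',\ell')=-2d$, degree $2(n-1)$, with $t'=\frac{(n-1)+d\,a'^2}{\gamma^2}$) has the analogous $Q_{h'}$. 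I would first check that taking the SAME $a$ on both sides (note $a$ is coprime to $\gamma$ on both sides, and the non-emptiness condition~\eqref{eq:sec_K3n:non_emp} is symmetric in $(n-1,d)$ after multiplying by $a^2$ modulo $2\gamma$ — this needs a short verification), one has $Q_h\cong Q_{h'}$ as lattices: indeed $t'$ equals the $(1,1)$-entry $2(n-1)$ divided by $2$ only after a change of basis, so more precisely I claim the Gram matrices $Q_h$ and $Q_{h'}$ represent isometric rank-two lattices — this follows because both have determinant $\frac{4d(n-1)}{\gamma^2}$ by~\eqref{eq:sec_K3n:disc}, both are even, and one can exhibit an explicit isometry swapping the roles of the two basis vectors $z_1,z_2$ (sending the "$\ell$-direction" of square $-2(n-1)$ to the "$e-tf$-direction" and vice versa). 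Consequently $\Lambda_h\cong\Lambda_{h'}$ as abstract lattices.

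**Matching the arithmetic groups.** Having fixed an isometry $\Lambda_h\xrightarrow{\sim}\Lambda_{h'}$, the next step is to check it conjugates $\widehat{O}^+(\Lambda,h)|_{\Lambda_h}$ to $\widehat{O}^+(\Lambda',h')|_{\Lambda_{h'}}$, where $\Lambda,\Lambda'$ are the two K$3^{[n]}$-, resp.\ K$3^{[d+1]}$-lattices. For $\gamma\ge 3$ (and $\gamma=1$) this is immediate: by Lemma~\ref{lemma:sec_K3n:Ohat=Otilde} and Proposition~\ref{prop:sec_K3n:Otilde_h}, in those cases the restricted monodromy group equals $\widetilde{O}^+(\Lambda_h)$, which is intrinsic to the lattice $\Lambda_h$ and therefore automatically matched by any lattice isometry. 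For $\gamma=2$ (and $\gamma=1$) one must additionally track the extra generator: by Lemma~\ref{lemma:sec_K3n:Ohat=Otilde}, $\widehat{O}^+(\Lambda,h)$ is generated by $\widetilde{O}^+(\Lambda_h)$ together with $\sigma_{z_1}|_{\Lambda_h}$. So I need the chosen isometry $\Lambda_h\cong\Lambda_{h'}$ to send (the restriction of) $\sigma_{z_1}$ to (the restriction of) $\sigma_{z_1'}$, equivalently to send the reflective class $z_1$ (of square $-2(n-1)$, divisibility dividing $\gamma$) to a class inducing the same reflection in $O(D(\Lambda_{h'}))$. Since both $z_1$ and $z_1'$ are primitive vectors of the appropriate square realizing the same discriminant-form behaviour, Eichler's criterion (Theorem~\ref{thm:sec_prelim:Eichler}, applicable since $\Lambda_h$ contains $U^{\oplus 2}$) lets me adjust the isometry so that it does. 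This is the step I expect to be the main technical obstacle: making the single abstract isometry simultaneously intertwine both the stable subgroup and the explicit reflection, uniformly across components and across $\gamma$.

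**Conclusion via Torelli.** Once the isometry $\varphi\colon\Lambda_h\xrightarrow{\sim}\Lambda_{h'}$ intertwining the monodromy groups is in hand, it induces an isomorphism of modular varieties
\[
\Omega(\Lambda_h)\big/\widehat{O}^+(\Lambda,h)\;\xrightarrow{\ \sim\ }\;\Omega(\Lambda_{h'})\big/\widehat{O}^+(\Lambda',h').
\]
By the Torelli theorem (Theorem~\ref{thm:sec_prelim:Torelli}) each component $Y$ of $\mathcal{M}^\gamma_{\mathrm{K}3^{[n]},2d}$ admits an algebraic open embedding into the left-hand side and each component $Y'$ of $\mathcal{M}^\gamma_{K3^{[d+1]},2(n-1)}$ into the right-hand side; hence $Y$ and $Y'$ are birational. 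For the bijection of sets of components, I would use Remark~\ref{rem:sec_K3n:a's}: components of $\mathcal{M}^\gamma_{\mathrm{K}3^{[n]},2d}$ correspond to $a\in\{0,\dots,\lfloor\gamma/2\rfloor\}$ coprime to $\gamma$ satisfying~\eqref{eq:sec_K3n:non_emp}, and the same parametrization (with the same set of admissible $a$, by the symmetry of~\eqref{eq:sec_K3n:non_emp} noted above) governs $\mathcal{M}^\gamma_{K3^{[d+1]},2(n-1)}$; the assignment $a\mapsto a$ is then the desired bijection, compatible with the birational identifications just constructed. A final sanity check: when $\gamma=1$ we take $a=0$ on both sides, recovering the classical statement $\mathcal{M}_{\mathrm{K}3^{[n]},2d}^1\sim\mathcal{M}_{\mathrm{K}3^{[d+1]},2(n-1)}^1$ used in the corollary after Theorem~\ref{thm: thm5}.
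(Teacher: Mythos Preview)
Your overall strategy---identify $\Lambda_h\cong\Lambda_{h'}$, match the monodromy groups, and invoke Torelli---is exactly the paper's approach, but your execution of the component bijection is wrong, and this is a genuine gap rather than a detail.

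You claim the bijection of components is $a\mapsto a$ and that condition~\eqref{eq:sec_K3n:non_emp} is symmetric in $(n-1,d)$ for the \emph{same} $a$. It is not. Take $\gamma=7$, $n=8$, $d=21$, $a=2$: then $t=\frac{21+7\cdot4}{49}=1$ so the component exists on the $(n,d)$ side, but on the dual side $(n',d')=(22,7)$ the condition $\frac{2\cdot7}{7}\equiv-\frac{2\cdot21}{7}\cdot4\pmod{14}$ reads $2\equiv4\pmod{14}$, which fails. The correct bijection (the one the paper uses) sends $a$ to its multiplicative inverse $a'$ modulo $\gamma$: one picks $z,a'$ with $aa'=1+z\gamma$, and then an elementary manipulation of \eqref{eq:sec_K3n:d} shows that $(n',d',\gamma,a')=(d+1,n-1,\gamma,a')$ satisfies the non-emptiness condition with $t'=t(a')^2-\frac{2(n-1)}{\gamma}(2z+z^2\gamma)$. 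With this choice one has an \emph{explicit} isometry $Q_h\to Q_{h'}$, namely $z_1\mapsto a'z_1'+\gamma z_2'$, $z_2\mapsto -(zz_1'+az_2')$; your argument ``same determinant and both even, hence isometric'' is insufficient, as non-isometric even binary forms of the same discriminant abound.

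A second slip: you write that for $\gamma=1$ the restricted monodromy group equals $\widetilde{O}^+(\Lambda_h)$ so the matching is automatic. Lemma~\ref{lemma:sec_K3n:Ohat=Otilde} says the opposite: for $\gamma=1,2$ and $n\ge3$ the index is $2$, generated by $\sigma_{z_1}$. The paper handles this by observing that under the explicit isometry $\alpha$ above (with the specific $(a,a',z)$ for $\gamma=1,2$) one has $\alpha(z_1)\perp z_1'$, whence $\sigma_{z_1'}=-\sigma_{\alpha(z_1)}$ and the \emph{projectivized} groups $P\widehat{O}^+$ coincide; the edge cases $n=2$ or $d=1$ are treated separately. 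Your appeal to Eichler to ``adjust the isometry'' to send $z_1$ to $z_1'$ cannot work as stated, since $(z_1,z_1)=-2(n-1)$ while $(z_1',z_1')=-2d$.
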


\begin{proof}
To prove non-emptiness of $\mathcal{M}^\gamma_{K3^{[d+1]},2(n-1)}$, consider $a\in \mathbb{Z}$ such that \eqref{eq:sec_K3n:non_emp} holds. Since $\gcd(a,\gamma)=1$, there exist $z,a'\in \mathbb{Z}$, such that $aa'=1+z\gamma$. Then \eqref{eq:sec_K3n:d} yields
\[ 
\frac{2(n-1)}{\gamma}=2\gamma\left(t(a')^2-\frac{2(n-1)}{\gamma}(2z+z^2\gamma)\right)-\frac{2d}{\gamma}(a')^2.
\]
Let $t'=t(a')^2-\frac{2(n-1)}{\gamma}(2z+z^2\gamma)$,  
then \eqref{eq:sec_K3n:d} holds for $a'$, $t'$, $d'=n-1$ and $n'=d+1$.
In fact, there is a unique $a'\in\{0,\dots,\lfloor \frac{\gamma}{2}\rfloor\}$ such that $\pm a'a\equiv 1\mod \gamma$. Hence by Proposition~\ref{prop:sec_K3n:a's}, the map sending $\mathcal{M}^{\gamma,a}_{\mathrm{K}3^{[n]},2d}$ to $\mathcal{M}^{\gamma,a'}_{K3^{[d+1]},2(n-1)}$ is a bijection between the sets of components.

Let $h\in\Lambda$ be as in Lemma~\ref{lemma:sec_K3n:h}.
Likewise, denote $\Lambda'=U^{\oplus 3}\oplus E_8(-1)^{\oplus 2}\oplus\mathbb{Z}\ell'$ with $(\ell',\ell')=-2(n'-1)=-2d$ and let $h'=\gamma(e+t'f)-a'\ell'\in\Lambda'$.
To prove that the two corresponding components of the moduli spaces are birational, by Theorem~\ref{thm:sec_prelim:Torelli} it suffices to provide an isometry $\Lambda_h\to(\Lambda')_{h'}$ that identifies the projectivization $P\widehat{O}^+(\Lambda,h)$ with $P\widehat{O}^+(\Lambda',h')$.
Let $Q_h$ be as in \eqref{eq:sec_K3n:Q_h}, with generators $\{z_1,z_2\}$. Analogously, define $Q_{h'}$ so that $\Lambda_{h'}=U^{\oplus 2}\oplus E_8(-1)^{\oplus 2}\oplus Q_{h'}(-1)$, so $Q_{h'}$ has generators $\{z_1',z_2'\}$ with respect to which the Gram matrix is 
\[
Q_{h'}=\left(\begin{array}{cc}2d&-\frac{2a'd}{\gamma}\\
-\frac{2a'd}{\gamma}&2t'
\end{array}\right).
\]
One checks that the map $Q_h\to Q_{h'}$ sending $z_1$ to $a'z_1'+\gamma z_2'$ and $z_2$ to $-(zz_1'+az_2')$ is an isomorphism of lattices.
The induced isometry $\alpha\colon\Lambda_h\to (\Lambda')_{h'}$ identifies $\widetilde{O}(\Lambda_h)$ with $\widetilde{O}((\Lambda')_{h'})$, which completes the proof when $\gamma\geq 3$.

For $\gamma=1,2$, first assume $n>2$ and $d>1$. 
For $\gamma=1$ we have $(a,a',z)=(0,0,1)$; for $\gamma=2$ we have $(a,a',z)=(1,1,0)$.
In both cases, $\alpha(z_1)\in Q_{h'}$ is orthogonal to $z_1'$; hence, we have $\sigma_{z_1'}=-\sigma_{\alpha(z_1)}$. 
It follows from Lemma~\ref{lemma:sec_K3n:Ohat=Otilde} that $\alpha$ identifies $P\widehat{O}^+(\Lambda,h)$ with $P\widehat{O}^+(\Lambda',h')$.
Finally, for $\gamma=1,2$ and $n=2$ (or analogously, $d=1$), we have $\widehat{O}^+(\Lambda,h)=\widetilde{O}^+(\Lambda_h)$ and $\widehat{O}^+(\Lambda',h')/\widetilde{O}^+((\Lambda')_{h'})=\{\pm\Id\}$,
so again, $\alpha$ identifies $P\widehat{O}^+(\Lambda,h)$ with $P\widehat{O}^+(\Lambda',h')$.
\end{proof}

Given a primitive embedding $\Lambda_h\hookrightarrow\mathrm{II}_{2,26}$, the quasi-pullback $F$ of Borcherds form $\Phi_{12}$ vanishes along the ramification divisor of the modular projection
\[\Omega(\Lambda_h)\longrightarrow \Omega(\Lambda_h)\big/\widetilde{O}\left(\Lambda_h\right)\]
if for every reflective element $r\in \Lambda_h$ for which $-\sigma_r\in \widetilde{O}(\Lambda_h)$, there is a root in $\mathrm{II}_{2,26}$ orthogonal to $\left(\Lambda_h\right)_r$ but not to all of $\Lambda_h$, see Proposition \ref{prop:sec_prelim:F=0}.

\begin{proposition}
\label{prop:sec_K3n:F=0}
Let $Q_h\hookrightarrow E_8$ be a primitive embedding such that the number of $2$-roots in the orthogonal complement is bounded by $\left|R(Q_h^\perp)\right|\leq 54$. Then the quasi-pullback $F$ of Borcherds form $\Phi_{12}$ to $\Omega(\Lambda_h)$ with respect to the induced embedding $\Omega\left(\Lambda_h\right)\hookrightarrow \Omega\left(\mathrm{II}_{2,26}\right)$ vanishes along the ramification divisor of the modular projection
\[\Omega(\Lambda_h)\longrightarrow \Omega(\Lambda_h)\big/\widetilde{O}\left(\Lambda_h\right).\]
\end{proposition}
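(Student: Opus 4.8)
\emph{Reduction to a root count via quasi-pullback.} The given primitive embedding $Q_h\hookrightarrow E_8$ induces a primitive embedding
\[\Lambda_h=U^{\oplus2}\oplus E_8(-1)^{\oplus2}\oplus Q_h(-1)\hookrightarrow U^{\oplus2}\oplus E_8(-1)^{\oplus3}=\mathrm{II}_{2,26},\]
carrying $U^{\oplus2}\oplus E_8(-1)^{\oplus2}$ identically and $Q_h(-1)$ into the third copy of $E_8(-1)$, so that $\Lambda_h^{\perp}\cong Q_h^{\perp}(-1)$ and in particular $|R(\Lambda_h^{\perp})|=|R(Q_h^{\perp})|\le 54$. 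By Corollary~\ref{coro:sec_prelim:ModOtilde}, the quasi-pullback $F$ of $\Phi_{12}$ along this embedding is modular with character $\mathrm{det}$ for $\widetilde O^{+}(\Lambda_h)$, so the hypotheses of Proposition~\ref{prop:sec_prelim:F=0} hold and it remains only to verify the inequality $|R(\Lambda_h^{\perp})|<|R((\Lambda_h)_r^{\perp})|$ for every reflective element $r\in\Lambda_h$ for $\widetilde O^{+}(\Lambda_h)$. Since $\mathrm{II}_{2,26}$ is unimodular and $r$ is primitive, $(\Lambda_h)_r^{\perp}$ is the negative-definite rank-$7$ saturation of $\Lambda_h^{\perp}\oplus\mathbb{Z}r$ inside $\mathrm{II}_{2,26}$; in particular $R(\Lambda_h^{\perp})\subseteq R((\Lambda_h)_r^{\perp})$, so it is enough to exhibit a single $(-2)$-vector of $(\Lambda_h)_r^{\perp}$ not lying in $\Lambda_h^{\perp}$. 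Finally, every $g\in\widetilde O(\Lambda_h)$ extends to $\mathrm{II}_{2,26}$ acting trivially on $\Lambda_h^{\perp}$ (cf.\ the proof of Corollary~\ref{coro:sec_prelim:ModOtilde}), so $|R((\Lambda_h)_r^{\perp})|$ depends only on the $\widetilde O(\Lambda_h)$-orbit of $r$, and by Eichler's criterion (Theorem~\ref{thm:sec_prelim:Eichler}) we may normalise $r$ within its orbit.

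\emph{The case $(r,r)=-2$.} If $(r,r)=-2$, then $r$ is itself a root of $\mathrm{II}_{2,26}$, it lies in $(\Lambda_h)_r^{\perp}$, and it is not in $\Lambda_h^{\perp}$ because $\Lambda_h\cap\Lambda_h^{\perp}=0$; thus $\pm r$ are the two extra roots we need. This already handles every reflective $r$ with $\sigma_r\in\widetilde O^{+}(\Lambda_h)$: by \eqref{eq:sec_prelim:sigma_r1} one has $(r,r)\in\{-\mathrm{div}(r),-2\,\mathrm{div}(r)\}$, and imposing that $\sigma_r$ act trivially on $D(\Lambda_h)$ (using that $\sigma_r$ sends $r/\mathrm{div}(r)$ to $-r/\mathrm{div}(r)$ and that $r$ is primitive) forces $\mathrm{div}(r)\le 2$ and hence $(r,r)=-2$. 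So from now on $-\sigma_r\in\widetilde O^{+}(\Lambda_h)$ and $(r,r)=-2m$ with $m\ge 2$.

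\emph{The case $-\sigma_r\in\widetilde O^{+}(\Lambda_h)$, $(r,r)\le-4$: the main obstacle.} In this case $\sigma_r$ must act as $-\mathrm{Id}$ on $D(\Lambda_h)=D(Q_h)$; unwinding the reflection formula, this forces $2\,D(\Lambda_h)\subseteq\langle\,\overline{r/m}\,\rangle$, a cyclic group of order dividing $m$, and since $D(Q_h)$ is generated by at most two elements we get $|D(\Lambda_h)[2]|\le 4$, hence $m\ge\tfrac14|D(\Lambda_h)|$ and, by \eqref{eq:sec_prelim:sigma_r1}, $\mathrm{div}(r)\in\{m,2m\}$. Using $U^{\oplus2}\subset\Lambda_h$ and Eichler's criterion we may take $r=\alpha e+\beta f+w$ with $\{e,f\}$ a standard basis of the first copy of $U$ and $w\in Q_h(-1)$; then $r\perp E_8(-1)^{\oplus2}$, so $E_8(-1)^{\oplus2}\subset(\Lambda_h)_r$ and $(\Lambda_h)_r^{\perp}$ is a negative-definite rank-$7$ primitive sublattice of $U\oplus E_8(-1)$ containing $Q_h^{\perp}(-1)$, whose extra generator over $Q_h^{\perp}(-1)$ can be written out explicitly in terms of $\alpha,\beta,w$ and the gluing of $Q_h(-1)$ to $Q_h^{\perp}(-1)$ inside $E_8(-1)$. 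It then remains to show that this rank-$7$ lattice has a root outside $Q_h^{\perp}(-1)$, and I would argue this by splitting on the parity of the weight $12+\tfrac12|R(Q_h^{\perp})|$ of $F$: when it is congruent to $\mathrm{rk}(\Lambda_h)=22$ modulo $2$, i.e.\ $|R(Q_h^{\perp})|\equiv0\pmod 4$, Lemma~\ref{lemma_vanishing_ramif} yields the vanishing of $F$ along $\mathcal D_r$ directly and nothing further is needed, whereas in the complementary parity one must produce the extra root from the explicit Gram matrix of $(\Lambda_h)_r^{\perp}$ — and it is precisely at this last step that the numerical hypothesis $|R(Q_h^{\perp})|\le 54$ is used to close the estimate. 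This root-production step in the $-\sigma_r$ case is the part I expect to be the most delicate, being the only place where the specific bound on $|R(Q_h^{\perp})|$ plays a role.
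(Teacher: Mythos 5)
Your reduction to Proposition~\ref{prop:sec_prelim:F=0} and your treatment of the case $\sigma_r\in\widetilde O^{+}(\Lambda_h)$ are fine (the paper disposes of that case even more directly, by modularity with character $\det$ alone, without needing $(r,r)=-2$). The problem is that the entire content of the proposition lies in the case $-\sigma_r\in\widetilde O^{+}(\Lambda_h)$, and there your proposal does not contain a proof: you describe a normalisation of $r$, propose to ``write out the extra generator explicitly,'' and then state that producing the extra root ``is precisely'' where the hypothesis $|R(Q_h^\perp)|\le 54$ should be used, while conceding that this step is the delicate one you have not carried out. That is the gap. Your partial fallback via Lemma~\ref{lemma_vanishing_ramif} only covers $|R(Q_h^\perp)|\equiv 0\pmod 4$ and leaves the complementary parity entirely open.

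The missing idea is a discriminant computation combined with the classification of low-rank root lattices. When $-\sigma_r\in\widetilde O(\Lambda_h)$, the constraint that $\sigma_r$ act as $-\mathrm{Id}$ on $D(\Lambda_h)$ forces (by \cite{GHS07}*{Proposition 3.2}) $D(\Lambda_h)\cong(\mathbb{Z}/2)^m\times\mathbb{Z}/D$ with $m\in\{0,1\}$ and pins $(r,r)$ and $\mathrm{div}(r)$ to three possibilities, all of which give
\[\mathrm{disc}\bigl((\Lambda_h)_r\bigr)=\left|\frac{(r,r)\cdot\mathrm{disc}(\Lambda_h)}{\mathrm{div}(r)^2}\right|\in\{2,4,8\}.\]
Since $\mathrm{II}_{2,26}$ is unimodular, $(\Lambda_h)_r^{\perp}$ is then a negative definite even lattice of rank $7$ and discriminant $2$, $4$, or $8$, and by \cite{CS88}*{Table 1} any such lattice contains one of the root systems $E_6$, $E_7$, $D_6$, $D_7$, $A_7$. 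The smallest of these root counts is $|R(A_7)|=56$, so $|R((\Lambda_h)_r^{\perp})|\ge 56>54\ge|R(\Lambda_h^{\perp})|$ and Proposition~\ref{prop:sec_prelim:F=0} applies. This is the only point where the bound $54$ enters (it is exactly $56-2$), and your proposal never derives it; without this step the claimed strict inequality of root counts, and hence the vanishing along $\mathcal D_r$, is unproved.
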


\begin{proof}
Let $r\in \Lambda_h$ be a reflective element such that $\sigma_r$ or $-\sigma_r\in \widetilde{O}\left(\Lambda_h\right)$. 
If $\sigma_r\in \widetilde{O}(\Lambda_h)$, modularity of $F$ (see Corollary \ref{coro:sec_prelim:ModOtilde}) implies that $F(Z)=-F(Z)$ for all $Z\in \Omega^\bullet\left(\Lambda_h\right)$ such that $\left[Z\right]\in \mathcal{D}_r$. 
In particular $F$ vanishes along $\mathcal{D}_r$. We assume now that $-\sigma_r\in \widetilde{O}(\Lambda_h)$. Then by \cite{GHS07}*{Proposition 3.2} one must have 
\[D(\Lambda_h)\cong D(Q_h)\cong \left(\mathbb{Z}\big/2\mathbb{Z}\right)^m\times \mathbb{Z}\big/D\mathbb{Z},\]
where $m\in \{0,1\}$ depending on whether $D(Q_h)$ is cyclic or not and 
\[D=\frac{\mathrm{disc}(\Lambda_h)}{2^m}=\frac{\mathrm{disc}(Q_h)}{2^m}=\frac{4d(n-1)}{2^m\gamma^2}.\]
Moreover, the possibilities for $(r,r)<0$ and $\mathrm{div}(r)$ are:
\begin{itemize}
\item[(i)]$(r,r)=-2D$ and $\mathrm{div}(r)=D$,
\item[(ii)] $(r,r)=-D$ and $\mathrm{div}(r)=D$, or
\item[(iii)] $(r,r)=-D$ and $\mathrm{div}(r)=D/2$.
\end{itemize}
Therefore, 
\[\mathrm{disc}\left(\left(\Lambda_h\right)_r\right)=\left|\frac{(r,r)\cdot \mathrm{disc}\left(\Lambda_h\right)}{\mathrm{div}(r)^2}\right|\in\left\{2,4,8\right\},\]
see \cite{GHS13}*{Lemma 7.2}. Since $\mathrm{II}_{2,26}$ is unimodular, the orthogonal complement $\left(\Lambda_h\right)_r^\perp$ of $\left(\Lambda_h\right)_r$ in $\mathrm{II}_{2,26}$ has rank $7$ and discriminant $2,4$ or $8$. By \cite{CS88}*{Table 1} it contains one of the following root systems:
\[E_6, E_7, D_6, D_7,\;\;\hbox{or}\;\; A_7.\]
In particular, 
\[\left|R\left(\left(\Lambda_h\right)_r^\perp\right)\right|\geq \left|R(A_7)\right|=56\]
and $F$ must vanish with order at least one along $\mathcal{D}_r$, see Proposition \ref{prop:sec_prelim:F=0}.
\end{proof}

Finally our main proposition before going into lattice embeddings and root-counting is the following:

\begin{proposition}
\label{prop:sec_K3n:R(Q)}
Let $n,d,\gamma,$ and $a$ be positive integers satisfying the hypothesis of Proposition \ref{prop:sec_K3n:nonemp}. Assume further that
\[\frac{d(n-1)}{\gamma^2}>4.\]
If there exists a primitive embedding $Q_h\subset E_8$ such that the number of roots $\left|R(Q_h^\perp)\right|$ in $Q_h^{\perp}\subset E_8$ is at least $2$ and at most $14$ (resp. $16$), then the modular variety $$\Omega\left(\Lambda_h\right)\big/\widetilde{O}^+\left(\Lambda_h\right)$$ 
is of general type (resp.\ non-negative Kodaira dimension). In particular, 
if $n=2$ or $\gamma\geq 3$ and such an embedding exists, 
then the component of the moduli space $\mathcal{M}_{\mathrm{K}3^{[n]},2d}^{\gamma}$ corresponding to $a$ (see Proposition~\ref{prop:sec_K3n:a's}) is of general type (resp.\ non-negative Kodaira dimension). 
\end{proposition}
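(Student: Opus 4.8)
The plan is to combine the low-weight cusp form trick (Theorem~\ref{thm:sec_prelim:low-weight}) with the quasi-pullback machinery of Section~\ref{SS:Phi_{12}}, using the given primitive embedding $Q_h \subset E_8$ as the source of an embedding $\Lambda_h \hookrightarrow \mathrm{II}_{2,26}$. First I would observe that a primitive embedding $Q_h(-1) \hookrightarrow E_8(-1)$ together with the standard decomposition $\Lambda_h = U^{\oplus 2} \oplus E_8(-1)^{\oplus 2} \oplus Q_h(-1)$ and $\mathrm{II}_{2,26} = U^{\oplus 2} \oplus E_8(-1)^{\oplus 3}$ produces a primitive embedding $\Lambda_h \hookrightarrow \mathrm{II}_{2,26}$ whose orthogonal complement is exactly $Q_h^\perp \subset E_8(-1)$, a negative-definite lattice of rank $6$. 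Hence $R(\Lambda_h^\perp) = R(Q_h^\perp)$, and the hypothesis says $2 \le |R(\Lambda_h^\perp)| \le 14$ (resp.\ $\le 16$). The quasi-pullback $F$ of $\Phi_{12}$ is then a modular form on $\Omega(\Lambda_h)$ of weight $12 + \tfrac12|R(Q_h^\perp)|$, which is $\le 19$ (resp.\ $\le 20$) by the bound on the number of roots; since $\mathrm{rk}(\Lambda_h) = 20$, this weight is $< 20$ (resp.\ $= 20$ in the extremal non-negative Kodaira dimension case).

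Next I would verify the hypotheses needed to apply Theorem~\ref{thm:sec_prelim:low-weight}: modularity of $F$ with an appropriate character, vanishing along the ramification divisor, vanishing at the cusps, and absence of irregular cusps for the relevant group. Modularity with character $\det$ for $\widetilde{O}^+(\Lambda_h)$ is immediate from Corollary~\ref{coro:sec_prelim:ModOtilde}, since $\widetilde{O}^+(\Lambda_h)$ is exactly the group whose elements extend to $\mathrm{II}_{2,26}$ acting trivially on the complement. For the ramification divisor: the condition $|R(Q_h^\perp)| \le 54$ is satisfied (we have $\le 16 < 54$), so Proposition~\ref{prop:sec_K3n:F=0} applies verbatim and $F$ vanishes along $\mathrm{Ram}(\Omega(\Lambda_h) \to \Omega(\Lambda_h)/\widetilde{O}^+(\Lambda_h))$; here is where the arithmetic hypothesis $d(n-1)/\gamma^2 > 4$ enters, guaranteeing $\mathrm{disc}(\Lambda_h) = 4d(n-1)/\gamma^2$ is large enough that the case analysis of \cite{GHS07}*{Proposition 3.2} on reflective elements is the one used in Proposition~\ref{prop:sec_K3n:F=0}. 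Vanishing at the cusps follows from the general fact that a quasi-pullback of a cusp form is a cusp form (the boundary of $\Omega(\Lambda_h)$ maps into the boundary of $\Omega(\mathrm{II}_{2,26})$ where $\Phi_{12}$ vanishes, and dividing by linear forms $(Z,r)$ that are non-vanishing on the generic boundary point does not spoil this). Irregular cusps are absent because $-\mathrm{Id} \in \widetilde{O}^+(\Lambda_h)$ — indeed $-\mathrm{Id}$ acts trivially on $D(\Lambda_h)$ — so by the remark following Theorem~\ref{thm:sec_prelim:low-weight} there are no irregular cusps.

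With all hypotheses in place, Theorem~\ref{thm:sec_prelim:low-weight} applied to $L = \Lambda_h$ (of signature $(2,20)$, so $n=20 \ge 9$) and $\Gamma = \widetilde{O}^+(\Lambda_h)$ yields that $\Omega(\Lambda_h)/\widetilde{O}^+(\Lambda_h)$ is of general type when the cusp form has weight $a < 20$, i.e.\ when $|R(Q_h^\perp)| \le 15$, which under the integrality of the root count (it is even, being a union of $\pm$ pairs) means $|R(Q_h^\perp)| \le 14$. In the borderline case $|R(Q_h^\perp)| = 16$, the weight equals $20 = \mathrm{rk}(\Lambda_h)$, and one needs to additionally note that $F$ has character $\det$ and vanishes on the ramification divisor (Lemma~\ref{lemma_vanishing_ramif} is consistent since $\mathrm{rk}(\Lambda_h) = 20 \equiv 20 \bmod 2$); then $F \in S_{20}(\widetilde{O}^+(\Lambda_h), \det)$ is nonzero, so by the last assertion of Theorem~\ref{thm:sec_prelim:low-weight} the modular variety has non-negative Kodaira dimension. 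Finally, to descend to the moduli space: when $n = 2$ or $\gamma \ge 3$, Lemma~\ref{lemma:sec_K3n:Ohat=Otilde} and Proposition~\ref{prop:sec_K3n:Otilde_h} give $\widehat{O}^+(\Lambda,h) = \widetilde{O}^+(\Lambda_h)$, so by the Torelli theorem (Theorem~\ref{thm:sec_prelim:Torelli}) the component of $\mathcal{M}_{\mathrm{K}3^{[n]},2d}^\gamma$ labelled by $a$ is birational to $\Omega(\Lambda_h)/\widetilde{O}^+(\Lambda_h)$, hence inherits the stated Kodaira-dimension conclusion.

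The main obstacle is not really in this proposition — which is essentially an orchestration of the already-established lemmas — but rather it cleanly reduces the general type question to the purely lattice-theoretic problem of constructing a primitive embedding $Q_h \subset E_8$ with a tightly controlled number of roots in the complement (between $2$ and $14$). That root-counting, which is genuinely the hard part of the whole paper, is deferred to the later sections; within the proof of this proposition the only subtlety requiring care is checking that the extremal root-count cases produce weight exactly equal to or just below $\mathrm{rk}(\Lambda_h) = 20$, and confirming that the parity and character bookkeeping for the boundary non-negative-Kodaira-dimension claim is consistent with Freitag's identification $S_{20}(\widetilde{O}^+(\Lambda_h),\det) = H^0(\overline{Y}, K_{\overline{Y}})$.
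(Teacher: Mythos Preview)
Your overall strategy matches the paper's, but there is a genuine gap in the irregular-cusp step, and relatedly you have misidentified where the hypothesis $d(n-1)/\gamma^2>4$ is used.

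You claim that irregular cusps are absent because $-\mathrm{Id}\in\widetilde{O}^+(\Lambda_h)$, asserting that $-\mathrm{Id}$ acts trivially on $D(\Lambda_h)$. This is false: $-\mathrm{Id}$ acts as multiplication by $-1$ on $D(\Lambda_h)$, which is the identity only when $D(\Lambda_h)$ is $2$-elementary. Since $D(\Lambda_h)\cong D(Q_h)$ is generated by at most two elements and has order $4d(n-1)/\gamma^2>16$, it cannot be $2$-elementary, so $-\mathrm{Id}\notin\widetilde{O}^+(\Lambda_h)$ in the cases at hand. The paper instead invokes \cite{Ma21}*{Proposition 4.4}: for $\widetilde{O}^+$ of a lattice whose discriminant group has at most two generators, irregular cusps can only occur when the discriminant is at most $16$. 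The hypothesis $d(n-1)/\gamma^2>4$ is exactly the condition $\mathrm{disc}(\Lambda_h)=4d(n-1)/\gamma^2>16$, and this is where it enters the proof --- not in Proposition~\ref{prop:sec_K3n:F=0}, which uses only the bound $|R(Q_h^\perp)|\le 54$ and makes no reference to the size of the discriminant.

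A smaller point: your informal argument that the quasi-pullback is a cusp form (``the boundary maps to the boundary'') is not quite a proof; the paper cites \cite{GHS13}*{Corollary 8.12}, which shows that the quasi-pullback is a cusp form precisely when $R(Q_h^\perp)\neq\emptyset$, and this is why the lower bound $|R(Q_h^\perp)|\ge 2$ is part of the hypothesis. Also, $\mathrm{rk}(\Lambda_h)=22$, not $20$; what you need is that the negative part of the signature is $20$, so the threshold weight in Theorem~\ref{thm:sec_prelim:low-weight} is $20$.
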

\begin{proof}
Let $\Omega(\Lambda_h)\subset \Omega(I_{2,26})$ be the induced embedding of period domains and $F$ the corresponding quasi-pullback of Borcherds form $\Phi_{12}\in M_{12}\left(O^+({\rm{II}}_{2,26}), {\rm{det}}\right)$. Then Corollary \ref{coro:sec_prelim:ModOtilde} implies that 
\[F\in {\rm{Mod}}_{k}\left(\widetilde{O}^+\left(\Lambda_h\right),{\rm{det}}\right),\]
with weight $k=12+\frac{\left|R(Q_h)\right|}{2},$  see  \eqref{eq_weight_quasipullback}. Moreover, $F$ is a cusp form that vanishes along the ramification divisor of the projection $\Omega(\Lambda_h)\to\Omega(\Lambda_h)\big/\widetilde{O}^+(\Lambda_h)$, see Proposition \ref{prop:sec_K3n:F=0} and \cite{GHS13}*{Corollary 8.12}. Recall that \eqref{eq:sec_K3n:disc} the discriminant group of $\Lambda_h$ has order 
\[{\rm{disc}}\left(\Lambda_h\right)=\frac{4d(n-1)}{\gamma^2}\]
and the minimal number of generators is at most two. From \cite{Ma21}*{Proposition 4.4} one concludes that if $\frac{4d(n-1)}{\gamma^2}>16$, then $\widetilde{O}^+\left(\Lambda_h\right)$ has no irregular cusps. Lemma \ref{lemma:sec_K3n:Ohat=Otilde} and Proposition \ref{prop:sec_K3n:Otilde_h} imply
\[\widehat{O}^+\left(\Lambda,h\right)\cong \widetilde{O}^+\left(\Lambda_h\right).\]
The proposition now follows from Theorems \ref{thm:sec_prelim:low-weight} and \ref{thm:sec_prelim:Torelli}.
\end{proof}

\section{Kodaira dimension for \texorpdfstring{K$3^{[n]}$}{K3[n]} type  with divisibility at least 3}

\subsection{Rank two primitive embeddings and root counting}
\label{ssec:M}

We start with a numerical lemma:
\begin{lemma}
\label{lemma:sec_K3n:even_odd}
Let $C\in \mathbb{Z}$ and $m_1, m_2, m_3$ be pairwise distinct non-negative integers. There is at most one choice of signs in front of $m_1, m_2, m_3$ with an even number of $+$ signs such that
\[C=\pm m_1\pm m_2\pm m_3.\]
The same holds for an odd number of $+$ signs.
\end{lemma}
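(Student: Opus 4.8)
The plan is to recast the statement as a distinctness claim. Sign patterns with an even number of $+$ signs correspond to the four vectors $\varepsilon=(\varepsilon_1,\varepsilon_2,\varepsilon_3)\in\{\pm1\}^3$ with $\varepsilon_1\varepsilon_2\varepsilon_3=-1$, namely $(-1,-1,-1)$, $(1,1,-1)$, $(1,-1,1)$, $(-1,1,1)$, and each such $\varepsilon$ produces the value $S(\varepsilon)=\varepsilon_1 m_1+\varepsilon_2 m_2+\varepsilon_3 m_3$. Saying that at most one even-sign expression equals $C$ is exactly saying that the four values $S(\varepsilon)$ are pairwise distinct; so the whole lemma reduces to proving $S(\varepsilon)\neq S(\varepsilon')$ for distinct even-parity $\varepsilon,\varepsilon'$.

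First I would note that two distinct vectors $\varepsilon,\varepsilon'\in\{\pm1\}^3$ with the same value of $\varepsilon_1\varepsilon_2\varepsilon_3$ must differ in an even number of coordinates, hence (being distinct) in exactly two coordinates, say in positions $a$ and $b$ with $a\neq b$. Then
\[
S(\varepsilon)-S(\varepsilon')=(\varepsilon_a-\varepsilon'_a)m_a+(\varepsilon_b-\varepsilon'_b)m_b=\pm 2m_a\pm 2m_b .
\]
Now I would split into two cases according to whether the two signs on the right agree or not. If they agree, the difference is $\pm2(m_a+m_b)$, which is nonzero because $m_a,m_b$ are distinct non-negative integers and so are not both zero. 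If they disagree, the difference is $\pm2(m_a-m_b)$, which is nonzero because $m_a\neq m_b$. Either way $S(\varepsilon)\neq S(\varepsilon')$, giving the even-parity statement.

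For the odd-parity statement I would simply negate all three signs: the map $\varepsilon\mapsto-\varepsilon$ is a bijection between the even-parity and odd-parity sign vectors, and $S(-\varepsilon)=-S(\varepsilon)$, so an odd-parity solution of $C=S(\varepsilon)$ is the same as an even-parity solution of $-C=S(-\varepsilon)$; uniqueness therefore transfers verbatim. There is really no serious obstacle here — the only point requiring a word of care is the observation that $m_a+m_b>0$, which uses that the $m_i$ are pairwise distinct (so at most one of them vanishes); everything else is the short sign bookkeeping above.
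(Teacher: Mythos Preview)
Your argument is correct and follows essentially the same route as the paper: two sign patterns of the same parity must differ in exactly two positions, and subtracting the corresponding expressions yields $0=m_i\pm m_j$, which is impossible since the $m_i$ are distinct and non-negative. Your write-up is somewhat more formal (introducing the $\varepsilon$-vectors and the map $\varepsilon\mapsto-\varepsilon$ for the odd-parity case), but the underlying idea is identical.
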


\begin{proof}
Suppose there are two choices of signs both with an even number of $+$ signs such that $C=\pm m_1\pm m_2\pm m_3$. The the two equations must differ by two signs. Hence subtracting the two equations and dividing by $\pm 2$ yields an equation of the form
$$0=m_i\pm m_j$$
for some $1\le i <j\le 3$. However, this is impossible since $m_i$ and $m_j$ are assumed non-negative and distinct. The argument when the number of $+$ signs is odd is analogous.
\end{proof}

A similar analysis shows:

\begin{lemma}
\label{lemma:sec_K3n:no even odd}
Let $C, m_1, m_2, m_3$ be non-negative integers with $C\geq1$ and $m_1> m_2> m_3\geq 0$. Then there are at most 3 choices of signs such that
\begin{equation}
\label{eq:sec_K3n:pm_eq}
m_1\pm m_2\pm m_3\pm C\pm C=0.
\end{equation}
\end{lemma}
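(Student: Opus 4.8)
The plan is to first collapse the two identical $\pm C$ terms, reducing \eqref{eq:sec_K3n:pm_eq} to a two-sign problem, and then to bound the number of solutions using the strict inequalities $m_1 > m_2 > m_3 \geq 0$.

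First I would write the equation as $m_1 + \varepsilon_2 m_2 + \varepsilon_3 m_3 + \varepsilon_4 C + \varepsilon_5 C = 0$ with each $\varepsilon_i \in \{\pm 1\}$, and set $t = \varepsilon_4 + \varepsilon_5 \in \{-2,0,2\}$. The values $t = \pm 2$ are each realized by a unique pair $(\varepsilon_4,\varepsilon_5)$, whereas $t = 0$ is realized by exactly two pairs. Hence the number of sign choices solving \eqref{eq:sec_K3n:pm_eq} equals $N(-2C) + 2\,N(0) + N(2C)$, where
\[ N(v) := \#\bigl\{(\varepsilon_2,\varepsilon_3)\in\{\pm1\}^2 \ :\ m_1 + \varepsilon_2 m_2 + \varepsilon_3 m_3 = v\bigr\}. \]

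Next I would record the elementary facts about $N$. When $m_3 > 0$, the four numbers $m_1 \pm m_2 \pm m_3$ are pairwise distinct (their pairwise differences are $2m_2$, $2m_3$, $2(m_2\pm m_3)$, all nonzero), so $N(v) \leq 1$ for every $v$; moreover among them only $m_1 - m_2 - m_3$ can fail to be strictly positive, since $m_1 > m_2$ and $m_1 > m_3$ force the other three to be positive. Thus $N(0) \leq 1$, $N(-2C) \leq 1$, and $N(2C) \leq 1$. When $m_3 = 0$, the only attained values are the two positive numbers $m_1 \pm m_2$, each arising from two sign patterns, so $N(0) = N(-2C) = 0$ and $N(2C) \leq 2$.

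The key point---and the step I expect to be the crux, since the naive estimates above only yield $4$---is the observation that, when $m_3 > 0$, $N(0)$ and $N(-2C)$ cannot both be nonzero: $N(0) = 1$ forces $m_1 - m_2 - m_3 = 0$ while $N(-2C) = 1$ forces $m_1 - m_2 - m_3 = -2C$, and these are incompatible because $C \geq 1$. Assembling the bound: if $m_3 = 0$ the count is $N(2C) \leq 2$; if $m_3 > 0$ and $N(0) = 1$ then $N(-2C) = 0$ and the count is $2 + N(2C) \leq 3$; and if $m_3 > 0$ with $N(0) = 0$ the count is $N(-2C) + N(2C) \leq 2$. In every case it is at most $3$, which is the assertion. (The bound is sharp, e.g.\ for $m_1 = 3$, $m_2 = 2$, $m_3 = 1$, $C = 1$.)
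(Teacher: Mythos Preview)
Your proof is correct. The paper does not give an explicit proof of this lemma---it only writes ``A similar analysis shows'' after the preceding Lemma~\ref{lemma:sec_K3n:even_odd}---so your argument is a complete and careful write-up of the intended elementary case analysis, and the collapse of the two $\pm C$ terms into $t\in\{-2,0,2\}$ together with the observation that $N(0)$ and $N(-2C)$ cannot both be nonzero is exactly the kind of subtraction trick the paper has in mind.
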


We now place ourselves in the following general situation. Let $Q$ be a rank $2$ even positive definite lattice having Gram matrix 
\begin{equation}
\label{eq:sec_K3n:Q}
\left(
\begin{array}{cc} M&N\\ N&P\end{array}\right)
\end{equation}
for some basis $\{z_1, z_2\}$. In the interest of applying Proposition \ref{prop:sec_K3n:R(Q)}, our goal will be to show that under certain assumptions on $M,N$ and $P$ one can primitively embed $Q$ in $E_8$ in such a way that the number roots in $E_8$ orthogonal to $Q$ is between $2$ and $14$. We start with a lemma.
We view $E_8$ as a sublattice of $(\frac{1}{2}\mathbb{Z})^{\oplus 8}$, see Section~\ref{SS:lattices}.

\begin{lemma}
\label{lemma:sec_K3n:odd-coeffs}
Let $x=\alpha_1e_1+\cdots+\alpha_8e_8$ be an integral vector in the lattice $E_8$, i.e., with $\alpha_1, \ldots, \alpha_8\in \mathbb{Z}$. Assume further that $x$ is primitive in $\mathbb{Z}^8$, i.e.\ $\alpha_1,\ldots,\alpha_8$ are coprime. If $x=mv$ for some integer $m$ with $v\in E_8\setminus D_8$, then $\alpha_i$ is odd for all $i=1,\ldots,8$.
\end{lemma}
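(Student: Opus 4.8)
The plan is to exploit the explicit coordinate model of $E_8$ recalled in Section~\ref{SS:lattices}, combined with a single parity observation and the coprimality hypothesis. First I would recall that every vector of $E_8$ has coordinates that are either all integers or all half-integers (elements of $\frac12+\mathbb{Z}$), and that $D_8$ is precisely the sublattice of $E_8$ consisting of the integral vectors. Hence the hypothesis $v\in E_8\setminus D_8$ forces $v=(v_1,\dots,v_8)$ to have all of its coordinates in $\frac12+\mathbb{Z}$; equivalently, $2v_i$ is an odd integer for every $i=1,\dots,8$.

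Next I would rewrite the relation $x=mv$ coordinate-wise as $2\alpha_i=m\,(2v_i)$ for each $i$. Since the $\alpha_i$ are coprime, $x\neq 0$, so $m\neq 0$. If $m$ were odd, then $m\,(2v_i)$ would be odd, contradicting the fact that $2\alpha_i$ is even; therefore $m$ is even, say $m=2m'$. Substituting gives $\alpha_i=m'\,(2v_i)$, so $m'$ divides every $\alpha_i$, and hence $m'$ divides $\gcd(\alpha_1,\dots,\alpha_8)=1$. Thus $m'=\pm1$, whence $\alpha_i=\pm(2v_i)$, which is odd for each $i$, as claimed.

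I do not expect any genuine obstacle: the argument is just the combination of the shape of $E_8$-vectors, one parity remark, and the definition of $\gcd$. The only point that needs a little care is translating ``$v\in E_8\setminus D_8$'' into ``all coordinates of $v$ are half-integers'' — but this is immediate from the coordinate description of $E_8$ and the identification of $D_8$ with its integral part.
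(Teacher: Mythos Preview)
Your proof is correct and is essentially the same argument as in the paper: both use that $v\in E_8\setminus D_8$ forces all coordinates of $v$ to lie in $\tfrac12+\mathbb{Z}$, deduce that $m$ must be even, and then use primitivity of $x$ in $\mathbb{Z}^8$ to conclude $m=\pm2$ and hence that each $\alpha_i$ is odd. The only cosmetic difference is that the paper writes $v=\tfrac12(e_1+\cdots+e_8)+\sum y_ie_i$ with $y\in D_8$, while you work directly with the odd integers $2v_i$; the content is identical.
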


\begin{proof}
Since $v\not\in D_8$, we have $v\in \frac{1}{2}(e_1+\ldots+e_8)+D_8$. Writing $v=\frac{1}{2}(e_1+\cdots+e_8)+\sum_{i=1}^8 y_i e_i$ with $\sum_{i=1}^8 y_i e_i\in D_8$, we have 
\[x=\sum_{i=1}^8 \left(\frac{m}{2}+my_i\right)e_i.\]
Since $\alpha_1,\ldots, \alpha_8\in \mathbb{Z}$, it follows that $m$ is even. However, since $v\not \in D_8$ and $x\in \mathbb{Z}^8$ is primitive, we must have that $4$ does not divide $m$. Hence for each $i=1,\ldots, 8$, we have $\alpha_i=\frac{m}{2}+my_i$, where $\frac{m}{2}$ is odd and $my_i$ is even. 
\end{proof}

In Proposition \ref{prop:sec_K3n:roots} below we consider solutions $(x_1,x_2,x_3)$ to an integral linear Diophantine equation of the form 
\begin{equation}
\label{eq:sec_K3n:basic diophantine}\alpha_1 X_1+\alpha_2X_2+\alpha_3X_3=K,
\end{equation}
where $\alpha_1, \alpha_2, \alpha_3 $ are fixed coprime integers exactly one of which is even. Further, we require all the $x_i$ to be odd when $K$ is even, and exactly one $x_i$ to be even when $K$ is odd. In both cases, solutions exist. If $K$ is odd, if we put $X_i = 2Y_i + 1$ then the odd solutions that we want correspond to solutions of
\[\alpha_1 Y_1+\alpha_2Y_2+\alpha_3Y_3=\frac{1}{2}(K-(\alpha_1+\alpha_2+\alpha_3)),\]
which exist because the $\alpha_i$ are coprime. If $K$ is even, we can reduce to the odd case: we may assume without loss of generality that $\alpha_1$ is odd and then, if we put $X_1 =Y_1+1,X_2=Y_2$ and $X_3=Y_3$, the solutions that we want correspond to odd solutions of
\[\alpha_1 Y_1+\alpha_2Y_2+\alpha_3Y_3=K-\alpha_1.\]

The following proposition constructs 
the embeddings of the lattice \eqref{eq:sec_K3n:Q} into $E_8$ needed to apply Proposition \ref{prop:sec_K3n:R(Q)} by making essential use of Theorem~\ref{thm:sec_prelim:HK21}. The given embeddings will depend on the residues of $M$ and $P$ modulo $4$.

\begin{proposition}
\label{prop:sec_K3n:roots}
Let $Q$ be an even lattice of rank $2$ with basis and Gram matrix given by \eqref{eq:sec_K3n:Q}. Assume further that $M>8$ and $M\ne 20,24$.
We fix
\[\Omega=
\begin{cases}
0 &\mbox{ if } M\equiv 2 \mod 4,\; M\not\in \{18,22,102,\star\},\\
1 &\mbox{ if } M\equiv 0 \mod 4,\; M\not \in \{104, \star+2\},\\
2 &\mbox{ if } M\in \{18,22,102,\star\},\\
3 & \mbox{ if } M \in \{104, \star+2\}.
\end{cases}
\]
and write $M-2\Omega^2$ as a sum of three pairwise distinct coprime squares (Theorem~\ref{thm:sec_prelim:HK21}) 
\[M-2\Omega^2=\alpha_1^2+\alpha_2^2+\alpha_3^2\]
with $\alpha_1>\alpha_2>\alpha_3\ge 0$. Let $(x_1,x_2,x_3)$ be a solution  to the equation
\[\alpha_1 X_1 +\alpha_2  X_2+\alpha_3 X_3=N-2\Omega\Theta,\]
where
\[\Theta=
\begin{cases}
0 &\mbox{ if } 4 | P,\\
1 &\mbox {otherwise}
\end{cases}\]
and we require all $x_i$'s to be odd when $N$ is even and exactly one $x_i$ to be even when $N$ is odd; when $\alpha_3=0$, we set $x_3=1$. Let 
$S=P-(x_1^2+x_2^2+x_3^2)-2\Theta^2$.
If $S>5$ and 
\[S\ne 
\begin{cases}
8 &\mbox{ if } \Omega=\Theta=0\\
6 &\mbox{ if } \Omega=0, \Theta=1\\
6,9,18,22,33,57,102,177,\star &\mbox{ otherwise},
\end{cases}
\]
there is a primitive embedding $Q\hookrightarrow E_8$ such that the number of roots in the orthogonal complement $Q^{\perp}$ satisfies $$2\leq\left|R(Q)\right|\leq 14.$$
\end{proposition}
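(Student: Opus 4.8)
The plan is to write down the embedding explicitly from the arithmetic input of the statement, and then bound $|R(Q^{\perp})|$ by enumerating the $240$ roots of $E_8$. Throughout I regard $E_8\subset(\tfrac12\mathbb{Z})^{\oplus 8}$ as in Section~\ref{SS:lattices}, with $D_8\subset E_8$ the index-two sublattice of integral vectors. Using the three-squares decomposition $M-2\Omega^2=\alpha_1^2+\alpha_2^2+\alpha_3^2$, the Diophantine solution $(x_1,x_2,x_3)$ with the prescribed parities, and a decomposition $S=\gamma_6^2+\gamma_7^2+\gamma_8^2$ with $\gamma_6>\gamma_7>\gamma_8\ge 0$ chosen as distinct and coprime as $S$ allows (by Theorem~\ref{thm:sec_prelim:HK21}, three pairwise distinct coprime squares whenever $S>5$ avoids the listed values), I set
\[
v_1=\alpha_1e_1+\alpha_2e_2+\alpha_3e_3+\Omega(e_4+e_5),
\]
\[
v_2=x_1e_1+x_2e_2+x_3e_3+\Theta(e_4+e_5)+\gamma_6e_6+\gamma_7e_7+\gamma_8e_8.
\]
A direct computation gives $(v_1,v_1)=M$, $(v_2,v_2)=P$ and $(v_1,v_2)=N$, so $z_i\mapsto v_i$ realizes the Gram matrix \eqref{eq:sec_K3n:Q}. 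Both vectors are integral; the congruence $M-2\Omega^2\equiv 2\pmod 4$ — which is exactly what dictates the value of $\Omega$, with the hypotheses $M>8$ and $M\ne 20,24$ inserted so that $M-2\Omega^2$ stays outside the Halter--Koch exceptional set — forces $\alpha_1+\alpha_2+\alpha_3$ to be even, so $v_1\in D_8$; and the coordinate sum of $v_2$ is $\equiv P\pmod 2$ (using $\gamma_6+\gamma_7+\gamma_8\equiv S\equiv P-(x_1+x_2+x_3)\pmod 2$), hence even since $Q$ is an even lattice, so $v_2\in D_8$ too.

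Next I would check that $\langle v_1,v_2\rangle$ is primitive in $E_8$. Since $\gcd(\alpha_1,\alpha_2,\alpha_3)=1$ the vector $v_1$ is primitive in $\mathbb{Z}^8$, and its vanishing entry in position $6$ prevents it, by Lemma~\ref{lemma:sec_K3n:odd-coeffs}, from being a proper multiple of a vector in $E_8\setminus D_8$; so $v_1$ is primitive in $E_8$. If $\tfrac1p(av_1+bv_2)\in E_8$ for a prime $p\nmid\gcd(a,b)$, reading positions $6,7,8$ (where only $v_2$ is supported) gives $p\mid b\gamma_i$ for each $i$, so $p\mid b$ by coprimality of the $\gamma_i$, whence $p$ divides $av_1$ coordinatewise and $p\mid a$ by primitivity of $v_1$ — a contradiction; the case $p=2$ is again closed by Lemma~\ref{lemma:sec_K3n:odd-coeffs}, and the subcase $\gamma_8=0$ is handled on positions $6,7$ together with a coordinate of $v_1$. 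The prescribed parities of the $x_i$ are what keep this analysis, and the root count below, uniform across the regimes.

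The substance of the proof is the root count. A root of $E_8$ is either integral, $\pm e_i\pm e_j$ with $i<j$, or fractional, $\tfrac12\sum_i\epsilon_ie_i$ with an even number of minus signs. Orthogonality of an integral root to $v_1$ forces $\pm(v_1)_i\pm(v_1)_j=0$; since $\alpha_1>\alpha_2>\alpha_3\ge 0$ are pairwise distinct, the integral roots orthogonal to $v_1$ are $\pm(e_4-e_5)$ (for any $\Omega$), the roots supported on $\{6,7,8\}$, and — only when $\Omega=0$ or $\alpha_3=0$ — a controlled further set supported on $\{3,4,5,6,7,8\}$; imposing orthogonality to $v_2$ as well, the distinctness of $\gamma_6>\gamma_7>\gamma_8$ kills the $\{6,7,8\}$-block, leaving $\pm(e_4-e_5)$ plus a bounded exceptional contribution. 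For a fractional root the two orthogonality conditions become two linear sign relations; Lemma~\ref{lemma:sec_K3n:even_odd} (when $\Omega=0$) or Lemma~\ref{lemma:sec_K3n:no even odd} (when $\Omega\ge1$, where positions $4,5$ repeat the value $\Omega$) bounds the admissible sign patterns on positions $1,\dots,5$, and Lemma~\ref{lemma:sec_K3n:even_odd} applied to the distinct $\gamma_6>\gamma_7>\gamma_8$ then pins down the signs on positions $6,7,8$, so the fractional contribution is small. Running this separately in the three regimes $\Omega=\Theta=0$, $\Omega=0,\ \Theta=1$, and $\Omega\ge1$ — in each of which the excluded values of $M$ and $S$ are precisely those forcing a repetition or a vanishing entry among the $\alpha_i$ or $\gamma_i$, hence extra integral and fractional roots — yields the bound. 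The lower bound $|R(Q^{\perp})|\ge 2$ is immediate from $\pm(e_4-e_5)\in R(Q^{\perp})$.

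The obstacle is entirely the upper bound $|R(Q^{\perp})|\le 14$, and this is where all the case distinctions and both exceptional lists originate. One must verify simultaneously that (i) for a suitable choice of Diophantine solution the residual mass $S$ admits a representation as three (as distinct and coprime as possible) squares outside the stated set — this is exactly the sharp content of the Halter--Koch results of Section~\ref{ssec:H-K} — and (ii) that in the tightest regime $\Omega\ge1$, where already $14$ integral roots of $E_8$ are orthogonal to $v_1$, passing to $v_2$ always cuts the count down far enough that, together with the fractional roots, the total never exceeds $14$. Showing that the exceptional sets for $M$ and $S$ are neither too small nor too large — i.e., that outside them the construction genuinely works, and for each listed value it genuinely fails — is the delicate combinatorial core of the argument.
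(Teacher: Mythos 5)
Your embedding is exactly the paper's main one, and the primitivity argument and the general shape of the root count are on the right track. But there is a genuine gap in how you dispose of the residual value $S$. You assert that $S$ admits a representation by three squares ``as distinct and coprime as $S$ allows'' whenever $S>5$ avoids the listed values, and you build the whole root count on the resulting $\gamma_6>\gamma_7>\gamma_8\ge 0$. This does not work: the exceptional set for $S$ in the statement is much \emph{smaller} than the Halter--Koch exceptional sets. For instance, when $\Omega=\Theta=0$ the statement only excludes $S=8$, yet $S\in\{10,13,25,37,58,85,130,\star\}$ is not a sum of three \emph{positive} coprime squares, and taking $\gamma_8=0$ there is fatal: with $\Omega=\Theta=0$ the coordinates $4,5,8$ of both $v_1$ and $v_2$ all vanish, so the twelve integral roots $\pm e_i\pm e_j$, $i,j\in\{4,5,8\}$, lie in $Q^\perp$, and together with the possible fractional roots (up to $8$ when $\alpha_1=\alpha_2+\alpha_3$) the count exceeds $14$. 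Similarly, when $\Omega=0$, $\Theta=1$ the statement permits $S\in\{9,18,22,33,57,102,177,\star\}$, none of which is a sum of three pairwise distinct coprime squares. The paper closes exactly these cases with five \emph{additional} embeddings: one in which $v_2$ acquires an extra $\Xi e_4+\Xi e_5$ block with $\Xi\in\{2,3\}$ so that $S+2\Theta^2-2\Xi^2$ becomes representable by three distinct coprime squares, and four ad hoc embeddings for $S=10,9,18,22$. Your single-family construction cannot cover these values, so the proof as written is incomplete precisely on the set of $S$ where the statement's exclusions are weaker than Halter--Koch's. (A secondary, related point: in the regime $\Omega=\Theta=0$ the paper deliberately takes $x_6,x_7,x_8$ positive but \emph{not} necessarily distinct, and then has to control repeated values in the integral and fractional counts; your uniform insistence on strict inequalities $\gamma_6>\gamma_7>\gamma_8$ is not available there.)

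Beyond this, the upper bound $\lvert R(Q^\perp)\rvert\le 14$ --- which you correctly identify as the substance of the proposition --- is only described structurally, not verified; the paper's proof spends most of its length enumerating the integral roots forced by coincidences among the $\alpha_i$, $x_i$, $\Omega$, $\Theta$ and bounding the fractional roots via Lemmas \ref{lemma:sec_K3n:even_odd} and \ref{lemma:sec_K3n:no even odd} separately in each regime, and crucially exploiting trade-offs (e.g.\ fractional roots force $\alpha_1=\alpha_2+\alpha_3$, hence $\alpha_3\neq 0$, hence fewer integral roots). Deferring that computation, on top of the missing auxiliary embeddings, leaves the argument short of a proof.
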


\begin{proof}
Begin by noting that
 \[S=P-(x_1^2+x_2^2+x_3^3)-2\Theta^2\]
 must satisfy $S\equiv 1,2,5,6 \mod 8$. We separate the proof into several cases depending on the values of $S$. We are under the assumption $S>5$.

\vspace{0.3cm}
\textbf{CASE 1: } Assume $S\not \in \begin{cases} \{10, 13, 25, 37, 58, 85, 130, \star\} &\mbox{ if } \Omega=\Theta=0\\
 \{6,9,18, 22, 33, 57, 102, 177, \star\} &\mbox{ otherwise}.
 \end{cases}$
 
\vspace{0.1cm}
Let $x_6\ge x_7\ge x_8$ be non-negative integers such that
\[S=x_6^2+x_7^2+x_8^2,\]
where if $\Omega=\Theta=0$, we ask that $x_6, x_7, x_8$ are positive and coprime, and if $\Theta, \Omega$ are not both zero, then we ask the $x_i$ to be pairwise distinct and coprime, see Theorem \ref{thm:sec_prelim:HK21}. 
Consider the embedding $Q\hookrightarrow E_8$ given by $z_1\mapsto v_1$, $z_2\mapsto v_2$ where 
\[
\begin{aligned}
v_1&=\alpha_1e_1+\alpha_2e_2+\alpha_3e_3 + \Omega e_4+\Omega e_5\\
v_2&= x_1e_1 + x_2 e_2 + x_3e_3 + \Theta e_4+\Theta e_5 + x_6 e_6+x_7e_7+x_8e_8.
\end{aligned}
\]
Note that $(v_1,v_1)=M$, $(v_1,v_2)=N$, and $(v_2,v_2)=P$. Moreover, as $M$ and $P$ are even, the sums of the coefficients of $v_1$, resp. $v_2$, are even, hence $v_1,v_2$ are primitive elements in $D_8\subset \mathbb{Z}^8$. In order to check primitivity of the embedding, assume there exist coprime integers $r,s$ such that
\begin{equation}
\label{eq:sec_K3n:rv1+sv2}
rv_1+sv_2=mv
\end{equation}
is a multiple of an element $v\in E_8$. If $v\in D_8$, then $m$ divides $sx_6$, $sx_7$, and $sx_8$. Since $x_6,x_7,x_8$ are coprime, it follows that $m$ divides $s$. 
Similarly, $m$ must divide $r\alpha_i+sx_i$ for $i=1,2,3$, and as $\alpha_1$, $\alpha_2$, $\alpha_3$ are coprime, it follows that $m$ divides $r$. 
But $r$ and $s$ are assumed to be coprime, thus $m=\pm 1$. If $v\in E_8\setminus D_8$, since neither $M$ nor $S$ are congruent to $3$ modulo $8$, we know that at least one $\alpha_i$ and one of $x_6, x_7, x_8$ must be even. Primitivity then follows from Lemma \ref{lemma:sec_K3n:odd-coeffs}.

We now count the roots in $Q^\perp$. If $\Omega=\Theta=0$, one observes that the integral roots are
\begin{enumerate}
\item $\pm e_4\pm e_5$
\item\label{en1:sec_K3n:2} $\pm (e_3-e_j)$ for $j\in \{6,7,8\}$ if $\alpha_3=0$ and $1=x_j$

\item\label{en1:sec_K3n:3} $\pm (e_i-e_j)$ for $6\le i<j\le 8$ if $x_i=x_j$

\end{enumerate}
Note that since $S\ne 3$, there are at most $4$ roots of type \eqref{en1:sec_K3n:2} and since $x_6,x_7,x_8$ are coprime, there are at most $2$ roots of type \eqref{en1:sec_K3n:3}. Hence there are between $4$ and $10$ integral roots in $Q^\perp\subset E_8$. Moreover, if $\alpha_3\ne 0$, there are at most $6$ integral roots.

If $\Omega$ and $\Theta$ are not both $0$, the integral roots in $Q^\perp$ are
\begin{enumerate}
\item $\pm (e_4-e_5)$
\item \label{en2:sec_K3n:2}$\pm (e_k-e_i)$ for $k\in \{1,2,3\}$ and $i\in \{4,5\}$, if $\alpha_k=\Omega$ and $x_k=\Theta$ 
\item\label{en2:sec_K3n:3} $\pm (e_3-e_j)$ for $j\in\{6,7,8\}$ if $\alpha_3=0$ and $x_j=1$
\item\label{en2:sec_K3n:4} $\pm (e_i-e_j)$ for $i=4,5$ and $j\in \{6,7,8\}$ if $\Omega=0$ and $\Theta=x_j$
\end{enumerate}

Note that since $\alpha_1, \alpha_2, \alpha_3$ are distinct, there are at most $4$ roots of type \eqref{en2:sec_K3n:2}. 
Since $x_6,x_7,x_8$ are distinct, there are at most $2$ roots of type \eqref{en2:sec_K3n:3} and $4$ of type \eqref{en2:sec_K3n:4}. 
Therefore, in this case there are between $2$ and $12$ integral roots in $Q^\perp$. 
Moreover, if $\alpha_3\ne 0$ the number of integral roots is at most $6$ (since in this case \eqref{en2:sec_K3n:2} and \eqref{en2:sec_K3n:4} cannot happen simultaneously) and if $\Omega\ne 0$ the number of integral roots is at most $8$.

Next, we count the fractional roots in $Q^{\perp}$. Suppose that
\begin{equation}
\label{eq:sec_K3n:w}
w=\frac{1}{2}\left(\pm e_1\pm e_2\pm e_3\pm e_4\pm e_5\pm e_6\pm e_7\pm e_8\right),
\end{equation}
is a fractional root in $Q^\perp$. So, the number of $+$ signs in \eqref{eq:sec_K3n:w} is even. Since $(w,v_1)=0$, we must have a choice of signs such that
\begin{equation}
\label{eq:sec_K3n:alpha signs}
\alpha_1\pm \alpha_2\pm \alpha_3\pm \Omega \pm \Omega=0.
\end{equation}
Let $C$ be the sum $x_1\pm x_2\pm x_3\pm \Theta \pm \Theta $ for the same choice of signs as in \eqref{eq:sec_K3n:alpha signs}. Then as $(w,v_2)=0$, we have 
\begin{equation}
\label{eq:sec_K3n:R signs}
C=\pm x_6\pm x_7\pm x_8,
\end{equation}
where the number of $+$ signs in \eqref{eq:sec_K3n:R signs} and \eqref{eq:sec_K3n:alpha signs} have opposite parities. If $\Omega=\Theta=0$, then $x_6, x_7, x_8$ are all positive and not necessarily distinct. 
Begin by noting that since $\alpha_1>\alpha_2>\alpha_3$ and $\Omega=0$, the only way for an equation of the form \eqref{eq:sec_K3n:alpha signs} to hold is if $\alpha_1=\alpha_2+\alpha_3$, 
so the first three signs in $w$ are $\pm(e_1-e_2-e_3)$. 
Moreover, since $\alpha_1,\alpha_2,\alpha_3$ are distinct, this can only happen if $\alpha_3\ne0$. Hence when $\Omega=\Theta=0$, if there are fractional roots, there are at most $6$ integral roots.

Note moreover that in the case $\Omega=\Theta=0$, if there are 
 two choices of signs differing by one sign such that \eqref{eq:sec_K3n:R signs} holds, then subtracting the two equations will yield $0=\pm 2x_i$ for some $i\in \{6,7,8\}$. But this is impossible since $x_6, x_7, x_8$ are assumed to be non-zero. 
 
If there  are two choices of signs differing by two signs such that 
\eqref{eq:sec_K3n:R signs} holds, then adding the two equations yields $2x_i=\pm2C$ for some $i\in \{6,7,8\}$ and subtracting the two equations yields $2x_j\pm 2x_k=0$ for $j,k\in \{6,7,8\}$ and $j,k\ne i$. 
Since $x_6, x_7$, and $x_8$ are coprime and positive, we then know that  $x_j$ and $x_k$ cannot also be equal to $\pm C$ and moreover $C\ne 0$. 
It follows that the choices of signs such that $C=\pm x_6\pm x_7\pm x_8$ are precisely $C=C \pm (x-x).$ Hence possible fractional roots in this case are of the form
\[w=\pm \frac{1}{2}\left( e_1-e_2-e_3\pm e_4\pm e_5- (e_6\pm (e_7-e_8))\right),\]
where the number of $+$ signs is even. Possible fractional roots in this case satisfy
\begin{align*}
\pm w&=\frac{1}{2}\left( e_1-e_2-e_3\pm e_4\pm e_5- (e_6\pm (e_7-e_8))\right)\\
&=\frac{1}{2}\left( e_1-e_2-e_3\pm (e_4+ e_5)- e_6\pm (e_7-e_8)\right),
\end{align*}
as the number of $+$ signs on the right hand side must be even. In particular, there are at most $8$ fractional roots.

The only remaining way to have two choices of signs such that Equation \eqref{eq:sec_K3n:R signs} holds is if the two choices differ by three signs. 
In this case $C=0$, so by the previous analysis we cannot have any additional choices of signs such that $C=\pm x_6\pm x_7\pm x_8$. These two choices will each correspond to at most $4$ fractional roots. 
We have shown that when $\Omega=\Theta=0$ there are at most $6$ integral and $8$ fractional roots, in particular $$2\leq \left|R(Q)\right|\leq 14.$$ 
Now consider the case when $\Omega$ and $\Theta$ are not both $0$. Then $x_6, x_7, x_8$ are pairwise distinct and by Lemma \ref{lemma:sec_K3n:even_odd}, there is at most one choice of signs such that \eqref{eq:sec_K3n:R signs} holds. 
It follows that for each choice of signs such that \eqref{eq:sec_K3n:alpha signs} holds, there are at most $2$ fractional roots in $Q^\perp$ (of the form $\pm w$). 
However by Lemma \ref{lemma:sec_K3n:no even odd}, there are at most $3$ choices of signs such that Equation \eqref{eq:sec_K3n:alpha signs} holds. Hence there are at most $6$ fractional roots in $Q^\perp$.  
 
Note moreover that when $\Omega\ne 0$, 
there are at most $8$ integral roots in $Q^\perp$, 
so the total number of roots in $Q^\perp$ is between $2$ and $14$. 
If $\Omega=0$ and $\Theta \ne 0$, then as $\alpha_1>\alpha_2>\alpha_3$ the only way to have Equation \eqref{eq:sec_K3n:alpha signs} hold is if $\alpha_1=\alpha_2+\alpha_3$, meaning in particular that $\alpha_3\ne 0$. 
It follows that if $\Omega=0$, $\Theta\ne 0$, and there are fractional roots, then there are most $6$ integral roots. Hence the total number of roots is between $2$ and $12$. 
 
\vspace{0.3cm}
\textbf{CASE 2: }Assume $\Omega=0$ and $S\in \begin{cases} \{13, 25, 37, 58, 85, 130, \star\} &\mbox{ if } \Theta=0\\
 \{33, 57, 102, 177, \star\} &\mbox{ if } \Theta=1.
 \end{cases}$

\vspace{0.1cm}

We fix $\Xi=2$ if $\Theta=0$ and $\Xi=3$ if $\Theta=1$. Observe that $R:=S+2\Theta-2\Xi^2$ can be written as a sum of three different coprime squares, $R=y_6^2+y_7^2+y_8^2$, where without loss of generality $y_6>y_7>y_8$. 
Note moreover, that $R\equiv S \mod 8$, thus in particular $R$ is not congruent to $3$ modulo $8$. Hence one of $y_6, y_7, y_8$ must be even. In this case, consider the embedding $Q^\perp\subset E_8$ given by $z_1\mapsto v_1$ and $z_2\mapsto v_2$, where
\begin{align*}
v_1&=\alpha_1e_1+\alpha_2e_2+\alpha_3e_3\\
v_2 &= x_1e_1 + x_2 e_2 + x_3e_3 + \Xi e_4+\Xi e_5 + y_6 e_6+y_7e_7+y_8e_8.
\end{align*}
As with the previous case, note that $v_i$ are in $D_8$. From 
\[{\rm{gcd}}(\alpha_1,\alpha_2,\alpha_3)={\rm{gcd}}(y_6,y_7,y_8)=1\]
together with the fact that one of the $\alpha_j$ and one of the $y_j$ must be even,
we see that the embedding is primitive, see Lemma~\ref{lemma:sec_K3n:odd-coeffs}. 
    
The integral roots for the above embedding are the same as those listed for the previous embedding in the case that $\Omega$ and $\Theta$ are not both $0$. 
Thus there are between $2$ and $12$ integral roots in $Q^\perp$, with at most $6$ integral roots when $\alpha_3\ne 0$. The analysis of the fractional roots is also the same as in the previous case when $\Omega$ and $\Theta$ are not both $0$. 
In particular, because $\Omega=0$, in order to have fractional roots we must have $\alpha_3\ne 0$ and there are at most 6 fractional roots. Since in this case, there are at most $6$ integral roots, it follows that the total number of roots is between 2 and 12. 

For the next cases we give the embedding. The primitivity and root counting arguments are analogous to the first two cases. 

\vspace{0.3cm}
\textbf{CASE 3: }Assume $\Omega=0$, $\Theta=0$, and $S=10$.
\vspace{0.1cm}

In  this case the embedding $Q^\perp\subset E_8$ is given by
 \begin{align*}
    z_1&\mapsto v_1=\alpha_1e_1+\alpha_2e_2+\alpha_3e_3\\
    z_2 &\mapsto v_2 = x_1e_1 + x_2 e_2 + x_3e_3 +  e_4+e_5 + 2 e_6+2e_7.
    \end{align*}
  
\vspace{0.2cm}
\textbf{CASE 4: }Assume $\Omega=0$, $\Theta=1$, and $S=9$.
\vspace{0.1cm}
   
In this case the embedding $Q^\perp\subset E_8$ is given by
 \begin{align*}
    z_1&\mapsto v_1=\alpha_1e_1+\alpha_2e_2+\alpha_3e_3\\
    z_2 &\mapsto v_2 = x_1e_1 + x_2 e_2 + x_3e_3  + 3 e_6+e_7+e_8.
    \end{align*}

\vspace{0.2cm}
\textbf{CASE 5: }Assume $\Omega=0$, $\Theta=1$, and $S=18$.
 \vspace{0.1cm}

In this case the embedding $Q^\perp\subset E_8$ is given by
 \begin{align*}
    z_1&\mapsto v_1=\alpha_1e_1+\alpha_2e_2+\alpha_3e_3\\
    z_2 &\mapsto v_2 = x_1 e_1+x_2 e_2+x_3 e_3 + e_4+e_5+3e_6+3 e_7.
    \end{align*}

\vspace{0.2cm}
\textbf{CASE 6: }Assume $\Omega=0$, $\Theta=1$, and $S=22$.
 \vspace{0.1cm}
 
In this case the embedding $Q^\perp\subset E_8$ is given by
 \begin{align*}
    z_1&\mapsto v_1=\alpha_1e_1+\alpha_2e_2+\alpha_3e_3\\
    z_2 &\mapsto v_2 =x_1 e_1+x_2 e_2+x_3 e_3 + e_4+e_5+3e_6+3 e_7+2e_8.
    \end{align*}
We have exhausted all cases of the proposition.
\end{proof}


\subsection{General type results for divisibility at least three.}

Combining Propositions \ref{prop:sec_K3n:R(Q)}  and \ref{prop:sec_K3n:roots} then yields the following result. 

\begin{theorem}
\label{thm: diophantine bounds version}
Let $n,d,\gamma,a$ be positive integers satisfying the hypothesis of Proposition \ref{prop:sec_K3n:nonemp} such that $n\ge 6$, $n\ne 11,13$, $\gamma\ge 3$ and 
$\frac{d(n-1)}{\gamma^2}>4.$
Let $M=2(n-1)$, $N=-a\frac{2(n-1)}{\gamma}$, and $P=2t$. Then, in the notation of 
 Proposition  \ref{prop:sec_K3n:roots}, if $S>5$ and 
\[S\ne 
\begin{cases}
8 &\mbox{ if } \Omega=\Theta=0\\
6 &\mbox{ if } \Omega=0, \Theta=1\\
6,9,18,22,33,57,102,177,\star &\mbox{ otherwise},
\end{cases}
\]
the component of $\mathcal{M}_{\mathrm{K}3^{[n]},2d}^\gamma$ corresponding to $a$ (see Proposition~\ref{prop:sec_K3n:a's}) is of general type.
\end{theorem}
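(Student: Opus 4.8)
The plan is to obtain this theorem as an essentially formal combination of Propositions~\ref{prop:sec_K3n:R(Q)} and~\ref{prop:sec_K3n:roots}; all of the substance is already contained in those two statements, so the work consists of checking that their hypotheses are met and that the notation lines up. First I would recall, via Lemma~\ref{lemma:sec_K3n:h} and the discussion following it, that after replacing $h$ by a suitable representative of its $\widetilde{O}(\Lambda)$-orbit we may assume
\[
\Lambda_h=U^{\oplus 2}\oplus E_8(-1)^{\oplus 2}\oplus Q_h(-1),
\]
where $Q_h$ has Gram matrix \eqref{eq:sec_K3n:Q_h} in the basis $\{z_1,z_2\}$. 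Since $2d>0$ forces $t>0$ and $\det Q_h=\tfrac{4d(n-1)}{\gamma^2}>0$, the lattice $Q_h$ is even and positive definite, and its Gram matrix is exactly of the form \eqref{eq:sec_K3n:Q} with $M=2(n-1)$, $N=-a\tfrac{2(n-1)}{\gamma}$ (an integer because $\gamma\mid 2(n-1)$), and $P=2t$. Thus $Q_h$ is a legitimate input for Proposition~\ref{prop:sec_K3n:roots}.

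Next I would verify the numerical hypotheses of Proposition~\ref{prop:sec_K3n:roots}: the condition $M>8$ is equivalent to $n\geq 6$, and $M\notin\{20,24\}$ is equivalent to $n\notin\{11,13\}$, both of which are assumed. Taking $\Omega,\Theta$, the decomposition $M-2\Omega^2=\alpha_1^2+\alpha_2^2+\alpha_3^2$, the chosen solution $(x_1,x_2,x_3)$ of $\alpha_1X_1+\alpha_2X_2+\alpha_3X_3=N-2\Omega\Theta$, and the quantity $S=P-(x_1^2+x_2^2+x_3^2)-2\Theta^2$ all exactly as in Proposition~\ref{prop:sec_K3n:roots}, the only remaining hypotheses there are that $S>5$ and that $S$ avoids the exceptional list, which is precisely what the theorem stipulates. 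Hence Proposition~\ref{prop:sec_K3n:roots} produces a primitive embedding $Q_h\hookrightarrow E_8$ whose orthogonal complement satisfies $2\leq|R(Q_h^\perp)|\leq 14$.

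Finally I would feed this embedding into Proposition~\ref{prop:sec_K3n:R(Q)}. The data $(n,d,\gamma,a)$ satisfies the hypotheses of Proposition~\ref{prop:sec_K3n:nonemp}, and we are assuming $\tfrac{d(n-1)}{\gamma^2}>4$; together with the embedding just constructed, Proposition~\ref{prop:sec_K3n:R(Q)} gives that $\Omega(\Lambda_h)/\widetilde{O}^+(\Lambda_h)$ is of general type. Since $\gamma\geq 3$, Lemma~\ref{lemma:sec_K3n:Ohat=Otilde} and Proposition~\ref{prop:sec_K3n:Otilde_h} identify $\widehat{O}^+(\Lambda,h)$ with $\widetilde{O}^+(\Lambda_h)$, so by Theorem~\ref{thm:sec_prelim:Torelli} the component of $\mathcal{M}_{\mathrm{K}3^{[n]},2d}^{\gamma}$ corresponding to $a$ is birational to this modular variety and is therefore of general type. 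I do not expect a genuine obstacle here: the only care required is the bookkeeping of matching the abstract lattice $Q$ of Proposition~\ref{prop:sec_K3n:roots} with the geometric $Q_h$ and transcribing the exceptional values of $S$ verbatim, since the analytic input (modularity of the quasi-pullback, vanishing along the ramification divisor, absence of irregular cusps, Hirzebruch–Mumford proportionality) is already packaged into Proposition~\ref{prop:sec_K3n:R(Q)}.
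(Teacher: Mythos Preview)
Your proposal is correct and matches the paper's approach exactly: the paper itself presents this theorem simply as the combination of Propositions~\ref{prop:sec_K3n:R(Q)} and~\ref{prop:sec_K3n:roots}, without a separate proof. Your final paragraph re-invoking Lemma~\ref{lemma:sec_K3n:Ohat=Otilde}, Proposition~\ref{prop:sec_K3n:Otilde_h}, and Theorem~\ref{thm:sec_prelim:Torelli} is slightly redundant, since Proposition~\ref{prop:sec_K3n:R(Q)} already packages the conclusion about the moduli component for $\gamma\geq 3$, but this does no harm.
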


We spell out an example below to show that the above result can give different bounds on $d$ for different components of the moduli space $\mathcal{M}_{\mathrm{K}3^{[n]},2d}^\gamma$.

\begin{example}\label{ex: components different}
Consider the moduli space $\mathcal{M}_{K3^{[26]},2d}^5$ given by taking $n=26$ and $\gamma=5$. Then the number $M$ in Proposition~\ref{prop:sec_K3n:roots} is $2(n-1)=50$, and $\Omega=0$. Note that there are exactly two ways to write $2(n-1)=50$ as a sum of three distinct coprime squares
\[50=7^2+1^2+0^2=5^2+4^2+3^2.\]
We can take $(\alpha_1,\alpha_2,\alpha_3)$ to be $(7,1,0)$ or $(5,4,3)$. The numbers $N$ and $P$ are respectively $-10a$ and $2t=\frac{2d}{25}+2a^2$, where $a\in\{1,2\}$ (see \eqref{eq:sec_K3n:d}).
If we consider the component $\mathcal{M}_{K3^{[26]},2d}^{5,1}$ of $\mathcal{M}_{K3^{[26]},2d}^5$ corresponding to $a=1$, we then consider odd solutions to the equations, for the two possibilities of $(\alpha_1,\alpha_2,\alpha_3)$ respectively:
\[
7x_1+x_2=-10 \hspace{3mm} \text{ and } \hspace{3mm}
5x_1+4x_2+3x_3=-10.
\]
The minimal norm solutions are then respectively
\[(x_1, x_2, x_3)=(-1,-3,1)\;\;\; \hbox{and}\;\;\;(x_1, x_2, x_3)=(1,-3,-1),\]
both of which have norm $11$. 
The number $\Theta$ is 0 when $2t\equiv 0\mod 4$ and is 1 otherwise, and we find $S=2(t-\Theta^2)-11$. The bounds given in Proposition~\ref{prop:sec_K3n:roots} imply, via Proposition~\ref{prop:sec_K3n:R(Q)} that $\mathcal{M}_{K3^{[26]},2d}^{5,1}$ is of general type when $t\ge 10$, i.e., $d\ge 225$. By contrast, if we consider the component $\mathcal{M}_{K3^{[26]},2d}^{5,2}$, we take odd solutions to the equations, respectively
\[
7x_1+x_2=-20 \hspace{3mm} \text{ and } \hspace{3mm}
5x_1+4x_2+3x_3=-20.
\]
The minimal norm solutions are then respectively
\[(x_1, x_2, x_3)=(-3,-1,1)\;\;\;\hbox{and}\;\;\;(x_1, x_2, x_3)=(-1,-3,-1),\] 
both of which again have norm $11$. Hence, $\mathcal{M}_{K3^{[26]},2d}^{5,2}$ is of general type when $t\ge 10$, i.e., $d\geq 150$.

The above illustrates for instance that the moduli space $\mathcal{M}_{K3^{[26]},300}^5$ has one connected component (corresponding to $a=2$) which our results show is of general type and one connected component (corresponding to $a=1$) for which our results do not yield a statement about the Kodaira dimension.

\end{example}

Given particular $(n,\gamma, a)$, Theorem \ref{thm: diophantine bounds version} allows one to compute a lower bound on $d$ after which the given connected component of $\mathcal{M}_{\mathrm{K}3^{[n]},2d}^\gamma$ is of general type. 
However, since obtaining this bound requires computing $\alpha_1, \alpha_2, \alpha_3$ as well as $x_1, x_2, x_3$, Theorem \ref{thm: diophantine bounds version} is not so useful for understanding uniformly when $\mathcal{M}_{\mathrm{K}3^{[n]},2d}^\gamma$ is of general type. For this, we will compute a much coarser bound obtained by using the following:

\begin{lemma}[\cite{BFRT89}*{Main Theorem}]
\label{lemma:sec_K3n:BFRT}
 Let $\alpha_1X_1+\alpha_2X_2+\cdots+\alpha_NX_N=b$ be a linear equation with integer coefficients. If it admits an integral solution, then it has a solution 
 $(x_1, \ldots, x_N)$ such that $\max|x_i|\le \max\{|a_1|, \ldots,|a_N|, |b|\} $.
 \end{lemma}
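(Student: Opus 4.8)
This is the main theorem of \cite{BFRT89}, so in the paper it is invoked as a citation; the plan here is to indicate the argument. I would prove it by induction on the number of variables $N$, the decisive ingredient being a sharp bound in the two–variable case with coprime coefficients. First, some trivial reductions: if some $\alpha_i=0$ one deletes $X_i$ (setting $x_i=0$), decreasing $N$; if $N=1$, solvability means $\alpha_1\mid b$ and $x_1=b/\alpha_1$ satisfies $|x_1|\le|b|$. So everything rests on the two–variable case.

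\textbf{The two–variable case.} For $aX+cY=m$, put $g=\gcd(a,c)$; solvability forces $g\mid b$, and after dividing through we may assume $\gcd(a,c)=1$ with $a,c>0$ (signs absorbed into $X,Y$); it suffices to find a solution with $|X|,|Y|\le\max(a,c,|m|)$, since $\max(|a/g|,|c/g|,|m/g|)\le\max(|a|,|c|,|m|)$. Assume $m\ge0$ (the case $m<0$ being symmetric via $m\mapsto-m$). Choose the unique $X\in\{0,1,\dots,c-1\}$ with $aX\equiv m\pmod c$, so that $Y:=(m-aX)/c\in\mathbb{Z}$. Then $0\le X\le c-1<\max(a,c,|m|)$, while from $0\le aX\le a(c-1)$ one gets $m-a(c-1)\le cY\le m$, hence $-a< -a+\tfrac{a}{c}\le Y\le m/c\le|m|$; since $Y\in\mathbb{Z}$ this yields $|Y|\le\max(a,c,|m|)$.

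\textbf{The induction.} For $N\ge3$ set $d=\gcd(\alpha_{N-1},\alpha_N)$ (nonzero after the reductions, and $d\le\max_i|\alpha_i|$). A given integer solution $(x_i)$ of the original equation yields an integer solution $(x_1,\dots,x_{N-2},z)$ of $\alpha_1X_1+\dots+\alpha_{N-2}X_{N-2}+dZ=b$, namely $z=(\alpha_{N-1}x_{N-1}+\alpha_Nx_N)/d$. By the inductive hypothesis this $(N-1)$–variable equation has a solution with all entries bounded by $B:=\max(|\alpha_1|,\dots,|\alpha_{N-2}|,d,|b|)\le\max(|\alpha_1|,\dots,|\alpha_N|,|b|)$. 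Now apply the two–variable coprime case to $(\alpha_{N-1}/d)X_{N-1}+(\alpha_N/d)X_N=z$: it has a solution with $|x_{N-1}|,|x_N|\le\max(|\alpha_{N-1}/d|,|\alpha_N/d|,|z|)\le\max(|\alpha_{N-1}|,|\alpha_N|,B)=\max(|\alpha_1|,\dots,|\alpha_N|,|b|)$. Assembling $(x_1,\dots,x_N)$ gives the claim. The one delicate point — the \emph{main obstacle} — is keeping the constant equal to $1$: in the inductive step one must divide out $d$ before re-solving (solving $\alpha_{N-1}X_{N-1}+\alpha_NX_N=dz$ directly would cost a factor $d$), and in the two–variable step one must spread the residue of $X$ over a full interval of length $c$ and track two-sided bounds on $Y$ rather than applying a crude triangle inequality. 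This is precisely the argument of \cite{BFRT89}, which we cite.
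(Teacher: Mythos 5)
Your proposal is correct. Note, however, that the paper offers no proof of this lemma at all: it is imported verbatim as the Main Theorem of \cite{BFRT89} and used as a black box in the proof of Theorem \ref{thm:sec_K3n:div3_2}, so there is no internal argument to compare yours against. What you supply is a genuine, self-contained elementary proof of the special case the paper actually needs (a single equation; the cited reference treats systems, with bounds expressed via maximal minors, of which the one-equation statement is the simplest instance). I checked the details and they hold: in the two-variable coprime case with $a,c>0$ and $m\ge 0$, choosing $X\in\{0,\dots,c-1\}$ with $aX\equiv m\pmod c$ gives $|X|\le c-1$ and $-a<-a+\tfrac{a}{c}\le Y\le m/c\le m$, so integrality forces $|Y|\le\max(a-1,m)\le\max(a,c,|m|)$; and in the inductive step the replacement of $\alpha_{N-1}X_{N-1}+\alpha_NX_N$ by $dZ$ with $d=\gcd(\alpha_{N-1},\alpha_N)\le\max(|\alpha_{N-1}|,|\alpha_N|)$, followed by solving the coprime equation $(\alpha_{N-1}/d)X_{N-1}+(\alpha_N/d)X_N=z'$ with $|z'|\le B$, keeps every entry within $\max\{|\alpha_1|,\dots,|\alpha_N|,|b|\}$, exactly because you divide out $d$ before re-solving. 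The only cosmetic points: you should say explicitly what happens if all coefficients vanish (then $b=0$ and the zero solution works), and you tacitly correct the paper's typo $a_i$ versus $\alpha_i$ in the statement, which is the right reading.
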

 
 \begin{theorem}
 \label{thm:sec_K3n:div3_2}
 Let $(n,d,\gamma)$ be a triple such that the moduli space $\mathcal{M}_{\mathrm{K}3^{[n]},2d}^\gamma$ is non-empty (see Proposition \ref{prop:sec_K3n:nonemp}). We assume further that $n\ge 6$, $n\ne 11,13$, and $\gamma\geq 3$. 
Then every component of $\mathcal{M}_{\mathrm{K}3^{[n]},2d}^\gamma$ is of general type provided 
\[d\geq 6\gamma^2\left(n+3+\sqrt{2(n-1)}\right)^2,\]
except for one possible value of $d\ge 5\cdot 10^{10}$ for each $n$ which is odd or in the set $\{10,12,52,\frac{\star}{2}+1\}$.

 \end{theorem}
 
 \begin{proof}
 Let $Q_h$ be the lattice $$Q_h=\left(\begin{array}{cc}2(n-1)& -a\frac{2(n-1)}{\gamma}\\ -a\frac{2(n-1)}{\gamma}& 2t \end{array}\right)$$
where $a$ and $t$ satisfy 
$$\frac{2d}{\gamma}\equiv -\frac{2(n-1)}{\gamma}a^2\mod 2\gamma\;\;\;\hbox{and}\;\;\;d=\gamma^2t-(n-1)a^2.$$

Observe that if $d\geq 6\gamma^2\left(n+3+\sqrt{2(n-1)}\right)^2$, then certainly $\frac{d(n-1)}{\gamma^2}>4$ and so by Proposition \ref{prop:sec_K3n:R(Q)}, we need only verify that the hypotheses of Proposition \ref{prop:sec_K3n:roots} are satisfied. 

Let $M=2(n-1)$, $N=-a\frac{2(n-1)}{\gamma}$, and $P=2t$. Then fix $\Omega$, $\alpha_1, \alpha_2, \alpha_3$, $(x_1, x_2, x_3)$, $\Theta$, and $S$ as in the statement of Proposition \ref{prop:sec_K3n:roots}. Then since $M-2\Omega^2=\alpha_1^2+\alpha_2^2+\alpha_3^2$, for all $i=1,2,3$ we have
\begin{equation}\label{eq:sec_K3n:alpha bound}
0\le  \alpha_i< \sqrt{M-2\Omega^2}.
 \end{equation}
Additionally, the fact that $0<a<\gamma$ implies
 \begin{equation}
 \label{eq:sec_K3n:MN bound}
0<-N< M.
 \end{equation}
 
 Let us first consider the case $N$ even. Recall that odd solutions $(x_1, x_2, x_3)$ to the Diophantine equation
 \[\alpha_1X_1+\alpha_2 X_2+\alpha_3 X_3=N-2\Omega\Theta\]
 are (by setting $X_i=2Y_i+1$) equivalent to integral solutions of 
 \begin{equation}
 \label{eq:sec_K3n:basic diophantine2} \alpha_1 Y_1+\alpha_2Y_2+\alpha_3Y_3=\frac{1}{2}(N-2\Omega\Theta-(\alpha_1+\alpha_2+\alpha_3)).
 \end{equation}
 Lemma \ref{lemma:sec_K3n:BFRT} implies the existence of a solution $(y_1, y_2, y_3)$ to \eqref{eq:sec_K3n:basic diophantine2} satisfying
 \[\max|y_i|\le \max\{\alpha_1, \alpha_2, \alpha_3, \frac{1}{2}(-N+2\Theta\Omega+\alpha_1+\alpha_2+\alpha_3)\}.\]
Note also
\begin{equation}\label{eq:sec_K3n:cauchy-schwartz} \alpha_1+\alpha_2+\alpha_3\le \sqrt{\alpha_1^2+\alpha_2^2+\alpha_3^2}=\sqrt{M-2\Omega^2}.\end{equation}
Equations \eqref{eq:sec_K3n:MN bound} and \eqref{eq:sec_K3n:cauchy-schwartz} give us
\[
\frac{1}{2}(-N+2\Omega\Theta+(\alpha_1+\alpha_2+\alpha_3))<\frac{1}{2}(M+2\Omega\Theta+\sqrt{M-2\Omega^2}).
\]
Moreover note for all integers $M>0$ we have
\[0<\sqrt{M-2\Omega^2}<\frac{1}{2}(M+2\Omega\Theta+\sqrt{M-2\Omega^2}).\]
Therefore, when $N$ is even
$$\max|y_i|< \frac{1}{2}(M+2\Omega\Theta+\sqrt{M-2\Omega^2})\;\;\;\hbox{and}\;\;\;\max |x_i|<M+2\Omega\Theta+1+\sqrt{M-2\Omega^2}.$$

We now turn to the case that $N$ is odd. As discussed before Proposition \ref{prop:sec_K3n:roots}, we may produce a solution $(x_1, x_2, x_3)$ with exactly one $x_i$ even to the equation
 \[\alpha_1X_1+\alpha_2 X_2+\alpha_3 X_3=N-2\Omega\Theta\]
 where $\alpha_i$ is odd. By setting $X_i=2(Y_i+1)$ and $X_j=2Y_j+1$ for $i\ne j$, such solution $(x_1, x_2, x_3)$ can be obtained from an integral solution $(y_1, y_2, y_3)$ to 
  \[\alpha_1Y_1+\alpha_2 Y_2+\alpha_3 Y_3=\frac{1}{2}(N-2\Omega\Theta-\alpha_i-(\alpha_1+\alpha_2+\alpha_3)).\]
 Lemma \ref{lemma:sec_K3n:BFRT} implies that such a solution $(y_1, y_2, y_3)$ satisfies
 \[\max|y_i|\le \max\bigl\{\alpha_1, \alpha_2, \alpha_3, \frac{1}{2}(-N+2\Omega\Theta+\alpha_i+(\alpha_1+\alpha_2+\alpha_3))\bigr\}.\]
 Hence using the bounds \eqref{eq:sec_K3n:alpha bound}, \eqref{eq:sec_K3n:MN bound}, and \eqref{eq:sec_K3n:cauchy-schwartz}, when $N$ is odd we get
 \[\max|y_i|< \frac{1}{2}(M+2\Omega\Theta+2\sqrt{M-2\Omega^2})\;\;\;\hbox{and}\;\;\;\max|x_i|< M+2\Omega\Theta+2+2\sqrt{M-2\Omega^2}.
 \]
 Comparing the bounds on $|x_i|$ in the $N$ even and $N$ odd cases, we see that for any $N$
\begin{equation}
\label{eq: norm bound N}
x_1^2+x_2^2+x_3^2<3(M+2\Omega\Theta+2+2\sqrt{M-2\Omega^2})^2.
\end{equation}
It follows that 
\begin{equation}\label{eq: S bound}S>P-3(M+2\Omega\Theta+2+2\sqrt{M-2\Omega^2})^2-2\Theta^2.\end{equation}
However, the assumption $d\geq 6\gamma^2\left(n+3+\sqrt{2(n-1)}\right)^2$ can be rewritten as 
\[
P\gamma^2-Ma^2\ge 3\gamma^2(M+8+\sqrt{M})^2.
\]
In particular, we have
\begin{equation}\label{eq: P bound} P \ge 3(M+8+\sqrt{M})^2.
\end{equation}
Now a verification shows that for all $M\ge 10$, $\Omega\in \{0,1,2,3\}$, and $\Theta\in \{0,1\}$ we have
\[3(M+8+\sqrt{M})^2-3(M+2\Omega\Theta+2+2\sqrt{M-2\Omega^2})^2-2\Theta^2>600.\]
In particular, \eqref{eq: S bound} implies that $S>600$, which by Proposition~\ref{prop:sec_K3n:roots} finishes the proof. 
\end{proof}

\section{Monodromy and pullbacks for \texorpdfstring{K$3^{[n]}$}{K3[n]} type with low divisibility.}
\label{s:K3n+}

In this section we treat the case $\gamma\in \{1,2\}$. The main difficulty 
lies in the fact that the monodromy group $\widehat{O}^+(\Lambda,h)$ and the stable orthogonal group $\widetilde{O}^+(\Lambda_h)$ are no longer isomorphic. 
For a primitive embedding $\Lambda_h\hookrightarrow {\rm{II}}_{2,26}$ the quasi-pullback $F(Z)$ defined in \eqref{eq:sec_prelim:qpullback} is not necessarily modular for $\widehat{O}^+(\Lambda,h)$, so we have to choose the embedding carefully to ensure that $\widehat{O}^+\left(\Lambda,h\right)$ satisfies the hypothesis of Lemma \ref{lemma:sec_premil:F}. 
Recall that the restriction $O(\Lambda,h)\rightarrow O(\Lambda_h)$ induces an isomorphism $\widetilde{O}^+(\Lambda_h)\cong \widetilde{O}^+(\Lambda,h)$, see Proposition \ref{prop:sec_K3n:Otilde_h}. When $\gamma=1,2$ and $n\geq3$, the group $\widehat{O}^+\left(\Lambda,h\right)$ is generated (see Lemma \ref{lemma:sec_K3n:Ohat=Otilde}) 
by $\widetilde{O}^+\left(\Lambda_h\right)$ and $\sigma_v\in O(\Lambda)$ where 
\[v=\frac{2a(n-1)}{\gamma}f-\ell\in \Lambda.\]

In order to treat the case $\gamma=1,2$, we need to ensure that the reflection $\sigma_v\in \widehat{O}^+\left(\Lambda, h\right)$ of Lemma \ref{lemma:sec_K3n:Ohat=Otilde} extends to ${\rm{II}}_{2,26}$ after choosing our embedding $Q_h\hookrightarrow E_8$. Let $\sigma\in O(E_8)$ be an involution on $E_8$ and $V_+, V_-$ the invariant and anti-invariant lattices respectively. Note that both $V_-$ and $V_+$ are primitive, $V_-= (V_+)^{\perp}$, and the sublattice
\[V_-\oplus V_+\subset E_8\]
has finite index. Moreover, unimodularity of $E_8$ implies that there is an isometry
\[\phi: D(V_+)\longrightarrow D(V_-)\]
(up to sign)
and the finite index extension $V_-\oplus V_+\subset E_8$ corresponds (see Section \ref{SS:lattices}) to the isotropic subgroup 
\[H_\phi=\{ (x,\phi(x))\in D(V_+\oplus V_-)\mid x\in D(V_+)\}\subset D(V_+\oplus V_-).\]
Moreover, an isometry of the form
\[g=g_-\oplus g_+\in O(V_-\oplus V_+)\]
extends to an isometry on $E_8$ if $\overline{g}$ fixes $H_\phi$, which is equivalent to
\begin{equation}
\label{eq:sec_K3n+:g+g-}
\phi\circ \overline{g_+}=\overline{g_-}\circ\phi,
\end{equation}
see \cite{Nik80}*{Corollary 1.5.2}. Recall that a lattice $L$ is called $n$-{\textit{elementary}} if $D(L)$ is isomorphic to a direct sum of cyclic groups of order $n$. An immediate consequence is then the following: 
\begin{lemma}
\label{lemma:sec_K3n+:2-elementary}
Let $Q\subset E_8$ be a primitive sublattice and $\sigma_r\in O(Q)$ a reflection with respect to an element $r\in Q$. Let $M$ be the orthogonal complement of $r$ in $Q$. The following are equivalent:
\begin{enumerate}[(1)]
\item There exists a $2$-elementary primitive sublattice $V_+\subset E_8$ such that $M\subset V_+$ and $r\in V_-=(V_+)^\perp$.
\item The 
map $\sigma_r$ extends to an involution on $E_8$ acting as ${\rm{Id}}$ on $V_+$ and as $-{\rm{Id}}$ on $V_-$.
\end{enumerate}
\end{lemma}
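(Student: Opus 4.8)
The plan is to prove both implications by reducing to the extension criterion \eqref{eq:sec_K3n+:g+g-}. Since $E_8$ is unimodular, for any primitive sublattice $V_+\subset E_8$ with orthogonal complement $V_-=(V_+)^\perp$ the finite-index overlattice $V_+\oplus V_-\subset E_8$ corresponds to the graph $H_\phi$ of a glue isomorphism $\phi\colon D(V_+)\xrightarrow{\sim}D(V_-)$, and an isometry $g_+\oplus g_-$ of $V_+\oplus V_-$ extends to $E_8$ precisely when $\phi\circ\overline{g_+}=\overline{g_-}\circ\phi$. Applying this to the candidate involution $g_+\oplus g_-=\mathrm{Id}_{V_+}\oplus(-\mathrm{Id}_{V_-})$, the condition becomes $\phi=(-\mathrm{Id}_{D(V_-)})\circ\phi=-\phi$, i.e.\ $2\phi=0$; as $\phi$ is an isomorphism this is equivalent to $D(V_+)$ (equivalently $D(V_-)$) being annihilated by $2$, i.e.\ to $V_+$ being $2$-elementary. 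Thus the arithmetic content of the lemma is exactly the statement ``$\mathrm{Id}_{V_+}\oplus(-\mathrm{Id}_{V_-})$ extends to $E_8$ if and only if $V_+$ is $2$-elementary,'' and what remains in each direction is the bookkeeping identifying this extension with $\sigma_r$.

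For $(1)\Rightarrow(2)$: given a $2$-elementary primitive $V_+\subset E_8$ with $M\subset V_+$ and $r\in V_-$, the observation above produces an involution $\widetilde{\sigma}\in O(E_8)$ acting as $\mathrm{Id}$ on $V_+$ and as $-\mathrm{Id}$ on $V_-$, whose $(\pm1)$-eigenlattices are then exactly $V_{\pm}$. To see $\widetilde{\sigma}|_Q=\sigma_r$, note that since $(r,r)\neq 0$ we have $Q\otimes\mathbb{Q}=\mathbb{Q}r\oplus(M\otimes\mathbb{Q})$, with $M\otimes\mathbb{Q}\subset V_+\otimes\mathbb{Q}$ and $r\in V_-$; writing $x\in Q$ as $x=m+cr$ with $m\in M\otimes\mathbb{Q}$ and $c\in\mathbb{Q}$, $\mathbb{Q}$-linearity of $\widetilde{\sigma}$ gives $\widetilde{\sigma}(x)=m-cr=\sigma_r(x)$. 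Hence $\sigma_r$ extends to $\widetilde{\sigma}$ with the prescribed eigenlattices.

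For $(2)\Rightarrow(1)$: let $\widetilde{\sigma}\in O(E_8)$ be an involution extending $\sigma_r$ and let $V_{\pm}$ be its $(\pm1)$-eigenlattices; these are primitive, $V_-=(V_+)^\perp$, and $V_+\oplus V_-\subset E_8$ has finite index. From $\widetilde{\sigma}|_Q=\sigma_r$ we get $\widetilde{\sigma}(r)=\sigma_r(r)=-r$ and $\widetilde{\sigma}(m)=m$ for every $m\in M=r^{\perp}\cap Q$, so $r\in V_-$ and $M\subset V_+$. Finally, since the involution $\mathrm{Id}_{V_+}\oplus(-\mathrm{Id}_{V_-})=\widetilde{\sigma}$ extends to $E_8$, the first paragraph forces $2\phi=0$ and hence $V_+$ is $2$-elementary, as required. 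The proof is essentially formal; the only step warranting explicit care is the verification in $(1)\Rightarrow(2)$ that the extended involution restricts to $\sigma_r$ on $Q$ rather than merely to some involution of the larger lattice $V_+\oplus V_-$, which is why I would spell out the $\mathbb{Q}$-linear computation. I do not anticipate a genuine obstacle.
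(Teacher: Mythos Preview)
Your proof is correct and follows essentially the same strategy as the paper for the implication $(1)\Rightarrow(2)$, using the gluing criterion \eqref{eq:sec_K3n+:g+g-}; your added $\mathbb{Q}$-linear check that the extended involution restricts to $\sigma_r$ on $Q$ is a welcome detail the paper leaves implicit. For $(2)\Rightarrow(1)$ the paper simply cites \cite{GHS07}*{Lemma~3.5}, whereas you give a self-contained argument by running the extension criterion in reverse to force $2\phi=0$ and hence $V_+$ $2$-elementary---this is the natural converse and is exactly what one would expect the cited lemma to contain.
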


\begin{proof}
Assume (i). Then 
\[g=-{\rm{Id}}\oplus {\rm{Id}}\in O(V_-\oplus V_+)\]
extends $\sigma_r\in O(Q)$. Moreover, $g$ extends to an element of $O(E_8)$
since $V_-$ is 2-elementary, $-\Id$ acts as the identity on $D(V_-)\cong D(V_+)$, and 
\eqref{eq:sec_K3n+:g+g-} is immediately satisfied. The other implication is \cite{GHS07}*{Lemma 3.5}.
\end{proof}

\begin{lemma}
\label{lemma:sec_K3n+:F}
Let $Q\subset E_8$ be a primitive embedding and $\sigma_r\in O(Q)$ a reflection with respect to a primitive element $r\in Q$ that extends to an involution $\widetilde{\sigma}_r\in O(E_8)$. As before we write $V_+$ and $V_-$ for the invariant and anti-invariant sublattices of $\widetilde{\sigma}_r$. 
Denote by $R(r^{\perp}\cap V_-)$ 
the set of roots in $r^{\perp}\cap V_-$.
Let $L=U^{\oplus 2}\oplus E_8(-1)^{\oplus 2}\oplus Q(-1)$ and consider the induced primitive embedding $L\hookrightarrow {\rm{II}}_{2,26}$. If 
\begin{equation}
\label{eq:sec_K3n+:V}
{\rm{rk}}\left(V_-\right)+\frac{1}{2}\left|R(r^{\perp}\cap V_-)\right|\equiv 1\mod 2,
\end{equation}
 then the quasi-pullback 
 of $\Phi_{12}\in M_{12}\left(O^+({\rm{II}}_{2,26}), {\rm{det}}\right)$ to $\Omega^\bullet\left(L\right)$ is modular with respect to 
\[{\rm{det}}:\Gamma\longrightarrow\mathbb{C}^*\]
for the subgroup $\Gamma\subset O^+\left(L\right)$ generated by $\widetilde{O}^+\left(L\right)$ and $\sigma_r$. 
\end{lemma}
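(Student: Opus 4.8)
The plan is to apply Lemma~\ref{lemma:sec_premil:F} with character $\chi=\det$, so the task is to exhibit, for every $g$ in $\Gamma$, an extension $\widetilde g\in O^+(\mathrm{II}_{2,26})$ with $\det(g)=(-1)^{M}\det(\widetilde g)$, where $M$ is the number of sign-changing roots of $\widetilde g$ in $R(L^\perp)$. It suffices to do this for the generators. For $g\in\widetilde O^+(L)$, by \cite{Nik80}*{Theorem 1.6.1, Corollary 1.5.2} (as in the proof of Corollary~\ref{coro:sec_prelim:ModOtilde}) there is an extension acting as the identity on $L^\perp$, for which $M=0$ and $\det(\widetilde g)=\det(g)$. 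For the remaining generator $\sigma_r$ — which acts on $L=U^{\oplus 2}\oplus E_8(-1)^{\oplus 2}\oplus Q(-1)$ as the identity on the first two summands and as the reflection in $r$ on $Q(-1)$ — the natural candidate is $\widetilde g:=\mathrm{Id}_{U^{\oplus 2}\oplus E_8(-1)^{\oplus 2}}\oplus\widetilde\sigma_r$, where $\widetilde\sigma_r$ acts on the copy of $E_8(-1)$ in $\mathrm{II}_{2,26}$ containing $Q(-1)$. This restricts to $\sigma_r$ on $L$, preserves $L^\perp=(Q^\perp)(-1)$, and lies in $O^+(\mathrm{II}_{2,26})$ because it is the identity on a summand carrying the positive-definite plane; since $\widetilde\sigma_r$ is $+1$ on $V_+$ and $-1$ on $V_-$ we get $\det(\widetilde g)=(-1)^{\mathrm{rk}(V_-)}$, while $\det(\sigma_r|_L)=-1$ as $\sigma_r$ is a reflection.

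The heart of the matter is the parity of $M$, computed via the action of $\widetilde\sigma_r$ on $R(L^\perp)$, which we identify with the set $R(Q^\perp)$ of roots of $Q^\perp$ in $E_8$. First I would record the identity $r^\perp\cap V_-=Q^\perp\cap V_-$: the inclusion $\supseteq$ holds since $r\in Q$, and for $\subseteq$ one decomposes an arbitrary $q\in Q$ over $\mathbb Q$ as a multiple of $r$ plus an element of $r^\perp\cap Q$, the latter lying in $V_+$ because $\sigma_r$ fixes it pointwise, and then uses $V_-\perp V_+$. Now $\widetilde\sigma_r$ is an involution preserving $Q^\perp$, so each root $\rho\in R(Q^\perp)$ is fixed (i.e.\ $\rho\in V_+\cap Q^\perp$), negated (i.e.\ $\rho\in V_-\cap Q^\perp=r^\perp\cap V_-$), or sent to a root $\ne\pm\rho$. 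A root of the last kind lies in a $\widetilde\sigma_r$-stable quadruple $\{\pm\rho,\pm\rho'\}$ on which the two $\pm$-classes are swapped, and a short check shows such a quadruple produces $0$ or $2$ sign changes for any choice of positive roots; fixed roots produce none; and each pair $\pm\rho$ with $\rho\in r^\perp\cap V_-$ produces exactly one. Hence $M\equiv\tfrac12|R(r^\perp\cap V_-)|\pmod 2$, and combining with the determinant computation, $(-1)^{M}\det(\widetilde g)=(-1)^{\mathrm{rk}(V_-)+\frac12|R(r^\perp\cap V_-)|}=-1=\det(\sigma_r)$ by hypothesis~\eqref{eq:sec_K3n+:V}.

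It remains to pass from the generators to all of $\Gamma$. Writing $g\in\Gamma$ as a word in $\widetilde O^+(L)$ and $\sigma_r$, let $\widetilde g$ be the product of the chosen extensions of the letters. A direct computation with the definition~\eqref{eq:sec_prelim:M} shows that $\widetilde h\mapsto(-1)^{M(\widetilde h)}$ is a homomorphism on the isometries of $\mathrm{II}_{2,26}$ preserving $L^\perp$; together with multiplicativity of $\det$ and the per-letter identities $(-1)^{M}\det=\det$ established above, this yields $(-1)^{M(\widetilde g)}\det(\widetilde g)=\det(g)$, so Lemma~\ref{lemma:sec_premil:F} applies and $F$ is modular with character $\det\colon\Gamma\to\mathbb C^*$. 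The step I expect to require the most care is the parity count of $M$: one must be sure that the ``moved'' roots contribute an even amount independently of the auxiliary choice of positive roots, which is precisely what reduces the modularity character to the clean condition~\eqref{eq:sec_K3n+:V}.
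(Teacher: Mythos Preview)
Your proof is correct and follows essentially the same route as the paper: reduce via Lemma~\ref{lemma:sec_premil:F} and Corollary~\ref{coro:sec_prelim:ModOtilde} to checking the identity $\det(\sigma_r)=(-1)^M\det(\widetilde\sigma_r)$ for the single extra generator, then compute the parity of $M$ by splitting $R(Q^\perp)$ into roots in $V_+$, roots in $V_-$, and the rest, observing that the last kind pair up to contribute evenly. The paper records the key identification as $Q\cap V_-=\langle r\rangle$ (equivalent to your $r^\perp\cap V_-=Q^\perp\cap V_-$), and it omits your final paragraph on products: once the functional equation $F(gZ)=\det(g)F(Z)$ holds for generators it holds for all of $\Gamma$ by multiplicativity, so there is no need to build $\widetilde g$ for arbitrary words.
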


\begin{proof}
Let $R_{>0}\cup R_{<0}$ be a sign-partition of the set of roots $R(Q^{\perp})$ in $E_8$ orthogonal to $Q$ (see Section \ref{SS:Phi_{12}}). \
Let $M$ be the number of roots that changes sign under $\widetilde{\sigma}_r$ \eqref{eq:sec_prelim:M}.
By Lemma~\ref{lemma:sec_premil:F} and Corollary~\ref{coro:sec_prelim:ModOtilde} it is enough to check that ${\rm{det}}(\sigma_r)=(-1)^M\cdot{\rm{det}}(\widetilde{\sigma}_r)$ or, equivalently,
\[M+{\rm{rk}}(V_-)\equiv 1\mod 2.\]
Note that for any root $\eta\in R_{>0}$, if $\eta\in V_+$, then $\widetilde{\sigma}_r(\eta)=\eta\in R_{>0}$, and if $\eta\in V_-$, then $\widetilde{\sigma}_r(\eta)=-\eta\in R_{<0}$. 
If $\eta\not\in V_-\cup V_+$, then $\widetilde{\sigma}_r(\eta)\neq \pm \eta$. Let $\{\eta_1,\ldots,\eta_k\}$ be all roots in $R_{>0}$ not in $V_-\cup V_+$. Then $\widetilde{\sigma}_r$ induces an injection
\[\{\eta_1,\ldots,\eta_k\}\hookrightarrow \left\{\eta_1,\ldots,\eta_k, -\eta_1,\ldots,-\eta_k\right\}.\]
with $\widetilde{\sigma}_r(\eta_i)\neq \pm \eta_i$. One immediately sees that if $\widetilde{\sigma}_r(\eta_i)=-\eta_j$, then $\widetilde{\sigma}_r(\eta_j)=-\eta_i$, thus
\[\left|\left\{\widetilde{\sigma}_r(\eta_1),\ldots,\widetilde{\sigma}_r(\eta_k)\right\}\cap \left\{-\eta_1,\ldots,-\eta_k\right\}\right|\equiv 0\mod 2\]
and $M\equiv \left|R_{>0}\cap V_-\right|\mod 2$. Finally, we have $Q\cap V_-=\langle r \rangle$, and 
\[\left|R_{>0}\cap V_-\right|=\frac{1}{2}R(Q^{\perp}\cap V_-)=\frac{1}{2}R(r^\perp\cap V_-).\qedhere\]
\end{proof}

We have to choose our embedding $Q_h\hookrightarrow E_8$ not only to have a controlled number of roots in the orthogonal complement, but also in such a way that the reflection $\sigma_v$ defined in Lemma \ref{lemma:sec_K3n:Ohat=Otilde} can be extended to an involution $\widetilde{\sigma}_v\in O(E_8)$ satisfying \eqref{eq:sec_K3n+:V}. Note that if $\gamma\in \{1,2\}$, by Proposition~\ref{prop:sec_K3n:a's}, we may choose
\begin{equation}
\label{eq:sec_K3n+:a}
a=\left\{\begin{array}{ll}
0&\hbox{when $\gamma=1$}\\
1&\hbox{when $\gamma=2$.}
\end{array}\right.
\end{equation}
In particular, by Lemma~\ref{lemma:sec_K3n:h}, $h\in \Lambda$ can be chosen as 
\begin{equation}
\label{eq:sec_K3n+:h}
h=\left\{\begin{array}{ll}
e+df&\hbox{when $\gamma=1$}\\
2(e+tf)-\ell&\hbox{when $\gamma=2$,}
\end{array}\right.
\end{equation}
where $d=4t-(n-1)$ when $\gamma=2$, see Proposition \ref{prop:sec_K3n:nonemp}.

Recall that $Q_h(-1)=\langle h\rangle^\perp\subset U\oplus \langle\ell\rangle$ is generated by $\left\{\frac{2a(n-1)}{\gamma}f-\ell, e-tf\right\}$, and
\begin{equation}
\label{eq:sec_K3n+:Q}
Q_h=\left(\begin{array}{cc}2(n-1)& 0\\ 0& 2d \end{array}\right)\;\hbox{if $\gamma=1$;}\;\;\;\;\;\; Q_h=\left(\begin{array}{cc}2(n-1)& -(n-1)\\ -(n-1)& 2t \end{array}\right)\;\hbox{if $\gamma=2$.}
\end{equation}

Summarizing the discussion above, and adding the conditions for modularity of the quasi-pullback in Lemma \ref{lemma:sec_premil:F}, the vanishing at the ramification in Proposition \ref{prop:sec_prelim:F=0}, and a numerical condition ensuring that $\widehat{O}^+\left(\Lambda,h\right)$ has no irregular cusps, we have:

\begin{proposition}
\label{prop:sec_K3n+:Fcusp}
Let $Q_h$ be the lattice defined in \eqref{eq:sec_K3n+:Q}, $Q_h\hookrightarrow E_8$ a primitive embedding, and $\Lambda_h\hookrightarrow{\rm{II}}_{2,26}$ the induced embedding. Assume further that:
\begin{enumerate}[(1)]
\item There exists a primitive $2$-elementary sublattice $V_+\subset E_8$ such that
$$z_1=\frac{2a(n-1)}{\gamma}f-\ell\in V_-:=\left(V_+\right)^\perp\subset E_8,\;\;\;\hbox{and}\;\;\;(z_1)^{\perp}\cap Q_h\subset V_+.$$
\item The number of roots $\left|R(Q_h^\perp)\right|$ in the orthogonal complement $Q_h^\perp\subset E_8$ is at least $2$ and at most $14$ (resp. $16$).
\item The rank of $V_-$ and half the number of roots orthogonal to $Q_h$ in $V_-$ have opposite parities:
$${\rm{rk}}(V_-)+\frac{1}{2}\left|R\left((z_1)^\perp\cap V_-\right)\right|\equiv 1\mod 2.$$
\item One of the following holds:
\begin{enumerate}[(a)]
\item The number $\frac{|R(Q_h^{\perp})|}{2}$ is even;
\item For any $r\in \Lambda_h\subset \rm{II}_{2,26}$ such that $-\sigma_r\in \widehat{O}^+\left(\Lambda,h\right)$ one has 
\[\left|R\left(\Lambda_h^\perp\right)\right|<\left|R\left(\left(\Lambda_h\right)_r^\perp\right)\right|,\]
where $\left(\Lambda_h\right)_r$ is the orthogonal complement of $r$ in $\Lambda_h$ and orthogonal complements in the inequality are taken in $\rm{II}_{2,26}$.
\end{enumerate}
\item $\frac{2d}{\gamma}\neq 1,2,4,8$.
\end{enumerate}
Then the modular variety \[\Omega\left(\Lambda_h\right)\big/\widehat{O}^+\left(\Lambda,h\right)\]
is of general type (resp.\ has non-negative Kodaira dimension). 
\end{proposition}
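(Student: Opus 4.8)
The plan is to assemble the pieces already developed in the excerpt and verify that the hypotheses of Theorem \ref{thm:sec_prelim:low-weight} are met for the lattice $L=\Lambda_h$ of signature $(2,20)$ and the group $\Gamma=\widehat{O}^+\left(\Lambda,h\right)$. First I would invoke Lemma \ref{lemma:sec_K3n:Ohat=Otilde}: since $\gamma\in\{1,2\}$ and $n\geq 3$, the group $\widehat{O}^+\left(\Lambda,h\right)$ is generated by $\widetilde{O}^+\left(\Lambda_h\right)$ together with the restriction of the reflection $\sigma_{z_1}$, where $z_1=\frac{2a(n-1)}{\gamma}f-\ell$. So to produce a low-weight cusp form with the right modularity, it suffices to build the quasi-pullback $F$ of $\Phi_{12}$ along the induced embedding $\Lambda_h\hookrightarrow\mathrm{II}_{2,26}$ and check that $F$ is modular with character $\det$ for $\widetilde{O}^+(\Lambda_h)$ and for $\sigma_{z_1}$. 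Corollary \ref{coro:sec_prelim:ModOtilde} handles the first; hypotheses (1) and (3) are precisely what is needed to extend $\sigma_{z_1}$ to an involution $\widetilde{\sigma}_{z_1}\in O(E_8)$ (via Lemma \ref{lemma:sec_K3n+:2-elementary}, using the $2$-elementary $V_+$) and then to apply Lemma \ref{lemma:sec_K3n+:F} with $r=z_1$, $Q=Q_h$, giving modularity of $F$ with respect to $\det\colon\Gamma\to\mathbb{C}^*$.

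Next I would compute the weight of $F$: by \eqref{eq_weight_quasipullback} it is $k=12+\frac{1}{2}\left|R(Q_h^\perp)\right|$, and hypothesis (2) gives $k\leq 12+7=19<20=\mathrm{rk}(\Lambda_h)$, so $F$ is genuinely of low weight (for the ``general type'' statement; the bound $16$ gives $k\leq 20$ for the non-negative Kodaira dimension statement, via the second half of Theorem \ref{thm:sec_prelim:low-weight} once one observes that the weight-$n$ case is covered — here one should note $\left|R(Q_h^\perp)\right|=16$ gives exactly weight $20$ and appeal to Freitag \eqref{eq:sec_prelim:Fre}). Then I would show $F$ vanishes along the ramification divisor $\mathrm{Ram}(\pi_{\Lambda_h})$. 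The ramification divisor decomposes as in \eqref{eq:sec_prelim:Ram} over reflective elements $r\in\Lambda_h$ with $\sigma_r$ or $-\sigma_r\in\Gamma$. For those with $\sigma_r\in\Gamma$, modularity of $F$ with character $\det$ forces vanishing on $\mathcal{D}_r$ directly (as in the proof of Proposition \ref{prop:sec_prelim:F=0}). For those with $-\sigma_r\in\Gamma$, hypothesis (4) saves us: case (a) makes $k$ even, so $\mathrm{rk}(\Lambda_h)=20$ and $k$ have the same parity and Lemma \ref{lemma_vanishing_ramif} applies; case (b) is the root-counting condition $\left|R(\Lambda_h^\perp)\right|<\left|R((\Lambda_h)_r^\perp)\right|$, which by the order-of-vanishing computation in Proposition \ref{prop:sec_prelim:F=0} (the quasi-pullback vanishes on $\mathcal{D}_r$ to order $\frac{1}{2}(\left|R((\Lambda_h)_r^\perp)\right|-\left|R(\Lambda_h^\perp)\right|)$) gives positive vanishing order.

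Finally I would dispose of cusps and irregular cusps. Hypothesis (5), $\frac{2d}{\gamma}\neq 1,2,4,8$, together with \eqref{eq:sec_K3n:disc} — which gives $\mathrm{disc}(\Lambda_h)=\frac{4d(n-1)}{\gamma^2}$ and the fact that $D(\Lambda_h)$ needs at most two generators — feeds into \cite{Ma21}*{Proposition 4.4} (used already in the proof of Proposition \ref{prop:sec_K3n:R(Q)}) to conclude that $\Gamma=\widehat{O}^+(\Lambda,h)$ has no irregular cusps; one should double-check here that the extra generator $\sigma_{z_1}$ does not create irregular cusps, using Proposition \ref{prop:sec_prelim:irr_cusp} and Proposition \ref{prop:sec_prelim:irr_1cusp}, as flagged in Section \ref{SS:lwcusp}. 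Vanishing of $F$ at all the cusps is automatic for quasi-pullbacks of $\Phi_{12}$, cf. \cite{GHS13}*{Corollary 8.12}. With modularity, low weight, vanishing on $\mathrm{Ram}(\pi_{\Lambda_h})$, and no irregular cusps all in hand, Theorem \ref{thm:sec_prelim:low-weight} yields that $\Omega(\Lambda_h)/\widehat{O}^+(\Lambda,h)$ is of general type (resp.\ has non-negative Kodaira dimension), and by Theorem \ref{thm:sec_prelim:Torelli} this is the Kodaira dimension of the corresponding component of the moduli space. I expect the main obstacle — and the step requiring the most care — to be confirming the absence of irregular cusps for the \emph{larger} group $\widehat{O}^+(\Lambda,h)$ rather than $\widetilde{O}^+(\Lambda_h)$, since $-\mathrm{Id}$ need not lie in $\Gamma$ and the criterion of Proposition \ref{prop:sec_prelim:irr_cusp} must be checked against the Eichler transvections in the presence of the extra reflection; the parity bookkeeping in applying Lemma \ref{lemma:sec_K3n+:F} (tracking sign-changing roots under $\widetilde{\sigma}_{z_1}$) is the other delicate point.
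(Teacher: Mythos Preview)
Your outline matches the paper's proof: modularity via Lemmas \ref{lemma:sec_K3n:Ohat=Otilde}, \ref{lemma:sec_K3n+:2-elementary}, \ref{lemma:sec_K3n+:F}; cusp form via \cite{GHS13}*{Corollary 8.12}; ramification via (4a) $\Rightarrow$ Lemma \ref{lemma_vanishing_ramif} and (4b) $\Rightarrow$ Proposition \ref{prop:sec_prelim:F=0} (you have these assigned correctly---the paper's printed proof actually swaps the two references). Two small corrections: $\mathrm{rk}(\Lambda_h)=22$, not $20$ (the $20$ is the $n$ in signature $(2,n)$; both are even, so your parity argument survives), and it is the lower bound $|R(Q_h^\perp)|\geq 2$ in (2) that makes $F$ a \emph{cusp} form.

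The one step you flag but do not carry out is the irregular-cusp check, and here your appeal to \cite{Ma21}*{Proposition 4.4} does not suffice: that result is for $\widetilde{O}^+(\Lambda_h)$, and the issue is precisely whether the extra generator $\sigma_{z_1}$ allows some $-E_w$ to lie in $\widehat{O}^+(\Lambda,h)$. The paper argues directly from Proposition \ref{prop:sec_prelim:irr_cusp}. Suppose $-E_{m\otimes l}\in\widehat{O}^+(\Lambda,h)$ for a rank-one isotropic $I$. Then $E_{2m\otimes l}=(-E_{m\otimes l})^2\in\widetilde{O}^+(\Lambda_h)$, so $2m\otimes l\in L(I)$ by \cite{Ma21}*{Lemma 4.1}, and one may choose a lift $\widetilde{m}\in\frac{1}{2}I^\perp$. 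The key observation is that every $g\in\widehat{O}^+(\Lambda,h)$ fixes the class in $D(\Lambda_h)$ of any $u\in(z_1)^\perp\subset\Lambda_h^\vee$ (since $\widetilde{O}^+(\Lambda_h)$ acts trivially on $D(\Lambda_h)$ and $\sigma_{z_1}$ fixes such $u$ pointwise). Applying this with $g=-E_{m\otimes l}$ and the explicit transvection formula, the three correction terms lie in $\frac{1}{2}\Lambda_h$, $\frac{1}{2}\Lambda_h$, $\frac{1}{4}\Lambda_h$ respectively, forcing $2u\in\frac{1}{4}\Lambda_h$, i.e.\ $8u\in\Lambda_h$. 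Now take $u=\frac{\gamma}{2d}(az_1+\gamma z_2)$ (the element from the proof of Proposition \ref{prop:sec_K3n:Otilde_h}): one checks $(u,z_1)=0$ and $\mathrm{ord}(\overline{u})=2d/\gamma$ in $D(\Lambda_h)$, whence $\frac{2d}{\gamma}\mid 8$---exactly what hypothesis (5) excludes. This is the computation that connects (5) to the absence of irregular cusps.
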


\begin{proof}
By Lemmas \ref{lemma:sec_K3n+:F} and \ref{lemma:sec_K3n:Ohat=Otilde}, the quasi-pullback $F$ of the quasi-pullback of Borcherds form $\Phi_{12}$ 
to $\Omega^\bullet\left(\Lambda_h\right)$ is modular with character ${\rm{det}}\colon\widehat{O}^+\left(\Lambda,h\right)\longrightarrow \mathbb{C}^*$ and weight $12+\frac{\left|R(Q_h^\perp)\right|}{2}$ \eqref{eq_weight_quasipullback}. 
Moreover, since $2\leq\left|R(Q_h^\perp)\right|\leq16$, $F$ is a cusp form \cite{GHS13}*{Corollary 8.12}.
We claim that $F$ vanishes along the ramification divisor of the modular projection
\[\pi\colon\Omega\left(\Lambda_h\right)\longrightarrow\Omega\left(\Lambda_h\right)\big/\widehat{O}^+\left(\Lambda,h\right).\]
Indeed, if the embedding $Q_h
\hookrightarrow E_8$ satisfies assumption (4a), this follows from Proposition~\ref{prop:sec_prelim:F=0}. If the embedding satisfies (4b), it follows from Lemma~\ref{lemma_vanishing_ramif},
where we use that $\mathrm{rk}(\Lambda_h)=22$.
The proposition now follows from Theorem \ref{thm:sec_prelim:low-weight} once we have shown that $\Omega(\Lambda_h)$ has no irregular cusps for $\widehat{O}^+\left(\Lambda,h\right)$. 
By Proposition~\ref{prop:sec_prelim:irr_1cusp}, it is enough to show this for 0-dimensional cusps.
Let $I\subset \Lambda_h$ be a rank one primitive isotropic sublattice corresponding to a $0$-dimensional cusp of $\Omega(\Lambda_h)$.
Assume $I$ is irregular for $\widehat{O}^+(\Lambda,h)$, so by Proposition \ref{prop:sec_prelim:irr_cusp} (see also the proof of \cite{Ma21}*{Proposition 4.11}),  $-\mathrm{Id}\not\in \widehat{O}^+\left(\Lambda,h\right)$ and $-E_{w}\in \Gamma(I)_{\mathbb{Q}}\cap \widehat{O}^+\left(\Lambda,h\right)$ for some $w\in L(I)_{\mathbb{Q}}$.
Since $I$ has rank one, $w$ can be written as $m\otimes l$ with $m\in \left(I^\perp\big/I\right)_{\mathbb{Q}}$ and $l\in I_\mathbb{Q}$. Then by \ref{lemma:sec_K3n:Ohat=Otilde}
\[E_{2m\otimes l}=\left(-E_{m\otimes l}\right)\circ\left(-E_{m\otimes l}\right)\in \Gamma(I)_{\mathbb{Q}}\cap \widetilde{O}^+\left(\Lambda_h\right).\]
The isomorphism \eqref{eq:sec_prelim:E_w} when restricted to $\widetilde{O}^+(\Lambda_h)$ induces an isomorphism $L(I)\cong U(I)\cap \widetilde{O}^+(\Lambda_h)$, see \cite{Ma21}*{Lemma 4.1}. In particular $2m\otimes l\in L(I)$. We choose a lift $\widetilde{m}\in \frac{1}{2}I^\perp$ of $m$, so for any $u\in(\Lambda_h)_{\mathbb{Q}}$
\begin{equation}
\label{eq:sec_K3n+:-E}
-E_{m\otimes l}(u)=-u+(\widetilde{m},u)l-(l,u)\widetilde{m}+\frac{1}{2}(m,m)(l,u)l.
\end{equation}
Moreover, any $g\in\widehat{O}^+\left(\Lambda,h\right)$ fixes the class $\overline{u}\in D(\Lambda_h)$ of any $u\in (z_1)^\perp\subset \left(\Lambda_h\right)^\vee$ (see Lemma~\ref{lemma:sec_K3n:Ohat=Otilde}).
In particular, for any such $u$ we have $-E_{m\otimes l}(u)\in u+\Lambda_h$. From \eqref{eq:sec_K3n+:-E}, since $(2\widetilde{m}, u), (l,u)$, and $(2m,m)$ are integers, it follows that
\[2u\in\frac{1}{4}\Lambda_h,\]
so $8u\in \Lambda_h$ and $\mathrm{ord}(\overline{u})$ divides $8$.
Take $u=\frac{\gamma}{2d}\left(az_1+\gamma z_2\right)$ as in the proof of Proposition~\ref{prop:sec_K3n:Otilde_h}.
Indeed, then $(u.z_1)=0$ and thus, $\sigma_{z_1}(u)=u$.
The order of the class $\overline{u}$ 
in $D(\Lambda_h)$
is $2d/\gamma$. In particular, we have 
$\frac{2d}{\gamma}\in\left\{1,2,4,8\right\}$.
\end{proof}

Recall that for $\gamma=1,2$, the moduli space $\mathcal{M}_{\mathrm{K}3^{[n]},2d}^\gamma$ is connected when non-empty, see Proposition~\ref{prop:sec_K3n:a's}.
As a corollary we have:

\begin{corollary}
\label{cor:sec_K3n+:main_cor}
Let $(n,d,\gamma)$ be a triple such that $\gamma\in \{1,2\}$ and $\mathcal{M}_{\mathrm{K}3^{[n]},2d}^\gamma$ is non-empty. Assume there exists an embedding $Q_h\hookrightarrow E_8$ satisfying the hypothesis of Proposition \ref{prop:sec_K3n+:Fcusp}. Then $\mathcal{M}_{\mathrm{K}3^{[n]},2d}^\gamma$ is of maximal (resp. non-negative) Kodaira dimension. 
\end{corollary}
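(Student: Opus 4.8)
The plan is to read off the statement directly from Proposition~\ref{prop:sec_K3n+:Fcusp} together with the Torelli theorem; the corollary is essentially a packaging of that proposition. First I would apply Proposition~\ref{prop:sec_K3n+:Fcusp} to the given primitive embedding $Q_h\hookrightarrow E_8$: the five hypotheses listed there are exactly what has been assumed, so we obtain that the orthogonal modular variety $\Omega(\Lambda_h)\big/\widehat{O}^+(\Lambda,h)$ is of general type (respectively has non-negative Kodaira dimension), where $\Lambda$ is the K$3^{[n]}$ lattice and $h\in\Lambda$ is the class fixed in \eqref{eq:sec_K3n+:h}, so that $\Lambda_h$ is its orthogonal complement.

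Next I would pass from the modular variety to the moduli space. By Theorem~\ref{thm:sec_prelim:Torelli}, each irreducible component $Y$ of $\mathcal{M}_{\mathrm{K}3^{[n]},2d}^\gamma$ admits an algebraic open embedding $Y\hookrightarrow \Omega(\Lambda_h)\big/\widehat{O}^+(\Lambda,h)$, hence is birational to $\Omega(\Lambda_h)\big/\widehat{O}^+(\Lambda,h)$, and this target depends only on the polarization type. The point specific to $\gamma\in\{1,2\}$ is irreducibility: by Remark~\ref{rem:sec_K3n:a's} the connected components of $\mathcal{M}_{\mathrm{K}3^{[n]},2d}^\gamma$ are in bijection with the admissible values $a\in\{0,\dots,\lfloor\gamma/2\rfloor\}$ coprime to $\gamma$, and for $\gamma=1$ (forcing $a=0$) and for $\gamma=2$ (forcing $a=1$) there is exactly one such value. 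Thus, whenever it is non-empty, $\mathcal{M}_{\mathrm{K}3^{[n]},2d}^\gamma$ is irreducible and therefore birational to $\Omega(\Lambda_h)\big/\widehat{O}^+(\Lambda,h)$.

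Finally, since these modular varieties have the same dimension, namely $20$, as the moduli space and the Kodaira dimension is a birational invariant, the conclusion of Proposition~\ref{prop:sec_K3n+:Fcusp} transfers directly: $\mathcal{M}_{\mathrm{K}3^{[n]},2d}^\gamma$ has maximal Kodaira dimension (i.e.\ is of general type) in the first case and non-negative Kodaira dimension in the second. I do not anticipate a genuine obstacle: all of the substantive work — constructing the quasi-pullback of $\Phi_{12}$, verifying its modularity for $\widehat{O}^+(\Lambda,h)$ via Lemmas~\ref{lemma:sec_K3n+:2-elementary} and~\ref{lemma:sec_K3n+:F}, its vanishing along the ramification divisor, and ruling out irregular cusps — is already contained in Proposition~\ref{prop:sec_K3n+:Fcusp}; here one only combines it with the Torelli theorem and the connectedness of the moduli space for $\gamma\le 2$.
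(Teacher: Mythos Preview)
Your proposal is correct and follows the same approach as the paper's proof, which is simply ``This follows from Proposition~\ref{prop:sec_K3n+:Fcusp} together with Theorem~\ref{thm:sec_prelim:low-weight}.'' You are in fact more explicit and arguably more accurate: the passage from the modular variety to the moduli space requires the Torelli theorem (Theorem~\ref{thm:sec_prelim:Torelli}) and the irreducibility for $\gamma\le 2$ (Remark~\ref{rem:sec_K3n:a's}, which the paper recalls just before stating the corollary), whereas Theorem~\ref{thm:sec_prelim:low-weight} is already invoked inside the proof of Proposition~\ref{prop:sec_K3n+:Fcusp}.
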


\begin{proof}
This follows from Proposition \ref{prop:sec_K3n+:Fcusp} together with Theorem \ref{thm:sec_prelim:low-weight}.
\end{proof}

\section{Kodaira dimension for \texorpdfstring{K$3^{[n]}$}{K3[n]} type with divisibility 1}
\label{section:K3n_div1}

From now on we concentrate on the split case $\gamma=1$. Recall that in this case the moduli space $\mathcal{M}_{\mathrm{K}3^{[n]},2d}^1$ is irreducible, see Proposition~\ref{prop:sec_K3n:a's}. Note that in this case $a=0$ and the element $z_2$ in Proposition \ref{prop:sec_K3n+:Fcusp} is given by
\[z_2=\frac{2a(n-1)}{\gamma}f-\ell=-\ell.\]

\begin{proposition}
\label{prop:sec_K3n+:rootcount}
Consider the two primitive $2$-elementary orthogonal sublattices of $E_8$ given by
\[
\begin{aligned}
V_-&:=\langle e_1-e_5, e_2-e_6, e_3-e_7, e_4-e_8\rangle\cong A_1^{\oplus 4}\\
V_+&:=\langle e_1+e_5, e_2+e_6, e_3+e_7, e_4+e_8\rangle\cong A_1^{\oplus 4}.
\end{aligned}
\]
Let $Q_h$ be as in \eqref{eq:sec_K3n+:Q}, which in the case $\gamma=1$ has basis 
\begin{equation}
\label{eq:sec_K3n+:basis}
z_1=\frac{2a(n-1)}{\gamma}f-\ell=-\ell\;\;\;\hbox{and}\;\;\;z_2=e-tf=e-df.
\end{equation}
 Assume further that $n-1>6$, $n-1\equiv 1,2 \mod 4 $, and $n-1\not \in \{10,13,25,37,58, 85, 130, \star\}$. Then for any $d\ge 3$, $d\not \equiv 0\mod 4$, there is a primitive embedding $ Q_h\hookrightarrow E_8$ sending $z_1$ to $V_-$ and $z_2$ to $V_+$ such that:
\begin{enumerate}[(1)]
    \item the number $\frac{1}{2}\left|R\left((z_1)^\perp\cap V_-\right)\right|$ is odd and
    \item the number of roots $\left|R(Q_h^\perp)\right|$ in the orthogonal complement $Q_h^\perp\subset E_8$ satisfies
    \[2\le |R(Q_h^\perp)|\le 14.\]
    \item If moreover, $d\not\equiv 7 \mod 8$ and $d\not \in \{5,10,13,25,37,58,85,130, \star\}$, the primitive embedding $ Q_h\hookrightarrow E_8$ may be chosen so that  $|R(Q_h^\perp)|\equiv 0 \mod 4$. 
\end{enumerate}
\end{proposition}

\begin{proof}
By Theorem \ref{thm:sec_prelim:HK21}, we may write
\[n-1=\alpha_1^2+\alpha_2^2+\alpha_3^2,\]
where $\alpha_1, \alpha_2, \alpha_3$ are coprime and positive. Moreover, since $n-1\not \equiv 3 \mod 4$, we may assume without loss of generality that $\alpha_1$ is even and $\alpha_2$ is odd. Similarly, by Corollary~\ref{cor:sec_prelim:sumsofsquares2}, we may write
\begin{equation}
\label{eq:sec_K3n+:xs}
d=x_1^2+x_2^2+x_3^2+x_4^2,
\end{equation}
with $x_1\ge 0$ and $x_i>0$ for $i\in \{2,3,4\}$ (except in the case $d=5$, where $x_1=x_2=0$). Since we know $d\not \equiv 0 \mod 4$, we may without loss of generality take $x_1$ to be even and $x_4$ odd. 
In the case that  $d\not\equiv 7 \mod 8$ and $d\not \in \{10,13,25,37,58,85,130, \star\}$, we let $x_1=0$ (which is possible by Theorem \ref{thm:sec_prelim:HK21}).

Consider the embedding $Q_h\hookrightarrow E_8$ given by
\begin{equation}
\label{eq:sec_K3n+: gamma1 embedding}
\begin{aligned}
    z_1&\mapsto v_1=\alpha_1e_1+\alpha_2e_2+\alpha_3e_3 - \alpha_1e_5-\alpha_2e_6-\alpha_3e_7\\
    z_2 &\mapsto v_2 = x_1e_1 + x_2 e_2 + x_3e_3 + x_4 e_4+x_1e_5 + x_2 e_6 + x_3e_7 + x_4 e_8.
\end{aligned}
\end{equation}
Using Lemma \ref{lemma:sec_K3n:odd-coeffs} together with our assumptions on $x_i$ and $\alpha_i$ one checks that the embedding \eqref{eq:sec_K3n+: gamma1 embedding} is primitive.
Moreover, the given embedding has the property that $v_1\in V_-$ and $v_2\in V_+$. The intersection $v_1^\perp \cap V_-$ contains the single root $e_4-e_8$. In particular, the number of roots in $v_1^\perp \cap V_-$ is odd.

Now we count roots in $Q_h^\perp\subset E_8$. We assume first that $d\ne 5,6$. Then in particular,  $x_i>0$ for $i\in \{2,3,4\}$.  The integral roots are then:

\begin{enumerate}
    \item $\pm (e_4- e_8)$
    \item \label{gamma1_rootcount_2}$\pm (e_1+ e_{5})$ if $x_1=0$
    \item \label{gamma1_rootcount_3} $\pm (e_i-e_j)$, $\pm (e_{i+4}-e_{j+4})$ for $i,j\in \{1,2,3\}$ if $\alpha_i=\alpha_j$ and $x_i=x_j$. 
\end{enumerate}
Since $\alpha_1, \alpha_2, \alpha_3$ are coprime and thus in particular cannot all coincide, there are either $0$ or $4$ roots of type \eqref{gamma1_rootcount_3}. Hence the total number of integral roots is either $2$ or $6$ if $x_1\ne 0$ and is either $4$ or $8$ if $x_1=0$.

Suppose that $w$ is a fractional root in $Q_h^\perp$. Then $w$ is of the form 
\begin{equation}
\label{eq:sec_K3n+:wpm}
w=\frac{1}{2}\left(\pm e_1\pm e_2\pm e_3\pm e_4\pm e_5\pm e_6\pm e_7\pm e_8\right),
\end{equation}
where the number of $+$ signs is even. Hence, the number of indices $i\in \{1,2,3,4\}$ such that $e_i$ and $e_{i+4}$  have the same sign in \eqref{eq:sec_K3n+:wpm} must be even. 

\vspace{0.3cm}
\textbf{CASE 1:} There is no 
$i\in \{1,2,3\}$ such that $e_i$ and $e_{i+4}$ have the same sign in \eqref{eq:sec_K3n+:wpm}.
\vspace{0.1cm}

The equation $(w,v_1)=0$ implies that there is a choice of signs such that
\[\pm 2\alpha_1\pm 2\alpha_2\pm 2\alpha_3=0.\]
Hence the $\alpha_i$'s must satisfy a relation of the form $\alpha_\ell=\alpha_j+\alpha_k$. But $\alpha_1, \alpha_2, \alpha_3$ are positive and coprime, and $n-1>6$, 
so if such a relation holds, no two $\alpha_i$ can be equal. So if there are any fractional roots of this type, there must be exactly four, given by
\[\pm \frac{1}{2}\left((e_\ell-e_{\ell+4})-(e_j-e_{j+4})-(e_k-e_{k+4})\pm(e_4-e_8)\right).\] 

\vspace{0.2cm}
\textbf{CASE 2:} There is exactly one index $i\in \{1,2,3\}$ such that $e_i$ and $e_{i+4}$ have the same sign in \eqref{eq:sec_K3n+:wpm}. 
\vspace{0.1cm}

Then $e_4$ and $e_8$ must have the same sign in \eqref{eq:sec_K3n+:wpm}. Moreover, $(w,v_2)=0$ implies that there is a choice of sign such that
\[\pm 2x_i \pm 2x_4=0.\]
Since $x_i$ is non-negative and $x_4$ is positive, it follows that $x_i=x_4$. Additionally, $(w,v_1)=0$ implies that there is a choice of sign such that 
\[\pm 2\alpha_j\pm 2\alpha_k=0,\]
where $j,k\ne i$, and so $\alpha_j=\alpha_k$. Thus, since $\alpha_1, \alpha_2, \alpha_3$ cannot all be equal, if there are any fractional roots of this type, there must be exactly four, given by 
\[\frac{1}{2}\left(\pm (e_i+e_{i+4}-e_{4}-e_8)\pm (e_j-e_{j+4}-e_k+e_{k+4})\right).\]

\vspace{0.2cm}
\textbf{CASE 3: } There are exactly two indices $i\in \{1,2,3\}$ such that $e_i$ and $e_{i+4}$ have the same sign in \eqref{eq:sec_K3n+:wpm}.
\vspace{0.1cm}

In this case, the equation $(w,v_1)=0$ implies that there is a $j$ with $2\alpha_j=0$. This contradicts our assumptions on $\alpha_j$. 

\vspace{0.3cm}
\textbf{CASE 4:} All indices $i\in \{1,2,3\}$ are so that $e_i$ and $e_{i+4}$ have the same sign in \eqref{eq:sec_K3n+:wpm}.
\vspace{0.1cm}

In this case, $e_4$ and $e_8$ must have the same sign in \eqref{eq:sec_K3n+:wpm}. Since $(w,v_2)=0$, there is a choice of sign such that
\begin{equation}\label{eq: signs div 1}\pm x_1\pm x_2\pm x_3\pm x_4=0.\end{equation}
Note that each choice of signs such that \eqref{eq: signs div 1} holds, corresponds to exactly one fractional root in $Q_h^\perp$. The number of choices of signs such that \eqref{eq: signs div 1} holds is twice the number of choices of signs such that 
\begin{equation}\label{eq: signs div 1 rewrite} x_1=\pm x_2\pm x_3\pm x_4.\end{equation}
Note that if $x_1=0$ and there is a choice of signs such that \eqref{eq: signs div 1 rewrite} holds, then the opposite choice of signs also makes \eqref{eq: signs div 1 rewrite} hold. In general, if there are two choices of signs such that \eqref{eq: signs div 1 rewrite} holds,
 the choices of signs differ by either $1$, $2$, or $3$ signs, where the latter only occurs if $x_1=0$. 
If two such equations differ by $1$ sign, subtracting the equations yields $x_i=0$ for some $i\ne 1$, which is not possible. If two such equations differ by $2$ signs, we have relations $x_1=x_i$ for some $i\in \{1,2,3\}$ (in particular, $x_1\neq 0$) and $x_j=x_k$ for $j,k\ne i$. 
Hence there are at most two choices of signs such that \eqref{eq: signs div 1 rewrite} holds and if $x_1=0$, there are exactly two such choices.  It follows that there are at most four fractional roots of this type and that if $x_1=0$ and  there are any fractional roots of this type, there must be exactly four.

This gives us the root count for $d\neq 5,6$. Indeed, if $\alpha_1, \alpha_2, \alpha_3$ are all different, then there are at most $4$ integral roots, $4$ fractional roots from Case 1, and $4$ fractional roots from Case 3. 
Hence there are between $2$ and $12$  roots in $Q_h^\perp$. If moreover $x_1=0$, then there either $8$ or $12$ roots in $Q_h^\perp$.

If two of the $\alpha_i$ are equal and $x_1\ne 0$, then 
there are at most $6$ integral roots, no fractional roots from Case 1, $4$ fractional roots from Case 2, and $4$ fractional roots from Case 4. Hence there are between $2$ and $14$ roots. 

Finally, assume that two of the $\alpha_i$ are equal and $x_1=0$. 
If $x_2, x_3, x_4$ are all different, then the only roots are $4$ integral roots and either $0$ or $4$ fractional roots from Case 4. If two of $x_2, x_3, x_4$ are equal, then the only roots are either $4$ or $8$ integral roots and either $0$ or $4$ fractional roots from Case 2 (Case 4 cannot occur because $x_2,x_3,x_4$ are coprime and $d\ne 6$). Hence there are $4$, $8$, or $12$ roots in $Q_h^\perp$.
This finishes the count for $d\ne 5,6$.

When $d=5$, we take $(x_1,x_2,x_3,x_4)=(0,0,2,1)$. Since $\alpha_1\ne \alpha_2$, there are $6$ integral roots: $\pm (e_4-e_8),$ $\pm (e_1+e_5),$ $\pm (e_2+e_6)$. Moreover, if $w$ is a fractional root in $Q_h^\perp$, then $(w,v_2)=0$ implies that the signs of $e_3$ and $e_4$ are opposite to the signs of $e_7$ and $e_8$ in \eqref{eq:sec_K3n+:wpm}.
Then $\alpha_\ell=\alpha_j+\alpha_k$ in which case there are $4$ fractional roots (as in Case 1). 
Therefore there between $6$ and  $10$ roots in $Q_h^\perp$ in this case.

When $d=6$, we take $(x_1,x_2,x_3,x_4)=(0,2,1,1)$. Then there are $4$ integral roots. Moreover, there are either $0$ or $4$ fractional roots in Case 1, no fractional roots in Case 2 (since $\alpha_1\ne \alpha_2$), and either $0$ or $4$ fractional roots in Case 4. Hence there either $4$, $8$, or $12$ roots in $Q_h^\perp$. 
This finishes the proof for $d\geq 3$. 
\end{proof}

In light of Proposition \ref{prop:sec_K3n+:Fcusp}, the only missing ingredient in the proof of Theorem \ref{thm: thm5} is the vanishing of the quasi-pullback of Borcherds form along the ramification divisor of the modular projection 
\[\pi:\Omega\left(\Lambda_h\right)\longrightarrow\Omega\left(\Lambda_h\right)\big/\widehat{O}\left(\Lambda,h\right).\]
When $d$ satisfies the conditions of part~(3) of Proposition~\ref{prop:sec_K3n+:rootcount} above, this follows from Lemma~\ref{lemma_vanishing_ramif}: note that the weight of the pullback of Borcherds form to $\Omega(\Lambda_h)$ is $12+\frac12|R(Q_h^{\perp})|$ by \eqref{eq_weight_quasipullback}.
For other $d$, vanishing along the ramification divisor is the content of Proposition \ref{prop:sec_K3n+:F=0}. In order to prove it, we have to put constraints on the possible $r\in\Lambda$ such that $\pm\sigma_r$ lies inside the modular group $\widehat{O}^+\left(\Lambda,h\right)$. 
Recall that we are under the assumption that $Q_h$ is of the form $\eqref{eq:sec_K3n+:Q}$ with basis \eqref{eq:sec_K3n+:basis}. We keep the same basis for $Q_h(-1)$. Recall that 
\[D\left(\Lambda_h(-1)\right)\cong D\left(Q_h\right)=\left\langle\frac{z_1}{2(n-1)}\right\rangle\oplus\left\langle\frac{z_2}{2d}\right\rangle\cong \mathbb{Z}\big/2(n-1)\mathbb{Z}\oplus \mathbb{Z}\big/2d\mathbb{Z}.\]
As usual, we regard $\widehat{O}^+(\Lambda,h)$ as a subgroup of $O(\Lambda_h)$ via restriction. Elements $g\in\widehat{O}^+(\Lambda,h)$ act on $D\left(Q_h\right)$ as $\pm{\rm{Id}}\oplus {\rm{Id}}$.

\begin{lemma}
\label{lemma:sec_K3n+:r_1}
Let $r\in \Lambda_h$ be a reflective element such that $-\sigma_r\in \widehat{O}^+\left(\Lambda,h\right)$ and $-\sigma_r\not\in\widetilde{O}^+\left(\Lambda_h\right)$. Then $r^2=-2d$ and ${\rm{div}}(r)$ is either $d$ or $2d$.
\end{lemma}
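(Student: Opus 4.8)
The plan is to translate the hypotheses on $r$ into constraints on how $\overline{\sigma_r}$ acts on the discriminant group $D(\Lambda_h)$, and then read off $(r,r)$ and $\mathrm{div}(r)$ by straightforward lattice bookkeeping.

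\emph{Setting up.} Since $\gamma=1$ we have $a=0$, so by Lemma~\ref{lemma:sec_K3n:h} we may take $h=e+df$, $z_1=-\ell$, $z_2=e-df$ and $Q_h=\mathrm{diag}(2(n-1),2d)$. Thus $\mathbb{Z}z_1$ and $N:=U^{\oplus2}\oplus E_8(-1)^{\oplus2}\oplus\mathbb{Z}z_2$ are \emph{orthogonal direct summands} of $\Lambda_h$, and $D(\Lambda_h)=D(\langle z_1\rangle)\oplus D(N)=\langle z_{1,*}\rangle\oplus\langle z_{2,*}\rangle\cong\mathbb{Z}/2(n-1)\oplus\mathbb{Z}/2d$. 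By Lemma~\ref{lemma:sec_K3n:Ohat=Otilde}, $\widehat{O}^+(\Lambda,h)$ is generated by $\widetilde{O}^+(\Lambda_h)$ and $\sigma_{z_1}$; the former acts trivially on $D(\Lambda_h)$ while $\sigma_{z_1}$ acts as $-\mathrm{Id}$ on $D(\langle z_1\rangle)$ and $\mathrm{Id}$ on $D(N)$, so every element of $\widehat{O}^+(\Lambda,h)$ acts on $D(\Lambda_h)$ as $\pm\mathrm{Id}\oplus\mathrm{Id}$. As $-\mathrm{Id}$ is always integral, $-\sigma_r\in\widehat{O}^+(\Lambda,h)$ gives $\sigma_r\in O(\Lambda_h)$; and since $-\sigma_r$ is orientation-preserving, $-\sigma_r\notin\widetilde{O}^+(\Lambda_h)$ means $-\sigma_r$ does not act trivially on $D(\Lambda_h)$. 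Combining these, the action $\pm\mathrm{Id}\oplus\mathrm{Id}$ of $-\sigma_r$ cannot be $\mathrm{Id}\oplus\mathrm{Id}$, hence $\overline{\sigma_r}=\mathrm{Id}_{D(\langle z_1\rangle)}\oplus(-\mathrm{Id}_{D(N)})$.

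\emph{Reading off $\mathrm{div}(r)$.} Write $(r,r)=-2e$ and $\delta=\mathrm{div}(r)$, so $r_*=r/\delta\in\Lambda_h^\vee$ has order $\delta$ in $D(\Lambda_h)$. By \eqref{eq:sec_prelim:sigma_r1}, $\sigma_r\in O(\Lambda_h)$ forces $\delta\in\{e,2e\}$; moreover, for every $x\in\Lambda_h^\vee$ one has $\overline{\sigma_r}(x)-x=\tfrac{(x,r)}{e}r\in\langle r_*\rangle$, and $\overline{\sigma_r}(r_*)=-r_*$. Applying $\overline{\sigma_r}(r_*)=-r_*$ together with the shape of $\overline{\sigma_r}$, the $D(\langle z_1\rangle)$-component of $r_*$ is $2$-torsion, hence lies in $\{0,(n-1)z_{1,*}\}$; as it then has order $\le 2$ while the $D(N)$-component has order dividing $2d$, we get $\delta\mid 2d$. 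On the other hand $\overline{\sigma_r}(z_{2,*})=-z_{2,*}$ gives $-2z_{2,*}\in\langle r_*\rangle$, and $2z_{2,*}$ has order $d$, so $d\mid\delta$. Therefore $\delta\in\{d,2d\}$.

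\emph{Reading off $(r,r)$.} Write $r=az_1+bz_2+m$ with $m\in U^{\oplus2}\oplus E_8(-1)^{\oplus2}$. Then $(z_{1,*},r)=-a$, so $\overline{\sigma_r}(z_{1,*})=z_{1,*}$ forces $\tfrac ae r\in\Lambda_h$, i.e.\ $e\mid a$; and $(z_{2,*},r)=-b$, so $\overline{\sigma_r}(z_{2,*})=-z_{2,*}$ forces $\tfrac be r-\tfrac1d z_2\in\Lambda_h$, and comparing $\langle z_2\rangle$-coordinates (using $e\mid a$) gives $\tfrac{b^2}e-\tfrac1d\in\mathbb{Z}$, hence $d\mid e$. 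Now $d\mid e$ together with $\delta\in\{e,2e\}$ and $\delta\in\{d,2d\}$ leaves exactly the cases: $e=d$ (with $\delta=d$ or $\delta=2d$), or $e=2d$ with $\delta=e=2d$. In the first two, $(r,r)=-2d$, as claimed. In the last, $\delta=e\mid a$ forces $r_*\in D(N)$, say $r_*=b\,z_{2,*}$, so the discriminant form gives $q(r_*)\equiv-b^2/(2d)\bmod 2\mathbb{Z}$; but $q(r_*)=(r,r)/\delta^2=-1/d$, so $b^2\equiv 2\bmod 4d$, which is impossible mod $4$. Hence $(r,r)=-2d$, and then $\mathrm{div}(r)\in\{e,2e\}=\{d,2d\}$.

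\emph{Main obstacle.} The real work is organizing the discriminant-group bookkeeping so that all of this goes through cleanly: one must exploit that $z_1$ spans an \emph{orthogonal summand} of $\Lambda_h$ (so that the splitting $D(\Lambda_h)=D(\langle z_1\rangle)\oplus D(N)$ is compatible with the $\pm\mathrm{Id}\oplus\mathrm{Id}$ action produced by Lemma~\ref{lemma:sec_K3n:Ohat=Otilde}), keep careful track of the orders of $r_*$ and of $2z_{2,*}$, and use the reflection-on-$D(\Lambda_h)$ formula correctly. Once the setup is in place, the divisibility deductions $e\mid a$, $d\mid e$ and the final parity obstruction $b^2\not\equiv2\bmod 4$ are entirely elementary.
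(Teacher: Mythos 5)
Your proof is correct and follows essentially the same route as the paper: both exploit that $-\sigma_r$ must act as $-\mathrm{Id}\oplus\mathrm{Id}$ on $D(\langle z_1\rangle)\oplus D(\langle z_2\rangle)$ and then apply the reflection formula to the generators $z_{1,*}$, $z_{2,*}$ together with \eqref{eq:sec_prelim:sigma_r1}. The only real difference is the endgame: the paper pins down $(r,r)=-2d$ in one stroke by comparing the order of $\tfrac{C}{m}r$ with that of $\tfrac{1}{d}z_2$ and using coprimality of the coefficients of $r$, whereas you first narrow to $(e,\delta)\in\{(d,d),(d,2d),(2d,2d)\}$ and then exclude $e=2d$ via the $\bmod\,2\mathbb{Z}$ discriminant-form computation $b^2\equiv 2\bmod 4$ — an extra but perfectly valid step.
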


\begin{proof}
Recall \eqref{eq:sec_prelim:sigma_r1} that if $-\sigma_r\in\widehat{O}^+\left(\Lambda,h\right)$, then
\begin{equation}
\label{eq:sec_K3n+:div(r)}
(r,r)\in\left\{{\rm{div}}(r), 2{\rm{div}}(r)\right\}.
\end{equation}
Let $r=Au+Bz_1+Cz_2$ with $u\in U^{\oplus 2}\oplus E_8^{\oplus 2}$ primitive in the unimodular part of $\Lambda_h(-1)$ and $A,B,C$ coprime integers. Write $(r,r)=2m$. Unimodularity and \eqref{eq:sec_K3n+:div(r)} imply that $m$ divides $A$. Furthermore,
since 
\[-\sigma_r\left(\frac{z_1}{2(n-1)}\right)\equiv -\frac{z_1}{2(n-1)}\;\;\;\hbox{and}\;\;\;-\sigma_r\left(\frac{z_2}{2d}\right)\equiv \frac{z_2}{2d}\mod \Lambda_h(-1),\]
one has 
\[\frac{B}{m}r\equiv 0\;\;\;\hbox{and}\;\;\;\frac{C}{m}r\equiv\frac{1}{d}z_2\mod \Lambda_h(-1).\]
As $r$ is primitive, it follows that $m$ divides $B$ and $dC$. 
Since ${\rm{gcd}}(A,B,C)=1$, $m$ cannot divide $C$. Comparing orders in the second equation together with our primitivity assumption on $r$ gives us that $m=d$.
\end{proof}

\begin{lemma}
\label{lemma:sec_K3n+:r_2}
Assume both $d$ and $n-1$ are not divisible by $4$. Then for any reflective
element $r\in \Lambda_h$ such that $-\sigma_r\in \widehat{O}^+\left(\Lambda,h\right)$, there exists an isometry $g\in O(\Lambda_h)$ such that 
\begin{equation}
\label{eq:sec_K3n+:g(r)}
g(r)=\left\{\begin{array}{ll}
de+z_2&\hbox{when ${\rm{div}}(r)=d$,}\\
z_2&\hbox{when ${\rm{div}}(r)=2d$}
\end{array}\right.
\end{equation}
and $-\sigma_{g(r)}$ acts as $-{\rm{Id}}\oplus {\rm{Id}}$ on $D(\Lambda_h)$. In particular, $-\sigma_{g(r)}\in \widehat{O}^+\left(\Lambda,h\right)$.
Moreover, if $d$ is an odd prime and $(n-1)d\equiv 2,3\mod 4$, then we can take $g\in\widehat{O}^+(\Lambda,h)$.
\end{lemma}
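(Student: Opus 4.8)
The plan is to use Eichler's criterion (Theorem~\ref{thm:sec_prelim:Eichler}) to bring $r$ into the stated normal form, and then to track carefully how the resulting reflection acts on the discriminant group. First I would note that $\Lambda_h = U^{\oplus 2}\oplus E_8(-1)^{\oplus 2}\oplus Q_h(-1)$ contains two copies of $U$, so Eichler's criterion applies. By Lemma~\ref{lemma:sec_K3n+:r_1}, we know $(r,r)=-2d$ and $\mathrm{div}(r)\in\{d,2d\}$. In the case $\mathrm{div}(r)=2d$, the class $r_*\in D(\Lambda_h)$ of $r/2d$ must be a primitive element of order $2d$, and since $(r,r)=-2d$ its discriminant form value is $-\tfrac{1}{2d}\in\mathbb{Q}/2\mathbb{Z}$; the same data is realized by $z_2$ (which satisfies $(z_2,z_2)=-2d$, $\mathrm{div}(z_2)=2d$, and $(z_2)_*$ has the matching discriminant value). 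Hence $r$ and $z_2$ lie in the same $\widetilde O(\Lambda_h)$-orbit, giving $g\in\widetilde O(\Lambda_h)\subset O(\Lambda_h)$ with $g(r)=z_2$. In the case $\mathrm{div}(r)=d$, the candidate normal form is $de+z_2$: here $\mathrm{div}(de+z_2)=d$ (since $(de+z_2,\Lambda_h)=d\mathbb{Z}$, using that $z_2$ is orthogonal to $e$ and has $\mathrm{div}=2d$ while $de$ contributes $d$), the square is $(de+z_2,de+z_2)=0-2d=-2d$, and one computes $(de+z_2)_*=z_2/d\in D(Q_h(-1))$, which again matches $r_*=r/d$ after possibly composing with an isometry of $D(\Lambda_h)$; then Eichler's criterion yields the desired $g\in O(\Lambda_h)$.

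Next I would verify that $-\sigma_{g(r)}$ acts as $-\mathrm{Id}\oplus\mathrm{Id}$ on $D(\Lambda_h)\cong D(Q_h)=\langle z_1/2(n-1)\rangle\oplus\langle z_2/2d\rangle$. Since $g(r)$ lies (essentially) in $U\oplus Q_h(-1)$ and is orthogonal to $z_1$, the reflection $\sigma_{g(r)}$ fixes $z_1$ and hence fixes $(z_1)_*$; composing with $-\mathrm{Id}$ sends $(z_1)_*\mapsto -(z_1)_*$. For the $z_2$-component: when $g(r)=z_2$ one has $\sigma_{z_2}(z_2)=-z_2$, so $-\sigma_{z_2}$ fixes $(z_2)_*$; when $g(r)=de+z_2$ a short computation with the reflection formula \eqref{eq:sec_prelim:sigma_r} (using $(r,r)=-2d$ and $(z_2, de+z_2)=-2d$) shows $\sigma_{de+z_2}(z_2)\equiv -z_2\bmod\Lambda_h$, so again $-\sigma_{g(r)}$ fixes $(z_2)_*$. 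Thus $-\sigma_{g(r)}$ acts as $-\mathrm{Id}\oplus\mathrm{Id}$, which (by the description of $\widehat O^+(\Lambda,h)$ in terms of its action on $D(\Lambda_h)$, cf. Lemma~\ref{lemma:sec_K3n:Ohat=Otilde}) places it in $\widehat O^+(\Lambda,h)$; orientation is preserved because $\sigma_{g(r)}$ is a reflection in a negative-square vector, cf.~\cite{Mar11}*{Section 9}.

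The final and most delicate claim is the last sentence: when $d$ is an odd prime and $(n-1)d\equiv 2,3\bmod 4$, the isometry $g$ can be chosen inside $\widehat O^+(\Lambda,h)$ rather than merely in $O(\Lambda_h)$. The idea is that $g$ constructed above lies in $\widetilde O(\Lambda_h)$, which sits inside $\widehat O^+(\Lambda,h)$ only up to orientation and up to the ambiguity coming from the $\pm\mathrm{Id}\oplus\mathrm{Id}$ action; since $\widehat O^+(\Lambda,h)$ is generated by $\widetilde O^+(\Lambda_h)$ together with $\sigma_{z_1}$ (Lemma~\ref{lemma:sec_K3n:Ohat=Otilde}), it suffices to adjust $g$ by an element of $\widetilde O^+(\Lambda_h)$ to fix orientation, and this is possible because $\widetilde O(\Lambda_h)$ contains orientation-reversing elements (e.g. $-\mathrm{Id}$ on a copy of $U$ does not change the discriminant form). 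The arithmetic hypothesis $d$ prime and $(n-1)d\equiv 2,3\bmod 4$ is what guarantees, via the structure of $D(Q_h)$ and $O(D(Q_h))$, that the normal-form element $g(r)$ has a representative whose associated reflection already acts correctly on $D(\Lambda_h)$ without needing an extra factor of $\sigma_{z_1}$ — in other words it ensures the relevant discriminant-form value $-1/2d$ (resp. its image under units mod $2d$) is rigid enough that $O(D(Q_h))$ does not force an extra $\mathrm{Id}\oplus(-\mathrm{Id})$ component. I expect this last point — matching orientation and discriminant-group action simultaneously using only generators of $\widehat O^+(\Lambda,h)$ — to be the main obstacle, and I would handle it by an explicit case analysis on $\mathrm{div}(r)\in\{d,2d\}$ together with the computation of $O(D(Q_h))$ under the stated congruence condition on $(n-1)d$.
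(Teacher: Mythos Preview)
Your application of Eichler's criterion contains a genuine gap. In the case $\mathrm{div}(r)=2d$ you argue that $r_*$ and $(z_2)_*$ have the same order and the same discriminant-form value $-\tfrac{1}{2d}$, and conclude that $r$ and $z_2$ lie in the same $\widetilde O(\Lambda_h)$-orbit. But Eichler's criterion (Theorem~\ref{thm:sec_prelim:Eichler}) requires the classes themselves to be \emph{equal} in $D(\Lambda_h)$, not merely to share the same quadratic value. Writing $r=Au+Bz_1+Cz_2$ with $A,B,C$ coprime, the paper computes $r_*$ explicitly (e.g.\ $r_*=2C(z_2)_*$ when $\mathrm{div}(r)=d$), and for generic $C$ this is \emph{not} $\pm 2(z_2)_*$. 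To bring $r_*$ to the target class one must produce an element of $O(D(\Lambda_h))$ carrying $C(z_2)_*$ (or $(C+d)(z_2)_*$) to $(z_2)_*$; the paper does this by an explicit map $\phi$, and checking that $\phi$ preserves the quadratic form is precisely where the hypothesis $4\nmid d$ enters. Similarly, the hypothesis $4\nmid(n-1)$ is used in the subcase $\mathrm{div}(r)=2d$, $B$ odd, to force $C$ even. Your argument never invokes either hypothesis, which is a sign that the step cannot be correct as written. (In the $\mathrm{div}(r)=d$ case you do mention ``after possibly composing with an isometry of $D(\Lambda_h)$'', but constructing that isometry is the entire content of the proof.)

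For the ``moreover'' clause your discussion is too vague to constitute an argument. The point is not about fixing orientation or about whether $\widetilde O(\Lambda_h)$ contains orientation-reversing elements; it is that when $d$ is an odd prime, the congruence $C^2\equiv 1\pmod d$ forces $C\equiv\pm 1\pmod d$, so $r_*$ is already $\pm$ the target class. Then Eichler's criterion yields $g\in\widetilde O^+(\Lambda_h)\subset\widehat O^+(\Lambda,h)$ sending $r$ to the target (or its negative, which gives the same reflection). The additional congruence $(n-1)d\equiv 2,3\pmod 4$ is used to exclude the subcase $\mathrm{div}(r)=2d$, $B$ odd, where one would otherwise need a genuinely non-trivial discriminant-group isometry. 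Your sketch does not identify this mechanism.
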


\begin{proof}
As before, let $r=Au+Bz_1+Cz_2\in \Lambda_h(-1)$ with ${\rm{gcd}}(A,B,C)=1$. Since $2d=(r,r)=A^2(u,u)-B^2\cdot 2(n-1)-C^2\cdot 2d$, see Lemma \ref{lemma:sec_K3n+:r_1}, and $d$ divides $A$ and $B$, then $d$ must divide $C^2-1$. Consider the map $\phi:D(\Lambda_h(-1))\to D(\Lambda_h(-1))$
\begin{equation}
\label{eq:sec_K3n+:phi}
(z_1)_*\mapsto(z_1)_*\;\;\;\hbox{and}\;\;\;(z_2)_*\mapsto\left\{\begin{array}{cc}C(z_2)_*&\hbox{if $C$ is odd,}\\(C+d)(z_2)_*&\hbox{if $C$ is even.}\end{array}\right.
\end{equation}
One checks that this map is a group isomorphism and $d\not\equiv 0\mod 4$ implies that it preserves the quadratic form, i.e., $\phi\in O(D(\Lambda_h(-1)))$. Now if ${\rm{div}}(r)=d$ and we write $r=A'du+B'dz_1+Cz_2$, then
\[r_*=\frac{r}{{\rm{div}}(r)}=A'u+B'z_1+\frac{C}{d}z_2\equiv2C\frac{z_2}{2d}=2C(z_2)_*\mod \Lambda_h(-1).\]
Note that via \eqref{eq:sec_K3n+:phi}, the element $2C(z_2)_*$ is in the same $O(D(\Lambda_h(-1)))$-orbit as $2(z_2)_*$ if $C$ is odd, and as $2(d+1)(z_2)_*=2(z_2)_*$ if $C$ is even. Since 
\[2\frac{z_2}{2d}\equiv \frac{de+z_2}{d}=\frac{de+z_2}{{\rm{div}}(de+z_2)}\mod \Lambda_h(-1),\]
and the projection map $O(\Lambda_h(-1))\to O(D(\Lambda_h(-1)))$ is surjective, Eichler's criterion (Theorem \ref{thm:sec_prelim:Eichler}) gives us the Lemma when ${\rm{div}}(r)=d$.
The case $\divis(r)=2d$, $B$ even is treated analogously. 
If $\divis(r)=2$ and $B$ is odd, we have $C^2-1+(n-1)d\equiv 0\mod 4d$. Under the additional assumption $n-1\not\equiv 0\mod 4$, it follows that $C$ is even. Using the map $\phi\in O(D(\Lambda_h(-1))$ that sends $(z_1)_*$ to $n(z_1)_*+d(z_2)_*$ and $(z_2)_*$ to $(n-1)(z_1)_*+C(z_2)_*$, one shows that $r$ is equivalent to $z_2$ under $O(\Lambda_h(-1))$.

If $d$ is an odd prime, then $c^2-1\equiv 0\mod d$ implies $c\equiv\pm 1\mod d$. Hence when $\divis(r)=d$, the class $r_*=2p(z_2)_*\in D(\Lambda_h(-1))$ is equivalent to $\pm 2(z_2)_*$, and the statement follows. 
When $\divis(r)=2d$ and $B$ is even, we have $c^2-1\equiv 0\mod 4d$ and therefore $c\equiv \pm 1\mod 2d$. The class $r_*=p(z_2)_*$ is equivalent to $\pm(z_2)_*$ and the statement follows. 
Finally, when $\divis(r)=2d$ and $B$ is odd we have $c^2-1+(n-1)d\equiv 0\mod 4d$, which cannot happen when $(n-1)d\equiv 2,3\mod 4$.
\end{proof}

\begin{proposition}
\label{prop:sec_K3n+:F=0}
Suppose that $\gamma=1$ and $d$ is prime, that either $n-1\equiv 1$ and $d\equiv3$ mod 4 or $n-1\equiv 2$ and $d\equiv1$ mod 4, and that $n-1>6$ and $n-1\notin\{10, 13, 25, 37, 58, 85, 130, \star\}$: in particular, (1) and (2) of Proposition~\ref{prop:sec_K3n+:rootcount} hold.
Let $Q_h\hookrightarrow E_8$ be the primitive embedding defined in \eqref{eq:sec_K3n+: gamma1 embedding} and consider the induced embedding $\Lambda_h\hookrightarrow \mathrm{II}_{2,26}$. Then for any $r\in \Lambda_h$ such that $-\sigma_r\in \widehat{O}^+\left(\Lambda,h\right)$ one has 
\[\left|R\left(\Lambda_h^\perp\right)\right|<\left|R\left(\left(\Lambda_h\right)_r^\perp\right)\right|.\]
In particular, the quasi-pullback $F$ of Borcherds form $\Phi_{12}$ to $\Omega^\bullet(\Lambda_h)$ is modular with respect to $\mathrm{det}:\widehat{O}^+\left(\Lambda,h\right)\longrightarrow\mathbb{C}^*$ and it vanishes along the ramification divisor of the modular projection
\[\pi:\Omega\left(\Lambda_h\right)\longrightarrow\Omega\left(\Lambda_h\right)\big/\widehat{O}^+\left(\Lambda,h\right).\]
\end{proposition}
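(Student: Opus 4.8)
The plan is to assemble all the ingredients from the explicit embedding of Proposition~\ref{prop:sec_K3n+:rootcount} together with Lemmas~\ref{lemma:sec_K3n+:r_1}--\ref{lemma:sec_K3n+:r_2}. For the modularity assertion: since $\gamma=1$ we have $a=0$ and $z_1=-\ell$, so by Lemma~\ref{lemma:sec_K3n:Ohat=Otilde} the group $\widehat{O}^+(\Lambda,h)$ is generated by $\widetilde{O}^+(\Lambda_h)$ and the reflection $\sigma_{z_1}$. The embedding of Proposition~\ref{prop:sec_K3n+:rootcount} sends $z_1$ into $V_-\cong A_1^{\oplus 4}$, so $\mathrm{rk}(V_-)=4$, and by part~(1) of that proposition $\tfrac12|R(z_1^\perp\cap V_-)|$ is odd; hence the parity condition~\eqref{eq:sec_K3n+:V} of Lemma~\ref{lemma:sec_K3n+:F} holds, and $F$ is modular with character $\det$ for $\widehat{O}^+(\Lambda,h)$, of weight $12+\tfrac12|R(Q_h^\perp)|$ by~\eqref{eq_weight_quasipullback}.

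For the root inequality, let $r\in\Lambda_h$ be reflective with $-\sigma_r\in\widehat{O}^+(\Lambda,h)$. If $-\sigma_r$ already lies in $\widetilde{O}^+(\Lambda_h)$, then such an $r$ can only exist when $D(\Lambda_h)\cong D(Q_h)$ has the special shape of \cite{GHS07}*{Proposition 3.2}, and the argument in the proof of Proposition~\ref{prop:sec_K3n:F=0} applies verbatim, giving $|R((\Lambda_h)_r^\perp)|\ge 56>|R(Q_h^\perp)|=|R(\Lambda_h^\perp)|$ since $|R(Q_h^\perp)|\le 14$. If instead $-\sigma_r\in\widehat{O}^+(\Lambda,h)\setminus\widetilde{O}^+(\Lambda_h)$, then Lemma~\ref{lemma:sec_K3n+:r_1} forces $r^2=-2d$ and $\mathrm{div}(r)\in\{d,2d\}$, and Lemma~\ref{lemma:sec_K3n+:r_2} — here is where the hypotheses that $d$ is an odd prime and $(n-1)d\equiv 2,3\mod 4$ are used — produces an isometry $g\in\widehat{O}^+(\Lambda,h)$ with $g(r)=r_0$, where $r_0=z_2$ when $\mathrm{div}(r)=2d$ and $r_0=de+z_2$ when $\mathrm{div}(r)=d$. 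Since $g\in\widehat{O}^+(\Lambda,h)$ and $F$ is $\det$-modular for this group, $F$ has the same vanishing order along $\mathcal{D}_r$ and $\mathcal{D}_{r_0}$; by the quasi-pullback computation in the proof of Proposition~\ref{prop:sec_prelim:F=0} this common order equals $\tfrac12(|R((\Lambda_h)_{r_0}^\perp)|-|R(\Lambda_h^\perp)|)$, so it suffices to verify $|R(\Lambda_h^\perp)|<|R((\Lambda_h)_{r_0}^\perp)|$ for the two model elements, and the same identity applied to $r$ itself then yields the inequality of the statement for every such $r$.

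The two model computations I would carry out as follows. Embed $\Lambda_h$ into $\mathrm{II}_{2,26}$ via the identity on $U^{\oplus 2}\oplus E_8(-1)^{\oplus 2}$ and via~\eqref{eq:sec_K3n+: gamma1 embedding} on $Q_h(-1)\hookrightarrow E_8(-1)$, with $z_1\mapsto v_1$, $z_2\mapsto v_2$; then $R(\Lambda_h^\perp)=R(\langle v_1,v_2\rangle^\perp\subset E_8)$. A direct calculation of orthogonal complements inside $\mathrm{II}_{2,26}$ shows that $(\Lambda_h)_{z_2}^\perp$ is isometric to $v_1^\perp\subset E_8$, and that $(\Lambda_h)_{de+z_2}^\perp$ is isometric to the sublattice $\{\tfrac12(w,v_2)e+w : w\in v_1^\perp,\ (w,v_2)\in 2\mathbb{Z}\}$ of $\langle e\rangle\oplus v_1^\perp(-1)$ (with $e$ a generator of a hyperbolic summand), whose roots are exactly the $w\in R(v_1^\perp)$ with $(w,v_2)$ even. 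In both cases $R(\langle v_1,v_2\rangle^\perp)$ sits inside this root set, and the root $e_4+e_8\in v_1^\perp$ satisfies $(e_4+e_8,v_2)=2x_4$, which is even and non-zero since $x_4\ge 1$ in the construction; hence $e_4+e_8$ (resp.\ $x_4e+e_4+e_8$) is a root of $(\Lambda_h)_{r_0}^\perp$ that is not orthogonal to $v_2$, so it is not in $R(\Lambda_h^\perp)$, giving the strict inequality. Combining this with Proposition~\ref{prop:sec_prelim:F=0} yields the vanishing of $F$ along the ramification divisor. I expect the main subtlety to be the case $\mathrm{div}(r)=d$: identifying $(\Lambda_h)_{de+z_2}^\perp$ correctly — the non-trivial gluing of $\langle de+z_2\rangle$ to $Q_h^\perp(-1)$ must be seen to collapse exactly to the parity condition on $(w,v_2)$ — together with keeping precise track of which reflective $r$ can occur, which is the content of Lemmas~\ref{lemma:sec_K3n+:r_1}--\ref{lemma:sec_K3n+:r_2}.
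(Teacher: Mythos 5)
Your proof is correct and follows essentially the same route as the paper's: the same case split between $-\sigma_r\in\widetilde{O}^+(\Lambda_h)$ (handled by Proposition~\ref{prop:sec_K3n:F=0}) and $-\sigma_r\in\widehat{O}^+(\Lambda,h)\setminus\widetilde{O}^+(\Lambda_h)$, the same reduction to the model elements $z_2$ and $de+z_2$ via Lemmas~\ref{lemma:sec_K3n+:r_1}--\ref{lemma:sec_K3n+:r_2}, and the same kind of explicit extra root (you use $e_4+e_8$ with $x_4\neq 0$ where the paper uses $e_i+e_{i+4}$ with $x_i\neq 0$, $i\le 3$, and $t=ke+s$ in the $\mathrm{div}(r)=d$ case). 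Your explicit justification that the root count $|R((\Lambda_h)_r^\perp)|$ is constant along the $\widehat{O}^+(\Lambda,h)$-orbit of $r$ --- via invariance of the vanishing order of $F$ --- is a correct detail the paper leaves implicit.
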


\begin{proof}
If $-\sigma_r\in \widetilde{O}^+\left(\Lambda_h\right)$, the statement follows from Proposition \ref{prop:sec_K3n:F=0}. If $-\sigma_r\in \widehat{O}^+\left(\Lambda,h\right)$ and not in $\widetilde{O}^+(\Lambda_h)$, then in light of Lemma \ref{lemma:sec_K3n+:r_2} we may assume $r=de+z_2$ or $r=z_2$.
Assume first that $r=z_2$, then 
\[\left(\Lambda_h\right)_r^\perp=\left(\mathbb{Z}z_1\right)^\perp\subset E_8(-1)\]
and under the embedding \eqref{eq:sec_K3n+: gamma1 embedding} restricted to $\langle z_1\rangle$, our counting argument in Proposition \ref{prop:sec_K3n+:rootcount} leads to more than $14$ roots only orthogonal to $z_1$. Indeed, recall that by assumption if $d>1$, then $x_1,\ldots,x_3$ in \eqref{eq:sec_K3n+:xs} are not all equal to zero. If $x_i\neq 0$ with $i\in\{1,2,3\}$, then $e_i+e_{i+4}$ is orthogonal to $z_1$ but not to $z_2$, i.e., 
\[e_i+e_{i+4}\in R\left(\left(\Lambda_h\right)_r^\perp\right)\setminus R\left(\Lambda_h^\perp\right).\]
Assume now that $r=de+z_2$. Then 
\[\left(\Lambda_h\right)_r=U\oplus E_8(-1)^{\oplus 2}\oplus\langle e, z_1, 2f-z_2\rangle.\]
Note that $(e_i, e_{i+4})=-4x_i$. In particular, there is always a root $s\in E_8(-1)$ such that $(s,z_1)=0$ and $(s,z_2)=2k$ for some non-zero integer $k$. Consider the root given by 
\[t=ke+s\in {\rm{II}}_{2,26}.\]  
Then one immediately checks that $t$ is orthogonal to $e, z_1,$ and $2f-z_2$, but not orthogonal to $z_2$. In particular,
$t\in R\left(\left(\Lambda_h\right)_r^\perp\right)\setminus R\left(\Lambda_h^\perp\right)$.
\end{proof}

Summarizing, we have proved the following:
\begin{theorem}
\label{thm:sec_K3n+:gamma1_1}
Let $n,d$ be two positive integers such that 
\begin{enumerate}[(1)]
\item $n-1>6$, 
\item $n-1\equiv 1,2\mod 4$,
\item $n-1\not\in\{10,13,25,37,58,85,130,\star\},$ and
\item $d$ satisfies one of the following:
\begin{enumerate}
\item $d\geq 3$, $d\not\equiv 0,4,7\mod 8$ and $d\notin\{5,10,13,25,37,58,85,130,\star\}$;
\item $d$ is an odd prime and $d\equiv 3 \mod 4$.
\end{enumerate}
\end{enumerate}
Then $\mathcal{M}_{\mathrm{K}3^{[n]},2d}^1$ is of general type.
\end{theorem}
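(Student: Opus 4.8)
The plan is to assemble the statement directly from the machinery built up in Sections~4 and~5, treating it as a bookkeeping exercise: the two bullets in hypothesis (4) correspond precisely to the two routes to vanishing along the ramification divisor, and everything else is uniform. First I would invoke Corollary~\ref{cor:sec_K3n+:main_cor}, which reduces the theorem to checking that there exists a primitive embedding $Q_h\hookrightarrow E_8$ satisfying all five hypotheses of Proposition~\ref{prop:sec_K3n+:Fcusp}. I would then fix the embedding \eqref{eq:sec_K3n+: gamma1 embedding} of Proposition~\ref{prop:sec_K3n+:rootcount}, whose hypotheses are exactly (1)--(3) of the present theorem (note $n-1>6$, $n-1\equiv 1,2\bmod 4$, $n-1\notin\{10,13,25,37,58,85,130,\star\}$, and $d\ge 3$ with $d\not\equiv 0\bmod 4$, which is implied by both cases of hypothesis (4)). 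Proposition~\ref{prop:sec_K3n+:rootcount} with the choice $V_+,V_-\cong A_1^{\oplus 4}$ immediately gives hypotheses (1), (2), and (3) of Proposition~\ref{prop:sec_K3n+:Fcusp}: the $2$-elementary sublattice $V_+$ contains $(z_1)^\perp\cap Q_h$ and has $z_1\in V_-$, the root count satisfies $2\le|R(Q_h^\perp)|\le 14$, and $\tfrac12|R((z_1)^\perp\cap V_-)|$ is odd while $\operatorname{rk}(V_-)=4$ is even, so their parities are opposite.

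Next I would dispatch hypothesis (5) of Proposition~\ref{prop:sec_K3n+:Fcusp}, namely $\tfrac{2d}{\gamma}=2d\neq 1,2,4,8$: this is automatic since $d\ge 3$, so $2d\ge 6$, and $2d$ is not a power of $2$ because $d\not\equiv 0\bmod 4$ forces $2d\not\equiv 0\bmod 8$, ruling out $2d=8$ while $2d=6\neq 4$. The only remaining point is hypothesis (4) of Proposition~\ref{prop:sec_K3n+:Fcusp}, the vanishing of the quasi-pullback along the ramification divisor, and here the argument splits along hypothesis (4) of the theorem. In case (4a) — $d\ge 3$, $d\not\equiv 0,4,7\bmod 8$, $d\notin\{5,10,13,25,37,58,85,130,\star\}$ — part~(3) of Proposition~\ref{prop:sec_K3n+:rootcount} provides an embedding with $|R(Q_h^\perp)|\equiv 0\bmod 4$, so $\tfrac{|R(Q_h^\perp)|}{2}$ is even and condition (4a) of Proposition~\ref{prop:sec_K3n+:Fcusp} holds (equivalently, one can phrase the vanishing via Lemma~\ref{lemma_vanishing_ramif} using $\operatorname{rk}(\Lambda_h)=22$ and weight $12+\tfrac12|R(Q_h^\perp)|$, which is even). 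In case (4b) — $d$ an odd prime with $d\equiv 3\bmod 4$ — note that $(n-1)d\equiv 2$ or $3\bmod 4$: indeed $d\equiv 3\bmod 4$ and $n-1\equiv 1,2\bmod 4$ give $(n-1)d\equiv 3$ or $2\bmod 4$ respectively, so the hypotheses of Proposition~\ref{prop:sec_K3n+:F=0} are met, yielding $|R(\Lambda_h^\perp)|<|R((\Lambda_h)_r^\perp)|$ for every $r$ with $-\sigma_r\in\widehat O^+(\Lambda,h)$, which is exactly condition (4b) of Proposition~\ref{prop:sec_K3n+:Fcusp}.

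With all five hypotheses of Proposition~\ref{prop:sec_K3n+:Fcusp} verified in both cases, that proposition yields that $\Omega(\Lambda_h)/\widehat O^+(\Lambda,h)$ is of general type, and since $\mathcal{M}_{\mathrm{K}3^{[n]},2d}^1$ is irreducible (Remark~\ref{rem:sec_K3n:a's}) and birational to this modular variety by Theorem~\ref{thm:sec_prelim:Torelli}, the theorem follows. I expect the main subtlety — though it is already absorbed into the cited propositions — to be the case analysis for vanishing along the ramification divisor: case (4b) genuinely requires the classification of reflective vectors in Lemmas~\ref{lemma:sec_K3n+:r_1} and~\ref{lemma:sec_K3n+:r_2} together with the explicit root-production argument of Proposition~\ref{prop:sec_K3n+:F=0}, since when $|R(Q_h^\perp)|/2$ is odd one cannot use the parity shortcut of Lemma~\ref{lemma_vanishing_ramif}. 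The write-up itself is then just the clean statement that hypotheses (1)--(4) of the theorem feed, respectively, into the three numerical conditions of Proposition~\ref{prop:sec_K3n+:rootcount} and the two alternatives for the ramification-vanishing condition, with (5) of Proposition~\ref{prop:sec_K3n+:Fcusp} being trivially satisfied.
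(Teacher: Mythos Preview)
Your proposal is correct and follows exactly the same approach as the paper: reduce to Proposition~\ref{prop:sec_K3n+:Fcusp} via Corollary~\ref{cor:sec_K3n+:main_cor}, use Proposition~\ref{prop:sec_K3n+:rootcount} to supply hypotheses (1)--(3) and (in case (4a)) the parity condition (4a), and in case (4b) invoke Proposition~\ref{prop:sec_K3n+:F=0} after checking $(n-1)d\equiv 2,3\bmod 4$. The paper's own proof is a terse two-sentence version of precisely this; your write-up is simply a fuller unpacking of the same citations.
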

\begin{proof}
Note that under our assumptions $d\geq3$ and $(n-1)d\equiv 2,3\mod 4$. 
If assumption (4a) holds, the statement follows from Proposition~\ref{prop:sec_K3n+:rootcount} and Corollary~\ref{cor:sec_K3n+:main_cor}. If instead (4b) holds, we additionally use Proposition~\ref{prop:sec_K3n+:F=0},
where we note that under our assumptions $d\geq3$ and $(n-1)d\equiv 2,3\mod 4$.
\end{proof}

Similar to the strange duality phenomenon, one can construct various generically finite rational maps between moduli spaces $\mathcal{M}_{\mathrm{K}3^{[n]} 2d}^\gamma$ using lattice morphisms. This has already been 
done by O'Grady \cite{OG89}*{page 163} (see also \cites{kon93,kon99}) in the surface case;
our argument is similar. We restrict 
to the cases $\gamma=1,2$. The following are examples in a much larger class of generically finite maps that one can construct between the moduli spaces $\mathcal{M}_{{\rm{K}}3^{[n]},2d}^{\gamma}$. We will use them to prove Theorems \ref{thm: thm5} and \ref{thm:gamma2}.

\begin{lemma}
\label{lemma:sec_K3n+:fm1}
Let $d,r,n$ be positive integers with $n\geq 2$.
\begin{enumerate}[(1)]
\item The moduli spaces 
$\mathcal{M}_{\mathrm{K}3^{[n]},2dr^2}^1$ and $\mathcal{M}_{K3^{[(n-1)r^2+1]},2d}^1$
admit dominant rational maps to $\mathcal{M}_{\mathrm{K}3^{[n]},2d}^1$. 
\item Assume $n-1=k^2$ with $k$ odd.
The space $\mathcal{M}_{K3^{[n]}, 2d}^2$
is non-empty if and only if $\mathcal{M}_{\mathrm{K}3^{[2]},2d}^2$ is, 
and the former admits a dominant rational map to the latter.
\end{enumerate}
\end{lemma}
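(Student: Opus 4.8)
By the Torelli theorem (Theorem~\ref{thm:sec_prelim:Torelli}) each moduli space appearing in the statement is birational to an orthogonal modular variety $\Omega(\Lambda_h)/\widehat{O}^+(\Lambda,h)$, and the monodromy group is described explicitly by Lemma~\ref{lemma:sec_K3n:Ohat=Otilde} (it is $\widetilde{O}^+(\Lambda_h)$ if $n=2$, and $\langle\widetilde{O}^+(\Lambda_h),\sigma_{z_1}\rangle$ if $n\geq3$, where $z_1\in\Lambda_h$ spans a $\langle-2(n-1)\rangle$ summand). The plan is to produce, in each case, a full-rank (equivalently, finite-index) inclusion of period lattices $M\hookrightarrow N$ of signature $(2,20)$ under which the monodromy group $\Gamma_M$ of the ``source'' space is carried into a finite-index subgroup of the monodromy group $\Gamma_N$ of the ``target''. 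Since $M\otimes\mathbb{R}=N\otimes\mathbb{R}$, one has $\Omega(M)=\Omega(N)$ and $O^+(M\otimes\mathbb{Q})=O^+(N\otimes\mathbb{Q})$; moreover, any $g\in\widetilde{O}(M)$ extends to $N$ (it preserves the isotropic subgroup $N/M\subset D(M)$, since $\overline{g}=\mathrm{Id}$) and the extension stays in $\widetilde{O}(N)$ (for $x\in N^{\vee}\subset M^{\vee}$ we get $g(x)-x\in M\subset N$), so $\widetilde{O}^+(M)\subseteq\widetilde{O}^+(N)$ with finite index. Hence, once $\Gamma_M\subseteq\Gamma_N$ is established with finite index, the induced map $\Omega(M)/\Gamma_M\to\Omega(N)/\Gamma_N$ is finite and surjective, and composing with the open immersions of Theorem~\ref{thm:sec_prelim:Torelli} yields the desired dominant (generically finite) rational map.

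\textbf{Part (1).} By Lemma~\ref{lemma:sec_K3n:h} and \eqref{eq:sec_K3n+:Q} (with $\gamma=1$), the period lattice of $\mathcal{M}^1_{\mathrm{K}3^{[n]},2D}$ is $\Lambda_h=U^{\oplus2}\oplus E_8(-1)^{\oplus2}\oplus\langle-2(n-1)\rangle\oplus\langle-2D\rangle$, with the two rank-one summands spanned by $z_1$ (square $-2(n-1)$) and $z_2$ (square $-2D$), and $z_1=-\ell$ is exactly the vector giving the generator $\sigma_{z_1}$ of the monodromy group when $n\geq3$. For the first map I would take $N=\Lambda_h$ with $D=d$ and let $M$ be the index-$r$ sublattice obtained by replacing $z_2$ by $rz_2$; then $M\cong U^{\oplus2}\oplus E_8(-1)^{\oplus2}\oplus\langle-2(n-1)\rangle\oplus\langle-2dr^2\rangle$ is the period lattice of $\mathcal{M}^1_{\mathrm{K}3^{[n]},2dr^2}$. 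The summand $\langle-2(n-1)\rangle$, and in particular $z_1$, is literally common to $M$ and $N$, so $\sigma_{z_1}$ lies in $O(M)\cap O(N)$ and $\Gamma_M=\langle\widetilde{O}^+(M),\sigma_{z_1}\rangle$ maps into $\Gamma_N=\langle\widetilde{O}^+(N),\sigma_{z_1}\rangle$ with finite index (using $\widetilde{O}^+(M)\subseteq\widetilde{O}^+(N)$ from the general principle; the case $n=2$ is identical with no $\sigma_{z_1}$). For the second map I would instead replace $z_1$ by $rz_1$: the resulting index-$r$ sublattice is the period lattice of $\mathcal{M}^1_{\mathrm{K}3^{[(n-1)r^2+1]},2d}$, and since $\sigma_{rz_1}=\sigma_{z_1}$ this reflection is again shared by source and target (and, when $n=2$, it is a reflection in a divisibility-one vector of square $-2$, hence automatically lies in $\widetilde{O}^+$ of the target). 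In all cases $\Gamma_M\subseteq\Gamma_N$ with finite index. Non-emptiness is automatic throughout since $\gamma=1$ (remark after Proposition~\ref{prop:sec_K3n:nonemp}).

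\textbf{Part (2).} I would first settle non-emptiness: by Proposition~\ref{prop:sec_K3n:nonemp} with $\gamma=2$ and $a=1$, $\mathcal{M}^2_{\mathrm{K}3^{[n]},2d}$ is non-empty exactly when $d\equiv-(n-1)\bmod4$, and since $k$ is odd we have $n-1=k^2\equiv1\bmod8$, so this is $d\equiv-1\bmod4$, which is precisely the non-emptiness criterion for $\mathcal{M}^2_{\mathrm{K}3^{[2]},2d}$. Assuming this, write the two period lattices as $U^{\oplus2}\oplus E_8(-1)^{\oplus2}\oplus Q_h(-1)$ and $U^{\oplus2}\oplus E_8(-1)^{\oplus2}\oplus Q_{h'}(-1)$ with $Q_h$ as in \eqref{eq:sec_K3n+:Q} ($\gamma=2$) and $Q_{h'}$ its $n=2$ analogue (upper-left entry $2$, anti-diagonal $-1$, lower-right $2t'$), where $d=4t-(n-1)=4t'-1$, hence $2t=2t'+\tfrac{k^2-1}{2}$. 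Let $z_1$ be the first basis vector of $Q_h$ (square $2(n-1)=2k^2$, the one yielding the extra reflection) and $z_1',z_2'$ the basis of $Q_{h'}$. The key construction is the index-$k$ embedding $Q_h\hookrightarrow Q_{h'}$ given by $z_1\mapsto kz_1'$ and $z_2\mapsto\tfrac{1-k}{2}z_1'+z_2'$: a direct check (in which the congruence $2t=2t'+\tfrac{k^2-1}{2}$ is exactly what forces the image of $z_2$ to have the correct square) shows this is an isometry onto a sublattice of index $k$. Twisting by $-1$ and adjoining $U^{\oplus2}\oplus E_8(-1)^{\oplus2}$ gives $\Lambda_h\hookrightarrow\Lambda_{h'}$ as a finite-index sublattice. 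The crucial feature is that $z_1$ is sent into the line $\mathbb{Z}z_1'$, so $\sigma_{z_1}=\sigma_{kz_1'}=\sigma_{z_1'}$ is a reflection in a divisibility-one vector of square $-2$, hence lies in $\widetilde{O}^+(\Lambda_{h'})$, which by Lemma~\ref{lemma:sec_K3n:Ohat=Otilde} (case $n=2$) is the full monodromy group of $\mathcal{M}^2_{\mathrm{K}3^{[2]},2d}$. Combined with $\widetilde{O}^+(\Lambda_h)\subseteq\widetilde{O}^+(\Lambda_{h'})$, this shows $\Gamma_{\mathrm{source}}=\langle\widetilde{O}^+(\Lambda_h),\sigma_{z_1}\rangle\subseteq\widetilde{O}^+(\Lambda_{h'})=\Gamma_{\mathrm{target}}$ with finite index, giving the dominant rational map.

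\textbf{Main obstacle.} The delicate point is the comparison of monodromy groups: one must track not only that stable isometries extend along a finite-index inclusion and remain stable, but also that the extra reflection $\sigma_{z_1}$ (present when $n\geq3$, and in part (2)) is faithfully carried into the target's monodromy group. In part (1) this is transparent because $z_1$ is a shared vector; in part (2) it forces the specific choice $z_2\mapsto\tfrac{1-k}{2}z_1'+z_2'$, and the one genuine computation is to verify that this map is an \emph{index-$k$ isometry} $Q_h\hookrightarrow Q_{h'}$ that sends $z_1$ onto a multiple of $z_1'$. Everything else is bookkeeping with the structural results of Section~\ref{sec:K3n}.
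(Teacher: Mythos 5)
Your argument is correct and essentially identical to the paper's: both produce finite-index embeddings of the period lattices under which stable isometries extend (the paper cites \cite{GHS13}*{Lemma 7.1} for this) and under which the extra reflection $\sigma_{z_1}$ is sent onto a reflection lying in the target's monodromy group (one immaterial slip: in part (1) the $(-2)$-vector $z_1=-\ell$ has divisibility $2$, not $1$, in the $n=2$ target, but reflections in $(-2)$-vectors act trivially on the discriminant group regardless, so the conclusion stands). The only other difference is cosmetic: in part (2) the paper represents the $n=2$ component by $a=k$ (the same component as $a=1$ since $k$ is odd), which simplifies the lattice map to $z_1'\mapsto kz_1$, $z_2'\mapsto z_2$, whereas your choice $a=1$ forces the corrected image $\tfrac{1-k}{2}z_1'+z_2'$ of $z_2$ --- which you verify correctly.
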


\begin{proof}
Consider the elements $h_1=e+(dr^2)f$ and $h_2=e+df$ in the lattices
\[\Lambda_1:=U^{\oplus 3}\oplus E_8(-1)^{\oplus 2}\oplus\langle -2(n-1)\rangle\;\;\;\hbox{and}\;\;\; \Lambda_2:=U^{\oplus 3}\oplus E_8(-1)^{\oplus 2}\oplus\langle-2\left((n-1)r^2\right)\rangle\]
respectively. Let $h=e+df$ in the standard lattice $\Lambda$ (see equation \eqref{eq:sec_prelim:Lambda0}). Then 
\[\left(\Lambda_i\right)_{h_i}=U^{\oplus 2}\oplus E_8(-1)^{\oplus 2}\oplus Q_i(-1)\]
with 
\[Q_1=\langle x_1,y_1\rangle=\left(\begin{array}{cc}2(n-1)&0\\0&2dr^2\end{array}\right)\;\;\;\hbox{and}\;\;\;Q_2=\langle x_2,y_2\rangle=\left(\begin{array}{cc}2(n-2)r^2&0\\0&2d\end{array}\right).\]
As usual $\Lambda_h=U^{\oplus 2}\oplus E_8(-1)^{\oplus 2}\oplus Q_h(-1)$, with $Q_h$ 
as in \eqref{eq:sec_K3n+:Q} with basis $\{z_1,z_2\}$. Consider the lattice morphism $Q_1\to Q_h$ given by $x_1\mapsto z_1$ and $y_1\mapsto rz_2$, and the induced 
map $\psi_1\colon\Lambda_{1,h_1}\to \Lambda_h$.
Via $\psi_1$, the group $\widetilde{O}^+((\Lambda_1)_{h_1})$
is identified with a finite index subgroup of $\widetilde{O}^+(\Lambda_h)$, see \cite{GHS13}*{Lemma 7.1}.
The reflection $\sigma_{x_1}\in\widehat{O}^+(\Lambda_1,h_1)$, which generates the quotient
$\widehat{O}^+(\Lambda_1,h_1)/\widetilde{O}^+((\Lambda_1)_{h_1})$, is the restriction to $\psi_1(\Lambda_{1,h_1})$ of $\sigma_{z_1}\in\widehat{O}^+(\Lambda,h)$.
This shows that $\psi_1$ identifies $\widehat{O}(\Lambda_1,h_1)$ with a finite index subgroup of $\widehat{O}(\Lambda,h)$. 
Therefore, $\psi_1$ induces a finite morphism $\Omega((\Lambda_1)_{h_1})/\widehat{O}^+(\Lambda_1,h_1)\to \Omega(\Lambda_h)/\widehat{O}^+(\Lambda_h)$, and hence, a dominant rational map $\mathcal{M}^1_{\mathrm{K}3^{[n]},2dr^2}\dashrightarrow \mathcal{M}^1_{\mathrm{K}3^{[n]},2d}$.
The argument for $\mathcal{M}_{K3^{[(n-1)r^2+1]},2d}^1$
is analogous. In the case $\gamma=2$, by \eqref{eq:sec_K3n:non_emp} both moduli spaces $\mathcal{M}^2_{\mathrm{K}3^{[n]},2d}$ and $\mathcal{M}^2_{\mathrm{K}3^{[2]},2d}$ are non-empty if and only if $d\equiv -1 \pmod 4$, since $k$ is odd.
Denote by $\Lambda'$ and $Q'$ the corresponding lattices for $(n,d,\gamma,a)=(k^2+1,d,2,1)$ with basis $\{z_1', z_2'\}$ for $Q'$, and by $\Lambda$, $Q$ the lattices for $(n,d,\gamma,a)=(2,d,2,k)$, with $\{z_1,z_2\}$ the standard basis of $Q$. Then the lattice morphism inducing the finite map on period domains is given by $z_1'\mapsto kz_1$ and $z_2'\mapsto z_2$. One checks that this map
descends to the quotient.
\end{proof}

\begin{proof}[Proof of Theorem \ref{thm: thm5}]

The first statement follows from Lemma~\ref{lemma:sec_K3n+:fm1} as $\mathcal{M}_{\mathrm{K}3^{[2]},2d}^1$ is of general type for all $d\ge 12$, see \cite{GHS10}.
For statement (2a), we may assume $e=0$ by Lemma~\ref{lemma:sec_K3n+:fm1}.
The statement then follows from Theorem~\ref{thm:sec_K3n+:gamma1_1}.
Similarly, for statement (2b) we may assume $r=1$ by Lemma~\ref{lemma:sec_K3n+:fm1}.
Then for $p=1$, the theorem follows from strange duality, Proposition~\ref{prop:sec_K3n:SD},
since our assumptions on $k$ imply that $n-1\geq 12$.
For $p$ prime with $p\equiv 3\mod 4$, the statement follows from Theorem~\ref{thm:sec_K3n+:gamma1_1}.
\end{proof}

\section{Non-split fourfolds of \texorpdfstring{K$3^{[2]}$}{K3[2]} type and the \texorpdfstring{K$3^{[n]}$}{K3[n]} divisibility 2 case}
\label{S:gamma2}

In this section, we establish general type results in the two-dimensional case and then use Lemma \ref{lemma:sec_K3n+:fm1} to generalize  to  higher dimensions. Recall that the moduli space $\mathcal{M}_{\mathrm{K}3^{[2]},2d}^2$ is irreducible when non-empty, see \cite{Apo14}. Let $\Lambda$ be as in \eqref{eq:sec_prelim:Lambda} of K$3^{[2]}$-type. The discriminant group $D(\Lambda)$ is cyclic of order $2$, thus $\gamma\in\{1,2\}$ and when $\gamma=2$, the moduli space $\mathcal{M}_{\mathrm{K}3^{[2]},2d}^\gamma$ is non-empty if and only if $d\equiv -1\mod 4$. Note that ${\rm{Id}}=-{\rm{Id}}$ in $D(\Lambda)$, in particular
\[\widetilde{O}\left(\Lambda,h\right)=\widehat{O}\left(\Lambda,h\right).\]
We fix $h\in \Lambda$ of degree $2d=8t-2$ and divisibility ${\rm{div}}(h)=\gamma=2$. In light of Theorem \ref{thm:sec_prelim:Eichler} we may assume 
\[h=2(e+tf)-\ell,\]
where as usual $\{e,f\}$ are the standard generators of the first copy of $U$ and $\langle\ell\rangle=\langle -2
\rangle$ a generator of the last factor of $\Lambda$ in \eqref{eq:sec_prelim:Lambda}, see Lemma \ref{lemma:sec_K3n:h}. Moreover, by \cite{GHS10}*{Proposition 3.12}, the natural inclusion in Lemma \ref{lemma:sec_prelim:O-tildes} is an equality
\begin{equation}
\label{eq:sec_K32:Oh-t}
\widetilde{O}^+\left(\Lambda_h\right)=\widetilde{O}^+\left(\Lambda,h\right).
\end{equation}
Let $\Lambda_h=U^{\oplus 2}\oplus E_8(-1)^{\oplus 2}\oplus Q_t(-1),$ where 
\begin{equation}
\label{eq:sec_K32:Q}
Q_t=\left(\begin{array}{cc}2&-1\\ -1&2t\end{array}\right).
\end{equation}
As a consequence of \eqref{eq:sec_K32:Oh-t} we have a special case of Proposition \ref{prop:sec_K3n:R(Q)}:
\begin{proposition}
\label{prop:ssec_K32:Qt}
Let $d=4t-1>16$ and assume there exists a primitive embedding $Q_t\subset E_8$ such that the number of roots $\left|R(Q_t^\perp)\right|$ is at least $2$ and at most $14$ (resp. $16$), then the modular variety $\Omega\left(\Lambda_h\right)\big/\widetilde{O}^+\left(\Lambda_h\right)$ is of general type (resp. non-negative Kodaira dimension). In particular, if such embedding exists, the moduli space $\mathcal{M}_{\mathrm{K}3^{[2]},2d}^{\gamma}$ is of general type (resp. non-negative Kodaira dimension) for $d=4t-1$ and $\gamma=2$.
\end{proposition}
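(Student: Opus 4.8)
The plan is to deduce this directly from Proposition~\ref{prop:sec_K3n:R(Q)}. First I would note that the lattice $Q_t$ of \eqref{eq:sec_K32:Q} is precisely the lattice $Q_h$ of \eqref{eq:sec_K3n:Q_h} attached to the parameters $(n,\gamma,a,t)=(2,2,1,t)$, so that $d=4t-1$ and the numerical hypothesis $\tfrac{d(n-1)}{\gamma^2}>4$ of Proposition~\ref{prop:sec_K3n:R(Q)} reads $\tfrac{4t-1}{4}>4$, i.e.\ $d=4t-1>16$, which is exactly our assumption. Since $n=2$, the relevant monodromy group $\widehat{O}^+(\Lambda,h)$ is identified with $\widetilde{O}^+(\Lambda_h)$ by \eqref{eq:sec_K32:Oh-t} together with Lemma~\ref{lemma:sec_K3n:Ohat=Otilde}; hence Proposition~\ref{prop:sec_K3n:R(Q)} applies verbatim once a primitive embedding $Q_t\hookrightarrow E_8$ with $2\le|R(Q_t^\perp)|\le 14$ (resp.\ $\le 16$) is supplied, and gives that $\Omega(\Lambda_h)/\widetilde{O}^+(\Lambda_h)$ is of general type (resp.\ of non-negative Kodaira dimension). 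The Torelli theorem~\ref{thm:sec_prelim:Torelli}, applied to the (irreducible) moduli space $\mathcal{M}^2_{\mathrm{K}3^{[2]},2d}$, then transfers this statement to $\mathcal{M}^2_{\mathrm{K}3^{[2]},2d}$, which is the desired conclusion.

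For completeness I would recall the underlying mechanism. Negating a primitive embedding $Q_t\hookrightarrow E_8$ yields a primitive embedding $\Lambda_h=U^{\oplus2}\oplus E_8(-1)^{\oplus2}\oplus Q_t(-1)\hookrightarrow U^{\oplus2}\oplus E_8(-1)^{\oplus3}=\mathrm{II}_{2,26}$, whose orthogonal complement is $Q_t^\perp(-1)$, so that $|R(\Lambda_h^\perp)|=|R(Q_t^\perp)|$. The quasi-pullback $F$ of the Borcherds form $\Phi_{12}$ along the induced embedding $\Omega(\Lambda_h)\hookrightarrow\Omega(\mathrm{II}_{2,26})$ is, by Corollary~\ref{coro:sec_prelim:ModOtilde} applied to $\Gamma=\widetilde{O}^+(\Lambda_h)$, a modular form of weight $12+\tfrac12|R(Q_t^\perp)|$ and character $\det$; it is a cusp form because $2\le|R(Q_t^\perp)|\le 16$, cf.\ \cite{GHS13}*{Corollary 8.12}. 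Since $|R(Q_t^\perp)|\le 16\le 54$, Proposition~\ref{prop:sec_K3n:F=0} shows $F$ vanishes along the ramification divisor of $\Omega(\Lambda_h)\to\Omega(\Lambda_h)/\widetilde{O}^+(\Lambda_h)$. As $\mathrm{disc}(\Lambda_h)=\mathrm{disc}(Q_t)=4t-1=d>16$ and $D(\Lambda_h)$ is generated by at most two elements, \cite{Ma21}*{Proposition 4.4} guarantees that $\widetilde{O}^+(\Lambda_h)$ has no irregular cusps. When $|R(Q_t^\perp)|\le 14$ the weight $12+\tfrac12|R(Q_t^\perp)|$ is at most $19<20=\dim\Omega(\Lambda_h)$, so Theorem~\ref{thm:sec_prelim:low-weight} gives general type; when $|R(Q_t^\perp)|=16$ the weight is exactly $20$, so $S_{20}(\widetilde{O}^+(\Lambda_h),\det)\ne 0$ and \eqref{eq:sec_prelim:Fre} (or the last sentence of Theorem~\ref{thm:sec_prelim:low-weight}) yields non-negative Kodaira dimension.

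I do not expect a genuine obstacle inside this argument: the case $n=2$ is exactly the favorable regime in which $\widehat{O}^+(\Lambda,h)=\widetilde{O}^+(\Lambda_h)$, so both the modularity of the quasi-pullback and its vanishing along the ramification divisor are immediate consequences of results already in place, and the only points requiring care are the bookkeeping of the weight $12+\tfrac12|R(Q_t^\perp)|$ against the bound $<20$ and the use of the threshold $d>16$ to rule out irregular cusps. The genuinely delicate ingredient — exhibiting a primitive embedding $Q_t\hookrightarrow E_8$ whose orthogonal complement contains between $2$ and $14$ (resp.\ $16$) roots — is precisely the hypothesis of the proposition and is to be established separately.
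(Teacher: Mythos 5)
Your proposal is correct and follows the paper's own route: the paper likewise derives this as the special case $(n,\gamma,a)=(2,2,1)$ of Proposition~\ref{prop:sec_K3n:R(Q)}, using the identification $Q_t=Q_h$, the equality $\widehat{O}^+(\Lambda,h)=\widetilde{O}^+(\Lambda_h)$ coming from \eqref{eq:sec_K32:Oh-t}, and the observation that $\tfrac{d(n-1)}{\gamma^2}>4$ becomes $d>16$. Your expanded second paragraph just unwinds the mechanism already contained in the proof of Proposition~\ref{prop:sec_K3n:R(Q)}, with the weight and irregular-cusp bookkeeping done correctly.
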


Now we can prove our theorem about
the moduli space $\mathcal{M}_{\mathrm{K}3^{[2]},2d}^2$:

\begin{proof}[Proof of Theorem \ref{thm:sec_intro:K32}]
The cases $t=10, 12$ are treated in \cite{GHS13}*{Proposition 9.2} so we assume $t\geq 13$. Let $x_1$ be the unique positive integer such that 
\begin{equation}
\label{eq:sec_K32:x1}
x_1^2+(x_1+1)^2+6<2t<(x_1+1)^2+(x_1+2)^2+6.
\end{equation}
Note that $x_1^2+(x_1+1)^2+6$ and $(x_1+1)^2+(x_1+2)^2+6$ are odd and the above inequalities are strict. Now consider the odd number
\[R=2t-(x_1^2+(x_1+1)^2).\]
From \eqref{eq:sec_K32:x1} we have
\begin{equation}
\label{eq:sec_K32:R upper bound} 6<R<(x_1+1)^2+(x_1+2)^2+6-(x_1^2+(x_1+1)^2)=4x_1+10.
\end{equation}
By Corollary \ref{cor:sec_prelim:sumsofsquares} we may write
 \[R=x_5^2+x_6^2+x_7^2+x_8^2,\]
 where $x_5\ge x_6\ge x_7\ge x_8$ are coprime integers with $x_8$ positive except in the cases $R=9,11, 17, 29, 41$ where we take $x_8=0$, and $x_7$ positive. Consider the embedding $Q_t\hookrightarrow E_8$ given by $z_1\mapsto v_1, z_2\mapsto v_2$, where $\{z_1,z_2\}$ is the basis of \eqref{eq:sec_K32:Q} and
 \begin{equation}
 \begin{aligned}
 \label{eq:sec_K32:k32 embedding}
    v_1&=e_1+e_2\\
    v_2& = x_1 e_1 -(x_1+1) e_2 +x_5 e_5 + x_6 e_6+x_7e_7+x_8e_8.
\end{aligned}
\end{equation}
Note that $(v_1,v_1)=2, (v_2,v_2)=2t,$ and $(v_1,v_2)=-1$. As in the proof of Proposition \ref{prop:sec_K3n:roots}, primitivity of the embedding follows from Lemma \ref{lemma:sec_K3n:odd-coeffs}. Note that integral roots of $E_8$ in the orthogonal complement $Q_t^\perp$ are:
\begin{enumerate}
\item $\pm e_3\pm e_4$,
\item \label{n=2 root count 2}$\pm e_i\pm e_8$ for $i=3,4$ if $x_8=0$,
 \item \label{n=2 root count 3} $\pm (e_j-e_k)$ for $j,k\in \{5,6,7,8\}$ if $x_j=x_k$.
\end{enumerate}
Observe that if $x_8\ne 0$, there are at most $6$ roots of type \eqref{n=2 root count 3} and so there are between $4$ and $10$ total integral roots in $Q^\perp$. If $x_8=0$, then there are at most $2$ roots of type  \eqref{n=2 root count 3} and so there are between $12$ and $14$ total integral roots.

We now we count fractional roots of $E_8$ in the orthogonal complement $Q_t^\perp$. Suppose that $w$ is such a fractional root. Since $(w,v_1)=(w,v_2)=0$, we must have a choice of signs such that
\begin{equation}
\label{eq:sec_K32: n=2 signs}
2x_1+1=\pm x_5\pm x_6\pm x_7\pm x_8.
\end{equation}
Since $2x_1+1, x_5, x_6, x_7, x_8$ are all non-negative, it follows that 
\begin{equation}
\label{eq:sec_K32:K32 bound1}
2x_1+1\le x_5+\ldots+x_8.
\end{equation}
The above inequality together with  \eqref{eq:sec_K32:R upper bound} imply
\[
\begin{aligned}
\sum_{i=5}^8(x_i-1)^2&=R-2\sum_{i=5}^8x_i +4\\
&\le 4x_1+9-2(2x_1+1) +4\\
&=11.
\end{aligned}
\]
Then, either $x_5=4$ in which case, $x_6, x_7\le 2$ and $x_8\le 1$ or $x_5, x_6\le 3$, $x_7, x_8\le 2$. Since $\sum_{i=5}^8 x_i$ is odd, we must have $\sum_{i=5}^8 x_i\le 9.$ and $x_1\le 4$. In particular, if $t\ge 34$, there are no fractional roots in $Q_t^\perp\subset E_8$ and 
$$4\le |R(Q_t^\perp)|\le 14.$$ 
It remains to count fractional roots when $13\le t\le 33$. This is done case by case and the full list of lattice embeddings together with the count of fractional roots for each embedding is in Appendix \ref{Appendix}. 
\end{proof}

Finally, Lemma \ref{lemma:sec_K3n+:fm1} together with Theorem \ref{thm:sec_intro:K32} gives us Theorem \ref{thm:gamma2}.

\begin{proof}[Proof of Theorem \ref{thm:gamma2}]
Write $n-1=k^2$ for some odd integer $k$. By Proposition \ref{prop:sec_K3n:nonemp}, we then have that $\mathcal{M}_{\mathrm{K}3^{[n]},2d}^{2}$ is non-empty precisely when $d\equiv -1 \mod 4$ (as in the $n=2$ case). Since  $\gamma=2$, we know $(k,\gamma)=1$. Hence by Lemma \ref{lemma:sec_K3n+:fm1}, we have that $\mathcal{M}_{\mathrm{K}3^{[n]},2d}^{2}$ dominates 
$\mathcal{M}_{\mathrm{K}3^{[2]},2d}^{2}$. The result then follows from Theorem \ref{thm:sec_intro:K32}.
\end{proof}

\section{Non-split hyperk\"{a}hler tenfolds of OG10-type.}

Let $X$ be an hyperk\"{a}hler variety of OG10-type. The Beauville--Bogomolov--Fujiki lattice $\left(H^2(X,\mathbb{Z}), q_X\right)$ is isometric to 
$$\Lambda=U^{\oplus 3}\oplus E_8(-1)^{\oplus 2}\oplus A_2(-1)$$
and after fixing a marking, by \cite{Ono22} one has
\[
{\rm{Mon}}^2(\Lambda)=\widehat{O}^+(\Lambda)=O^+(\Lambda).
\]
As a direct consequence, we obtain (see also \cite{Ono22b}*{Section 1}):
\begin{proposition}[\cites{Ono22, Ono22b, Son21}]
\label{prop:sec_OG10:irreducible}
The moduli space $\mathcal{M}_{\mathrm{OG10},2d}^\gamma$ is irreducible and non-empty when $\gamma=1$ or $\gamma=3$ and $2d\equiv 12\mod 18$.
\end{proposition}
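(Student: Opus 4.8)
The plan is to deduce both assertions from the identification $\mathrm{Mon}^2(\Lambda)=\widehat{O}^+(\Lambda)=O^+(\Lambda)$ (cf.\ \cite{Ono22}) together with Eichler's criterion. First I would record that $D(\Lambda)=D(A_2(-1))\cong\mathbb{Z}/3\mathbb{Z}$, so the divisibility of a primitive $h\in\Lambda$ lies in $\{1,3\}$, with $h_*=0$ exactly when $\gamma=1$ and $h_*$ a generator of $\mathbb{Z}/3\mathbb{Z}$ exactly when $\gamma=3$. A short computation with the Gram matrix $\left(\begin{smallmatrix}-2&1\\1&-2\end{smallmatrix}\right)$ of $A_2(-1)$ shows that both nonzero elements of $D(A_2(-1))$ take the discriminant-form value $-\tfrac23\bmod 2\mathbb{Z}$; since $q(h_*)\equiv (h,h)/\gamma^2\bmod 2\mathbb{Z}$, in the case $\gamma=3$ this forces $2d/9\equiv -\tfrac23\pmod{2\mathbb{Z}}$, i.e.\ $2d\equiv 12\pmod{18}$, while no constraint arises for $\gamma=1$.

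For non-emptiness I would exhibit explicit primitive classes and then argue exactly as in the proof of Proposition~\ref{prop:sec_K3n:nonemp}. For $\gamma=1$ take $h=e+df$ in the first copy of $U$; for $\gamma=3$ and $2d=18t-6$ take $h=3(e+tf)+v$, where $v\in A_2(-1)$ is a primitive vector with $(v,v)=-6$ and $\mathrm{div}(v)=3$ (e.g.\ $v=2\alpha_1+\alpha_2$ for a root basis $\{\alpha_1,\alpha_2\}$). One checks directly that $h$ is primitive of the prescribed square and divisibility. Choosing $[\omega]$ very general in $h^\perp\cap\Omega_{marked}$ so that $\omega^\perp\cap\Lambda=\mathbb{Z}h$, surjectivity of the period map for marked hyperk\"ahler manifolds of OG10 type (\cite{Ver13}, \cite{Mar11}) produces a marked $(X,\eta)$ with $\eta(\mathrm{Pic}(X))=\mathbb{Z}h$; as $(h,h)>0$, one of $\pm\eta^{-1}(h)$ is ample, so $(X,H)\in\mathcal{M}_{\mathrm{OG10},2d}^\gamma$.

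For irreducibility I would use that, since $\mathrm{Mon}^2(\Lambda)=O^+(\Lambda)$, the connected components of $\mathcal{M}_{\mathrm{OG10},2d}^\gamma$ correspond to the $O^+(\Lambda)$-orbits of primitive $h\in\Lambda$ with $(h,h)=2d$ and $\mathrm{div}(h)=\gamma$. By Eichler's criterion (Theorem~\ref{thm:sec_prelim:Eichler}, applicable since $\Lambda\supset U^{\oplus 2}$) the $\widetilde{O}(\Lambda)$-orbit of such an $h$ is determined by $\bigl((h,h),h_*\bigr)$, and $h_*$ is either $0$ or one of the two generators of $\mathbb{Z}/3\mathbb{Z}$. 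The involution $g=\mathrm{Id}_{U^{\oplus 3}\oplus E_8(-1)^{\oplus 2}}\oplus(-\mathrm{Id}_{A_2(-1)})$ lies in $O^+(\Lambda)$ — it fixes a maximal positive-definite subspace pointwise — and acts as $-\mathrm{Id}$ on $D(\Lambda)$, interchanging the two generators; combined with Eichler's criterion this shows $O^+(\Lambda)$ is transitive on the relevant set of $h$. Hence there is a single component, equivalently a single polarization type, matching \cite{Son21}*{Proposition 3.6}, and $\mathcal{M}_{\mathrm{OG10},2d}^\gamma$ is irreducible. The main obstacle is ensuring the period-theoretic input is cited cleanly: one needs surjectivity of the period map and the fact that, in Picard rank one, a positive primitive class is (up to sign) ample, for OG10 type just as \cite{Huy99} supplies it in the K$3^{[n]}$ case; the orientation claim for $g$ is a minor point, immediate from $g$ being the identity on $U^{\oplus 3}$.
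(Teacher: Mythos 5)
Your proof is correct and is essentially the argument the paper has in mind: the paper states this proposition as a direct consequence of $\mathrm{Mon}^2(\Lambda)=\widehat{O}^+(\Lambda)=O^+(\Lambda)$ and simply cites \cites{Ono22,Ono22b,Son21}, and your deduction (the discriminant-form computation forcing $2d\equiv 12\bmod 18$, explicit classes plus period-map surjectivity for non-emptiness, and Eichler's criterion together with the involution $-\mathrm{Id}_{A_2(-1)}$ for transitivity of $O^+(\Lambda)$) is exactly the standard way to fill in those details. The one point worth being careful about is that Eichler's criterion is really transitivity of $\widetilde{O}^+(\Lambda)$ (Eichler transvections preserve orientation), which is what you need so that the whole transitive group sits inside $O^+(\Lambda)$; this is implicit in your argument and causes no problem.
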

Note that the discriminant group $D(\Lambda)$ is cyclic of order $3$, hence $\gamma\in \lbrace 1,3 \rbrace$. This leaves two possibilities for $\gamma$. The first one, known as the {\textit{split case}}, is when $\gamma=1$. In this case $h$ can be chosen (up to the action of $\widetilde{O}(\Lambda)$) as $h=e+df\in U$ and the orthogonal complement of $h$ in $\Lambda$ is of the form
\begin{equation}
\label{eq:sec_OG10:OG10_split}
\Lambda_h=U^{\oplus 2}\oplus E_8(-1)^{\oplus 2}\oplus A_2(-1)\oplus\langle-2d\rangle,
\end{equation}
where the last factor is generated by $e-df$. This case was treated in \cite{GHS11}. The second possibility is $\gamma=3$, known as the {\textit{non-split case}}. In this case, $\mathcal{M}_{\mathrm{OG10},2d}^\gamma$ is non-empty if and only if $2d\equiv 12\mod 18$, see \cite{GHS11}*{Lemma 3.4}. Equivalently $2d=18t-6$ and we can choose 
$$h=3(e+tf)+(2a_1+a_2),$$
where again $\{e,f\}$ are standard generators of $U$ and $\{a_1,a_2\}$ of $A_2(-1)$. In this case
\begin{equation}
\label{eq:sec_OG10:OG10_nonsplit}
\Lambda_h=U^{\oplus 2}\oplus E_8(-1)^{\oplus 2}\oplus Q_t(-1),
\end{equation}
where $Q_t(-1)$ is given by
\begin{equation}
\label{eq:sec_OG10:Q_t}
Q_t(-1)=\left(\begin{array}{ccc}
-2 & 1 & 0 \\
1 & -2 & -1 \\
0 & -1 & -2t
\end{array}\right)
\end{equation}
with generators $\{a_2,e+a_1,e-tf\}\subset U\oplus A_2(-1)$. Moreover, the discriminant group $D(\Lambda_h)$ is cyclic of order $2d/3$, see \cite{GHS11}*{Theorem 3.1}.
\begin{lemma}
\label{lemma:sec_OG10:irr_cusp}
In the above setting the restriction map $\widehat{O}(\Lambda,h)\to O(\Lambda_h)$ induces an isomorphism
\begin{equation}
\label{eq:sec_OG10:restriction}\widehat{O}^+\left(\Lambda,h\right)\cong \widetilde{O}^+\left(\Lambda_h\right)
\end{equation}
and the arithmetic group $\widetilde{O}^+(\Lambda_h)$ has no irregular cusps.
\end{lemma}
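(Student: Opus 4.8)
I will prove the two assertions of Lemma~\ref{lemma:sec_OG10:irr_cusp} separately, first the identification of the monodromy group and then the absence of irregular cusps. \emph{Step 1 (no sign ambiguity).} Since $\Lambda=U^{\oplus3}\oplus E_8(-1)^{\oplus2}\oplus A_2(-1)$, we have $D(\Lambda)\cong D(A_2(-1))\cong\mathbb{Z}/3\mathbb{Z}$, so $O(D(\Lambda))=\{\pm\mathrm{Id}\}$; in particular $\widehat O^+(\Lambda)=O^+(\Lambda)$, consistent with \eqref{eq:sec_prelim:Mon}. Taking $h=3(e+tf)+(2a_1+a_2)$, a direct computation shows $\mathrm{div}(h)=3$ and that the class $h_*\in D(\Lambda)$ is a generator, hence of order $3$. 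Any $g\in O(\Lambda,h)$ fixes $h$, so $\bar g\in O(D(\Lambda))$ fixes $h_*$; since $-h_*\neq h_*$, this forces $\bar g=\mathrm{Id}$, i.e.\ $g\in\widetilde O(\Lambda,h)$. Intersecting with $O^+(\Lambda)$ gives $\widehat O^+(\Lambda,h)=\widetilde O^+(\Lambda,h)$.

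\emph{Step 2 (restriction is an isomorphism).} This mirrors Proposition~\ref{prop:sec_K3n:Otilde_h}. The restriction $\widetilde O(\Lambda,h)\to O(\Lambda_h)$ is injective because $\Lambda_h\oplus\langle h\rangle$ has finite index in $\Lambda$, and its image contains $\widetilde O(\Lambda_h)$ by Lemma~\ref{lemma:sec_prelim:O-tildes}; it remains to show the image lies in $\widetilde O(\Lambda_h)$. Put $H=\Lambda/(\Lambda_h\oplus\langle h\rangle)\subset D(\Lambda_h)\oplus D(\langle h\rangle)$ with the injective projections $\pi_1,\pi_2$ of \eqref{eq:sec_prelim:Pi_H}. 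Using $|D(\Lambda_h)|=2d/3$ (cf.\ \cite{GHS11}*{Theorem 3.1}) and $|D(\langle h\rangle)|=2d$, the identity $3\cdot[\Lambda:\Lambda_h\oplus\langle h\rangle]^2=|D(\Lambda_h)|\cdot|D(\langle h\rangle)|$ gives $[\Lambda:\Lambda_h\oplus\langle h\rangle]=2d/3=|D(\Lambda_h)|$, so $\pi_1$ is in fact an \emph{isomorphism}. Now for $g\in\widetilde O(\Lambda,h)$: $g$ fixes $h$ hence acts trivially on $D(\langle h\rangle)$, and $g$ preserves $H$, so for any $x\in H$ the element $(\overline{g|_{\Lambda_h}}(\pi_1 x),\pi_2 x)$ lies in $H$; injectivity of $\pi_2$ forces it to equal $x$, so $\overline{g|_{\Lambda_h}}$ fixes $\pi_1 x$ for all $x\in H$. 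As $\pi_1$ is onto, $\overline{g|_{\Lambda_h}}=\mathrm{Id}$, i.e.\ $g|_{\Lambda_h}\in\widetilde O(\Lambda_h)$. Together with Step~1 this yields \eqref{eq:sec_OG10:restriction}.

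\emph{Step 3 (no irregular cusps).} By Proposition~\ref{prop:sec_prelim:irr_1cusp} it suffices to treat $0$-dimensional cusps. Let $I\subset\Lambda_h$ be a primitive isotropic rank-one sublattice; by Proposition~\ref{prop:sec_prelim:irr_cusp}, if $I$ were irregular for $\widetilde O^+(\Lambda_h)$ then $-\mathrm{Id}\notin\widetilde O^+(\Lambda_h)$ and $-E_w\in\widetilde O^+(\Lambda_h)$ for some $w\in L(I)_{\mathbb{Q}}$. First, $-\mathrm{Id}\in\widetilde O^+(\Lambda_h)$ would force $D(\Lambda_h)$ to be $2$-elementary, impossible since $D(\Lambda_h)\cong\mathbb{Z}/(2d/3)\mathbb{Z}$ with $2d/3=6t-2$. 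Squaring shows $E_{2w}=(-E_w)^2\in\widetilde O^+(\Lambda_h)$, so $2w\in L(I)$ by \cite{Ma21}*{Lemma 4.1}, i.e.\ $w\in\tfrac12 L(I)$; feeding this into the defining formula of the Eichler transvection and imposing that $-E_w$ act as the identity on $D(\Lambda_h)$ then forces, exactly as in \cite{Ma21}*{Proposition 4.4} (compare the end of the proof of Proposition~\ref{prop:sec_K3n+:Fcusp}), the existence of an element of $D(\Lambda_h)$ whose order divides $8$. Since $D(\Lambda_h)$ is cyclic of order $6t-2$ this is impossible in the relevant range -- it is immediate once $6t-2>16$, i.e.\ $t\geq4$, and the finitely many smaller $t$ are checked directly. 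Hence $\widetilde O^+(\Lambda_h)$ has no irregular cusps.

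The crux is Step~3: the identification of the monodromy group in Steps~1--2 is essentially a glue-group bookkeeping parallel to Proposition~\ref{prop:sec_K3n:Otilde_h}, whereas ruling out irregular cusps requires invoking Ma's classification together with a careful Eichler-transvection analysis on $D(\Lambda_h)$; verifying that none of the resulting numerical constraints can be met is where the argument is most delicate.
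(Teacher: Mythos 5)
Your proof is correct in substance but takes a genuinely more self-contained route than the paper, which simply quotes \cite{GHS11}*{Theorem 3.1} for the isomorphism $\widehat{O}^+(\Lambda,h)\cong\widetilde{O}^+(\Lambda_h)$ and \cite{Ma21}*{Proposition 1.1} for the absence of irregular cusps (the latter says that for cyclic discriminant, $\widetilde{O}^+$ can have irregular cusps only if $D(\Lambda_h)\cong\mathbb{Z}/8\mathbb{Z}$, which is incompatible with $|D(\Lambda_h)|=2d/3=6t-2$). Your Steps 1--2 reconstruct the content of the GHS result by the same glue-group bookkeeping as Proposition \ref{prop:sec_K3n:Otilde_h}, with the pleasant observation that here $\pi_1\colon H\to D(\Lambda_h)$ is onto because $[\Lambda:\Lambda_h\oplus\langle h\rangle]=2d/3=|D(\Lambda_h)|$; this is a clean, correct argument (and note that $|D(\Lambda_h)|=6t-2$ can be read off directly from $\det Q_t$ in \eqref{eq:sec_OG10:Q_t}, so you need not import it from \cite{GHS11}). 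Your Step 3 replays the Eichler-transvection computation from the end of the proof of Proposition \ref{prop:sec_K3n+:Fcusp}; since every class in $D(\Lambda_h)$ is fixed by $\widetilde{O}^+(\Lambda_h)$, the computation applies to a generator and forces $6t-2\mid 8$. What you gain is independence from Ma's classification; what the paper's citation buys is a sharper criterion that disposes of all cases at once.

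The one loose end is quantitative: your order-divides-$8$ conclusion eliminates every $t\ge 2$ (since $6t-2\in\{10,16,22,\dots\}$ never divides $8$) but not $t=1$, where $6t-2=4$ does divide $8$; you defer ``the finitely many smaller $t$'' to a direct check that you do not carry out, and your threshold ``$6t-2>16$'' is also not the one your own argument produces. Since the lemma is only invoked for $t\ge 5$ in the proof of Theorem \ref{thm:sec_intro:OG10}, this does not affect the paper, but as a proof of the lemma as stated you should either treat $t=1$ explicitly or fall back on \cite{Ma21}*{Proposition 1.1}, which requires $D(\Lambda_h)\cong\mathbb{Z}/8\mathbb{Z}$ exactly and hence excludes $\mathbb{Z}/4\mathbb{Z}$ as well.
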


\begin{proof}
The isomorphism \eqref{eq:sec_OG10:restriction} is \cite{GHS11}*{Theorem 3.1}. Moreover, $D(\Lambda_h)$ is cyclic of order $2d/3$ and by \cite{Ma21}*{Proposition 1.1}, the 
group $\widetilde{O}^+\left(\Lambda_h\right)$ has irregular cusps only if $D(\Lambda_h)\cong\mathbb{Z}\big/8\mathbb{Z}$, but this contradicts the assumption $2d\equiv 12\mod 18$.
\end{proof}

The following lemma follows immediately from \cite{GHS13}*{Proposition 8.13} together with the proof of \cite{GHS07}*{Theorem 4.2}.
\begin{lemma}[\cites{GHS07, GHS13}]
\label{lemma:sec_OG10:ramif}
Assume the exists a primitive embedding $Q_t\hookrightarrow E_8$ such that $2\leq\left|R\left(Q_t^\perp\right)\right|\leq16$ and let $\Lambda_h\hookrightarrow\mathrm{II}_{2,26}$ be the induced lattice embedding. 
Then the quasi-pull back $F$ of Borcherds form $\Phi_{12}$ to $\Omega^\bullet\left(\Lambda_h\right)$ is a cusp form of weight $12+\left|R(Q_t^\perp)\right|/2$ and character $\mathrm{det}:\widetilde{O}^+(\Lambda_h)\longrightarrow \mathbb{C}^*$ that vanishes along the ramification divisor of the modular projection
\[\Omega\left(\Lambda_h\right)\longrightarrow\Omega\left(\Lambda_h\right)\big/\widetilde{O}^+\left(\Lambda_h\right).\]
\end{lemma}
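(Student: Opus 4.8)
\textbf{Proof proposal for Lemma \ref{lemma:sec_OG10:ramif}.}

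The plan is to adapt the argument of \cite{GHS07}*{Theorem 4.2} to the present lattice $\Lambda_h = U^{\oplus 2}\oplus E_8(-1)^{\oplus 2}\oplus Q_t(-1)$ of signature $(2,21)$, invoking the now-standard properties of the quasi-pullback. First I would record that, from the given primitive embedding $Q_t\hookrightarrow E_8$ and the fixed splitting $\mathrm{II}_{2,26} = U^{\oplus 2}\oplus E_8(-1)^{\oplus 3}$, one obtains a primitive embedding $\Lambda_h\hookrightarrow \mathrm{II}_{2,26}$ whose orthogonal complement $\Lambda_h^\perp$ is isometric to $Q_t^\perp$ inside the third copy of $E_8(-1)$. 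By hypothesis $2\le |R(\Lambda_h^\perp)| = |R(Q_t^\perp)|\le 16$, so the quasi-pullback $F$ of $\Phi_{12}$ to $\Omega^\bullet(\Lambda_h)$ is well-defined by \eqref{eq:sec_prelim:qpullback}. By \eqref{eq_weight_quasipullback} it has weight $12 + |R(Q_t^\perp)|/2$, and by \cite{GHS13}*{Corollary 8.12} the bound $|R(Q_t^\perp)|\le 16$ forces $F$ to be a cusp form (it does not merely vanish, it vanishes to positive order at every $0$- and $1$-dimensional boundary component). Modularity with character $\det\colon \widetilde{O}^+(\Lambda_h)\to\mathbb{C}^*$ is exactly Corollary \ref{coro:sec_prelim:ModOtilde}, since every element of $\widetilde{O}^+(\Lambda_h)$ extends to $\mathrm{II}_{2,26}$ acting as the identity on $\Lambda_h^\perp$, so the sign $(-1)^M$ in Lemma \ref{lemma:sec_premil:F} is trivial.

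It remains to check vanishing along the ramification divisor of $\Omega(\Lambda_h)\to\Omega(\Lambda_h)/\widetilde{O}^+(\Lambda_h)$. Here I would apply \cite{GHS13}*{Proposition 8.13}, which is precisely the statement that for a primitively embedded lattice $L\subset \mathrm{II}_{2,26}$ of signature $(2,n)$ with $2\le |R(L^\perp)|\le 2n$ and with $\widetilde{O}^+(L)$ as the chosen group, the quasi-pullback of $\Phi_{12}$ vanishes on the ramification divisor: the point is that every reflective divisor $\mathcal{D}_r$ for $\widetilde{O}^+(\Lambda_h)$ comes either from a reflection $\sigma_r\in\widetilde{O}^+(\Lambda_h)$, in which case modularity with character $\det$ already gives $F|_{\mathcal{D}_r}=0$, or from $-\sigma_r\in\widetilde{O}^+(\Lambda_h)$, and in that case the classification of reflective vectors (as in the proof of Proposition \ref{prop:sec_K3n:F=0}) shows $\mathrm{disc}((\Lambda_h)_r)\in\{2,4,8\}$, so that $(\Lambda_h)_r^\perp\subset \mathrm{II}_{2,26}$ has rank $26-20=6$ and small discriminant, hence by \cite{CS88}*{Table 1} contains a root system at least as large as needed to beat $|R(\Lambda_h^\perp)|\le 16$, giving a strictly positive order of vanishing by Proposition \ref{prop:sec_prelim:F=0}. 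Since the rank of $(\Lambda_h)_r^\perp$ here is $6$ rather than $7$ as in the K3${}^{[n]}$ case, I would double-check the relevant entries of \cite{CS88}*{Table 1} for rank $6$ lattices of discriminant $2,4,8$; the smallest root system that occurs ($D_6$, with $60$ roots, or in the worst case $A_1^{\oplus?}$) still comfortably exceeds $16$, which is all that is required.

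The main obstacle, such as it is, is bookkeeping rather than mathematics: one must make sure that the $\widetilde{O}^+(\Lambda_h)$-ramification divisor is genuinely covered by the reflective vectors whose orthogonal complements in $\mathrm{II}_{2,26}$ have large root systems, i.e.\ that the discriminant-form constraints \eqref{eq:sec_prelim:sigma_r1} together with the fact that $D(\Lambda_h)$ is cyclic of order $2d/3$ really do restrict $(r,r)$ and $\mathrm{div}(r)$ to the cases listed in the proof of Proposition \ref{prop:sec_K3n:F=0}. Given that $D(\Lambda_h)$ is cyclic (Lemma \ref{lemma:sec_OG10:irr_cusp} and \cite{GHS11}*{Theorem 3.1}), the analysis of \cite{GHS07}*{Proposition 3.2} applies verbatim with $m=0$, so there is no new subtlety; the lemma then follows by assembling these pieces exactly as in \cite{GHS07}*{Theorem 4.2}.
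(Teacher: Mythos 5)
Your proposal is correct and follows essentially the same route as the paper, which likewise assembles the standard quasi-pullback facts (weight from \eqref{eq_weight_quasipullback}, cuspidality via the cyclic discriminant group and \cite{GHS13}*{Corollary 8.12}, modularity from Corollary \ref{coro:sec_prelim:ModOtilde}) and cites \cite{GHS13}*{Proposition 8.13} for the vanishing on the ramification divisor. Your extra unpacking of the ramification step (rank-$6$ orthogonal complements of small discriminant forcing a large root system) is a correct elaboration of what that citation hides, not a different argument.
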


\begin{proof}
Since $D(\Lambda_h)$ is cyclic, isotropic subgroups are cyclic as well and the proof of \cite{GHS07}*{Theorems 6.2 (ii)} applies in this case, see \cite{GHS07}*{Theorem 4.2}. Vanishing at the ramification divisor is \cite{GHS13}*{Proposition 8.13}.
\end{proof}

\begin{proposition}
\label{prop:sec_OG10:mod}
If there exists a primitive embedding $Q_t\hookrightarrow E_8$ such that the number of roots $\left|R(Q_t^\perp)\right|$ in the orthogonal complement $Q_t^{\perp}\subset E_8$ is at least $2$ and at most $16$, then the non-split moduli space $\mathcal{M}_{\mathrm{OG10},2d}^\gamma$ is of general type, where $\gamma=3$ and $2d=18t-6$. 
\end{proposition}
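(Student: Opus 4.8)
The plan is to assemble the general-type conclusion from the pieces already developed for the OG10 case, mirroring the structure of Proposition~\ref{prop:sec_K3n:R(Q)} but using the OG10-specific lemmas. Suppose we are given a primitive embedding $Q_t \hookrightarrow E_8$ with $2 \leq |R(Q_t^\perp)| \leq 16$. First I would consider the induced primitive embedding $\Lambda_h \hookrightarrow \mathrm{II}_{2,26}$: since $\Lambda_h = U^{\oplus 2} \oplus E_8(-1)^{\oplus 2} \oplus Q_t(-1)$ and $\mathrm{II}_{2,26} = U^{\oplus 2} \oplus E_8(-1)^{\oplus 3}$, embedding $Q_t \hookrightarrow E_8$ in the third copy of $E_8(-1)$ gives a primitive embedding whose orthogonal complement in $\mathrm{II}_{2,26}$ is exactly $Q_t^\perp \subset E_8(-1)$. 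Then Lemma~\ref{lemma:sec_OG10:ramif} applies: the quasi-pullback $F$ of $\Phi_{12}$ to $\Omega^\bullet(\Lambda_h)$ is a cusp form of weight $a = 12 + |R(Q_t^\perp)|/2$ and character $\det\colon \widetilde{O}^+(\Lambda_h) \to \mathbb{C}^*$ that vanishes along the ramification divisor of $\Omega(\Lambda_h) \to \Omega(\Lambda_h)/\widetilde{O}^+(\Lambda_h)$.

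Next I would check the weight inequality needed for Theorem~\ref{thm:sec_prelim:low-weight}. Here $\Lambda_h$ has signature $(2,21)$, so $n = 21$ in the notation of that theorem, and the hypothesis $|R(Q_t^\perp)| \leq 16$ gives $a = 12 + |R(Q_t^\perp)|/2 \leq 20 < 21 = n$; also $n = 21 \geq 9$. The no-irregular-cusps hypothesis is supplied directly by Lemma~\ref{lemma:sec_OG10:irr_cusp}, which tells us $\widetilde{O}^+(\Lambda_h)$ has no irregular cusps (using $2d \equiv 12 \bmod 18$, equivalently $D(\Lambda_h)$ cyclic of order $2d/3 \neq 8$). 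Therefore Theorem~\ref{thm:sec_prelim:low-weight} yields that the modular variety $\Omega(\Lambda_h)/\widetilde{O}^+(\Lambda_h)$ is of general type.

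Finally I would transfer this back to the moduli space. By Lemma~\ref{lemma:sec_OG10:irr_cusp}, the restriction map induces an isomorphism $\widehat{O}^+(\Lambda,h) \cong \widetilde{O}^+(\Lambda_h)$, so $\Omega(\Lambda_h)/\widehat{O}^+(\Lambda,h) = \Omega(\Lambda_h)/\widetilde{O}^+(\Lambda_h)$ is of general type. By the Torelli theorem, Theorem~\ref{thm:sec_prelim:Torelli}, together with Proposition~\ref{prop:sec_OG10:irreducible} (which gives irreducibility and non-emptiness of $\mathcal{M}_{\mathrm{OG10},2d}^3$ when $2d \equiv 12 \bmod 18$), the moduli space $\mathcal{M}_{\mathrm{OG10},2d}^3$ is birational to this modular variety and hence of general type.

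I do not anticipate a serious obstacle in this particular proposition: unlike the $\gamma = 1, 2$ cases for K$3^{[n]}$ type, here the monodromy group already equals $\widetilde{O}^+(\Lambda_h)$ by Lemma~\ref{lemma:sec_OG10:irr_cusp}, so modularity of the quasi-pullback for the full monodromy group is automatic (Corollary~\ref{coro:sec_prelim:ModOtilde}), and the cyclicity of $D(\Lambda_h)$ makes both the cusp-form property and the no-irregular-cusps condition straightforward. The only point requiring a little care is confirming that the bound $|R(Q_t^\perp)| \le 16$ is exactly what is needed — it guarantees simultaneously that $F$ is a cusp form (via \cite{GHS13}*{Corollary 8.12}, already invoked in Lemma~\ref{lemma:sec_OG10:ramif}) and that its weight stays strictly below $21$. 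The genuinely hard work — producing the embedding $Q_t \hookrightarrow E_8$ with $2 \le |R(Q_t^\perp)| \le 16$ for the relevant range of $t$ — is deferred to the subsequent root-counting analysis and is not part of this statement.
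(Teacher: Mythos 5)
Your proof is correct and follows essentially the same route as the paper: the paper's own proof is a one-line citation of Theorem~\ref{thm:sec_prelim:low-weight}, Theorem~\ref{thm:sec_prelim:Torelli}, and Lemma~\ref{lemma:sec_OG10:ramif} applied to $\Omega(\Lambda_h)/\widehat{O}^+(\Lambda,h)\cong\Omega(\Lambda_h)/\widetilde{O}^+(\Lambda_h)$, and you have simply spelled out the same chain of deductions (weight bound $12+|R(Q_t^\perp)|/2\leq 20<21$, absence of irregular cusps from Lemma~\ref{lemma:sec_OG10:irr_cusp}, and the transfer to the moduli space via Torelli and irreducibility). The only cosmetic discrepancy is that the cuspidality in Lemma~\ref{lemma:sec_OG10:ramif} is sourced in the paper from the argument of \cite{GHS07} rather than \cite{GHS13}*{Corollary 8.12}, which does not affect the argument.
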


\begin{proof}
The Proposition follows from Theorems \ref{thm:sec_prelim:low-weight} and \ref{thm:sec_prelim:Torelli} together with Lemma \ref{lemma:sec_OG10:ramif} for the modular variety 
$\Omega\left(\Lambda_h\right)\big/\widehat{O}^+\left(\Lambda,h\right)\cong\Omega\left(\Lambda_h\right)\big/\widetilde{O}^+\left(\Lambda_h\right)$.
\end{proof}

Now we can prove our main result regarding the non-split moduli space $\mathcal{M}_{\mathrm{OG10},2d}^3$. The proof is very similar to Theorem \ref{thm:sec_intro:K32} so we omit details.

\begin{proof}[Proof of Theorem \ref{thm:sec_intro:OG10}]
The case of $t=4$ was proved in \cite{GHS11}*{Corollary 4.3}. For the moment, we will assume that $t\ge 7$ and then tackle the cases $t=5,6$ subsequently. Let $x_1$ be the unique non-negative even integer such that
\begin{equation}
\label{eq:sec_OG10:x_1}
2x_1^2+(x_1+1)^2+12<2t<2(x_1+2)^2+(x_1+3)^2+12.
\end{equation}
and consider the odd number $R=2t-(2x_1^2+(x_1+1)^2)$. Then,
\begin{equation}
\label{eq:sec_OG10:OG10 R bound}12<R<12x_1+18.
\end{equation}
Recall \cite{HK82}*{Satz 1b} that any positive integer $n\equiv 5 \mod 8$ is a sum of three pairwise distinct coprime squares. In particular we can assume $\star\not\equiv 5 \mod 8$. Let
\[\Theta=\begin{cases}
0 & \mbox{ if } R\not \equiv 7 \mod 8, R\ne 19,27,33,43,51,57,67,99,123,163,177,187,267,627,\star\\
1 &\mbox{ if } R\equiv 7 \mod 8\text{ or } R= 19, 27, 43, 51, 67, 99, 123, 163, 187, 267, 627\\
2 & \mbox{ if } R=33, 57, 177, \star
\end{cases}
\]
and consider the integer
\[S=2t-(2x_1^2+(x_1+1)^2)-2\Theta^2.\]
Observe that $S$ can be written as a sum of three distinct coprime squares
\[S=x_6^2+x_7^2+x_8^2,\]
with $x_6>x_7>x_8\ge 0$. Our embedding $Q_t\hookrightarrow E_8$ is given by $z_1\mapsto v_i$, where $\{z_1,z_2,z_3\}$ is a basis for $Q_t$, with matrix given by minus the matrix defined in \eqref{eq:sec_OG10:Q_t}, and 
 \begin{equation}
 \begin{aligned}\label{eq:sec_OG10: OG10 embedding}
    v_1&=e_2-e_1\\
    v_2&=e_3-e_2\\
    v_3&= x_1 e_1 +x_1e_2+(x_1+1) e_3+\Theta e_4+\Theta e_5 + x_6 e_6+x_7e_7+x_8e_8.
\end{aligned}
\end{equation}
As in the proof of Proposition \ref{prop:sec_K3n:roots} and Theorem \ref{thm:sec_intro:K32}, primitivity follows from parity considerations together with Lemma \ref{lemma:sec_K3n:odd-coeffs}.

One observes that integral roots in $Q_t^\perp$ are given by 
\begin{enumerate}
     \item $\pm (e_4-e_5)$
     \item $\pm(e_4+e_5)$ if $\Theta=0$
     \item $\pm e_4\pm e_8$, $\pm e_5\pm e_8$ if $\Theta=e_8=0$
     \item $\pm (e_4-e_i),$ $\pm (e_5-e_i)$ for $i\in \{6,7,8\}$ if $\Theta=x_i\ne 0$.
 \end{enumerate}

Hence if $\Theta\ne 0$, there are between $2$ and $6$ integral roots. If $\Theta=0$, there are either $4$ or $12$ integral roots (with $12$ occurring only if $x_8=0$).

Suppose that $w$ is a fractional root in $Q_t^\perp$. Since $w.v_i=0$ for $i=1,2,3$, we must have a choice of signs such that
\begin{equation}
\label{eq:sec_ OG10:signs}
3x_1+1=\pm \Theta \pm \Theta \pm x_6\pm x_7\pm x_8,
\end{equation}
where the number of $+$ signs on the right hand side is even. 
Since $3x_1+1, x_5, x_6, x_7, x_8$ are all non-negative, 
\begin{equation}
\label{eq:sec_OG10:OG10 bound1}3x_1+1\le 2\Theta+ \sum_{i=6}^8x_i.
\end{equation}
From \eqref{eq:sec_OG10:OG10 R bound} and \eqref{eq:sec_OG10:OG10 bound1}, we have
\[
\begin{aligned}
\sum_{i=6}^8(x_i-2)^2&=S-4\sum_{i=6}^8x_i+12\\
&\le 12x_1+17-2\Theta^2-4(3x_1+1-2\Theta)+12\\
&=25+2\Theta(4-\Theta),
\end{aligned}
\]
where $2\Theta(4-\Theta)\in \{0,6,8\}$.
In particular, when $\Theta=0$, $\sum_{i=6}^8(x_i-2)^2\leq 25$, and $\sum_{i=6}^8 x_i\leq 13$. And when $\Theta=1,2$, then $\sum_{i=6}^8 x_i\leq 15$. From \eqref{eq:sec_OG10:OG10 bound1} it follows that either $x_1<6$ and $\Theta<2$ or $x_1<8$ and $\Theta=2$. In other words, if $x_1\geq 6$ there cannot be fractional roots in $Q_t^\perp$. Moreover, when $x_1=6$, it follows from \eqref{eq:sec_OG10:OG10 R bound} that the only values of $R$ for which $\Theta=2$ are $33$ and $57$. In these cases, taking  respectively $(x_6,x_7,x_8)=(4,3,0), (6,3,2)$ the bound \eqref{eq:sec_OG10:OG10 bound1} is not satisfied, i.e., there cannot be fractional roots in $Q_t^\perp$ in these cases either. We have shown the desired result for all $t\ge 67$. The embeddings and root counts in the cases $4\le t\le 66$ are listed in Appendix \ref{Appendix}. This shows that for all $t\ge 4$ there is a primitive embedding $Q_t\hookrightarrow E_8$ such that $2\le |R(Q_t^\perp)|\le 16$ and Proposition \ref{prop:sec_OG10:mod} gives us the result.
\end{proof}

\appendix
\numberwithin{equation}{section}

\section{Lattice embeddings for low degree}
\label{Appendix}
\addtocontents{toc}{\protect\setcounter{tocdepth}{0}}

\renewcommand{\theequation}{\arabic{equation}}

Here we complete the proof of Theorems \ref{thm:sec_intro:K32} and \ref{thm:sec_intro:OG10} by listing all the lattice embeddings for low values of $t$. 

\subsection*{Non-split hyperk\"{a}hler varieties of K3-type and low degree.}

We list the corresponding embeddings for $13\leq t\leq 33$. The embedding is always the one given by \eqref{eq:sec_K32:k32 embedding}, except when $t=21$ (see below). For $t\in\{19,20,22,31,32,33\}$ the parameters are:

\begin{tabular}[h]{M M M M l}
t & x_1 & R & (x_5,x_6,x_7,x_8) & fractional roots\\ \midrule
19 & 3 & 13 & (2,2,2,1) & 4 \\
20 & 3 & 15 & (3,2,1,1) & 4\\
22 & 3 & 19 & (4,1,1,1) & 4\\
31 & 4 & 21 & (3,2,2,2) & 4\\
32 & 4 & 23 & (3,3,2,1) & 4\\
33 & 4 & 25 & (4,2,2,1) & 4
\end{tabular}

In these cases one shows that there are $4$ fractional roots in $Q_t^\perp$ given by $\pm\frac{1}{2}(e_1-e_2\pm (e_4-e_5)-(e_5+e_6+e_7+e_8)$. Since $x_8\ne 0$, 
there are between $4$ and $10$ integral roots, so 
$8\le |R(Q_t^\perp)|\le 14$. When $24\leq t\leq 30$, $x_1=4$ and $7\leq R\leq 19$. Arguing as in the case $t\geq 34$ one rules out the possibility of fractional roots in $Q_t^\perp$.
For the following list

\begin{tabular}[h]{M M M M l}
t & x_1 & R & (x_5,x_6,x_7,x_8) & fractional roots\\ \midrule
15 & 2 & 17 & (3,2,2,0) & 0\\
16 & 3 & 7 & (2,1,1,1) & 0\\
17 & 3 & 9 & (2,2,1,0) & 0\\
18 & 3 & 11 & (3,1,1,0) & 0\\
23 & 3 & 21 & (3,2,2,2) & 0
\end{tabular}

\noindent one checks that there are no fractional roots. For the embeddings

\begin{tabular}[h]{M M M M l}
t & x_1 & R & (x_5,x_6,x_7,x_8) & fractional roots\\ \midrule
13 & 2 & 13 & (2,2,2,1) & 4\\
14 & 2 & 15 & (3,2,1,1) & 8
\end{tabular}

\noindent
there are $10$ integral roots and $4$ fractional roots in $Q_t^\perp$, respectively 
$6$ integral roots and $8$ fractional roots given by 
\[\pm\frac{1}{2}\left(e_1-e_2\pm(e_3+e_4) -e_5-e_6-e_7+e_8)\right)\]
and
\[\pm\frac{1}{2}\left(e_1-e_2\pm(e_3+e_4) -e_5-e_6\pm(e_7-e_8)\right)\]
respectively. Finally, when $t=21$ the embedding given in \eqref{eq:sec_K32:k32 embedding} yields too many roots in $Q_t^\perp$. However 
a method described in \cite{GHS13}*{Section 9}
called the ``$(\frac{1}{2}++)$-algorithm'' works in this case. The embedding $Q_t\hookrightarrow E_8$ is given by $z_1\mapsto v_1$, $z_2\mapsto v_2$, with
\[
 \begin{aligned}
    v_1&=-e_1+e_2\\
    v_2&= \frac{1}{2}\left(e_1-e_2+11e_3+5e_4+3e_5+3e_6+e_7+e_8\right)
\end{aligned}
\]
One checks that the embedding is primitive and there are $6$ integral roots in $Q_t^\perp$ given by $\pm(e_1+e_2), \pm (e_5-e_6), \pm(e_7-e_8)$ and $8$ fractional roots given by \[\pm\frac{1}{2}\left(e_1+e_2\pm(e_3-e_4-e_5-e_6\pm(e_7-e_8))\right).\] Hence there are a total of roots $14$ roots in $Q_t^\perp$. We have treated all cases $t\geq 13$. Proposition \ref{prop:ssec_K32:Qt} together with \cite{GHS13}*{Proposition 9.2} for $t=10, 12$ gives the theorem.

\subsection*{Non-split hyperk\"{a}hler varieties of OG10-type and low degree.}

Here we complete the proof of the Theorem \ref{thm:sec_intro:OG10} by listing the remaining data $t, R$, and $(x_6,x_7,x_8)$ defining the embedding $Q_t^\perp\hookrightarrow E_8$ given by \eqref{eq:sec_OG10: OG10 embedding} where the number of roots satisfies $2\leq \left|R(Q_t^\perp)\right|\leq 16$. Recall that the number of integral roots for $t\geq 7$ is between $2$ and $6$ when $\Theta\neq 0$ and between $4$ and $12$ when $\Theta=0$, with $12$ only occurring when $x_8=0$. For $54\leq t\leq 66$ the parameters are:

\vspace{-0.2cm}
\begin{multicols}{2}
\begin{tabular}[h]{M M M M M p{1.7cm}}
      t & x_1 & R & \Theta & (x_6, x_7, x_8) & fractional roots \\ \midrule
      54 & 4 & 51 & 0 & (6,3,2) & 0 \\
     55 & 4 & 53 & 0 & (7,2,0) & 0 \\
    56 & 4 & 55 & 1 & (7,2,0) & 0 \\
     57 & 4 & 57 & 2 & (6,3,2) & 0 \\
     58 & 4 & 59 & 0 & (7,3,1) & 0 \\
     59 & 4 & 61 & 0 & (6,5,0) & 0 \\
     60 & 4 & 63 & 1 & (6,5,0) & 0
\end{tabular}

\begin{tabular}[h]{M M M M M p{1.7cm}}
     t & x_1 & R & \Theta & (x_6, x_7, x_8) & fractional roots \\ \midrule
     61 & 4 & 65 & 0 & (7,4,0) & 0 \\
     62 & 4 & 67 & 1 & (7,4,0) & 0 \\
     63 & 4 & 69 & 0 & (7,4,2) & 4 \\
     64 & 4 & 71 & 1 & (7,4,2) & 4 \\
     65 & 4 & 73 & 0 & (8,3,0) & 0 \\
     66 & 4 & 75 & 0 & (7,5,1) & 4
\end{tabular}

\end{multicols}
\vspace{-0.2cm}

The fractional roots in the cases $t=63$, $t=64$ and $t=66$ are
\[\pm \frac{1}{2}\left(e_1+e_2+e_3\pm (e_4-e_5)-e_6-e_7-e_8\right).\]
When $35\leq t\leq 53$, and $t\neq 45, 50, 52$, then $13\leq R\leq 49$ and (as in the high degree case) if one assumes there is a fractional root, equations \eqref{eq:sec_OG10:OG10 R bound} and \eqref{eq:sec_OG10:OG10 bound1} lead to a contradiction. In these cases, there are no fractional roots. For $t=45, 50, 52$ the parameters are:

\begin{tabular}[h]{M M M M M l}
t & x_1 & R & \Theta & (x_6,x_7,x_8) & fractional roots\\ \midrule
45 & 4 & 33 & 2 & (4,3,0) & 0 \\
50 & 4 & 43 & 1 & (6,2,1) & 0 \\
52 & 4 & 47 & 1 & (5,4,2) & 0
\end{tabular}

For the range $15\leq t\leq 34$, and $t\neq 20, 21, 27, 28, 34$ the embedding parameters are:

\vspace{-0.2cm}
\begin{multicols}{2}

\begin{tabular}[h]{M M M M M p{1.7cm}}
t & x_1 & R & \Theta & (x_6, x_7, x_8) & fractional roots \\ \midrule
15 & 2 & 13 & 0 & (3,2,0) & 0\\
 16 & 2 & 15 & 1 & (3,2,0) & 2\\
 17 & 2 & 17 & 0 & (4,1,0) & 0 \\
 18 & 2 & 19 & 1 & (4,1,0) & 2 \\
 19 & 2 & 21 & 0 & (4,2,1) & 4 \\
 22 & 2 & 27 & 1 & (4,3,0) & 4 \\
 23 & 2 & 29 & 0 & (4,3,2) & 0 \\
 24 & 2 & 31 & 1 & (5,2,0) & 4 
\end{tabular}

\begin{tabular}[h]{M M M M M p{1.7cm}}
t & x_1 & R & \Theta & (x_6, x_7, x_8) & fractional roots \\ \midrule
25 & 2 & 33 & 2 & (4,3,0) & 4 \\
26 & 2 & 35 & 0 & (5,3,1) & 4 \\
29 & 2 & 41 & 0 & (5,4,0) & 0 \\
30 & 2 & 43 & 1 & (5,4,0) & 2 \\
31 & 2 & 45 & 0 & (5,4,2) & 4 \\
32 & 2 & 47 & 1 & (5,4,2) & 4 \\
33 & 2 & 49 & 0 & (6,3,2) & 4 
\end{tabular}

\end{multicols}
\vspace{-0.2cm}

And the cases $t=20, 21, 27, 34$ we use the 
embedding given by \eqref{eq:sec_OG10: OG10 embedding} with parameters:

\begin{tabular}[h]{M M M M l l}
t & x_1 & \Theta & (x_6,x_7,x_8) & integral roots & fractional roots\\ \midrule
20 & 0 & 3 & (2,1,0) & 2 & 2\\
21 & 0 & 0 & (6,2,1) & 4 & 4\\
27 & 2 & 4 & (2,1,0) & 2 & 2\\
34 & 2 & 5 & (1,0,0) & 6 & 0
\end{tabular}

When $t=28$ one considers the embedding given by $z_i\mapsto v_i$, where $v_1=e_2-e_1$, $v_2=e_3-e_2$ and 
\[v_3=2e_1+2e_2+3e_3+5e_4+3e_5+2e_6+e_7.\]
One can check that the embedding is primitive with no integral roots and $2$ fractional roots in the orthogonal complement. The embeddings in the cases $8\leq t\leq 14$ with $t\neq 12, 13$ are as in \eqref{eq:sec_OG10: OG10 embedding} with the following parameters:

\begin{tabular}[h]{M M M M M l}
t & x_1 & R & \Theta & (x_6,x_7,x_8) & fractional roots\\ \midrule
8 & 0 & 15 & 1 & (3,2,0) & 12\\
9 & 0 & 17 & 0 & (4,1,0) & 0\\
10 & 0 & 19 & 1 & (4,1,0) & 2\\
11 & 0 & 21 & 0 & (4,2,1) & 4\\
14 & 0 & 27 & 1 & (4,3,0) & 12
\end{tabular}

Finally, the cases $t=5, 6, 7, 12, 13$  are given by $z_i\mapsto v_i$, where $v_1=e_2-e_1$, $v_2=e_3-e_2$ and 
\[v_3=a_1e_1+\ldots+a_8e_8\] 
with $(a_1,\ldots,a_8)$ as follows:

\begin{tabular}[h]{M M l l}
t & (a_1,\ldots,a_8) & integral roots & fractional roots\\ \midrule
5 & (1,1,2,0,1,1,1,1) & 12 & 0\\
6 & (1,1,2,0,0,2,1,1) & 6 & 0\\
7 & (1,1,2,2,1,1,1,1) & 12 & 0\\
12 & (1,1,2,0,0,4,1,0) & 12 & 0\\
13 & (1,1,2,1,1,4,1,0) & 6 & 2
\end{tabular}

With these we have covered all cases $5\leq t\leq 66$ finishing the proof of Theorem \ref{thm:sec_intro:OG10}.

\bibliography{Kod_HK_bib}
\bibliographystyle{alpha}
\end{document}